\numberwithin{equation}{section}
 \newtheorem{thm}{Theorem}[section]
 \newtheorem{conj}[thm]{Conjecture}
 \newtheorem{problem}{Problem}
 \newtheorem{proposition}[thm]{Proposition}
\newtheorem{lemma}[thm]{Lemma}
\newtheorem{corollary}[thm]{Corollary}
 \theoremstyle{definition}
 \newtheorem{dfn}[thm]{Definition}
 \newtheorem{example}[thm]{Example}
\newtheorem{rmk}[thm]{Remark}
\newtheorem{notation}[thm]{Notation}
\newtheorem{algorithm}[thm]{Algorithm}
\renewenvironment{quote}{%
   \list{}{%
     \it\leftmargin1.0cm   
     \rightmargin\leftmargin
   }
   \item\relax
}
{\endlist}
 \newcommand{\set}[1]{\left\{#1\right\}}
 \newcommand{\PP}{\mathbb{P}}
 \newcommand{\NN}{\mathbb{N}}
\newcommand{\bvec}[1]{\ensuremath{\mathbf{#1}}}
\newcommand{\bbP}{\mathbb{P}}
\newcommand{\bbG}{\mathbb{G}}
\newcommand{\bbN}{\mathbb{N}}
\newcommand{\bbR}{\mathbb{R}}
\newcommand{\bfV}{\mathbf{V}}
\newcommand{\rk}{\mathrm{R}}
\newcommand{\bfx}{\mathbf{x}}
\newcommand{\bfy}{\mathbf{y}}
\newcommand{\bfd}{\mathbf{d}}
\newcommand{\bfe}{\mathbf{e}}
\newcommand{\bfn}{\mathbf{n}}
\newcommand{\HF}{\mathrm{HF}}
\newcommand{\HS}{\mathrm{HS}}
\newcommand{\calF}{\mathcal{F}}
\newcommand{\calI}{\mathcal{I}}
\newcommand{\calL}{\mathcal{L}}
\newcommand{\calC}{\mathcal{C}}
\newcommand{\calO}{\mathcal{O}}
\newcommand{\downmapsto}{\rotatebox[origin=c]{-90}{$\scriptstyle\mapsto$}\mkern2mu}
\newcommand{\expdim}{{\it exp.\dim}}
\newcommand{\virtdim}{{\it vir.\dim}}
\newcommand{\incV}{\mathfrak{I}}
\newcommand{\Res}{\mathrm{Res}}
\newcommand{\Tr}{\mathrm{Tr}}
\newcommand{\bbC}{\mathbb{C}}
\newcommand{\bbX}{\mathbb{X}}
\newcommand{\VSP}{\mathrm{VSP}}
\def\move-in{\parshape=1.75true in 5true in}
\newcommand\blfootnote[1]{%
  \begingroup
  \renewcommand\thefootnote{}\footnote{#1}%
  \addtocounter{footnote}{-1}%
  \endgroup
}
\begin{document}

\title{The Hitchhiker guide to: Secant Varieties and Tensor Decomposition.}

\author[A. Bernardi]{Alessandra Bernardi$^*$}
\address[A. Bernardi]{Dipartimento di Matematica, Universit\`a di Trento, Trento, Italy}
\email{alessandra.bernardi@unitn.it}

\author[E. Carlini]{Enrico Carlini$^*$}
\address[E. Carlini]{Dipartimento di Scienze Matematiche, Politecnico di Torino, Turin, Italy}
\email{enrico.carlini@polito.it}

\author[M.V. Catalisano]{Maria Virginia Catalisano$^*$}
\address[M.V. Catalisano]{Dipartimento di Ingegneria Meccanica, Energetica, Gestionale e dei Trasporti, Universit\`a degli studi di Genova, Genoa, Italy,}
\email{catalisano@dime.unige.it}

\author[A. Gimigliano]{Alessandro Gimigliano$^*$}
\address[A. Gimigliano]{Dipartimento di Matematica, Univ. di Bologna, Bologna, Italy}
\email{Alessandr.Gimigliano@unibo.it}

\author[A. Oneto]{Alessandro Oneto$^*$}
\address[A. Oneto]{Barcelona Graduate School of Mathematics, and Universitat Polit\`ecnica de Catalunya, Barcelona, Spain}
\email{alessandro.oneto@upc.edu; aless.oneto@gmail.com}

\subjclass[2010]{13D40, 14A05, 14A10, 14A15, 14C20, 14M12, 14M15, 14M99, 14N05, 15A69, 15A75}

\maketitle

\centerline {${}^*$ {\it The initial elaboration of this work had Anthony V. Geramita}}

\centerline {{\it as one of the authors. The paper is dedicated to him.}}


\begin{abstract}
We consider here the problem, which is quite classical in Algebraic
geometry, of studying the secant varieties of a projective variety
$X$. The case we concentrate on is when $X$ is a Veronese variety, a
Grassmannian or a Segre variety. Not only these varieties are among
the ones that have been most classically studied, but a strong
motivation in taking them into consideration is the fact that they
parameterize, respectively, symmetric, skew-symmetric and general
tensors, which are decomposable, and their secant varieties give a
stratification of tensors via tensor rank. We collect here most of
the known results and the open problems on this fascinating subject.
\end{abstract}

\blfootnote{{\it Keywords.} Additive decompositions; Secant varieties; Veronese varieties; Segre varieties; Segre-Veronese varieties; Grassmannians; tensor rank; Waring rank; algorithm.}

\tableofcontents

\addtocontents{toc}{\protect\setcounter{tocdepth}{1}}

{\footnotesize
\subsection*{Funding}
	A.B. acknowledges the Narodowe Centrum Nauki, Warsaw Center of Mathematics and Computer Science, Institute of Mathematics of university of Warsaw which financed  the 36th Autumn School in Algebraic Geometry ``Power sum decompositions and apolarity, a geometric approach" September 1st-7th, 2013 {\L}uk\k{e}cin, Poland where a partial version of the present paper was conceived. M.V. C. and A. G. acknowledges financial support from MIUR (Italy). A.O. acknowledges financial support from the Spanish Ministry of Economy and Competitiveness, through the Mar\'ia de Maeztu Programme for Units of Excellence in R$\&$D (MDM-2014-0445).
	}

\addtocontents{toc}{\protect\setcounter{tocdepth}{2}}

\section{Introduction}
\unskip

\subsection{The Classical Problem}

When considering finite dimensional vector spaces over a field
$\Bbbk$ (which for us, will always be algebraically closed and of
characteristic zero, unless stated otherwise), there are three main functors
that come to attention when doing multilinear algebra:
\begin{itemize}
	\item the {tensor product}, denoted by $V_1\otimes \cdots \otimes V_d$;
	\item the {symmetric product}, denoted by $S^dV$; 
	\item the {wedge product}, denoted by $\bigwedge^dV$.
\end{itemize}

These functors are associated with three classically-studied projective varieties in algebraic geometry (see e.g., \cite{Harris}): 
\begin{itemize}
	\item the {Segre variety};
	\item the {Veronese variety};
	\item the {Grassmannian}. 
\end{itemize}

We will address here the problem of studying the higher secant
varieties $\sigma_s(X)$, where $X$ is one of the varieties above. We
have:
\begin{equation}\label{equ:secant} \sigma_s(X) := \overline{\bigcup _{P_1,\ldots,P_s\in X} \langle P_1,\ldots,P_s\rangle}\end{equation}
i.e., $\sigma_s(X)$ is the Zariski closure of the union of the $\PP
^{s-1}$'s, which are $s$-secant to $X$.

The problem of determining the dimensions of the higher secant
varieties of many classically-studied projective varieties (and also
projective varieties in general) is quite classical in algebraic geometry and has a long and interesting
history. By a simple count of parameters, the {expected dimension} of $
\sigma_s(X)$, for $X\subset \PP^N$, is $\min\{s(\dim X)+(s-1),
N\}$. This is always an upper-bound of the actual dimension, and a variety $X$ is said to be
{defective}, or {$s$-defective}, if there is a value $s$ for which
the dimension of $\sigma_s(X)$ is strictly smaller than the expected one;
the difference: 
$$
\delta_s(X) := \min\{s(\dim X)+(s-1), N\} - \dim
\sigma_s(X)
$$ 
is called the {$s$-defectivity} of $X$ (or of $\sigma_s(X)$); a variety $X$ for which some $\delta_s$ is positive is called {defective}.

The first interest in the secant variety $\sigma_2(X)$ of a variety
$X\subset \PP^N$ lies in the fact that if $\sigma_2(X) \neq \PP
^N$, then the projection of $X$ from a generic point of $\PP^N$ into
$\PP^{N-1}$ is an isomorphism. This goes back to the XIX Century with the
discovery of a surface $X \subset \PP ^5$, for which
$\sigma_2(X)$ is a hypersurface, even though its expected dimension
is five. This is the Veronese surface, which is the only surface in $\PP
^5$ with this property. The research on defective varieties has been quite a frequent
subject for classical algebraic geometers, e.g., see the works of
F. Palatini \cite{Pa}, A. Terracini \cite{Te, Te3} and G.
 Scorza \cite{Sc1,
Sc2}.

It was then in the $1990$s that two new articles marked a turning point
in the study about these questions and rekindled the interest in
these problems, namely the work of F. Zak and the one by J. Alexander and
A. Hirshowitz.

 Among many other things, like, e.g., proving Hartshorne's conjecture on linear normality, the outstanding paper of F. Zak \cite{Z} studied {Severi varieties}, i.e.,
non-linearly normal smooth $n$-dimensional subvarieties $X\subset \PP^N$, with ${2\over
3}(N-1)=n$. Zak found that all Severi varieties have defective $\sigma_2(X)$, and, by
using invariant theory, classified all of them as follows.
\begin{thm} 
Over an algebraically-closed field of characteristic zero, each Severi variety is
projectively equivalent to one of the following four projective
varieties:
\begin{itemize}
	\item Veronese surface $\nu_2(\bbP^2) \subset \bbP^5$;
	\item Segre variety $\nu_{1,1}(\PP^2 \times \PP^2) \subset \PP^8$;
	\item Grassmann variety $Gr(1,5)\subset \PP^{14}$;
	\item Cartan variety ${\Bbb E}^{16} \subset \PP^{26}$.
\end{itemize}
\end{thm}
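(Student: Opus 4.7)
The plan is to follow Zak's two-step strategy: first extract strong numerical and geometric constraints on a Severi variety $X^n\subset\mathbb{P}^N$ using infinitesimal methods, and then match the resulting data with the classification of simple Jordan algebras (equivalently, composition algebras). The starting point is Terracini's lemma: for a generic point $z\in\sigma_2(X)$ lying on the secant $\langle x,y\rangle$ with $x,y\in X$, one has $T_z\sigma_2(X)=\langle T_xX,T_yX\rangle$. Since $X$ is $2$-defective with defect $\delta_2(X)=\dim\mathbb{P}^N-\dim\sigma_2(X)\ge 1$, the tangent spaces $T_xX,T_yX$ meet in positive dimension. The first step is to quantify this: under the extremal condition $n=\tfrac{2}{3}(N-1)$ a parameter count forces $\dim(T_xX\cap T_yX)=n/2$ for a generic pair, which already implies that $n$ is even.

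The second step is to study the \emph{entry locus} $\Sigma_z=\overline{\{x\in X:\ \exists\,y\in X,\ z\in\langle x,y\rangle\}}$ for a general $z\in\sigma_2(X)$. Using the tangential symmetry and Zak's theorem on tangencies, one shows $\Sigma_z$ is a smooth, irreducible subvariety of $X$ of pure dimension $n/2$, and moreover a quadric hypersurface inside its linear span. This is the crucial geometric input. From the projective geometry of a smooth quadric fibration on $X$ (together with the non-degeneracy and smoothness of $X$) one recovers an algebraic structure on the ambient space: the maps sending $z\in\sigma_2(X)$ to the quadric $\Sigma_z$ define on $\mathbb{A}^{N+1}$ the structure of a simple cubic Jordan algebra of rank $3$, and $X$ appears as the projectivization of its rank-$1$ locus.

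The third step is to invoke the classification of cubic simple Jordan algebras (Jordan--von Neumann--Wigner, equivalently Hurwitz's theorem on composition algebras): the only possibilities are the algebras of $3\times 3$ Hermitian matrices over $\mathbb{R},\mathbb{C},\mathbb{H},\mathbb{O}$, whose rank-$1$ projective varieties are, respectively, the Veronese surface $\nu_2(\mathbb{P}^2)\subset\mathbb{P}^5$, the Segre $\mathbb{P}^2\times\mathbb{P}^2\subset\mathbb{P}^8$, the Grassmannian $Gr(1,5)\subset\mathbb{P}^{14}$, and the Cartan variety $\mathbb{E}^{16}\subset\mathbb{P}^{26}$. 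Matching dimensions $(n,N)=(2,5),(4,8),(8,14),(16,26)$ exhausts the list. A direct check that each of these four varieties is indeed a Severi variety (which is classical: each has a $2$-defective $\sigma_2$ of the expected codimension by explicit tangent-space calculation) closes the argument.

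The main obstacle is the second step: producing the Jordan algebra structure intrinsically out of the quadric-entry-locus geometry is not a formal matter and is where Zak's original use of invariant theory enters. Equivalently, one has to show that the Hilbert scheme of entry-locus quadrics, together with its incidence with $X$, assembles into an associative ``trilinear form'' of Jordan type; any proof must either do this hands-on or appeal to the uniqueness of homogeneous projective varieties with the given numerical invariants (a classification of rational homogeneous spaces with appropriate tangent geometry). All remaining steps are routine once the Jordan/composition-algebra dictionary is in place.
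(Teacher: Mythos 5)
The paper offers no proof of this theorem: it is stated as a quotation of Zak's result and attributed to \cite{Z}, with only the remark that Zak used invariant theory. So there is no in-paper argument to compare against, and your proposal has to be judged on its own.

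Your skeleton is the right one and the numerology is correct: Terracini gives $\dim(\hat T_xX\cap \hat T_yX)=2(n+1)-(\tfrac{3n}{2}+2)=\tfrac n2$ for a general pair, the entry locus $\Sigma_z$ of a general $z\in\sigma_2(X)$ is an $\tfrac n2$-dimensional quadric, and the four varieties are exactly the rank-one loci of the simple rank-$3$ Jordan algebras $\mathcal H_3(\mathbb{A})$ for the four composition algebras, of dimensions $6,9,15,27=N+1$. But as you yourself concede, the central step -- ``the maps sending $z$ to the quadric $\Sigma_z$ define on $\mathbb{A}^{N+1}$ the structure of a simple cubic Jordan algebra of rank $3$'' -- is asserted, not proved, and it is essentially equivalent to the theorem. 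Nothing in your first two steps rules out, say, $n=6$ or $n=12$: parity of $n$ is all you actually extract, whereas the restriction $n\in\{2,4,8,16\}$ is itself a hard theorem of Zak (it is where the Hurwitz-type dichotomy enters, and in Zak's treatment it comes from a delicate analysis of the family of entry-locus quadrics and of tangencies, not from a formal count). Establishing that $\Sigma_z$ is smooth and is a quadric also silently uses Zak's theorem on tangencies and the linear normality theorem, which deserve to be flagged as inputs rather than folded into ``one shows.''

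It is also worth noting that the route you sketch is not Zak's. Zak first proves $n\le 16$, then $n\in\{2,4,8,16\}$, and then classifies case by case (the $n=16$ case occupying most of the effort); the Jordan-algebra reading of the answer is a posteriori. Proofs that genuinely run through the algebraic structure (Chaput's, or Landsberg--Manivel's) must first establish that $X$ is a homogeneous variety, or carry out a local differential-geometric prolongation argument -- precisely the content you defer. So the proposal is an accurate road map of what a proof must accomplish, but it does not close the essential step, and as written it does not constitute a proof.
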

Moreover, later in the paper, also Scorza varieties are classified,
which are maximal with respect to defectivity and which generalize
the result on Severi varieties.

The other significant work is the one done by J. Alexander and
A. Hirschowitz; see \cite{AH95} and Theorem \ref{corollAH} below. Although
not directly addressed to the study of secant varieties, they confirmed
the conjecture that, apart from the quadratic Veronese varieties and
a few well-known exceptions, all the Veronese varieties have
higher secant varieties of the expected dimension. In a sense, this
result completed a project that was underway for over $100$ years (see
\cite{Pa,Te,War}).


\subsection{Secant Varieties and Tensor Decomposition}
{Tensors} are multidimensional arrays of numbers and play an important role in numerous research areas including computational complexity, signal processing for telecommunications~\cite{deLC} and scientific data analysis \cite{SBG}.
As specific examples, we can quote the complexity of matrix multiplication~\cite{Str}, the P versus NP complexity problem 
 \cite{Vali}, the study of entanglement in quantum physics \cite{entanglement, bernardi2012algebraic}, matchgates in computer science \cite{Vali},
the study of phylogenetic invariants \cite{ar}, independent component analysis~\cite{Comon}, blind identification in signal processing \cite{SGB}, branching structure in diffusion images~\cite{ScSe} and other multilinear data analysis techniques in bioinformatics and spectroscopy \cite{CoJu}. Looking at this literature shows how knowledge about this subject used to 
be quite scattered and suffered a bit from
the fact that the same type of problem can be considered in different
areas using a different language.

In particular, {tensor decomposition} is nowadays an intensively-studied argument by many algebraic geometers and by more applied communities. Its main problem is the decomposition of a tensor with a given structure as a linear combination of {decomposable tensors} of the same structure called {rank-one tensors}. 
To be more precise:
let $V_1, \ldots , V_d$ be $\Bbbk$-vector spaces of dimensions $n_1+1,\ldots,n_d+1$, respectively, and let
$
{V} = V_1^*\otimes \cdots \otimes V_d^* \simeq (V_1\otimes\ldots\otimes V_d)^*.
$
We call a {decomposable}, or {rank-one}, {tensor} an element of the type $v^*_1\otimes\cdots\otimes v^*_d \in \bfV$. If $T \in {V}$, one can ask: 
\begin{quote}
	{What is the minimal length of an expression of $T$ as a sum of decomposable tensors?}
\end{quote}

We call such an expression a {tensor decomposition} of $T$, and the answer to this question is usually referred to as the {tensor rank} of $T$. Note that, since ${V}$ is a finite-dimensional vector space of dimension $\prod _{i=1}^d \dim_\Bbbk V_i$,
which has a basis of decomposable tensors, it is quite trivial to see that every $T \in {V}$ can be written as the sum of finitely many decomposable tensors. Other natural questions to ask are:
\begin{quote}
{What is the rank of a generic tensor in ${V}$? What is the dimension of the closure of the set of all tensors of tensor rank $\leq r$?}
\end{quote}

Note that it is convenient to work up to scalar multiplication, i.e., in the projective space $\bbP(\bfV)$, and the latter questions are indeed meant to be considered in the {Zariski topology} of $\bbP(\bfV)$. This is the natural topology used in algebraic geometry, and it is defined such that closed subsets are zero loci of (homogeneous) polynomials and open subsets are always dense. In this terminology, an element of a family is said to be {generic} in that family if it lies in a proper Zariski open subset of the family. Hence, saying that a property holds for a generic tensor in $\bbP({V})$ means that it holds on a proper Zariski subset of $\bbP({V})$.

In the case $d=2$, tensors correspond to ordinary matrices, and the notion of tensor rank coincides with the usual one of the rank of matrices. Hence, the generic rank is the maximum one, and it is the same with respect to rows or to columns. When considering multidimensional tensors, we can check that in general, all these usual properties for tensor rank fail to hold; e.g., for
$(2\times 2\times 2)$-tensors, the generic tensor rank is two, but the
maximal one is three, and of course, it cannot be the dimension of the
space of ``row vectors'' in whatever direction.

 It is well known that studying the dimensions of the secant varieties to
Segre varieties gives a first idea of the stratification of ${V}$, or equivalently of $\PP ({V})$,
 with respect to tensor rank. In fact, the {Segre variety} $\nu_{1,\ldots,1}(\bbP^{n_1}\times\ldots\times\bbP^{n_d})$ can be seen as the projective variety in $\bbP(\bfV)$, which parametrizes rank-one tensors, and consequently, the generic point of $\sigma_s(\nu_{1,\ldots,1}(\bbP^{n_1}\times\cdots\times\bbP^{n_d}))$ parametrizes a tensor of tensor rank equal to $s$ (e.g., see \cite {CGG1,CGG2}).

If $V_1 = \cdots = V_d = V$ of dimension $n+1$, one can just consider {symmetric} or
{skew-symmetric} tensors. In the first case, we study the $S^dV^*$, which corresponds to the space of homogeneous polynomials in $n+1$ variables. Again, we have a notion of {symmetric decomposable tensors}, i.e, elements of the type $(v^*)^d \in S^dV^*$, which correspond to powers of linear forms. These are parametrized by the {Veronese variety} $\nu_d(\bbP^n)\subset\bbP(S^dV^*)$. In the skew-symmetric case, we consider $\bigwedge^dV^*$, whose {skew-symmetric decomposable tensors} are the elements of the form $v^*_1\wedge\ldots\wedge v^*_d \in \bigwedge^dV^*$. These are parametrized by the {Grassmannian} $Gr(d,n+1)$ in its Pl\"ucker embedding. Hence, we get a notion of {symmetric-rank} and of {$\wedge$-rank} for which one can ask the same questions as in the case of arbitrary tensors. Once again, these are translated into algebraic geometry problems on secant varieties of Veronese varieties and Grassmannians. 

Notice that actually, Veronese varieties 
embedded
in a projective space corresponding to $\PP (S^dV^*)$ 
can be thought of as sections of the Segre variety in 
 $\PP ((V^*)^{\otimes d})$ defined by the (linear) equations given by the
symmetry relations.

Since the case of symmetric tensors is the one that has been classically considered more in depth, due to the fact that symmetric tensors correspond to homogeneous polynomials, we start from analyzing secant varieties of Veronese varieties in Section \ref{sec:SymmetricDec_Veronese}. Then, we pass to secant varieties of Segre varieties in Section \ref{sec:TensorDec_Segre}. Then, Section \ref{sec:OtherDec} is dedicated to varieties that parametrize other types of structured tensors, such as Grassmannians, which parametrize skew-symmetric tensors, Segre--Veronese varieties, which parametrize decomposable partially-symmetric tensors, Chow varieties, which parametrize homogeneous polynomials, which factorize as product of linear forms, varieties of powers, which parametrize homogeneous polynomials, which are pure $k$-th powers in the space of degree $kd$, or varieties that parametrize homogeneous polynomials with a certain prescribed factorization structure. In Section \ref{sec:ByondDim}, we will consider other problems related to these kinds of questions, e.g., what is known about maximal ranks, how to find the actual value of (or bounds on) the rank of a given tensor, how to determine the number of minimal decompositions of a tensor, what is known about the equations of the secant varieties that we are considering or what kind of problems we meet when treating this problem over $\mathbb{R}$, a case that is of course very interesting for applications.

\section{Symmetric Tensors and Veronese Varieties}\label{sec:SymmetricDec_Veronese}

A symmetric tensor $T$ is an element of the space $S^{d}V^*$, where $V^*$ is an $(n+1)$-dimensional $\Bbbk$-vector space and $\Bbbk$ is an algebraically-closed field.
%
%
%
%
%
It is quite immediate to see that we can associate a degree $d$ homogeneous polynomial in $\Bbbk[x_0,\ldots,x_n]$ with any symmetric tensor in $S^dV^*$. 

In this section, we address the problem of symmetric tensor decomposition.

\begin{quote} 
 {What is the smallest integer $r$ such that a given symmetric tensor $T \in S^dV^*$ can be written as a sum of $r$ symmetric decomposable tensors, i.e., as a sum of $r$ elements of the type $(v^*)^{\otimes d}\in~(V^*)^{\otimes d}$?}
\end{quote}

We call the answer to the latter question the {\emph{symmetric rank}} of $T$. Equivalently, 

\begin{quote} 
 {What is the smallest integer $r$ such that a given homogeneous polynomial $F\in S^dV^*$ (a $(n+1)$-ary $d$-ic
, in classical language) can be written as a sum of $r$ $d$-th powers of linear forms?}
\end{quote}

We call the answer to the latter question the {Waring rank}, or simply {rank}, of $F$; denoted $\rk_{\mathrm{sym}}(F)$. Whenever it will be relevant to recall the base field, it will be denoted by $\rk_{\rm sym}^{\Bbbk}(F)$. Since, as we have said, the space of symmetric tensors of a given format can be naturally seen as the space of homogeneous polynomials of a certain degree, we will use both names for the rank.

The name ``Waring rank'' comes from an old problem in number theory regarding expressions of integers as sums of powers; we will explain it in Section \ref{waring}.

The first naive remark is that there are ${n+d \choose d}$ coefficients $a_{i_0,\cdots ,i_n}$ needed to write: 
$$F=\sum a_{i_0,\cdots ,i_n}x_0^{i_0}\cdots x_n^{i_n},$$ 
and $r(n+1)$ coefficients $b_{i,j} $ to write the same $F$ as: 
$$F=\sum_{i=1}^r (b_{i,0}x_0+ \cdots + b_{i,n}x_n)^d.$$ 
Therefore, for a general polynomial, the answer to the question should be that $r$ has to be at least such that $r(n+1)\geq {n+d \choose d}$. Then, the minimal value for which the previous inequality holds is $\left\lceil \frac{1}{n+1} {n+d\choose d}\right\rceil$. For $n=2$ and $d=2$, we know that this bound does not give the correct answer because a regular quadratic form in three variables cannot be written as a sum of two squares. On the other hand, a straightforward inspection shows that for binary cubics, i.e., $d=3$ and $n=1$, the generic rank is as expected. Therefore, the answer cannot be too simple.

The most important general result on this problem has been obtained by J. Alexander and A. Hirschowitz, in $1995$; see \cite{AH95}. It says that the generic rank is as expected for forms of degree $d\geq 3$ in $n\geq 1$ variables except for a small number of peculiar pairs $(n, d)$; see Theorem \ref{corollAH}.

What about {non-generic} forms? As in the case of binary cubics, there are special forms that require a larger $r$, and these cases are still being investigated. Other presentations of this topic from different points of view can be found in \cite{Ge, cgo, IaKa, Land}.

As anticipated in the Introduction, we introduce {Veronese varieties}, which parametrize homogeneous polynomials of symmetric-rank-one, i.e., powers of linear forms; see Section \ref{Verosection}. Then, in order to study the symmetric-rank of a generic form, we will use the concept of secant varieties as defined in \eqref{equ:secant}. 
In fact, the order of the first secant that fills the ambient space will give the symmetric-rank of a generic form. The dimensions of secant varieties to Veronese varieties were completely classified by J. Alexander and A. Hirschowitz in \cite{AH95} (Theorem \ref{corollAH}). We will briefly review their proof since it provides a very important constructive method to compute dimensions of secant varieties that can be extended also to other kinds of varieties parameterizing different structured tensors. In order to do that, we need to introduce apolarity theory (Section \ref{Apolaritysection}) and the so-called {Horace method} (Sections \ref{Horacesection} and \ref{Horacediffsection}).

The second part of this section will be dedicated to a more algorithmic approach to these problems, and we will focus on the problem of computing the symmetric-rank of a {given} homogeneous polynomial. 

In the particular case of binary forms, there is a very well-known and classical result firstly obtained by J. J. Sylvester in the XIX Century. We will show a more modern reformulation of the same algorithm presented by G. Comas and M. Seiguer in \cite{CS} and a more efficient one presented in~\cite{bgi}; see Section \ref{Sylvestersection}. In Section \ref{beyondsection}, we will tackle the more general case of the computation of the symmetric-rank of any homogeneous polynomial, and we will show the only theoretical algorithm (to our knowledge) that is able to do so, which was developed by J. Brachat, P. Comon, B. Mourrain and E. Tsigaridas in \cite{BCMT} with its reformulation \cite{taufer,bertau}. 

The last subsection of this section is dedicated to an overview of open problems.


\subsection{On Dimensions of Secant Varieties of Veronese Varieties}

This section is entirely devoted to computing the symmetric-rank of a generic form, i.e., to the computation of the generic symmetric-rank. As anticipated, we approach the problem by computing dimensions of secant varieties of Veronese varieties. Recall that, in algebraic geometry, we say that a property holds for a generic form of degree $d$ if it holds on a Zariski open, hence dense, subset of~$\bbP(S^dV^*)$.

\subsubsection{Waring problem for forms}\label{waring}
The problem that we are presenting here takes its name from an old question in number theory. In $1770$, E. Waring in \cite{War} stated (without proofs) that:
\begin{quote}
{``Every natural number can be written as sum of at most $9$ positive cubes, 
Every natural number can be written as sum of at most $19$ biquadratics.''}
\end{quote}

Moreover, he believed that: 
\begin{quote}
{``For all integers $d\geq 2$, there exists a number $g(d)$ such that each positive integer $n\in \mathbb{Z}^{+}$ can be written as sum of the $d$-th powers of $g(d)$ many positive integers, i.e., $n=a_{1}^{d}+\cdots +a_{g(d)}^{d}$ with $a_{i}\geq 0$.''}
\end{quote}

E. Waring's belief was shown to be true by D. Hilbert in $1909$, who proved that such a $g(d)$ indeed exists for every $d\geq 2$. In fact, we know from the famous four-squares Lagrange theorem (1770) that $g(2) = 4$, and more recently, it has been proven that $g(3) = 9$ and $g(4) = 19$. However, the exact number for higher powers is not yet known in general. In \cite{Dav39:Waring}, H. Davenport proved that any {sufficiently large} integer can be written as a sum of $16$ fourth powers. As a consequence, for any integer $d \geq 2$, a new number $G(d)$ has been defined, as the least number of $d$-th powers of positive integers to write any {sufficiently large} positive integer as their sum. Previously, C. F. Gauss proved that any integer congruent to seven modulo eight can be written as a sum of four squares, establishing that $G(2) = g(2) = 4$. Again, the exact value $G(d)$ for higher powers is not known in general.

This fascinating problem of number theory was then formulated for homogeneous polynomials as follows.

Let $\Bbbk$ be an algebraically-closed field of characteristic zero.
We will work over the projective space $\mathbb{P}^{n}=\mathbb{P}V$ where $V$ is an $(n+1)$-dimensional vector space over $\Bbbk$. 
We consider the polynomial ring $S=\Bbbk[x_{0},\ldots,x_{n}]$ with the graded structure
$S =\bigoplus_{d\geq0}S_{d}$, where $S_{d}=\langle x_{0}^{d},x_{0}^{d-1}x_{1},\ldots,x_{n}^{d}\rangle $ is the vector space of homogeneous polynomials, or {forms}, of degree $d$, which, as we said, can be also seen as the space $S^dV$ of symmetric tensors of order $d$ over $V$. 
In geometric language, those vector spaces $S_{d}$ are called {complete linear systems of
hypersurfaces of degree $d$ in $\mathbb{P}^n$}. Sometimes, we will write $\mathbb{P}S_{d}$ in order to mean the projectivization of $S_{d}$, namely $\mathbb{P}S_{d}$ will be a $\mathbb{P}^{{n+d\choose d}-1}$ whose elements are classes of forms of degree $d$ modulo scalar multiplication, i.e., $[F]\in \mathbb{P}S_{d}$ with $F\in S_{d}$.

In analogy to the Waring problem for integer numbers, the so-called {little Waring problem for forms} is the following.
\begin{problem}[Little Waring problem]\label{lwp}
 Find the minimum $s\in \mathbb Z$ such that {all} forms $F\in S_d$ can be written as the sum of at most $s$ $d$-th powers of linear forms.
\end{problem}
The answer to the latter question is analogous to the number $g(d)$ in the Waring problem for integers. At the same time, we can define an analogous number $G(d)$, which considers decomposition in sums of powers of all numbers, but finitely many. In particular, the {big Waring problem for forms} can be formulated as follows.
\begin{problem}[Big Waring problem]\label{bwp}
Find the minimum $s\in \mathbb Z$ such that the {generic} form
$F\in S_d$ can be written as a sum of at most $s$ $d$-th powers of linear
forms.
\end{problem}
In order to know which elements of
$S_d$ can be written as a sum of $s$ $d$-th powers of linear forms,
we study the image of the map:
\begin{equation}\label{phi}\phi_{d,s} : \underbrace{S_{1}\times\cdots\times S_{1}}_{s}\longrightarrow S_{d}, \; \;
\phi_{d,s}(L_{1},\ldots,L_{s})=L_{1}^{d}+\cdots+L_{s}^{d}.
\end{equation} 
In terms of maps $\phi_{d,s}$, the little Waring problem (Problem \ref{lwp}) is to find the smallest $s$, such that $\mathrm{Im}(\phi_{d,s}) = S_d$. Analogously, to solve the big Waring problem (Problem \ref{bwp}), we require $\overline{\mathrm{Im}(\phi_{d,s})} = S_d$, which is equivalent to finding the minimal $s$ such that $\dim(\mathrm{Im}(\phi_{d,s})) = \dim S_d$.

The map $\phi_{d,s}$ can be viewed as a polynomial map between affine spaces:
$$\phi_{d,s} : \mathbb{A}^{s(n+1)} \longrightarrow \mathbb{A}^{N}, \quad \text{ with } N={n+d\choose n}.$$
In order to know the dimension of the image of such a map, we look
at its differential at a general point $P$ of the domain:
$$d\phi_{d,s}|_{P}:T_{P}(\mathbb{A}^{s(n+1)})\longrightarrow T_{\phi_{d,s}(P)}(\mathbb{A}^{N}).$$
Let $P=(L_{1},\ldots,L_{s})\in \mathbb{A}^{s(n+1)}$ and
$v=(M_{1},\ldots,M_{s}) \in T_{P}(\mathbb{A}^{s(n+1)}) \simeq
\mathbb{A}^{s(n+1)}$, where $L_{i},M_{i}\in S_1$ for $i=1,\ldots ,s$. Let us consider the
following parameterizations $t \longmapsto
(L_{1}+M_{1}t,\ldots,L_{s}+M_{s}t)$ of a line ${\mathcal{C}}$ passing through $P$ whose tangent vector at $P$ is $M$. The image of $\mathcal C$ via $\phi_{d,s}$ is
$\phi_{d,s}
(L_{1}+M_{1}t,\ldots,L_{s}+M_{s}t)=\sum_{i=1}^{s}
(L_{i}+M_{i}t)^{d}$. The tangent vector to $\phi_{d,s}(\mathcal{C})$ in
$\phi_{d,s}(P)$ is: 
\begin{equation}\label{eq:tangent}
\left.\frac{d}{dt}\right|_{t=0} \left( \sum_{i=1}^{s}
(L_{i}+M_{i}t)^{d} \right) = 
\sum_{i=1}^{s}\left.\frac{d}{dt}\right|_{t=0}(L_{i}+M_{i}t)^{d} = \sum_{i=1}^{s}dL_{i}^{d-1}M_{i}.
\end{equation}
 Now, as
$v=(M_{1},\ldots,M_{s})$ varies in $\mathbb{A}^{s(n+1)}$, the tangent
vectors that we get span $\langle L_{1}^{d-1}S_{1},\ldots,L_{s}^{d-1}S_{1}\rangle $. Therefore, we just proved the following.
\begin{proposition}
 Let $L_{1},\ldots,L_{s}$ be linear forms in $S=\Bbbk[x_{0},\ldots,x_{n}]$, where $L_{i}=a_{i,{0}}x_{0}+\cdots+a_{i,{n}}x_{n}$, and consider the map: $$\phi_{d,s}: \underbrace{S_{1}\times\cdots\times S_{1}}_{s} \longrightarrow S_{d}, \; \; \phi_{d,s} (L_{1},\ldots,L_{s})=L_{1}^{d}+\cdots+L_{s}^{d};$$
 then:
 $${\rm rk}(d\phi_{d,s})|_{(L_{1},\ldots,L_{s})}=\dim_\Bbbk\langle L_{1}^{d-1}S_{1},\ldots,L_{s}^{d-1}S_{1}\rangle .$$
\end{proposition}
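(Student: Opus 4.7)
The statement is essentially a direct consequence of the differential computation already carried out in equation \eqref{eq:tangent}, so the plan is to organize that calculation into a clean proof.

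First, I would identify tangent spaces with the ambient affine spaces, which is legitimate since $\mathbb{A}^{s(n+1)}$ and $\mathbb{A}^N$ are themselves vector spaces: $T_P(\mathbb{A}^{s(n+1)}) \simeq S_1^{\times s}$ and $T_{\phi_{d,s}(P)}(\mathbb{A}^N) \simeq S_d$. Under these identifications, the rank of $d\phi_{d,s}|_P$ equals the dimension of its image as a linear subspace of $S_d$.

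Next, I would recover the formula for the differential from \eqref{eq:tangent}. For an arbitrary tangent vector $v = (M_1, \ldots, M_s) \in S_1^{\times s}$, apply the map $\phi_{d,s}$ to the line $t \mapsto (L_1 + t M_1, \ldots, L_s + t M_s)$ through $P$, expand each summand $(L_i + t M_i)^d$ by the binomial theorem, and differentiate at $t=0$. This yields
$$
d\phi_{d,s}|_P(v) \;=\; \sum_{i=1}^s d\, L_i^{d-1} M_i,
$$
exactly as written in \eqref{eq:tangent}.

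Finally, I would describe the image. Letting $v = (M_1, \ldots, M_s)$ vary freely over $S_1^{\times s}$, each summand $d\,L_i^{d-1} M_i$ independently sweeps out the entire subspace $L_i^{d-1} S_1 \subset S_d$ (the factor $d$ is a nonzero scalar since $\mathrm{char}(\Bbbk) = 0$, so it does not affect the span). Therefore
$$
\mathrm{Im}\bigl(d\phi_{d,s}|_P\bigr) \;=\; \bigl\langle L_1^{d-1} S_1, \ldots, L_s^{d-1} S_1 \bigr\rangle,
$$
and taking dimensions gives the claimed equality. There is no genuine obstacle here; the only subtlety worth flagging is the use of $\mathrm{char}(\Bbbk) = 0$ to discard the factor $d$, which has already been assumed globally in the paper.
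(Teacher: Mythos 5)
Your proposal is correct and follows essentially the same route as the paper: computing the differential along the line $t \mapsto (L_1+tM_1,\ldots,L_s+tM_s)$ to obtain $\sum_i d\,L_i^{d-1}M_i$ as in \eqref{eq:tangent}, and then observing that the image spans $\langle L_1^{d-1}S_1,\ldots,L_s^{d-1}S_1\rangle$ as the $M_i$ vary. Your explicit remark that $\mathrm{char}(\Bbbk)=0$ is needed to discard the nonzero factor $d$ is a small but welcome addition that the paper leaves implicit.
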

It is very interesting to see how the problem of determining the latter dimension has been solved, because the solution involves many algebraic and geometric tools.


\subsubsection{Veronese Varieties}\label{Verosection}

The first geometric objects that are related to our problem are the Veronese varieties.
We recall that a {\emph{Veronese variety}} can be viewed as (is projectively equivalent to) the image of the following $d$-ple
 embedding of $\PP^n$, where all degree $d$ monomials in $n+1$ variables appear in lexicographic order: 
\begin{equation}\label{vero1}
\begin{array}{rccc}
 \nu_d : & \mathbb{P}^n & \hookrightarrow & \mathbb{P}^{{n+d \choose d} -1} \\
 & $[$u_0: \ldots : u_n] & \mapsto & [u_0^d: u_0^{d-1}u_1:
u_0^{d-1}u_2: \ldots : u_n^d].\\
\end{array}
\end{equation}
With a slight abuse of notation, we can describe the Veronese map as follows:
\begin{equation}\label{vero2}
\begin{array}{rcccl}\nu_d:&\mathbb{P}
S_1= (\mathbb{P}^n)^{*}&\hookrightarrow& \mathbb{P}^{}
S_d&=\left(\mathbb{P}^{{n+d\choose d} -1}\right)^{*}\\
&[L]&\mapsto&[L^d]&
\end{array}.
\end{equation}
Let $X_{n,d} := \nu_d(\bbP^n)$ denote a Veronese variety.

Clearly, ``$\nu_d$ as defined in \eqref{vero1}'' and ``$\nu_d$ as defined in \eqref{vero2}'' are not the same map; indeed, from \eqref{vero2},
\begin{align*}
\nu_d([L])& =\nu_d\left([u_0x_0+ \cdots + u_nx_n]\right)= [L^d] = \\
& = \left[u_0^d: du_0^{d-1}u_1:
{d\choose 2}u_0^{d-1}u_2: \ldots : u_n^d\right]\in\bbP S_d.
\end{align*}
However, the two images are projectively equivalent. In order to see that, it is enough to consider the monomial basis of $S_d$ given by: 
$$\left\{ {d \choose \alpha} \bfx^\alpha ~|~\alpha = (\alpha_0,\ldots,\alpha_n) \in \bbN^{n+1}, |\alpha| = d\right\}.$$

Given a set of variables $x_0,\ldots,x_n$, we let $\bfx^\alpha$ denote the monomial $x_0^{\alpha_0}\cdots x_n^{\alpha_n}$, for any $\alpha \in \bbN^{n+1}$. Moreover, we write $|\alpha| = \alpha_0+\ldots+\alpha_n$ for its degree. Furthermore, if $|\alpha| = d$, we use the standard notation ${d \choose \alpha}$ for the multi-nomial coefficient $\frac{d!}{\alpha_0!\cdots\alpha_n!}$.

Therefore, we can view the Veronese variety either as the variety that
parametrizes $d$-th powers of linear forms or as the one parameterizing completely decomposable symmetric tensors.

\begin{example}[Twisted cubic]\label{ex1} Let $V=\Bbbk^2$ and $d=3$, then:
$$\begin{array}{rcccl}\nu_3:&\mathbb{P}^{1}&\hookrightarrow& \mathbb{P}^{3}\\
&[a_0:a_1]&\mapsto&[a_0^3:a_0^2a_1:a_0a_1^2:a_1^3]&\end{array}.$$
If we take $\{z_0, \ldots , z_3\}$ to be homogeneous coordinates in $\mathbb{P}^3$, then the Veronese curve in $\mathbb{P}^3$ (classically known as {twisted cubic}) is given by the solutions of the following system of equations:
$$\left\{ 
\begin{array}{l}
z_0z_2-z_1^2=0 
\\z_0z_3-z_1z_2=0 
\\z_1z_3-z_2^2=0
\end{array}\right. .$$
Observe that those equations can be obtained as the vanishing of all the maximal minors of the following matrix:
\begin{equation}\label{babycat}\left( \begin{array}{ccc} z_0 &z_1 & z_2 \\ z_1 & z_2 & z_3 \end{array}\right).\end{equation}
Notice that the matrix \eqref{babycat} can be obtained also as the defining matrix of the linear map:
$$S^2V^* \rightarrow S^1V, \; \;
\partial^2_{x_i} \mapsto \partial^2_{x_i}(F)$$
where $F=\sum_{i=0}^3 {d \choose i}^{-1}z_ix_0^{3-i}x_1^{i}$ and $\partial_{x_i}:=\frac{\partial}{\partial x_i}$.

Another equivalent way to obtain \eqref{babycat} is to use the so-called {flattenings}. We give here an intuitive idea about flattenings, which works only for this specific example.

 Write the $2\times 2 \times 2$ tensor by putting in position $ijk$ the variable $z_{i+j+k}$. This is an element of $V^*\otimes V^* \otimes V^*$. There is an obvious isomorphism among the space of $2\times 2 \times 2$ tensors $V^*\otimes V^* \otimes V^*$ and the space of $4\times 2$ matrices $(V^* \otimes V^*) \otimes V^*$. Intuitively, this can be done by {slicing} the $2\times 2 \times 2$ tensor, keeping fixed the third index. This is one of the three obvious possible flattenings of a $2\times 2 \times 2$ tensor: the other two flattenings are obtained by considering as fixed the first or the second index. Now, after having written all the possible three flattenings of the tensor, one could remove the redundant repeated columns and compute all maximal minors of the three matrices obtained by this process, and they will give the same ideal.
\end{example}

The phenomenon described in Example \ref{ex1} is a general fact. Indeed, Veronese varieties are always defined by $2\times 2$ minors of matrices constructed as \eqref{babycat}, which are usually called {catalecticant matrices}.
 
\begin{dfn}\label{CatalecticantMatrices} Let $F\in S_d$ be a homogeneous polynomial of degree $d$ in the polynomial ring $S = \Bbbk[x_0,\ldots,x_n]$. For any $i = 0,\ldots,d$, the \emph{$(i,d-i)$-th catalecticant matrix} associated to $F$ is the matrix representing the following linear maps in the standard monomial basis, i.e.,
$$
\begin{array}{rccc}
Cat_{i,d-i}(F) : & S_{i}^* & \longrightarrow & S_{d-i}, \\
& \partial^{i}_{\bfx^\alpha} & \mapsto & \partial^{i}_{\bfx^\alpha}(F), \\
\end{array}
$$
where, for any $\alpha \in \bbN^{n+1}$ with $|\alpha| = d-i$, we denote $\partial^{d-i}_{\bfx^\alpha} := \frac{\partial^{d-i}}{\partial x_0^{\alpha_0}\cdots\partial x_n^{\alpha_n}}$.

Let $\{z_\alpha ~|~ \alpha \in \bbN^{n+1}, ~|\alpha| = d\}$ be the set of coordinates on $\bbP S^dV$, where $V$ is $(n+1)$-dimensional. The \emph{$(i,d-i)$-th catalecticant matrix} of $V$ is the ${n+i \choose n} \times {n+d-i \choose n}$ matrix whose rows are labeled by $\mathcal{B}_i = \{\beta \in \bbN^{n+1} ~|~ |\beta| = i\}$ and columns are labeled by $\mathcal{B}_{d-i} = \{\beta \in \bbN^{n+1} ~|~ |\beta| = d-i\}$, given by:
$$
	Cat_{i,d-i}(V) = 
	\begin{pmatrix}
		z_{\beta_1 + \beta_2}
	\end{pmatrix}_{\substack{\beta_1 \in \mathcal{B}_i \\ \beta_2 \in \mathcal{B}_{d-i}}}.
$$
\end{dfn}
\begin{rmk}
Clearly, the catalecticant matrix representing $Cat_{d-i,i}(F)$ is the transpose of $Cat_{i,d-i}(F)$. Moreover, the most possible square catalecticant matrix is $Cat_{\lfloor d/2\rfloor ,\lceil d/2\rceil }(F)$ (and its transpose).
\end{rmk}
 Let us describe briefly how to compute the ideal of any Veronese variety.

\begin{dfn} A hypermatrix $A=(a_{i_{1}, \ldots , i_{d}})_{0\leq i_{j}\leq n, \, j=1, \ldots , d}$ is said to be {symmetric}, or {completely symmetric}, if $a_{i_{1}, \ldots , i_{d}}=a_{i_{\sigma(1)}, \ldots , i_{\sigma(d)}}$ for all $\sigma\in {\mathfrak{S}}_{d}$, where ${\mathfrak{S}}_{d}$ is the permutation group of $\{1, \ldots ,d\}$.
\end{dfn}

\begin{dfn} Let $H\subset V^{\otimes d}$ be the ${n+d\choose d}$-dimensional subspace of completely symmetric tensors of $V^{\otimes d}$, i.e., $H$ is isomorphic to the symmetric algebra $S^dV$ or the space of homogeneous polynomials of degree $d$ in $n+1$ variables. 
Let ${S}$ be a ring of coordinates of $\mathbb{P}^ {{n+d\choose d}-1}=\mathbb{P}H$ obtained as the quotient $S=\widetilde{S}/I$ where $\widetilde{S}=\Bbbk[x_{i_{1}, \ldots , i_{d}}]_{0\leq i_{j}\leq n, \, j=1, \ldots ,d}$ and $I$ is the ideal generated by all: $$x_{i_{1}, \ldots , i_{d}}-x_{i_{\sigma(1)}, \ldots , i_{\sigma(d)}}, \forall\; \sigma \in {\mathfrak{S}}_{d}.$$
The hypermatrix $(\overline{x}_{i_{1}, \ldots , i_{d}})_{0\leq i_{j}\leq n, \, j=1, \ldots , d}$, whose entries are the generators of $S$, is said to be a {\emph{generic symmetric~hypermatrix}}.
\end{dfn}

Let $A=(x_{i_{1}, \ldots , i_{d}})_{0\leq i_{j}\leq n, \, j=1, \ldots , d} $ be a generic symmetric hypermatrix, then it is a known result that the ideal of any Veronese variety is generated in degree two by the $2 \times 2$ minors of a generic symmetric hypermatrix, i.e., 
\begin{equation}\label{veronese}
	I(\nu_d(\mathbb{P}^n))=I_{2}(A) := (2 \times 2 \text{ minors of } A) \subset \tilde{S}.
\end{equation}
See \cite{Wa} for the set theoretical point of view. In \cite{Pu}, the author proved that the ideal of the Veronese variety is generated by the two-minors of a particular catalecticant matrix. In his PhD thesis \cite{parolin2004varieta}, A. Parolin showed that the ideal generated by the two-minors of that catalecticant matrix is actually $I_{2}(A)$, where $A$ is a generic symmetric hypermatrix.

\subsubsection{Secant Varieties}\label{Secantsection}
Now, we recall the basics on {secant varieties}.

\begin{dfn} 
Let $X\subset\mathbb{P}^{N}$ be a projective variety
of dimension $n$. We define the \emph{$s$-th secant variety} $\sigma_s(X)$ of $X$ as the closure of the union of all linear spaces spanned by $s$ points lying on $X$, i.e.,
$$\sigma_{s}(X):= \overline{\bigcup_{P_1,\ldots ,P_s\in X} \langle P_1,\ldots ,P_s\rangle} \subset \bbP^N. $$
For any $\calF \subset \bbP^n$, $\langle \calF \rangle$ denotes the linear span of $\calF$, i.e., the smallest projective linear space containing~$\calF$.

\end{dfn}
\begin{rmk}
The closure in the definition of secant varieties is necessary. Indeed, let $L_1,L_2\in S_1$ be two homogeneous linear forms. The polynomial $L_1^{d-1}L_2$ is clearly in $\sigma_2(\nu_d(\mathbb{P}(V)))$ since we can write: 
\begin{equation}\label{equation:remark}
L_1^{d-1}L_2=\lim_{t\rightarrow 0}\frac{1}{t}\left((L_1+tL_2)^d-L_1^d\right);
\end{equation}
however, if $d>2$, there are no $M_1,M_2\in S_1$ such that $L_1^{d-1}L_2=M_1^d+M_2^d$. This computation represents a very standard concept of basic calculus: {tangent lines are the limit of secant lines}. Indeed, by \eqref{eq:tangent}, the left-hand side of \eqref{equation:remark} is a point on the tangent line to the Veronese variety at $[L_1^d]$, while the elements inside the limit on the right-hand side of \eqref{equation:remark} are lines secant to the Veronese variety at $[L_1^d]$ and another moving point; see Figure \ref{figure:tangent_asLimit}.
\begin{figure}[h]
\centering
	\includegraphics[scale=0.35]{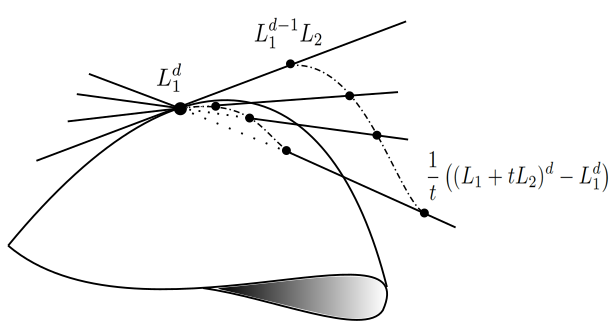}
	\caption{Representation of \eqref{equation:remark}.}
	\label{figure:tangent_asLimit}
\end{figure}
\end{rmk}
From this definition, it is evident that the generic element of $\sigma_{s}(X)$ is an element of some $\langle P_1,\ldots ,P_s\rangle$, with $P_i \in X$; hence, it is a linear combination of $s$ elements of $X$. This is why secant varieties are used to model problems concerning {additive decompositions}, which motivates the following general definition.
\begin{dfn}
	Let $X\subset\bbP^N$ be a projective variety. For any $P \in \bbP^N$, we define the \emph{$X$-rank} of $P$ as
	$$
		\rk_X(P) = \min\{s ~|~ P \in \langle P_1,\ldots,P_s \rangle, \text{ for } P_1,\ldots,P_s \in X\},
	$$
	and we define the \emph{border $X$-rank} of $P$ as
	$$
		\underline{\rk}_X(P) = \min\{s ~|~ P \in \sigma_s(X)\}.
	$$	
\end{dfn}
If $X$ is a non-degenerate variety, i.e., it is not contained in a proper linear subspace of the ambient space, we obtain a chain of inclusions
$$
	X \subset \sigma_2(X) \subset \ldots \subset \sigma_s(X) = \bbP^N.
$$
\begin{dfn}\label{linearization constant} 
The smallest $s\in \mathbb Z$ such that $\sigma_{s}(X)=\mathbb{P}^N$ is called the \emph{generic $X$-rank}. This is the $X$-rank of the generic point of the ambient space. 
\end{dfn}
The generic $X$-rank of $X$ is an invariant of the embedded variety $X$.

As we described in \eqref{vero2}, the image of the $d$-uple Veronese embedding of $\mathbb{P}^n = \bbP S_1$ can be viewed as the subvariety of $\PP S_d$ made by all forms, which can be written as $d$-th powers of linear forms. From this point of view, the generic rank $s$ of the Veronese variety is the minimum integer such that the generic form of degree $d$ in $n+1$ variables can be written as a sum of $s$ powers of linear forms. In other~words,
\begin{quote}
	{the answer to the Big Waring problem (Problem \ref{bwp}) is the generic rank with respect to the $d$-uple Veronese embedding in $\bbP S_d$.}
\end{quote}

This is the reason why we want to study the problem of determining the dimension of
$s$-th secant varieties of an $n$-dimensional projective variety $X\subset \mathbb{P}^N$.

Let $X^{s}:=\underbrace{X\times \cdots
\times X}_{s}$, $X_{0}\subset X$ be the open subset of regular points of $X$ and:
$$U_{s}(X) := \left\{ (P_1,\ldots ,P_{s})\in X^{s} ~|~ \substack{\forall \, i, \;P_i\in X_{0}, \text{ and } \\ 
\text{ the } P_i \text{'s are independent}}\right\}.$$
Therefore, for all $(P_1,\ldots ,P_s)\in U_{s}(X)$, since the $P_i$'s are linearly independent, the linear span
$H=\langle P_1,\ldots ,P_s\rangle $ is a $\mathbb{P}^ {s-1}$. Consider the following incidence variety:
 $$\incV^{s}(X)=\{ (Q,H)\in \mathbb{P}^ N \times U_{s}(X) \; | \; Q\in H \}.$$
If $s\le N+1$, the dimension of that incidence variety is:
$$\dim(\incV^{s}(X))=n(s-1)+n+s-1.$$
With this definition, we can consider the projection on the first factor:
$$\pi_1:\incV^{s}(X)\rightarrow \mathbb{P}^ N;$$
 the $s$-th secant variety of $X$ is just the closure of the image of this map, i.e.,
$$\sigma_{s}(X)=\overline{{\rm Im}(\pi_1:\incV^{s}(X)\rightarrow \mathbb{P}^ N)}.$$
Now, if $\dim(X)=n$, it is clear that, while $\dim(\incV^{s}(X))=ns+s-1$,
the dimension of $\sigma_{s}(X)$ can be smaller: it suffices that the
generic fiber of $\pi_1$ has positive dimension to impose
$\dim(\sigma_{s}(X))<n(s-1)+n+s-1$. Therefore, it is a general fact that, if $X\subset \mathbb{P}^ N$ and
$\dim(X)=n$, then,
$$\dim(\sigma_{s}(X))\leq \min \{N,sn+s-1 \}.$$

\begin{dfn}
A projective variety $X\subset \mathbb{P}^ N$ of dimension $n$ is said to
be \emph{$s$-defective} if $\dim(\sigma_{s}(X))<\min\{ N,sn+s-1 \}$. If so, we call \emph{$s$-th defect} of $X$ the difference: $$\delta_{s}(X):= \min\{
N,sn+s-1 \} -\dim(\sigma_{s}(X)).$$ Moreover, if $X$ is $s$-defective, then $\sigma_s(X)$ is said to be \emph{defective}. If $\sigma_s(X)$ is not defective, i.e., $\delta_s(X)=0$, then it is said to be \emph{regular} or of \emph{expected dimension}.
\end{dfn}

Alexander--Hirschowitz Theorem (\cite{AH95}) tells us that the dimension of
the $s$-th secant varieties to Veronese varieties is not always the
expected one; moreover, they exhibit the list of all the defective cases.

\begin{thm} [Alexander--Hirschowitz Theorem]\label{corollAH}
Let $X_{n,d}=\nu_{d}( \mathbb{P}^n)$, for $d\geq 2$, be a Veronese variety. Then:
 $$\dim(\sigma_s(X))=\min\left\{ {n+d \choose d } -1, sn+s-1\right\}$$
 except for the following cases:
 \begin{enumerate}
 \item[{\rm (1)}] $d=2$, $n\geq 2$, $s\leq n$, where $\dim(\sigma_s(X))=\min\left\{ {n+2 \choose 2} -1, 2n+1-{s \choose 2}\right\}$;
 \item[{\rm (2)}] $d=3$, $n=4$, $s=7$, where $\delta_s=1$;
 \item[{\rm (3)}] $d=4$, $n=2$, $s=5$, where $\delta_s=1$;
 \item[{\rm (4)}] $d=4$, $n=3$, $s=9$, where $\delta_s=1$;
 \item[{\rm (5)}] $d=4$, $n=4$, $s=14$, where $\delta_s=1$.
 \end{enumerate}
\end{thm}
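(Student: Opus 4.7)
The plan is to apply \emph{Terracini's Lemma}, which asserts that for a generic point of $\sigma_s(X)$ obtained from $s$ generic points $P_1,\ldots,P_s \in X$, the tangent space $T_{[P_1+\cdots+P_s]}\sigma_s(X)$ coincides with the linear span $\langle T_{P_1}X,\ldots,T_{P_s}X\rangle$. For $X = X_{n,d}$, the computation already carried out in \eqref{eq:tangent} identifies the affine cone over $T_{[L^d]}X_{n,d}$ with $L^{d-1}\cdot S_1 \subset S_d$. Hence
$$
\dim \sigma_s(X_{n,d}) + 1 = \dim_{\Bbbk}\langle L_1^{d-1}S_1,\ldots,L_s^{d-1}S_1\rangle
$$
for general $L_1,\ldots,L_s\in S_1$, reducing the theorem to a dimension count inside $S_d$.

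Next, I would use \emph{apolarity} to recast this as a Hilbert-function statement about fat points. Under the apolar pairing between $S_d$ and its dual, a form $F\in S_d$ is orthogonal to $L_i^{d-1}\cdot S_1$ if and only if $F$ vanishes to order $2$ at $P_i := [L_i] \in \bbP^n$. Thus
$$
{n+d\choose d} - \dim_{\Bbbk}\langle L_1^{d-1}S_1,\ldots,L_s^{d-1}S_1\rangle \;=\; \dim_{\Bbbk}(I_Z)_d,
$$
where $Z = 2P_1 + \cdots + 2P_s$ is the scheme of $s$ general double points in $\bbP^n$. The theorem is therefore equivalent to the assertion that the Hilbert function of a generic set of $s$ double points takes its expected value $\min\{{n+d\choose d},\,s(n+1)\}$ in degree $d$, save for the listed exceptions.

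The core is a double induction on $(n,d)$ via the \emph{Horace method}. One specializes some of the double points to lie on a generic hyperplane $H\subset\bbP^n$ and uses the Castelnuovo exact sequence
$$
0\to I_{\mathrm{Res}_H Z}(d-1)\to I_Z(d)\to I_{Z\cap H, H}(d)\to 0,
$$
reducing to the analogous (expected) statements for $Z\cap H$ in $H\cong\bbP^{n-1}$ and for the residue $\mathrm{Res}_H Z$ back in $\bbP^n$ in degree $d-1$. A crucial subtlety is that collapsing a double point onto $H$ leaves only a simple point as residue, so the naive specialization loses too much information: this is where the \emph{differential Horace method} of Alexander--Hirschowitz enters, imposing only the ``tangent direction'' of the fat point along $H$ while retaining the infinitesimal residue in the ambient space, so that the induction closes with the correct codimension.

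The main obstacle is running this induction cleanly while identifying exactly those configurations where it fails. The quadratic case $d=2$ is immediate from the classical identification of $\sigma_s(X_{n,2})$ with the variety of symmetric $(n+1)\times(n+1)$ matrices of rank at most $s$, of dimension $s(n+1)-{s\choose 2}-1$, producing the first family of exceptions. For the four sporadic triples $(d,n,s)\in\{(3,4,7),(4,2,5),(4,3,9),(4,4,14)\}$, one exhibits by hand an unexpected hypersurface of degree $d$ singular at $s$ general points---for instance, the rational normal quartic curve through $5$ general points in $\bbP^2$ yields the defect for $(4,2,5)$---forcing $\delta_s=1$. The genuinely hard part is proving that no further exceptions arise: this is the content of the differential Horace induction of \cite{AH95}, whose combinatorial bookkeeping of specializations and residues is the real technical heart of the argument.
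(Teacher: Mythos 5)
Your proposal follows essentially the same route as the paper's outline: Terracini's lemma to identify the tangent space with $\langle L_1^{d-1}S_1,\ldots,L_s^{d-1}S_1\rangle$, apolarity to convert this into the Hilbert function of $s$ generic double points in degree $d$, and the Horace/differential Horace induction of Alexander--Hirschowitz to establish regularity, with the exceptional cases checked separately (your dimension count $s(n+1)-\binom{s}{2}-1$ for $d=2$ via symmetric matrices of bounded rank is correct).

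One concrete slip in your treatment of the sporadic cases: for $(d,n,s)=(4,2,5)$ the unexpected quartic singular at $5$ general points of $\bbP^2$ is the \emph{double conic} through those points, not a ``rational normal quartic curve through $5$ general points in $\bbP^2$'' (a rational normal quartic spans $\bbP^4$). You have conflated this with the witness for $(3,4,7)$, where the relevant object is the rational normal quartic through $7$ general points of $\bbP^4$, whose secant variety is a cubic hypersurface singular along the curve. With that correction, the argument for the defective cases is the standard one.
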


Due to the importance of this theorem, we firstly give a historical review, then we will give the main steps of the idea of the proof. For this purpose, we will need to introduce many mathematical tools (apolarity in Section \ref{Apolaritysection} and fat points together with the Horace method in Section \ref{fat:points:section}) and some other excursuses on a very interesting and famous conjecture (the so-called SHGH
 conjecture; see Conjecture \ref{conjSegre} and Conjecture \ref{conjGimi}) related to the techniques used in the proof of this theorem.

\label{history}The following historical review can also be found in \cite{bo}. 

The quadric cases ($d = 2$) are classical. The first non-trivial exceptional case $d=4$ and $n=2$ was known already by Clebsch in 1860 \cite{Cl}. He thought of the quartic as a quadric of quadrics and found that $\sigma_5(\nu_4(\mathbb{P}^2))\subsetneq \mathbb{P}^{14}$, whose dimension was not the expected one. Moreover, he found the condition that the elements of $\sigma_5(\nu_4(\mathbb{P}^2))$ have to satisfy, i.e., he found the equation of the hypersurface $\sigma_5(\nu_4(\mathbb{P}^2)) \subsetneq \mathbb{P}^{14}$: that condition was the vanishing of a $6\times 6$ determinant of a certain {catalecticant~matrix}.

To our knowledge, the first list of all exceptional cases was described by Richmond in \cite{Ri}, who showed all the defectivities, case by case, without finding any general method to describe all of them. It is remarkable that he could describe also the most difficult case of general quartics of $\mathbb{P}^4$. The same problem, but from a more geometric point of view, was at the same time studied and solved by Palatini in 1902--1903; see \cite{Pa1, Pa2}. In particular, Palatini studied the general problem, proved the defectivity of the space of cubics in $\mathbb{P}^4$ and studied the case of $n=2$. He was also able to list all the defective cases.

The first work where the problem was treated in general is due to Campbell (in 1891; therefore, his work preceded those of Palatini, but in Palatini's papers, there is no evidence of knowledge of Campbell's work), who in \cite{Cam}, found almost all the defective cases (except the last one) with very interesting, but not always correct arguments (the fact that the Campbell argument was wrong for $n = 3$ was claimed also in~\cite{Te3} in 1915).

 His approach is very close to the infinitesimal one of Terracini, who introduced in \cite{Te} a very simple and elegant argument (today known as Terracini's lemmas, the first of which will be displayed here as Lemma \ref{Terracini}), which offered a completely new point of view in the field. Terracini showed again the case of $n=2$ in \cite{Te}. In \cite{Te2}, he proved that the exceptional case of cubics in $\mathbb{P}^4$ can be solved by considering that the rational quartic through seven given points in $\mathbb{P}^4$ is the singular locus of its secant variety, which is a cubic hypersurface. In \cite{Te3}, Terracini finally proved the case $n=3$ (in 2001, Ro\'e, Zappal\`a and Baggio revised Terracini's argument, and they where able to present a rigorous proof for the case $n = 3$; see \cite{rzb}). 

In 1931, Bronowski \cite{br} tried to tackle the problem checking if a linear system has a vanishing Jacobian by a numerical criterion, but his argument was incomplete.

In 1985, Hirschowitz (\cite{Hir1}) proved again the cases $n=2,3$, and he introduced for the first time in the study of this problem the use of zero-dimensional schemes, which is the key point towards a complete solution of the problem (this will be the idea that we will follow in these notes). Alexander used this new and powerful idea of Hirschowitz, and in \cite{Ale}, he proved the theorem for $d \geq 5$.

Finally, in \cite{AH0,AH95} (1992--1995), J. Alexander and A. Hirschowitz joined forces to complete the proof of Theorem \ref{corollAH}. After this result, simplifications of the proof followed \cite{AH3, Chandler}.

After this historical excursus, we can now review the main steps of the proof of the Alexander--Hirschowitz theorem. As already mentioned, one of the main ingredients to prove is Terracini's lemma (see \cite{Te} or \cite{AAd}), which gives an extremely powerful technique to compute the dimension of any secant variety.


\begin{lemma} [Terracini's Lemma] \label{Terracini} 
Let $X$ be an irreducible non-degenerate variety in $\mathbb{P}^ N$, and let $P_1,\ldots ,P_s$ be $s$ generic points on $X$. Then, the tangent space to $\sigma_{s}(X)$ at a generic point $Q\in \langle P_1,\ldots ,P_s\rangle $ is the linear span in $\mathbb{P}^ N$ of the tangent spaces $T_{P_i}(X)$ to $X$ at $P_i$, $i=1,\ldots ,s$, i.e.,
$$T_{Q}(\sigma_{s}(X)) = \langle T_{P_1}(X),\ldots , T_{P_s}(X)\rangle.$$
\end{lemma}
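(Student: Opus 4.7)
The plan is to reduce to an affine-cone computation and then invoke generic smoothness. Let $\hat X\subset V=\Bbbk^{N+1}$ denote the affine cone over $X$, and consider the summation morphism
$$
\mu\colon \hat X^{\,s}\longrightarrow V,\qquad (v_1,\dots ,v_s)\longmapsto v_1+\cdots +v_s,
$$
whose image has Zariski closure $\widehat{\sigma_s(X)}$, the affine cone over $\sigma_s(X)$. Choose affine representatives $v_i\in\hat X$ of the generic points $P_i\in X$, and fix $\tilde Q=\lambda_1 v_1+\cdots+\lambda_s v_s$ representing a generic $Q\in\langle P_1,\ldots ,P_s\rangle$; by rescaling the $v_i$ we may assume $\tilde Q=v_1+\cdots+v_s$, so that $\mu(v_1,\ldots,v_s)=\tilde Q$.

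First I would compute $d\mu$ at $(v_1,\ldots ,v_s)$. Since $\mu$ is just the restriction of the linear addition map $V^{\oplus s}\to V$ to $\hat X^{\,s}$, its differential is the addition map on tangent spaces:
$$
d\mu_{(v_1,\ldots ,v_s)}\colon \bigoplus_{i=1}^{s} T_{v_i}(\hat X)\longrightarrow V,\qquad (w_1,\ldots ,w_s)\longmapsto w_1+\cdots+w_s.
$$
Thus $\mathrm{Im}(d\mu_{(v_1,\ldots,v_s)})=T_{v_1}(\hat X)+\cdots+T_{v_s}(\hat X)$ as a linear subspace of $V$. Because each $T_{v_i}(\hat X)$ already contains the line $\Bbbk v_i$, its projectivisation is precisely the embedded projective tangent space $T_{P_i}(X)\subset\bbP^N$, and the projectivisation of the sum is exactly $\langle T_{P_1}(X),\ldots ,T_{P_s}(X)\rangle$.

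Next I would conclude by generic smoothness. The morphism $\mu$ is dominant onto $\widehat{\sigma_s(X)}$; since $\Bbbk$ has characteristic zero, a dominant morphism of irreducible varieties is smooth on a dense open subset of the source, and thus the induced map on tangent spaces is surjective onto the tangent space of the image at a generic point. Applied to a generic choice of $(P_1,\ldots ,P_s)$ (and hence a generic $Q$ in $\langle P_1,\ldots,P_s\rangle$), this yields
$$
T_{\tilde Q}\bigl(\widehat{\sigma_s(X)}\bigr)=T_{v_1}(\hat X)+\cdots +T_{v_s}(\hat X).
$$
Projectivising both sides gives the desired equality $T_Q(\sigma_s(X))=\langle T_{P_1}(X),\ldots ,T_{P_s}(X)\rangle$.

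The one delicate point—the ``main obstacle''—is the use of generic smoothness: one must justify that for \emph{generic} $(P_1,\ldots,P_s)$ and the resulting generic $Q$, the point $\tilde Q$ is a smooth point of $\widehat{\sigma_s(X)}$ at which $d\mu$ is surjective onto $T_{\tilde Q}(\widehat{\sigma_s(X)})$. In characteristic zero this follows from the standard fact that a dominant morphism between irreducible varieties is generically smooth; in positive characteristic the statement can fail without additional hypotheses, which is why the assumption on $\mathrm{char}(\Bbbk)$ matters. Everything else is linear algebra on cones.
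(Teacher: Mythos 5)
Your proof is correct, but it takes a different route from the paper's. The paper gives a ``proof made by hand'': it chooses local rational parametrizations of $X$ in affine neighborhoods of the $P_i$, writes down an explicit parametrization $\varphi$ of $\sigma_s(X)$ near $Q=P_1+\lambda_2P_2+\cdots+\lambda_sP_s$ (with the scalars $\lambda_i$ and the local parameters of each $P_i$ as coordinates), and then reads off the Jacobian of $\varphi$ in blocks, identifying the first two blocks with bases of the affine tangent spaces $T_{P_i}(X)$. You instead pass to the affine cone, observe that the summation morphism $\mu\colon \hat X^{\,s}\to V$ is the restriction of a linear map so that $d\mu$ is just addition of tangent spaces, and then invoke generic smoothness of a dominant morphism in characteristic zero to identify $\mathrm{Im}(d\mu)$ with $T_{\tilde Q}(\widehat{\sigma_s(X)})$. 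Your version is shorter and isolates the one genuinely non-formal ingredient (generic smoothness, hence the characteristic-zero hypothesis), whereas the paper's computation is more concrete and exhibits the tangent directions explicitly in coordinates; note that the paper's argument also implicitly relies on the same generic-submersion fact to conclude that the image of the Jacobian is all of $T_Q(\sigma_s(X))$ rather than merely contained in it, so you have not smuggled in anything the paper avoids. One small point worth making explicit in your writeup: since $\hat X^{\,s}$ need not be smooth, generic smoothness should be applied after restricting to its smooth locus, which is dense open and contains the generic tuple $(v_1,\ldots,v_s)$ because the $P_i$ are generic (hence regular) points of $X$.
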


This ``lemma'' (we believe it is very reductive to call it a ``lemma'') can be proven in many ways (for example, without any assumption on the characteristic of $\Bbbk$, or following Zak's book \cite{Z}). Here, we present a proof ``made by hand''.

\begin{proof} We give this proof in the case of $\Bbbk=\mathbb{C}$, even though it works in general for any algebraically-closed field of characteristic zero. 

We have already used the notation $X^{s}$ for $X\times \cdots \times X$ taken $s$ times. Suppose that $\dim(X)=n$. Let us consider the following incidence variety,
$$\incV=\left\{ (P;P_{1},\ldots ,P_{s}) \in \mathbb{P}^N \times X^{s} ~~\Big|~~  \let\scriptstyle\textstyle
\substack{P \in \langle P_{1},\ldots ,P_{s}\rangle \\
P_{1},\ldots ,P_{s} \in X}\right\}\subset \mathbb{P}^N\times X^{s},$$
and the two following projections,
$$\pi_{1}:\incV \rightarrow \sigma_{s}(X)\ , \quad \pi_{2}:\incV \rightarrow X^{s}.$$ 
The dimension of $X^{s}$ is clearly $sn$. If $(P_{1},\ldots , P_{s})\in X^{s}$, the fiber $\pi_{2}^{-1}((P_{1},\ldots ,P_{s}))$ is generically a $\mathbb{P}^ {s-1}$, $s<N$. Then, $\dim (\incV)=sn+s-1$. 

If the generic fiber of $\pi_{1}$ is finite, then $\sigma_{s}(X)$ is regular. i.e., it has the expected dimension; otherwise, it is defective with a value of the defect that is exactly the dimension of the generic fiber.

Let $(P_{1},\ldots , P_{s})\in X^{s}$ and suppose that each $P_{i}\in X \subset \mathbb{P}^ N$ has coordinates $P_{i}=[a_{i,0}:\ldots:a_{i,N}]$, for $i=1,\ldots ,s$. In an affine neighborhood $U_i$ of $P_{i}$, for any $i$, the variety $X$ can be locally parametrized with some rational functions $f_{i,j}:\Bbbk^{n+1} \rightarrow \Bbbk$, with $j=0,\ldots ,N$, that are zero at the origin. Hence, we~write:
$$X \supset U_i : \; \left\{ \begin{array}{l} 
x_{0}= a_{i,0}+ f_{i,0}(u_{i,0},\ldots ,u_{i,n})\\
\quad \vdots \\
x_{N}=a_{i,N}+f_{i,N}(u_{i,0}, \ldots ,u_{i,n})
\end{array} \right. .$$
Now, we need a parametrization $\varphi$ for $\sigma_{s}(X)$. Consider the subspace spanned by $s$ points of $X$, i.e.,
$$\langle (a_{1,0}+f_{1,0},\ldots , a_{1,N}+f_{1,N}),\ldots ,(a_{s,0}+f_{s,0},\ldots ,a_{s,N}+f_{s,N})\rangle,$$
where for simplicity of notation, we omit the dependence of the $f_{i,j}$ on the variables $u_{i,j}$; thus, an element of this subspace is of the form: 
$$\lambda_{1}(a_{1,0}+f_{1,0},\ldots , a_{1,N}+f_{1,N})+\cdots +\lambda_{s}(a_{s,0}+f_{s,0},\ldots , a_{s,N}+f_{s,N}),$$
for some $\lambda_{1}, \ldots ,\lambda_{s}\in \Bbbk$. We can assume $\lambda_{1}=1$. Therefore, a parametrization of the $s$-th secant variety to $X$ in an affine neighborhood of the point $P_1+\lambda_2P_2+\ldots+\lambda_sP_s$ is given by: 
\begin{align*}
(a_{1,0}+f_{1,0},\ldots,&a_{1,N}+f_{1,N})+ \\
& + (\lambda_{2}+t_{2})(a_{2,1}-a_{1,0}+f_{2,1}-f_{1,0},\ldots ,a_{2,N}-a_{1,N}+f_{2,N}-f_{1,N})+ \\ 
& + \cdots + \\
& + (\lambda_{s}+t_{s})(a_{s,1}-a_{1,0}+f_{s,1}-f_{1,0},\ldots ,a_{s,N}-a_{1,N}+f_{s,N}-f_{1,N}),
\end{align*}
for some parameters $t_{2},\ldots ,t_{s}$. Therefore, in coordinates, the 
parametrization of $\sigma_s(X)$ that we are looking for is the map $\varphi: \Bbbk^{s(n+1)+s-1} \rightarrow \Bbbk^{N+1}$ given by:
	\begin{center}
	$ (u_{1,0},\ldots,u_{1,n},u_{2,0},\ldots,u_{2,n},\ldots,u_{s,0},\ldots,u_{s,n},t_{2},\ldots ,t_{s})$
	
	${\LARGE \downmapsto}$
	
	$
	 (\ldots , a_{1,j}+f_{1,j}+ (\lambda_{2}+t_{2})(a_{2,j}-a_{1,j}+f_{2,j}-f_{1,j})+\cdots +(\lambda_{s}+t_{s})(a_{s,j}-a_{1,j}+f_{s,j}-f_{1,j}) ,\ldots),
	$
	\end{center}
	where for simplicity, we have written only the $j$-th element of the image.
Therefore, we are able to write the Jacobian of $\varphi$. We are writing it in three blocks: the first one is $(N+1)\times (n+1)$; the second one is $(N+1)\times (s-1)(n+1)$; and the third one is $(N+1)\times (s-1)$:
$$J_{\underline 0}(\varphi)=\left( \begin{array}{ccccc} (1-\lambda_{2}-\cdots -\lambda_{s})\frac{\partial f_{1,j}}{\partial u_{1,k}} & \Big| & \lambda_{i}\frac{\partial f_{i,j}}{\partial u_{i,k}} & \Big| & a_{i,j}- a_{1,j}\end{array} \right),$$
with $i=2,\ldots ,s$; $j=0, \ldots ,N$ and $k=0,\ldots ,n$. Now, the first block is a basis of the (affine) tangent space to $X$ at $P_{1}$, and in the second block, we can find the bases for the tangent spaces to $X$ at $P_{2},\ldots ,P_{s}$; the rows of:
$$\left(\begin{array}{ccc}
\frac{\partial f_{i,0}}{\partial u_{i,0}} &\cdots & \frac{\partial f_{i,0}}{\partial a_{i,N}}\\
\vdots &&\vdots \\
\frac{\partial f_{i,N}}{\partial a_{i,0}} &\cdots & \frac{\partial f_{i,N}}{\partial a_{i,N}}
\end{array}\right)$$
give a basis for the (affine) tangent space of $X$ at $P_i$.
\end{proof}

The importance of Terracini's lemma to compute the dimension of any secant variety is extremely evident. One of the main ideas of Alexander and Hirshowitz in order to tackle the specific case of Veronese variety was to take advantage of the fact that Veronese varieties are embedded in the projective space of homogeneous polynomials. They firstly moved the problem from computing the dimension of a vector space (the tangent space to a secant variety) to the computation of the dimension of its dual (see Section \ref{Apolaritysection} for the precise notion of duality used in this context). Secondly, their punchline was to identify such a dual space with a certain degree part of a zero-dimensional scheme, whose Hilbert function can be computed by induction (almost always). We will be more clear on the whole technique in the sequel. Now we need to use the language of schemes.

\begin{rmk} {Schemes} are locally-ringed spaces isomorphic to the spectrum of a commutative ring. Of course, this is not the right place to give a complete introduction to schemes. The reader interested in studying schemes can find the fundamental material in \cite{EH, hartshorne, mumford}. In any case, it is worth noting that we will always use only zero-dimensional schemes, i.e., ``points''; therefore, for our purpose, it is sufficient to think of zero-dimensional schemes as points with a certain structure given by the vanishing of the polynomial equations appearing in the defining ideal. For example, a homogeneous ideal $I$ contained in $\Bbbk[x,y,z]$, which is defined by the forms vanishing on a degree $d$ plane curve $\mathcal{C}$ and on a tangent line to $\mathcal{C}$ at one of its smooth points $P$, represents a zero-dimensional subscheme of the plane supported at $P$ and of {length} two, since the degree of intersection among the curve and the tangent line is two at $P$ (schemes of this kind are sometimes called {jets}).
\end{rmk}

\begin{dfn}
 A \emph{fat point} $Z\subset \mathbb{P}^n$ is a zero-dimensional scheme, whose defining ideal is of the form $\wp^m$, where $\wp$ is the ideal of a simple point and $m$ is a positive integer. In this case, we also say that $Z$ is a {$m$-fat point}, and we usually denote it as $mP$. We call the \emph{scheme of fat points} a union of fat points $m_1P_1+\cdots+m_sP_s$, i.e., the zero-dimensional scheme defined by the ideal $\wp_1^{m_1}\cap\cdots\cap\wp_s^{m_s}$, where $\wp_i$ is the prime ideal defining the point $P_i$, and the $m_i$'s are positive integers.
\end{dfn}

\begin{rmk}
	In the same notation as the latter definition, it is easy to show that $F \in \wp^m$ if and only if $\partial(F)(P) = 0$, for any partial differential $\partial$ of order $\leq m-1$. In other words, the hypersurfaces ``vanishing'' at the $m$-fat point $mP$ are the hypersurfaces that are passing through $P$ with multiplicity $m$, i.e., are singular at $P$ of order $m$. 
\end{rmk}

\begin{corollary}\label{corTerracini} Let $(X,{\mathcal L})$ be an integral, polarized
scheme. If ${\mathcal L}$ embeds $X$ as a closed scheme in $\mathbb{P}^ N$,
then:
$$ \dim (\sigma_{s}(X)) = N - \dim (h^0({\mathcal I}_
{Z,X}\otimes {\mathcal L})),$$ where $Z$ is the union of s
 generic two-fat
points in $X$.
\end{corollary}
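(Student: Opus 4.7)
The plan is to combine Terracini's Lemma (Lemma \ref{Terracini}) with a dualization step that reinterprets tangent spaces at smooth points of $X$ in terms of hyperplane sections of the embedding, then to recognize the condition ``the hyperplane contains $T_{P_i}X$'' as a fatness condition at $P_i$.

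First, pick a generic point $Q \in \sigma_s(X)$; since $\sigma_s(X)$ is irreducible, $Q$ is a smooth point, so $\dim \sigma_s(X) = \dim T_Q \sigma_s(X)$. By Terracini's Lemma applied to $s$ generic points $P_1,\ldots,P_s \in X$ with $Q \in \langle P_1, \ldots, P_s\rangle$, one has
\[
T_Q \sigma_s(X) \;=\; \langle T_{P_1}X, \ldots, T_{P_s}X \rangle \;=: L \;\subset\; \mathbb{P}^N.
\]
Thus computing $\dim \sigma_s(X)$ reduces to computing $\dim L$, and the cleanest way to do this is to count the hyperplanes of $\mathbb{P}^N$ that contain $L$: if $L$ has dimension $d$, the space of such hyperplanes is a vector space of dimension $N-d$ (equivalently, $N+1$ linear forms minus the $d+1$ independent conditions imposed by $L$).

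Next I would translate this hyperplane count into a cohomological statement via the embedding defined by $\mathcal{L}$. Sections of $\mathcal{L}$ (or, if one prefers, the $(N+1)$-dimensional subspace of $H^0(X,\mathcal{L})$ giving the embedding) correspond bijectively to hyperplane sections of $X \subset \mathbb{P}^N$. A hyperplane $H$ contains the embedded tangent space $T_{P_i}X$ if and only if the divisor $H \cap X$ is singular at $P_i$, which, in local coordinates around $P_i$, means the corresponding section lies in $\mathfrak{m}_{P_i}^2 \cdot \mathcal{L}_{P_i}$. Equivalently, writing $Z = 2P_1 + \cdots + 2P_s$ for the union of $s$ generic $2$-fat points on $X$, such hyperplanes correspond exactly to global sections of $\mathcal{I}_{Z,X} \otimes \mathcal{L}$. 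Hence
\[
h^0(\mathcal{I}_{Z,X} \otimes \mathcal{L}) \;=\; N - \dim L.
\]

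Combining the two displays and using genericity of $Q$ (which forces $P_1,\ldots,P_s$ to be generic in $X$ so that the $2$-fat point scheme $Z$ is generic) gives
\[
\dim \sigma_s(X) \;=\; \dim L \;=\; N - h^0(\mathcal{I}_{Z,X} \otimes \mathcal{L}),
\]
as required. The only delicate point is the identification ``$H \supset T_{P_i}X \iff \text{section vanishes to order } 2 \text{ at } P_i$'', i.e., the scheme-theoretic interpretation of $2$-fat points as imposing the vanishing of a section together with its first-order jet; once this is in place, everything else is linear algebra and a direct application of Terracini.
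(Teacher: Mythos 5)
Your proposal is correct and follows essentially the same route as the paper: apply Terracini's lemma to identify $T_Q\sigma_s(X)$ with the span of the tangent spaces, then dualize by observing that hyperplanes containing $T_{P_i}X$ cut out divisors singular at $P_i$, i.e., correspond to sections of $H^0(\mathcal{I}_{2P_i,X}\otimes\mathcal{L})$, so that the hyperplanes containing the whole span are exactly $H^0(\mathcal{I}_{Z,X}\otimes\mathcal{L})$. The linear-algebra count $h^0(\mathcal{I}_{Z,X}\otimes\mathcal{L}) = N - \dim L$ is exactly the paper's conclusion (note the embedding is tacitly by the complete linear system $|\mathcal{L}|$, as both you and the paper assume).
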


\begin{proof} By Terracini's lemma, we have that, for generic points $P_1,\ldots ,P_s\in X$, $$\dim (\sigma_{s}(X)) =\dim (\langle T_{P_1}(X),\ldots ,T_{P_s}(X)\rangle).$$ Since $X$ is
embedded in $ \mathbb{P}^{} (H^0(X,{\mathcal L})^*)$ of dimension $N$, we can view the
elements of $ H^0(X,{\mathcal L})$ as hyperplanes in $\mathbb{P}^ N$. The
hyperplanes that contain a space $T_{P_i}(X)$ correspond to
elements in $H^0({\mathcal I}_ {2P_i,X}\otimes {\mathcal L})$, since they
intersect $X$ in a subscheme containing the first infinitesimal
neighborhood of $P_i$. Hence, the hyperplanes of $\mathbb{P}^N$ containing
 $\langle T_{P_1}(X),\ldots ,T_{P_s}(X)\rangle $ are the sections of
$H^0({\mathcal I}_{Z,X}\otimes {\mathcal L})$, where $Z$ is the scheme
union of the first infinitesimal neighborhoods in $X$ of the
points $P_i$'s. 
\end{proof}

\begin{rmk} A hyperplane $H$ contains the tangent space to a non-degenerate projective variety $X$ at a smooth point $P$ if and only if the intersection $X\cap H$ has a singular point at $P$. In fact, the tangent space $T_{P}(X)$ to $X$ at $P$ has the same dimension of $X$ and $T_{P}(X\cap H)= H\cap T_{P}(X)$. Moreover, $P$ is singular in $H\cap X$ if and only if $\dim (T_{P}(X\cap H))\geq \dim (X\cap H)=\dim (X)-1$, and this happens if and only if $H \supset T_{P}(X)$.
\end{rmk}

\begin{example}[The Veronese surface of $\bbP^5$ is defective]\label{VeroSurfDef} Consider the Veronese surface $X_{2,2} = \nu_2(\PP^2)$ in $\mathbb{P}^ 5$. We want to show that it is two-defective, with $\delta_2=1$. In other words, since the expected dimension of $\sigma_2(X_{2,2})$ is $2\cdot 2 + 1$, i.e., we expect that $\sigma_2(X_{2,2})$ fills the ambient space, we want to prove that it is actually a hypersurface. This will imply that actually, it is not possible to write a generic ternary quadric as a sum of two squares, as expected by counting parameters, but at least three squares are necessary instead. 

Let $P$ be a general point on the linear span $\langle R,Q\rangle $ of two general points $R,Q\in X$; hence, $P \in \sigma_{2}(X_{2,2})$. By Terracini's lemma, $T_{P}(\sigma_{2}(X_{2,2}))=\langle T_{R}(X_{2,2}),T_{Q}(X_{2,2})\rangle $. The expected dimension for $\sigma_{2}(X_{2,2})$ is five, so $\dim (T_{P}(\sigma_{2}(X_{2,2})))<5$ if and only if there exists a hyperplane $H$ containing $T_{P}(\sigma_{2}(X_{2,2}))$. The previous remark tells us that this happens if and only if there exists a hyperplane $H$ such that $H\cap X_{2,2}$ is singular at $R, Q$. Now, $X_{2,2}$ is the image of $\mathbb{P}^2$ via the map defined by the complete linear system of quadrics; hence, $X_{2,2} \cap H$ is the image of a plane conic. Let $R', Q'$ be the pre-images via $\nu_{2}$ of $R,Q$ respectively. Then, the double line defined by $R',Q'$ is a conic, which is singular at $R',Q'$. Since the double line $\langle R',Q'\rangle $ is the only plane conic that is singular at $R',Q'$, we can say that $\dim (T_{P}(\sigma_{2}(X_{2,2})))=4<5$; hence, $\sigma_2(X_{2,2})$ is defective with defect equal to zero. 

Since the two-Veronese surface is defined by the complete linear system of quadrics, Corollary \ref{corTerracini} allows us to rephrase the defectivity of $\sigma_{2}(X_{2,2})$ in terms of the {number of conditions} imposed by two-fat points to forms of degree two; i.e., we say that

{two two-fat points of $\mathbb{P}^2$ do not impose independent conditions on ternary quadrics.}

As we have recalled above, imposing the vanishing at the two-fat point means to impose the annihilation of all partial derivatives of first order. In $\mathbb{P}^2$, these are three linear conditions on the space of quadrics. Since we are considering a scheme of two two-fat points, we have six linear conditions to impose on the six-dimensional linear space of ternary quadrics; in this sense, we expect to have no plane cubic passing through two two-fat points. However, since the double line is a conic passing doubly thorough the two two-fat points, we have that the six linear conditions are not independent. We will come back in the next sections on this relation between the conditions imposed by a scheme of fat points and the defectiveness of secant varieties.
\end{example}

Corollary \ref{corTerracini} can be generalized to non-complete linear systems on $X$.

\begin{rmk} Let $D$ be any divisor of an irreducible projective variety $X$. With $|D|$, we indicate the complete linear system defined by $D$. Let $V\subset |D|$ be a linear system. We use the notation:
$$V(m_{1}P_{1}, \ldots , m_{s}P_{s})$$
for the subsystem of divisors of $V$ passing through the fixed points $P_{1}, \ldots , P_{s}$ with multiplicities at least $m_{1}, \ldots , m_{s}$ respectively.
\end{rmk}

When the multiplicities $m_{i}$ are equal to two, for $i=1,\ldots ,s$, since a two-fat point in $\bbP^n$ gives $n+1$ linear conditions, in general, we expect that, if $\dim (X)=n$, then:
$$\expdim (V(2P_{1}, \ldots , 2P_{s}))= \dim (V)- s(n+1).$$
Suppose that $V$ is associated with a morphism $\varphi_{V}:X_{0}\rightarrow \mathbb{P}^ {r}$ (if $\dim (V)=r$), which is an embedding on a dense open set $X_{0}\subset X$. We will consider the variety $\overline{\varphi_{V}(X_{0}) }$.

The problem of computing $\dim (V(2P_{1}, \ldots , 2P_{s}))$ is equivalent to that one of computing the dimension of the $s$-th secant variety to $\overline{\varphi_{V}(X_{0}) }$.

\begin{proposition}\label{Terrsistlin} Let $X$ be an integral scheme and $V$ be a linear system on $X$ such that the rational function $\varphi_{V}:X\dashrightarrow \mathbb{P}^ r$ associated with $V$ is an embedding on a dense open subset $X_{0}$ of X
. Then, $\sigma_{s}\left(\overline{\varphi_{V}(X_{0})}\right)$ is defective if and only if for general points, we have $P_{1}, \ldots , P_{s}\in X$:
$$\dim (V(2P_{1}, \ldots , 2P_{s}))>\min \{-1, r-s(n+1)\} .$$ 
\end{proposition}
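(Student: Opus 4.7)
The strategy is to generalize Corollary \ref{corTerracini} by combining Terracini's lemma with projective duality, translating the defectivity of $\sigma_s(Y)$, where $Y := \overline{\varphi_V(X_0)} \subset \mathbb{P}^r$, into a dimension count for the fat-point linear subsystem $V(2P_1,\ldots,2P_s)$.

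First I would set $Y := \overline{\varphi_V(X_0)}$. Since $\varphi_V$ is an embedding on the dense open $X_0 \subset X$, $Y$ is an irreducible $n$-dimensional non-degenerate subvariety of $\mathbb{P}^r$, and for general points $P_1,\ldots,P_s \in X$ the images $Q_i := \varphi_V(P_i)$ lie in $\varphi_V(X_0)$, hence are smooth points of $Y$. Terracini's lemma (Lemma \ref{Terracini}) then gives
$$\dim \sigma_s(Y) \;=\; \dim \langle T_{Q_1}(Y),\ldots,T_{Q_s}(Y)\rangle.$$

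Next I would install the standard hyperplane--divisor dictionary: every $D' \in V$ is cut out on $X_0$ by a unique hyperplane $H_{D'} \subset \mathbb{P}^r$, and, exactly as in the remark following Corollary \ref{corTerracini}, $H_{D'} \supset T_{Q_i}(Y)$ if and only if $D'$ is singular of multiplicity at least two at $P_i$. Consequently the divisors in $V(2P_1,\ldots,2P_s)$ correspond precisely to the hyperplanes of $\mathbb{P}^r$ containing the tangential span $\langle T_{Q_1}(Y),\ldots,T_{Q_s}(Y)\rangle$. By elementary projective duality (the hyperplanes through a projective $d$-plane in $\mathbb{P}^r$ form a projective $(r-1-d)$-plane, with the convention $\dim \emptyset = -1$), this gives
$$\dim V(2P_1,\ldots,2P_s) \;=\; r - 1 - \dim \sigma_s(Y).$$
The proposition follows: $\sigma_s(Y)$ is defective iff $\dim \sigma_s(Y) < \min\{r,\, s(n+1)-1\}$, which by the previous equality is equivalent to
$$\dim V(2P_1,\ldots,2P_s) \;>\; \max\{-1,\, r - s(n+1)\},$$
i.e.\ the subsystem exceeds the expected dimension $\max\{-1, r-s(n+1)\}$ (I read the $\min$ in the statement as a typo for $\max$, since this is the natural lower truncation of the na\"\i ve expected dimension $r - s(n+1)$).

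The only delicate point is the hyperplane--divisor correspondence: one must be sure that on the relevant locus a divisor of $V$ really is the pullback of a hyperplane in $\mathbb{P}^r$, despite $\varphi_V$ being only an embedding on $X_0$. Since $X$ is integral, the $P_i$ are general (hence in $X_0$), and tangent spaces are computed locally at smooth points of $Y$, this identification is automatic off a proper closed locus and the Terracini computation goes through unchanged. Everything else is purely linear-algebraic bookkeeping.
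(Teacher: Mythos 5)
Your argument is correct and is precisely the one the paper intends: Proposition \ref{Terrsistlin} is stated without an explicit proof as the evident generalization of Corollary \ref{corTerracini} to non-complete linear systems, obtained exactly as you do via Terracini's lemma together with the identification of hyperplanes of $\mathbb{P}^r$ containing $\langle T_{Q_1}(Y),\ldots,T_{Q_s}(Y)\rangle$ with divisors of $V$ singular at the $P_i$, followed by the duality count $\dim V(2P_1,\ldots,2P_s)=r-1-\dim\sigma_s(Y)$. You are also right that the $\min$ in the displayed inequality is a typo for $\max$, consistent with the convention $\expdim=\max\{\virtdim,-1\}$ of Definition \ref{sls}.
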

\noindent This statement can be reformulated via apolarity, as we will see in the next section.

\subsubsection{Apolarity}\label{Apolaritysection}

This section is an exposition of inverse systems techniques, and it follows \cite{Ge1}.

As already anticipated at the end of the proof of Terracini's lemma, the whole Alexander and Hirshowitz technique to compute the dimensions of secant varieties of Veronese varieties is based on the computation of the dual space to the tangent space to $\sigma_s(\nu_d(\mathbb{P}^n))$ at a generic point. Such a duality is the apolarity action that we are going to define. 

\begin{dfn}[Apolarity action]\label{ApolarityAction}
Let $S=\Bbbk[x_{0},\ldots,x_{n}]$ and
$R=\Bbbk[y_{0},\ldots,y_{n}]$ be polynomial rings and consider the action of $R_1$ on $S_1$ and of $R_1$ on $S_1$ defined by:
$$y_{i} \circ x_{j}= \left(\frac{\partial}{\partial x_{i}}\right)
 (x_{j})=\left\{ \begin{array}{ll}0, & \textrm{if } i\neq j\\1, & \textrm{if } i=j \end{array} \right. ;$$
i.e., we view the polynomials of $R_1$ as ``partial derivative operators'' on $S_1$.
 \end{dfn}
 
Now, we extend this action to the whole rings $R$ and $S$ by linearity and using properties
of differentiation. Hence, we get the {apolarity action:}
\begin{equation*}
\circ: R_{i} \times S_{j} \longrightarrow S_{j-i}
\end{equation*}
where:
$$\bfy^\alpha \circ \bfx^\beta=
\begin{cases}
  \prod_{i=1}^n \frac{(\beta_i)!}{(\beta_i-\alpha_i)!}\bfx^{\beta - \alpha}, & \text{ if }\alpha \leq \beta; \\
  0, & \text{otherwise.} \\
\end{cases}
$$ 
for $\alpha, \beta \in \mathbb{N}^{n+1}$, $\alpha = (\alpha_0,...,\alpha_n)$, $\beta = (\beta_0,\ldots,\beta_n)$, where we use the notation $\alpha
 \leq \beta$ if and only if $a_i\leq b_i$ for all
$i=0,\ldots ,n$, which is equivalent to the condition that $x^\alpha$ divides $x^\beta$ in $S$.
\begin{rmk} Here, are some basic remarks on apolarity action:
\begin{itemize}
\item the apolar action of $R$ on $S$ makes $S$ a (non-finitely generated) $R$-module (but the converse is not true);
 \item the apolar action of $R$ on $S$ lowers the degree; in particular, given $F \in S_d$, the $(i,d-i)$-th catalecticant matrix (see Definition \ref{CatalecticantMatrices}) is the matrix of the following linear map induced by the apolar action
   $$
   	Cat_{i,d-i}(F) : R_i \longrightarrow S_{d-i}, \quad G \mapsto G \circ F;
   $$
 \item the apolarity action induces a non-singular $\Bbbk$-bilinear pairing:
 \begin{equation}\label{pairing}
 R_{j} \times S_{j} \longrightarrow \Bbbk, \quad\forall~ j \in \bbN,
 \end{equation}
that induces two bilinear maps (Let $V \times W \longrightarrow \Bbbk$ be a $\Bbbk$-bilinear parity given by
 $v \times w \longrightarrow v \circ w$. It induces two $\Bbbk$-bilinear maps:
 (1) $\phi : V \longrightarrow Hom_{\Bbbk}(W,\Bbbk)$ such that
  $\phi (v) := \phi_{v}$ and $\phi_{v}(w)=v \circ w$ and $\chi :
  W \longrightarrow Hom_{\Bbbk}(V,\Bbbk)$ such that $\chi (w) :
  = \chi_{w}$ and $\chi_{w}(v)=v \circ w$; (2) $V \times W \longrightarrow \Bbbk$
  is not singular iff for all the bases
   $\{w_{1},\ldots,w_{n}\}$ of $W$, the matrix $(b_{ij}=v_{i} \circ w_{j})$ is invertible.).
\end{itemize}
\end{rmk}
\begin{dfn}
 Let $I$ be a homogeneous ideal of $R$. The \emph{inverse system} $I^{-1}$ of $I$
 is the $R$-submodule of $S$ containing all the elements of $S$, which are annihilated by $I$ via the apolarity action.
\end{dfn}

\begin{rmk}Here are some basic remarks about inverse systems:
\begin{itemize}
\item if $I=(G_{1},\ldots,G_{t})\subset R$ and $F \in S$, then:
 $$F \in I^{-1} \Longleftrightarrow G_{1} \circ F = \cdots = G_{t} \circ
 F = 0,$$ finding all such $F$'s amounts to finding all the polynomial solutions for the differential equations defined by the $G_{i}$'s, so one can notice that determining $I^{-1}$ is equivalent to solving (with
polynomial solutions) a finite set of differential equations;
\item $I^{-1}$ is a graded submodule of $S$, but it is not
necessarily multiplicatively closed; hence in general, $I^{-1}$ is not
an ideal of $S$. 
\end{itemize}
\end{rmk}

Now, we need to recall a few facts on {Hilbert functions} and {Hilbert series}.

Let $X \subset \mathbb{P}^n$ be a closed subscheme whose
defining homogeneous ideal is $I:=I(X) \subset S=\Bbbk[x_0,\ldots,x_n]$. Let $A= S
/I$ be the homogeneous coordinate ring of $X$, and $A_{d}$ will be its degree $d$~component.

\begin{dfn}\label{def:HF}
The \emph{Hilbert function} of the scheme $X$ is the numeric function:
$$\HF(X,\cdot):\mathbb{N}\rightarrow \mathbb{N};$$
$$\HF(X,d)= \dim_{\Bbbk}(A_{d}) = \dim_{\Bbbk}(S_{d})-\dim_{\Bbbk}(I_{d}).$$
The \emph{Hilbert series} of $X$ is the generating power series:
$$
	\HS(X;z) = \sum_{d \in \bbN} \HF(X,d) t^d \in \Bbbk[[z]].
$$
\end{dfn}

In the following, the importance of inverse systems for a particular choice of the ideal $I$ will be given by the following result.

\begin{proposition} The dimension of the part of degree $d$ of the
inverse system of an ideal $I\subset R$ is the Hilbert function of
$R/I$ in degree $d$:
\begin{equation}\label{inv syst e H}
 \dim_\Bbbk(I^{-1})_{d}=\mathrm{codim}_\Bbbk(I_{d})=\HF(R/I,d). 
\end{equation}
\end{proposition}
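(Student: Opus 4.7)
The plan is to reduce the claim to the non-degeneracy of the apolarity pairing \eqref{pairing} and then conclude by elementary linear algebra. The second equality $\mathrm{codim}_\Bbbk(I_d)=\HF(R/I,d)$ is immediate from Definition \ref{def:HF}, after noting that $R_d$ and $S_d$ are both of dimension $\binom{n+d}{d}$, so $\dim_\Bbbk R_d - \dim_\Bbbk I_d = \dim_\Bbbk S_d - \dim_\Bbbk I_d$. No real work is required for this part.

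For the first equality, the first step I would take is to rewrite $(I^{-1})_d$ as the annihilator of a single finite-dimensional subspace inside the degree-$d$ pairing. By definition, $F\in (I^{-1})_d$ iff $G\circ F=0$ for every homogeneous $G\in I$. If $\deg G>d$ this is automatic, and if $\deg G=d$ it is a system of linear conditions cut out by $I_d$. The point is that the conditions in degrees $i<d$ are redundant: since the apolar action makes $S$ into an $R$-module, we have $(HG)\circ F = H\circ(G\circ F)$ for $H\in R_{d-i}$ and $G\in I_i$, and $HG\in I_d$ because $I$ is an ideal. Hence the degree-$d$ conditions force $H\circ(G\circ F)=0$ for all $H\in R_{d-i}$, and then non-degeneracy of the pairing $R_{d-i}\times S_{d-i}\to\Bbbk$ from \eqref{pairing} forces $G\circ F=0$ in $S_{d-i}$.

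Once this reduction is in place, we have
\[
(I^{-1})_d \;=\; I_d^{\perp} \;:=\; \{F\in S_d \,\mid\, G\circ F=0 \text{ for all } G\in I_d\}.
\]
The perfect pairing $R_d\times S_d\to\Bbbk$ identifies $S_d$ with $R_d^{*}$, and under this identification $I_d^{\perp}$ is exactly the annihilator of the subspace $I_d\subset R_d$. The standard linear-algebra formula for annihilators under a perfect pairing then gives $\dim_\Bbbk(I_d^{\perp}) = \dim_\Bbbk S_d - \dim_\Bbbk I_d = \mathrm{codim}_\Bbbk(I_d)$, which is the desired equality.

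The only delicate point — and it is hardly an obstacle — is justifying the collapse of the infinitely many conditions in degrees $\leq d$ to the single degree-$d$ condition. This rests entirely on the identity $(HG)\circ F = H\circ(G\circ F)$ (the $R$-module structure on $S$ remarked upon right after Definition \ref{ApolarityAction}) and on non-degeneracy in each graded piece. Everything else is bookkeeping with dimensions.
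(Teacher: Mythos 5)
Your proof is correct, and it follows the route the paper itself intends: the paper states the proposition without proof and then records the key identification $(I^{-1})_d \cong I_d^{\perp}$ in Remark \ref{rmk: inverse systems properties}, from which the dimension count under the perfect pairing \eqref{pairing} is immediate. Your careful justification that the conditions coming from $I_i$ with $i<d$ are subsumed by those from $I_d$ (via the $R$-module structure and non-degeneracy in each graded piece) is exactly the detail the paper leaves implicit, and it is the right one.
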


\begin{rmk}\label{rmk: inverse systems properties}
If $V \times W \rightarrow \Bbbk$ is a non-degenerate bilinear form and 
$U$ is a subspace of $V$, then with $U^{\bot}$, we denote the subspace of
$W$ given by: $$U^{\bot}= \{w \in W \, | \, v \circ w =0 \;
\forall \; v \in U \}.$$ With this definition, we observe that:
\begin{itemize} 
\item if we consider the bilinear map in (\ref{pairing}) and an ideal $I\subset R$, then:
 
$(I^{-1})_{d}\cong I_{d}^{\bot}. \;\;$
\item moreover, if $I$ is a monomial ideal, then:

$I_{d}^{\bot}=\langle \textrm{monomials of $R_{d}$ that are not in
$I_{d}$}\rangle$;
\item for any two ideals $I,J\subset R$: $(I \cap J)^{-1}=I^{-1}+J^{-1}.$
\end{itemize}
\end{rmk}
If $I=\wp_{1}^{m_{1}+1} \cap \cdots \cap \wp_{s}^{m_{s}+1}\subset
R = \Bbbk[y_{0},\ldots,y_{n}]$ is the defining ideal of the scheme of fat points $m_1P_1+\cdots+m_sP_s \in \bbP^n$, where $P_{i}=[p_{i_{0}}:p_{i_{1}}:\ldots:p_{i{n}}]\in \bbP^n$, and if
$L_{P_{i}}=p_{i_{0}}x_{0}+p_{i_{1}}x_{1}+\cdots+p_{i_{n}}x_{n} \in
S=\Bbbk[x_{0},\ldots,x_{n}]$, then:
  $$(I^{-1})_{d}= 
  \begin{cases}
  S_{d}, & \textrm{ for } d \leq \max\{m_{i}\}, \\
 L_{P_{1}}^{d-m_{1}}S_{m_{1}}+\cdots+L_{P_{s}}^{d-m_{s}}S_{m_{s}}, &
 \textrm{ for } d \geq \max\{m_{i}+1\},
 \end{cases}$$
and also: 
\begin{align}\label{Hilb fat points}
\HF(R/I,d) & =\dim_\Bbbk(I^{-1})_{d} = \nonumber \\
& = \begin{cases}
 \dim_\Bbbk S_{d}, & \textrm{ for } d \leq \max\{m_{i}\}
 \\ 
 \dim_\Bbbk \langle L_{P_{1}}^{d-m_{1}}S_{m_{1}},\ldots,L_{P_{s}}^{d-m_{s}}S_{m_{s}}\rangle, &
 \textrm{ for } d \geq \max\{m_{i}+1\} 
 \end{cases}
\end{align}
This last result gives the following link between the Hilbert function of a
set of fat points and ideals generated by sums of powers of linear
forms.

\begin{proposition}\label{prop: inverse system fat points}
Let $I=\wp_{1}^{m_{1}+1} \cap \cdots \cap
\wp_{s}^{m_{s}+1}\subset R = \Bbbk[y_{0},\ldots ,y_{n}]$, then the inverse system $(I^{-1})_{d}\subset S_{d}=\Bbbk[x_{0},\ldots ,x_{n}]_{d}$ is the $d$-th graded part
of the ideal $(L_{P_{1}}^{d-m_{1}},\ldots,L_{P_{s}}^{d-m_{s}})\subset S$,
for $d\geq \max \{ m_{i}+1 , \, i=1,\ldots ,s\}$.
 \end{proposition}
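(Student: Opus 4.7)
The plan is to combine the additivity of inverse systems under intersection of ideals (from Remark~\ref{rmk: inverse systems properties}) with a direct calculation of the inverse system of a single fat point, and then to recognise the resulting vector space as the degree $d$ piece of the claimed ideal in $S$.

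First, I would reduce to the single fat point case. Using the identity $(I\cap J)^{-1}=I^{-1}+J^{-1}$ from Remark~\ref{rmk: inverse systems properties}, iterated over the primary components, one gets
\[
(I^{-1})_d \;=\; \bigl(\wp_1^{m_1+1}\bigr)^{-1}_d \,+\, \cdots \,+\, \bigl(\wp_s^{m_s+1}\bigr)^{-1}_d.
\]
Thus it suffices to identify $\bigl(\wp_i^{m_i+1}\bigr)^{-1}_d$ with $L_{P_i}^{d-m_i}\cdot S_{m_i}$ for each $i$, provided $d\ge m_i+1$.

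Second, I would establish the single point case by choosing coordinates so that $P_i=[1:0:\cdots:0]$, so that $\wp_i=(y_1,\ldots,y_n)$ and $L_{P_i}=x_0$. In this frame the vector space $(\wp_i^{m_i+1})_d\subset R_d$ is the span of monomials $\mathbf{y}^{\alpha}$ with $|\alpha|=d$ and $\alpha_1+\cdots+\alpha_n\ge m_i+1$. Using the (monomial) basis description of orthogonal complements in Remark~\ref{rmk: inverse systems properties}, $(\wp_i^{m_i+1})_d^{\bot}$ is spanned by the monomials $\mathbf{x}^{\beta}$ with $|\beta|=d$ and $\beta_0\ge d-m_i$. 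Factoring out $x_0^{d-m_i}$ exhibits this space as $x_0^{d-m_i}\cdot S_{m_i}=L_{P_i}^{d-m_i}\cdot S_{m_i}$, and by the pairing $(\wp_i^{m_i+1})_d^{\bot}\cong \bigl(\wp_i^{m_i+1}\bigr)^{-1}_d$. Since the assertion is coordinate free, this identification holds for any $P_i$.

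Third, I would identify the union with the degree $d$ component of the ideal. Since $d\ge m_i+1$ for all $i$, each generator $L_{P_i}^{d-m_i}$ is non-constant of degree $d-m_i\ge 1$; hence the $d$-th graded piece of the ideal $(L_{P_1}^{d-m_1},\ldots,L_{P_s}^{d-m_s})\subset S$ is exactly
\[
\sum_{i=1}^{s} L_{P_i}^{d-m_i}\cdot S_{m_i}.
\]
Combining with the first two steps yields the stated equality. The only real subtlety to watch is the compatibility with the apolarity pairing: one must check that the non-degenerate pairing $R_d\times S_d\to\Bbbk$ in~\eqref{pairing} really carries $\cap$ to $+$ degree by degree (so that the reduction to the single point case is rigorous), and that the monomial computation in step two is genuinely independent of the choice of projective coordinates, which follows because $\GL_{n+1}$ acts compatibly on $R$ and $S$ with respect to $\circ$.
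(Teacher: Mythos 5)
Your argument is correct and follows essentially the same route the paper relies on: the proposition is presented there as an immediate consequence of the displayed identity \eqref{Hilb fat points}, which is exactly what your three steps establish (reduction to a single point via $(I\cap J)^{-1}=I^{-1}+J^{-1}$ from Remark~\ref{rmk: inverse systems properties}, the monomial computation of $(\wp^{m+1})_d^{\bot}$ at a coordinate point, and the identification of $\sum_{i} L_{P_i}^{d-m_i}S_{m_i}$ with the degree-$d$ part of the ideal, using $d\geq m_i+1$ so each generator has degree $d-m_i\geq 1$). The only difference is that the paper asserts \eqref{Hilb fat points} without proof, whereas you supply the standard verification, including the $\mathrm{GL}_{n+1}$-equivariance point needed to reduce each $P_i$ to a coordinate point.
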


Finally, the link between the big Waring problem (Problem \ref{bwp}) and inverse systems is clear. If in~(\ref{Hilb fat points}), all the $m_{i}$'s are equal to one, the dimension of the vector space $\langle L_{P_{1}}^{d-1}S_{1},\ldots ,L_{P_{s}}^{d-1}S_{1}\rangle$ is at the same time the Hilbert function of the inverse system of a scheme of $s$ double fat points and the rank of the differential of the application $\phi$ defined in (\ref{phi}). 

\begin{proposition}\label{apolaritythm}
 Let $L_{1},\ldots,L_{s}$ be linear forms of $S = \Bbbk[x_{0},\ldots,x_{n}]$ such that:
 $$L_{i}=a_{i_{0}}x_{0}+\cdots+a_{i_{n}}x_{n},$$
and let $P_{1},\ldots,P_{s}\in \mathbb{P}^ n$ such that
 $P_{i}=[a_{i_{0}},\ldots,a_{i_{n}}].$
Let $\wp_{i} \subset R = \Bbbk[y_{0},\ldots,y_{n}]$ be the prime ideal
associated with $P_{i}$, for $i=1,\ldots,s$, and let:
 $$\phi_{s,d}: \underbrace{S_{1}\times\cdots\times S_{1}}_{s} \longrightarrow S_{d}$$
 with
 $\phi_{s,d} (L_{1},\ldots,L_{s})=L_{1}^{d}+\cdots+L_{s}^{d}.$ Then,
 $$\rk(d\phi_{s,d})|_{(L_{1},\ldots,L_{s})}=\dim_\Bbbk\langle L_{1}^{d-1}S_{1},\ldots,L_{s}^{d-1}S_{1}\rangle.$$ Moreover, by \eqref{inv syst e H}, we have:
 $$\dim_\Bbbk (\langle L_{1}^{d-1}S_{1},\ldots,L_{s}^{d-1}S_{1}\rangle)=\HF\left( \frac{R}{\wp_{1}^{2}\cap\cdots\cap\wp_{s}^{2}},d \right).$$
\end{proposition}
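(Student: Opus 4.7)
The proposition is essentially a synthesis of material developed earlier, so my plan is to assemble the pieces already available rather than to argue anything substantially new. I would split the proof into two independent equalities, since the first is purely differential-geometric while the second is a statement of apolarity.

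\textbf{First equality.} To compute $\rk(d\phi_{s,d})|_{(L_1,\ldots,L_s)}$, I would mimic the calculation performed in equation \eqref{eq:tangent}. Parametrize an affine line through $(L_1,\ldots,L_s)$ by $t \mapsto (L_1 + tM_1,\ldots,L_s+tM_s)$, where $(M_1,\ldots,M_s) \in S_1^{s}$ is an arbitrary tangent direction. Then
$$
\phi_{s,d}(L_1+tM_1,\ldots,L_s+tM_s) = \sum_{i=1}^s (L_i + tM_i)^d,
$$
whose derivative at $t=0$ is $d\sum_{i=1}^s L_i^{d-1}M_i$. As $(M_1,\ldots,M_s)$ varies over $S_1^{s}$, the resulting tangent vectors span exactly $\langle L_1^{d-1}S_1, \ldots, L_s^{d-1}S_1 \rangle$. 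The rank of the differential is the dimension of its image, giving the first asserted equality. (The factor $d$ is nonzero in characteristic zero and may be absorbed.)

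\textbf{Second equality.} Here I would invoke the apolarity machinery developed in Section \ref{Apolaritysection}. The ideal $I = \wp_1^2 \cap \cdots \cap \wp_s^2 \subset R$ is the defining ideal of the scheme of $s$ two-fat points $2P_1+\cdots+2P_s \subset \mathbb{P}^n$; this corresponds to taking all $m_i = 1$ in the setup leading to \eqref{Hilb fat points}. By Proposition \ref{prop: inverse system fat points} (with $m_i = 1$ and $d \geq 2$), the degree $d$ graded piece of the inverse system $I^{-1} \subset S$ coincides with the degree $d$ part of the ideal $(L_{P_1}^{d-1},\ldots,L_{P_s}^{d-1}) \subset S$, which is precisely $L_1^{d-1}S_1 + \cdots + L_s^{d-1}S_1$. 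Combining this identification with the general relation \eqref{inv syst e H}, namely $\dim_{\Bbbk}(I^{-1})_d = \HF(R/I, d)$, yields the second equality.

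\textbf{Main difficulty.} There is no genuine obstacle: the content lies in the machinery (Terracini-type differential computation plus Macaulay's inverse systems) already established earlier in the section. The only care needed is bookkeeping in small degrees: for $d = 1$ the fat-point description degenerates, but this falls outside the range relevant to the Waring problem and can be handled by direct inspection, while for $d \geq 2$ both equalities follow verbatim from the cited results.
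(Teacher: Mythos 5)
Your proposal is correct and follows exactly the route the paper takes: the first equality is the earlier unnumbered proposition obtained from the tangent-vector computation in \eqref{eq:tangent}, and the second is the specialization of \eqref{Hilb fat points} and \eqref{inv syst e H} (via Proposition \ref{prop: inverse system fat points}) to the case where all multiplicities $m_i$ equal one. The paper in fact offers no separate proof for this proposition, since it is stated precisely as the synthesis of those two prior results, which is what you reconstructed.
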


Now, it is quite easy to see that: 
$$\langle T_{P_1}X_{n,d}, \ldots ,T_{P_s}X_{n,d} \rangle=\PP\langle L_{1}^{d-1}S_{1},\ldots,L_{s}^{d-1}S_{1}\rangle.$$
Therefore, putting together Terracini's lemma and Proposition \ref{apolaritythm}, if we assume the $L_i$'s (hence, the $P_i$'s) to be generic, we get:
\begin{align}\label{eq: dimensions secants = HF points}
\dim(\sigma_s(&X_{n,d})) = \dim \langle T_{P_1}X_{n,d}, \ldots ,T_{P_s}X_{n,d} \rangle = \nonumber \\
& = \dim_\Bbbk\langle L_{1}^{d-1}S_{1},\ldots,L_{s}^{d-1}S_{1}\rangle-1 = \HF(R/(\wp_1^2 \cap \cdots \cap \wp_s^2),d)-1.
\end{align}
\begin{example}
 Let $P\in \mathbb{P}^ n$, $\wp \subset S$ be its representative prime ideal and $f\in S$. Then, the order of all partial derivatives of $f$ vanishing in $P$ is almost $t$ if and only if
 $ f \in \wp^{t+1}$, i.e., $P$ is a singular point of $V(f)$ of multiplicity greater than or equal to $t+1$. Therefore,
 \begin{equation}\label{Hilbertuno} 
 \HF(S/ \wp^{t},d)=
 \begin{cases}
 	{d+n \choose n} & \text{ if } d < t; \\
 	{t-1+n \choose n} & \text{ if } d \geq t.
 \end{cases}
\end{equation}
It is easy to conclude that one $t$-fat point of $ \mathbb{P}^ n$ has the same Hilbert function of
${t-1+n\choose n}$ generic distinct points of $ \mathbb{P}^ n$. Therefore, $\dim(X_{n,d})=\HF(S/ \wp^{2},d)-1=(n+1)-1$. This reflects the fact that Veronese varieties are never one-defective, or, equivalently, since $X_{n,d} = \sigma_1(X_{n,d})$, that Veronese varieties are never defective: they always have the expected dimension $1\cdot n+1-1$.
\end{example}
\begin{example}\label{example: defectiveness Veronese surface}
Let $P_{1},P_{2}$ be two points of $\mathbb{P}^{2}$, $\wp_{i}\subset S=\Bbbk[x_{0},x_{1},x_{2}]$ their associated prime ideals and $m_{1}=m_{2}=2$, so that $I= \wp_{1}^{2} \cap \wp_{2}^{2}$. Is the Hilbert function of $I$ equal to the Hilbert function of six points of $\mathbb{P}^ 2$ in general position? No; indeed, the Hilbert series of six general points of $\mathbb{P}^ 2$ is $1+3z+6\sum_{i \geq 2}z^i$. This means that $I$ should not contain conics, but this is clearly false because the double line through $P_{1}$ and $P_{2}$ is contained in $I$. By~\eqref{eq: dimensions secants = HF points}, this implies that $\sigma_2(\nu_2(\mathbb{P}^2)) \subset \bbP^5$ is defective, i.e., it is a hypersurface, while it was expected to fill all the ambient space.
\end{example}
 
\begin{rmk}[Fr\"oberg--Iarrobino's conjecture]\label{conj:FI}
	Ideals generated by powers of linear forms are usually called {power ideals}. Besides the connection with fat points and secant varieties, they are related to several areas of algebra, geometry and combinatorics; see \cite{ArPo10:PowerIdeals}. Of particular interest is their Hilbert function and Hilbert series. In \cite{Fro}, Fr\"oberg gave a lexicographic inequality for the Hilbert series of homogeneous ideals in terms of their number of variables, number of generators and their degrees. That is, if $I = (G_1,\ldots,G_s) \subset S=\Bbbk[x_0,\ldots,x_n]$ with $\deg(G_i) = d_i$, for $i = 1,\ldots,s$,
	\begin{equation}\label{eq: Froberg inequality}
		\HS(S/I;z) \succeq_{\rm Lex} \left\lceil \frac{\prod_{i=1}^s (1-z^{d_i})}{(1-z)^{n+1}} \right\rceil,
	\end{equation}
	where $\left\lceil \cdot \right\rceil$ denotes the truncation of the power series at the first non-positive term. Fr\"oberg conjectured that equality holds generically, i.e., it holds on a non-empty Zariski open subset of $\bbP S_{d_1}\times\ldots\times \bbP S_{d_s}$. By semicontinuity, fixing all the numeric parameters $(n;d_1,\ldots,d_s)$, it is enough to exhibit {one} ideal for which the equality holds in order to prove the conjecture for those parameters. In \cite{Ia97III:InvSys} (Main Conjecture 0.6), Iarrobino suggested to look to power ideals and asserted that, except for a list of cases, their Hilbert series coincides with the right-hand-side of \eqref{eq: Froberg inequality}. By \eqref{Hilb fat points}, such a conjecture can be translated as a conjecture on the Hilbert function of schemes of fat points. This is usually referred to as the {Fr\"oberg--Iarrobino conjecture}; for a detailed exposition on this geometric interpretation of Fr\"oberg and Iarrobino's conjectures, we refer to \cite{Ch05:GeomInterpretationFrobIarrobino}. As we will see in the next section, computing the Hilbert series of schemes of fat points is a very difficult and largely open problem.
\end{rmk}
 
Back to our problem of giving the outline of the proof of Alexander and Hirshowitz Theorem (Theorem \ref{corollAH}): Proposition \ref{apolaritythm} clearly shows that the computation of $T_Q(\sigma_s(\nu_d(\mathbb{P}^n)))$ relies on the knowledge of the Hilbert function of schemes of double fat points. Computing the Hilbert function of fat points is in general a very hard problem. In $\mathbb{P}^2$, there is an extremely interesting and still open conjecture (the SHGH conjecture). The interplay with such a conjecture with the secant varieties is strong, and we deserve to spend a few words on that conjecture and related aspects.

\subsection{Fat Points in the Plane and SHGH Conjecture}\label{fat:points:section}
The general problem of determining if a set of generic points $P_1,\ldots ,P_s$ in the plane, each with a structure of $m_i$-fat point, has the expected Hilbert function is still an open one. There is only a conjecture due first to B. Segre in 1961 \cite{Se}, then rephrased by B. Harbourne in 1986 \cite{Har}, A. Gimigliano in 1987~\cite{Gi87}, A. Hirschowitz in 1989 \cite{Hir} and others. It describes how the elements of a sublinear system of a linear system $\mathcal{L}$ formed by all divisors in $\mathcal{L}$ having multiplicity at least $m_i$ at the points $P_1, \ldots , P_s$, look when the linear system does not have the expected dimension, i.e., the sublinear system depends on fewer parameters than expected. For the sake of completeness, we present the different formulations of the same conjecture, but the fact that they are all equivalent is not a trivial fact; see~\cite{Ci01, CHMR, CM, CM2}. 

Our brief presentation is taken from \cite{Ci01, CHMR}, which we suggest as excellent and very instructive deepening on this topic.

Let $X$ be a smooth, irreducible, projective, complex variety of dimension $n$. Let $\mathcal{L}$ be a complete linear system of divisors on $X$. Fix $P_1,\ldots ,P_s$ distinct points on $X$ and $m_1,\ldots ,m_s$ positive integers. We denote by $\mathcal{L}(-\sum_{i=1}^s m_iP_i)$ the sublinear system of $\mathcal{L}$ formed by all divisors in $\mathcal{L}$ having multiplicity at least $m_i$ at $P_i$, $i = 1,\ldots ,s$. Since a point of multiplicity $m$ imposes ${m+n-1 \choose n}$ conditions on the divisors of $\mathcal{L}$, it makes sense to define the {expected dimension} of $\mathcal{L}(-\sum_{i=1}^s m_iP_i)$ as:
$$\expdim\left(\mathcal{L}\left(-\sum_{i=1}^s m_iP_i\right)\right):=\max
\left\{ \dim (\mathcal{L})- \sum_{i=1}^s{m_i+n-1 \choose n},-1\right\}.$$
 If $\mathcal{L}(-\sum_{i=1}^s m_iP_i)$ is a linear system whose dimension is not the expected one, it is said to be a {special linear system}.
Classifying special systems is equivalent to determining the Hilbert function of the zero-dimensional subscheme of $\mathbb{P}^n$ given $s$ general fat points of given multiplicities.

A first reduction of this problem is to consider particular varieties $X$ and linear systems $\mathcal{L}$ on them. From this point of view, the first obvious choice is to take $X = \mathbb{P}^n$ and $\mathcal{L} = \mathcal{L}_{n,d} := |\mathcal{O}_{\mathbb{P}^n }(d)|$, the system of all hypersurfaces of degree $d$ in $\mathbb{P}^n$. In this language, $\mathcal{L}_{n,d}(-\sum_{i=1}^s m_iP_i)$ are the hypersurfaces of degree $d$ in $n+1$ variables passing through $P_1, \ldots , P_s$ with multiplicities $m_1, \ldots , m_s$, respectively.

The {SHGH conjecture} describes how the elements of $\mathcal{L}_{2,d}(-\sum_{i=1}^s m_iP_i)$ look when not having the expected dimension; here are two formulations of this. 

\begin{conj}[Segre, 1961 \cite{Se}]\label{conjSegre}
If $\mathcal{L}_{2,d}(-\sum_{i=1}^s m_iP_i)$ is a special linear system, then there is a fixed double component for all curves through the scheme of fat points defined by $\wp_1^{m_{1}}\cap \cdots \cap \wp_s^{m_{s}}$.
\end{conj}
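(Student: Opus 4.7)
\medskip

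\noindent\textbf{Proof proposal.} A cautionary note is in order: Conjecture~\ref{conjSegre} is one of the central open problems in the area, and there is currently no known complete proof. What follows is a description of the strategy that has been most successful in partial results and that I would pursue, rather than a genuine plan for a full proof.

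The natural setup is to pass from $\mathbb{P}^2$ to the blow-up $\pi\colon \widetilde{X}\to \mathbb{P}^2$ at $P_1,\ldots,P_s$, with exceptional divisors $E_1,\ldots,E_s$ and $H=\pi^*\calO_{\mathbb{P}^2}(1)$. The linear system $\mathcal{L}_{2,d}\!\left(-\sum m_i P_i\right)$ corresponds on $\widetilde{X}$ to $|D|$ with $D = dH - \sum m_i E_i$, and its speciality is measured by $h^1(\widetilde X,\calO_{\widetilde X}(D))$. Riemann--Roch on the smooth rational surface $\widetilde X$ gives
\[
\chi(D) \;=\; 1 + \tfrac12\bigl(D\cdot D - D\cdot K_{\widetilde X}\bigr),
\]
and one checks that this matches $\expdim(\mathcal{L})+1$ whenever $d\ge \max m_i$. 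So Segre's assertion becomes: if $h^1(\widetilde X,\calO(D))>0$, then the fixed part of $|D|$ contains some irreducible curve with multiplicity at least two.

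My first step would be to reduce to the case in which $|D|$ has no fixed components, by peeling off any fixed curve iteratively; this reduction is standard. The heart of the problem is then to show that, once $|D|$ is free of fixed components, speciality is \emph{impossible}. The classical tool here is the Harbourne--Hirschowitz reformulation: one looks for $(-1)$-curves $C\subset \widetilde X$ with $C\cdot D \le -2$, which, if they exist, would contribute at least $\binom{-C\cdot D}{2}$ to $h^1$ and would be forced to appear in the base locus of $|D|$ with multiplicity $-C\cdot D \ge 2$. Thus I would try to prove: \emph{for general $P_1,\ldots,P_s$, the only obstruction to $|D|$ having the expected dimension comes from $(-1)$-curves on $\widetilde X$}. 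If achieved, Segre's form follows because such a $(-1)$-curve appears squared in the fixed locus.

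To attack this claim, I would use the degeneration technique of Ciliberto--Miranda: degenerate $\mathbb{P}^2$ to a reducible surface $\mathbb{P}^2\cup_R \mathbb{F}_n$ (or iterate), letting some of the fat points specialize onto the exceptional component, and track the limit linear system by a careful Horace-type induction on $d$ and on $\sum m_i$. Combined with Cremona transformations (which permute the candidate $(-1)$-classes), one would try to bound the numerical invariants of any hypothetical special system without a $(-1)$-curve obstruction and derive a contradiction. The principal obstacle, and the reason the conjecture remains open after sixty years, is the \emph{uniform} regime, where all $m_i$ are equal and $s$ is large compared to $d^2/m^2$: there the $(-1)$-curve classes become arbitrarily complicated (they form an infinite family on $\widetilde X$ once $s\ge 9$), Cremona orbits are dense, and Nagata's conjecture sits inside the problem. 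Any serious attempt would have to say something genuinely new about this regime, which is precisely where all existing partial results — by Ciliberto, Harbourne, Miranda, Roé, Dumnicki, and others — stop short.
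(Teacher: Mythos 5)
You have correctly recognized that the statement labeled Conjecture~\ref{conjSegre} is an open conjecture (the SHGH conjecture in Segre's formulation), for which the paper offers no proof but only a discussion of its reformulations and supporting evidence. Your outline — blow-up, Riemann--Roch giving $\dim(\mathcal{L})=\virtdim(\mathcal{L})+h^1$, the role of $(-1)$-curves with $\widetilde{\mathcal{L}}\cdot \calC=-N\le -2$ contributing $\binom{N}{2}$ to the speciality and splitting off $N$ times from the fixed part, and the equivalence with the Harbourne--Hirschowitz form — matches the paper's own exposition of the conjecture, so there is nothing further to reconcile.
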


\begin{conj}[Gimigliano, 1987 \cite{Gi87, Gi}]\label{conjGimi}
Consider $\mathcal{L}_{2,d}(-\sum_{i=1}^s m_iP_i).$ Then, one has the following possibilities:
\begin{enumerate}
\item the system is non-special, and its general member is irreducible; 
\item the system is non-special; its general member is non-reduced, reducible; 
its fixed components are all rational curves, except for at most one (this 
may occur only if the system is zero-dimensional); and the general member 
of its movable part is either irreducible or composed of rational curves in 
a pencil; 
\item the system is non-special of dimension zero and consists of a unique multiple 
elliptic curve; 
\item the system is special, and it has some multiple rational curve as a fixed 
component. 
\end{enumerate}
\end{conj}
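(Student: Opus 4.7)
The plan is to approach Conjecture~\ref{conjGimi} via its (established, non-trivial) equivalence with the Harbourne--Hirschowitz formulation: every special system $\mathcal{L}_{2,d}(-\sum_{i=1}^s m_i P_i)$ must contain a multiple $(-1)$-curve in its base locus. Passing to the blow-up $\pi: \tilde X \to \mathbb{P}^2$ at $P_1, \ldots, P_s$ with exceptional divisors $E_i$, the linear system becomes $|dH - \sum m_i E_i|$, where $H = \pi^*\mathcal{O}_{\mathbb{P}^2}(1)$, and speciality translates to the non-vanishing of $h^1(\tilde X, dH - \sum m_i E_i)$. Once a multiple $(-1)$-curve $C$ is extracted from the base locus, the classification of items (1)--(4) in the statement reduces to an analysis of the residual linear system $|dH - \sum m_i E_i - kC|$, whose general member is either empty, irreducible, or composed with a pencil; Bertini-type arguments together with adjunction on $\tilde X$ would then separate the four cases.

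The inductive engine I would use is a combination of Cremona reduction and the Ciliberto--Miranda degeneration. Whenever a triple of points satisfies $d < m_{i_1} + m_{i_2} + m_{i_3}$, the standard quadratic Cremona transformation centered at that triple lowers $d$ while controlling the new multiplicity vector, and preserves both speciality and the property of having a $(-1)$-curve in the base. Iterating yields a \emph{Cremona-reduced} configuration in which $d \geq m_{i_1}+m_{i_2}+m_{i_3}$ for every triple. In this range, one specializes some of the points onto a line $L$ and degenerates $\mathbb{P}^2$ to $\mathbb{P}^2 \cup_L \mathbb{F}_1$: the limit system splits into two systems on the components, glued along $L$, yielding a recursion on both $d$ and $\sum m_i$. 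Base cases (small $d$, or $s \leq 9$, where the Mori cone of $\tilde X$ is polyhedral) are known by classical work of Nagata, Harbourne and Gimigliano.

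The main obstacle, which is precisely why this conjecture has remained open since 1961, lies in the range $s \geq 10$ with the Cremona-reduced configuration. There, the effective cone of $\tilde X$ contains infinitely many $(-1)$-curve classes, and it is conceivable that a special system exists whose base locus contains a $(-1)$-curve that is \emph{not} visible from any of the intermediate degenerations — in other words, the degeneration-plus-Cremona scheme controls only finitely many obstructions at a time, while the conjecture requires simultaneous control of an infinite family. A complete proof would seem to need a new cohomological vanishing statement on $\tilde X$ for divisors near the boundary of the nef cone, strong enough to rule out any $h^1$ contribution not accounted for by an explicit multiple rational curve. Absent such an input, the proposed strategy provably succeeds only on those strata of the parameter space $(d; m_1, \ldots, m_s)$ reachable by a finite Cremona--degeneration tree, which is exactly the partial progress accumulated in \cite{CM,CM2,CHMR} and subsequent work.
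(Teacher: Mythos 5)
What you have written is not a proof, and you say as much in your final paragraph: the statement is the SHGH conjecture (in Gimigliano's formulation), which has been open since Segre's 1961 paper, and the paper itself presents it only as Conjecture~\ref{conjGimi} with no proof --- it explicitly remarks that ruling out reduced irreducible curves of self-intersection less than $-1$ ``turns out to be itself very hard and is still open.'' So there is no proof in the paper against which to compare yours, and your proposal cannot be accepted as a proof of the statement.

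That said, your account of the state of the art is accurate and matches the surrounding discussion in the paper: the equivalence with the Harbourne--Hirschowitz formulation (Conjecture~\ref{conjHar}) is nontrivial and established in \cite{Ci01,CHMR,CM,CM2}; the blow-up translation to $h^1(\tilde X, dH-\sum m_iE_i)\neq 0$ and the role of multiple $(-1)$-curves are exactly how the paper frames the problem; and the Cremona-reduction condition $d\geq m_{i_1}+m_{i_2}+m_{i_3}$ is the numerical hypothesis of the concise version, Conjecture~\ref{conjGimeasy}. You also correctly locate the genuine obstruction: for $s\geq 10$ the blown-up plane carries infinitely many $(-1)$-classes, the effective cone is not finitely generated, and the Cremona-plus-degeneration machinery only controls finitely many potential obstructions at a time --- this is precisely why even the weaker Nagata conjecture (Conjecture~\ref{conjNagata}) remains open for non-square $s\geq 10$. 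If you wish to present something defensible, restrict your claim to the strata where the strategy actually closes (e.g., $s\leq 9$, or the homogeneous and quasi-homogeneous multiplicity cases treated in the cited literature) and state the general case as the open conjecture it is.
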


This problem is related to the question of what self-intersections occur
for reduced irreducible curves
on the surface $X_s$ obtained by blowing up the projective plane at the $s$ points.
Blowing up the points introduces rational curves (infinitely many 
when $s>8$) of self-intersection $-1$.
Each curve $\calC \subset X_s$ corresponds to a curve $D_\calC\subset \mathbb{P}^2$ of some degree $d$
vanishing to orders $m_i$ at the $s$ points:
$$\mathbb{P}^2 \dasharrow X_s, \quad D_\calC \mapsto \calC, $$
and the self-intersection $\calC^2$ is $d^2-m_1^2-\cdots-m_s^2$ if $D_\calC\in \mathcal{L}_{2,d}(-\sum_{i=1}^s m_iP_i)$. 

\begin{example}
 An example of a curve $D_\calC$ corresponding to a curve $\calC$ such that $\calC^2=-1$ on $X_s$
is the line through two of the points; in this case,
$d=1$, $m_1=m_2=1$ and $m_i=0$ for $i>2$, so we have 
$d^2-m_1^2-\cdots-m_s^2=-1$.
\end{example}
According to the SHGH conjecture, these $(-1)$-curves should be the 
only reduced irreducible curves of negative self-intersection,
but proving that there are no others turns out to be itself very hard
and is still open.

\begin{dfn}\label{sls} 
Let $P_{1},\ldots ,P_{s}$ be $s$ points of $\mathbb{P}^ n$ in general position.
The \emph{expected dimension} of $\mathcal{L}(-\sum_{i=1}^s m_iP_i)$ is: 
$$\expdim\left(\mathcal{L}\left(-\sum_{i=1}^s m_iP_i\right)\right):=\max
\left\{\virtdim\left(\mathcal{L}\left(-\sum_{i=1}^s m_iP_i\right) \right),-1\right\},$$
where:
$$\virtdim\left(\mathcal{L}\left(-\sum_{i=1}^s m_iP_i\right) \right):={n+d\choose d}-1-\sum_{i=1}^s{m_i+n-1\choose n},$$
is the \emph{virtual dimension} of $\mathcal{L}(-\sum_{i=1}^s m_iP_i)$. 
\end{dfn}
Consider the blow-up $\pi: \widetilde{\mathbb{P}}^2 \dasharrow \mathbb{P}^2$ of the plane at the points $P_1, \ldots , P_s$. Let $E_1,\ldots ,E_s$ be the exceptional divisors corresponding to the blown-up points $P_1, \ldots , P_s$, and let $H$ be the pull-back of a general line of $\mathbb{P}^2$ via $\pi$. The strict transform of the system $\mathcal{L} := \mathcal{L}_{2,d}(\sum_{i=1}^sm_iP_i)$ is the system $\widetilde{ \mathcal{L} }= |dH - \sum_{i=1}^sm_iE_i|$. 
Consider two linear systems $\mathcal{L} := \mathcal{L}_{2,d}(\sum_{i=1}^sm_iP_i)$ and $\mathcal{L}' := \mathcal{L}_{2,d}(\sum_{i=1}^sm'_iP_i)$. Their intersection product is defined by using the intersection product of their strict transforms on $\widetilde{\mathbb{P}}^2$,~i.e.,
$$\mathcal{L}\cdot \mathcal{L}'=\widetilde{\mathcal{L}}\cdot\widetilde{\mathcal{L}'}=dd'-\sum_{i=1}^sm_im'_i.$$
Furthermore, consider the anticanonical
class $-K := -K_{\widetilde{\mathbb{P}}^2}$ of $\widetilde{\mathbb{P}}^2$ corresponding to the linear system $\mathcal{L}_{2,d}(-\sum^s_{i=1} P_i$), which, by abusing notation, we also denote by $-K$.
The adjunction formula tells us that the arithmetic genus $p_a(\widetilde{\mathcal{L}})$ of a curve
in $\widetilde{\mathcal{L}}$ is:
$$p_a(\widetilde{\mathcal{L}})=\frac{\mathcal{L}\cdot(\mathcal{L}+K)}{2}+1={d-1 \choose 2}\sum_{i=1}^s{m_i \choose 2},$$
which one defines to be the{ geometric genus }of $\mathcal{L}$, denoted $g_{\mathcal{L}}$.

This is the classical Clebsch formula. Then, Riemann--Roch says that:
\begin{align*}
\dim(\mathcal{L}) = \dim(\widetilde{\mathcal{L}}) &= \mathcal{L}\cdot(\mathcal{L}-K) +h^1(\widetilde{\mathbb{P}}^2,\widetilde{L})-h^2(\widetilde{\mathbb{P}}^2,\widetilde{L}) =\\
& = \mathcal{L}^2 - g_{\mathcal{L}} + 1 + h^1(\mathbb{P}^2,\widetilde{\mathcal{L}}) = \virtdim(\mathcal{L}) + h^1(\mathbb{P}^2,\widetilde{\mathcal{L}})
\end{align*}
because clearly, $h^2(\widetilde{\mathbb{P}}^2, \widetilde{\mathcal{L}}) = 0$. Hence,
$$\mathcal{L}~\hbox{is non-special if and only if } h^0 (\widetilde{\mathbb{P}}^2 , \widetilde{\mathcal{L}}) \cdot h^1 (\widetilde{\mathbb{P}}^2 , \widetilde{\mathcal{L}}) = 0 .$$

Now, we can see how, in this setting, special systems can naturally arise. Let us look for an irreducible curve $\calC$ on $\widetilde{\mathbb{P}}^2$, corresponding to a linear system $\mathcal{L}$ on $\mathbb{P}^2$, which is expected to exist, but, for example, its double is not expected to exist. It translates into the following set of inequalities:
$$
\begin{cases}
\virtdim(\mathcal{L})\geq 0;\\
g_{\mathcal{L}}\geq 0;\\
\virtdim(2\mathcal{L})\leq -1;
\end{cases}
$$
which is equivalent to:
$$
\begin{cases}
\calC^2 - \calC \cdot K\geq 0;\\
\calC^2 + \calC \cdot K\geq -2;\\
2\calC^2 -\calC \cdot K \leq 0;
\end{cases}.$$
and it has the only solution:
$$\calC^2 = \calC \cdot K = -1,$$
which makes all the above inequalities equalities. Accordingly, $\calC$ is a rational curve, i.e., a curve of genus zero, with self-intersection $-1$, i.e., a $(-1)$-curve. A famous theorem of Castelnuovo's (see \cite{bov} (p. 27)) says that these are the only curves that can be contracted to smooth points via a birational morphism of the surface on which they lie to another surface. By abusing terminology, the curve $\Gamma \subset \mathbb{P}^2$ corresponding to $\calC$ is also called a {$(-1)$-curve}.

More generally, one has special linear systems in the following situation. Let $\mathcal{L}$ be a linear system on $\mathbb{P}^2$, which is not empty; let $\calC$ be a $(-1)$-curve on $\mathbb{P}^2$ corresponding to a curve $\Gamma$ on $\mathbb{P}^2$, such that $\widetilde{\mathcal{L}} \cdot \calC = -N < 0$. Then, $\calC$ (respectively, $\Gamma$) splits off with multiplicity $N$ as a fixed component from all curves of $\widetilde{\mathcal{L}}$ (respectively, $\mathcal{L}$), and one has:
$$\widetilde{\mathcal{L}}=N\calC+\widetilde{\mathcal{M}}, \quad {\rm (respectively, } \mathcal{L}=N\Gamma+\mathcal{M}{\rm )},$$
where $\widetilde{\mathcal{M}}$ (respectively, $\mathcal{M}$) is the residual linear system. Then, one computes:
$$\dim(\mathcal{L}) = \dim(\mathcal{M}) \geq \virtdim(\mathcal{M}) = \virtdim(\mathcal{L}) + {N\choose 2},$$
and therefore, if $N\geq 2$, then $\mathcal{L}$ is special.

\begin{example} 
One immediately finds examples of special systems of this type by starting from the $(-1)$-curves of the previous example. For instance, consider $\mathcal{L} := \mathcal{L}_{2,2d}(- \sum_{i=1}^5 dP_i)$, which is not empty, consisting of the conic $\mathcal{L}_{2,2}(\sum^d_{i=1} P_i)$ counted $d$ times, though it has virtual dimension $-{d\choose 2}$.
\end{example}

Even more generally, consider a linear system $\mathcal{L}$ on $\mathbb{P}^2$, which is not empty, $\calC_1,\ldots ,\calC_k$ some $(-1)$-curves on 
$\widetilde{\mathbb{P}}^2$ corresponding to curves $\Gamma_1,\ldots ,\Gamma_k$ on $\mathbb{P}^2$, such that $\widetilde{\mathcal{L}}\cdot \calC_i = -N_i < 0$, $i = 1,\ldots ,k$. Then, for $i = 1,\ldots,k$,
$$ \mathcal{L}=\sum_{i=1}^k N_i \Gamma_i + \mathcal{M}, \quad 
\widetilde{\mathcal{L}} =
\sum^k_{i=1}
N_i \calC_i + \widetilde{\mathcal{M}}, \quad \text{ and } \quad \widetilde{\mathcal{M}}\cdot \calC_i =0.$$ 

As before, $\mathcal{L}$ is special as soon as there is an $i = 1,\ldots ,k$ such that $N_i \geq 2$. Furthermore, $\calC_i \cdot \calC_j =\delta_{i,j}$, because the union of two meeting $(-1)$-curves moves, according to the Riemann--Roch theorem, in a linear system of positive dimension on $\widetilde{\mathbb{P}}^2$, and therefore, it cannot be fixed for $\widetilde{\mathcal{L}}$. In this situation, the reducible curve $\calC := \sum^k_{i=1} \calC_i$ (respectively, $\Gamma := \sum^k_{i=1} N_i\Gamma_i$) is called a $(-1)$-configuration on $\widetilde{\mathbb{P}}^2$ (respectively, on $\mathbb{P}^2$).

\begin{example}
Consider $\mathcal{L} := \mathcal{L}_{2,d}(-m_0P_0 -\sum^ s_{i=1} m_iP_i)$, with $m_0 + m_i = d + N_i$,
$N_i \geq 1$. Let $\Gamma_i$ be the line joining $P_0$, $P_i$. It splits off $N_i$ times from $\mathcal{L}$. Hence: $$\mathcal{L} = \sum^s_{i=1} N_i\Gamma_i + \mathcal{L}_{2,d -\sum_{i=1}^s N_i} \left(-\left(m_0 -\sum^s_{i=1} N_i\right)P_0 -\sum^s_{i=1}\left(m_i -N_i\right)P_i\right).$$ If we require the latter system to have non-negative virtual dimension, e.g., $d\geq\sum_{i=1}^sm_i$, if $m_0 = d$ and some $N_i > 1$, we have as many special systems as we want.
\end{example}

\begin{dfn}
A linear system $\mathcal{L}$ on $\mathbb{P}^2$ is \emph{$(-1)$-reducible} if $\widetilde{\mathcal{L}} = \sum^ k_{i=1} N_i\calC_i +\widetilde{\mathcal{M}}$, where $\calC = \sum^ k_{i=1} \calC_i$ is a $(-1)$-configuration, $\widetilde{\mathcal{M}} \cdot \calC_i = 0$, for all $i = 1,\ldots , k$ and $\virtdim(\mathcal{M}) \geq 0$.
The system $\mathcal{L}$ is called \emph{$(-1)$-special} if, in addition, there is an $i = 1,\ldots , k$ such that $N_i > 1$.
\end{dfn}

\begin{conj}[Harbourne, 1986 \cite{Har}, Hirschowitz, 1989 \cite{Hir}]\label{conjHar} A linear system of plane curves $\mathcal{L}_{2,d}(-\sum_{i=1}^s m_iP_i)$ with general multiple base points is special if and only if it is $(-1)$-special, i.e., it contains some multiple rational curve of self-intersection $-1$ in the base locus. 
\end{conj}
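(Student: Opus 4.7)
The equivalence has two directions. The implication ``$(-1)$-special $\Rightarrow$ special'' is essentially done in the excerpt: if $\widetilde{\mathcal{L}} = \sum_{i=1}^k N_i \calC_i + \widetilde{\mathcal{M}}$ with $\calC_i \cdot \calC_j = \delta_{ij}$ and some $N_i \geq 2$, then $\dim(\mathcal{L}) = \dim(\mathcal{M}) \geq \virtdim(\mathcal{M}) = \virtdim(\mathcal{L}) + \sum_i \binom{N_i}{2}$, so $h^1(\widetilde{\bbP}^2,\widetilde{\mathcal{L}}) > 0$ and $\mathcal{L}$ is special. The content is therefore in the converse: every special system must contain a $(-1)$-configuration with some coefficient $\geq 2$ in its fixed part.

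For the hard direction, my plan is a double induction on $d$ and on $\sum_i m_i$, with Cremona transformations used to reduce the combinatorial possibilities. The standard quadratic Cremona centered at three base points $P_i,P_j,P_k$ replaces the class $(d; m_1,\ldots, m_s)$ by $(2d - m_i - m_j - m_k;\; \ldots,\, d - m_j - m_k,\, d - m_i - m_k,\, d - m_i - m_j,\, \ldots)$, and it preserves both speciality (since $\widetilde{\mathcal{L}}$ is unchanged up to isomorphism of the blown-up surface) and $(-1)$-speciality (since $(-1)$-curves are preserved). By iterating, I would reduce to a \emph{standard class}, in which $m_1 \geq \ldots \geq m_s$ and $d \geq m_1 + m_2 + m_3$; the desired statement for standard classes then predicts non-speciality, so the goal becomes: a standard system with non-negative virtual dimension is non-special.

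To attack standard classes, the technical engine I would use is the Horace differential method together with the degeneration of $\bbP^2$ into two components $X_0 \cup X_1$ meeting along a rational curve, in the style of Ciliberto--Miranda. Specialising a carefully chosen subset of the fat points to the double locus, one bounds $h^0(\widetilde{\bbP}^2,\widetilde{\mathcal{L}})$ from above by the dimension of the fiber product of sections on $X_0$ and $X_1$; the inductive hypothesis applied separately to the two restricted systems (each having either strictly smaller degree or strictly fewer fat points) would then yield the desired upper bound matching the expected dimension.

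\textbf{Main obstacle.} The reason this conjecture has resisted proof since 1961 is a geometric/bookkeeping problem in the degeneration step: one must choose the specialisation so that the two residual systems remain in the inductive regime and do not themselves acquire new unexpected $(-1)$-configurations (``collisions'' of $(-1)$-curves that exist on the central fiber but not on the generic one). For $s \geq 9$ general points on $\bbP^2$, the set of $(-1)$-curves on $\widetilde{\bbP}^2$ is an infinite orbit of a Weyl group of type $E_n$, and controlling this orbit through the degeneration is tantamount to proving the Nagata conjecture, which is itself wide open. A subsidiary obstacle is the classification of reduced irreducible curves of negative self-intersection on $\widetilde{\bbP}^2$: SHGH predicts these are precisely the $(-1)$-curves, but without that classification in hand, the induction can in principle produce a fixed component of self-intersection $\leq -2$ which is neither a $(-1)$-curve nor accounted for by the conjecture. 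Realistically, this is where any serious attempt along the lines above will stall, and it is the reason we present the statement as a conjecture rather than a theorem.
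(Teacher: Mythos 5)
This statement is an open conjecture: the paper offers no proof of it, and indeed explicitly records that classifying special systems ``is still an open one'' and that ``no special system has been discovered except $(-1)$-special systems.'' You are therefore right not to claim a complete argument, and your write-up is an accurate account of the state of the art rather than a proof. Your easy direction ($(-1)$-special $\Rightarrow$ special) is exactly the computation the paper carries out just before stating the conjecture: splitting off $N_i\calC_i$ from $\widetilde{\mathcal{L}}$ gives $\dim(\mathcal{L})=\dim(\mathcal{M})\geq \virtdim(\mathcal{M})=\virtdim(\mathcal{L})+\sum_i\binom{N_i}{2}$, which forces speciality once some $N_i\geq 2$ and $\virtdim(\mathcal{M})\geq 0$. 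Your proposed reduction of the converse via Cremona transformations to standard classes with $d\geq m_1+m_2+m_3$ is precisely the content of the paper's Conjecture \ref{conjGimeasy} and the discussion following it, and the obstruction you name --- controlling the infinite Weyl-orbit of $(-1)$-classes for $s\geq 9$ points and ruling out irreducible curves of self-intersection $\leq -2$, which is intertwined with Nagata's Conjecture \ref{conjNagata} --- is the genuine reason the statement remains conjectural. Nothing to correct; just be explicit, if you retain this text, that the Ciliberto--Miranda degeneration has so far only yielded partial cases (e.g.\ bounded multiplicities), not the full conjecture.
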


No special system has been discovered except $(-1)$-special systems.

Eventually, we signal a concise version of the conjecture (see \cite{Gi} (Conjecture~3.3)), which involves only a numerical condition. 

\begin{conj}\label{conjGimeasy} 
A linear system of plane curves $\mathcal{L}_{2,d}(-\sum_{i=1}^s m_iP_i)$ with general multiple base points and such that $m_1\geq m_2\geq \ldots \geq m_s \geq 0$ and $d \geq m_1+m_2+m_3$ is always non-special.
\end{conj}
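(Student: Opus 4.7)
The plan is conditional: assuming Conjecture~\ref{conjHar}, I would deduce Conjecture~\ref{conjGimeasy} by a short numerical argument that exploits the hypothesis $d \geq m_1+m_2+m_3$. Suppose for contradiction that $\mathcal{L} := \mathcal{L}_{2,d}(-\sum_{i=1}^{s} m_i P_i)$ is special. By Conjecture~\ref{conjHar}, $\mathcal{L}$ is $(-1)$-special, so the strict transform $\widetilde{\mathcal{L}}$ on the blow-up $\widetilde{\mathbb{P}}^2$ has a $(-1)$-curve $\calC$ in its base locus with multiplicity $N \geq 2$. Let $\Gamma \subset \mathbb{P}^{2}$ be the corresponding plane curve, of degree $e$ and with multiplicity $n_j$ at $P_j$, so that $\calC = eH - \sum_{j} n_j E_j$.

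The usual self-intersection and adjunction equations for a smooth rational $(-1)$-curve give
\[
 \sum_{j=1}^{s} n_j \;=\; 3e-1, \qquad \sum_{j=1}^{s} n_j^2 \;=\; e^2+1,
\]
and, since $\Gamma$ is irreducible, $n_j \leq e$ for every $j$ (B\'ezout against a generic line through $P_j$). The condition that $\calC$ appears with multiplicity $N \geq 2$ in $\widetilde{\mathcal{L}}$ reads
\[
 \widetilde{\mathcal{L}} \cdot \calC \;=\; d e - \sum_{j} m_j n_j \;=\; -N \;\leq\; -2,
\]
so $\sum_j m_j n_j \geq d e + 2$; this is what must be contradicted.

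The key combinatorial bound is
\[
  \sum_{j=1}^{s} m_j n_j \;\leq\; e\bigl(m_1+m_2+m_3\bigr),
\]
which, together with $d \geq m_1+m_2+m_3$, gives $\sum_j m_j n_j \leq de$, contradicting the previous display. To prove the bound, the rearrangement inequality lets us reindex so that $n_1 \geq n_2 \geq \cdots \geq n_s$; set $A := e-n_1$, $B := e-n_2$, $C := e-n_3$, all non-negative by $n_j \leq e$. The relation $\sum_j n_j = 3e-1$ becomes $\sum_{j \geq 4} n_j = A+B+C-1$. Using $m_j \leq m_4 \leq \min\{m_1,m_2,m_3\}$ for $j \geq 4$, one gets
\[
  \sum_{j \geq 4} m_j n_j \;\leq\; m_4(A+B+C-1) \;\leq\; m_1 A + m_2 B + m_3 C,
\]
and adding $m_1 n_1 + m_2 n_2 + m_3 n_3$ on both sides yields the claim. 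The edge cases $s \leq 3$ are handled by padding with zero multiplicities, so that the left-hand side is automatically zero.

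The main obstacle is not the combinatorial bound---which is elementary once the $(-1)$-curve data is in hand---but the fact that Conjecture~\ref{conjHar} is itself still open. A genuinely unconditional proof would require either establishing \ref{conjHar} in the regime $d \geq m_1+m_2+m_3$, plausibly via a Horace-type induction on $d$ (specializing one point onto a line, applying the Castelnuovo restriction exact sequence, and controlling $h^1$ of the residual ideal by induction), or bypassing \ref{conjHar} altogether by a direct degeneration of the scheme of fat points to a semicontinuous limit whose Hilbert function can be computed explicitly. I expect the real difficulty to lie in designing a degeneration whose residual scheme still satisfies a strictly smaller instance of the same hypothesis $d' \geq m_1'+m_2'+m_3'$, so that the induction closes---the same difficulty that, to date, has kept the SHGH circle of conjectures unresolved.
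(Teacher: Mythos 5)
The statement you are addressing is a \emph{conjecture}: the paper does not prove it, and neither do you --- you are explicit that your argument is conditional on Conjecture~\ref{conjHar}, which is precisely the status of the paper's own discussion. Within that conditional framework, though, your route is genuinely different. The paper's justification is by Cremona reduction: assuming \ref{conjHar}, a fixed multiple $(-1)$-curve could be brought by quadratic transformations to the form $-m'_iE'_i$, and the hypothesis $d\geq m_1+m_2+m_3$ guarantees that the degree transforms as $d\mapsto 2d-m_i-m_j-m_k\geq d$, so the system can never reach degree zero; this is a sketch (one must also track the transformed multiplicities and justify termination of the reduction). You instead bound $\widetilde{\mathcal{L}}\cdot\mathcal{C}$ directly from the numerical characterization of a $(-1)$-curve, $\sum_j n_j=3e-1$ and $n_j\leq e$, and your combinatorial inequality $\sum_j m_jn_j\leq e(m_1+m_2+m_3)\leq de$ is correct: the rearrangement step is legitimate (it only produces an upper bound, which is all you need), and the $A,B,C$ bookkeeping closes, contradicting $\sum_j m_jn_j\geq de+2$. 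As a derivation of Conjecture~\ref{conjGimeasy} from Conjecture~\ref{conjHar} this is cleaner and more rigorous than the paper's heuristic. Two minor remarks: you should dispose of the degenerate case where the $(-1)$-curve is an exceptional divisor $E_j$ itself (then there is no plane curve $\Gamma$ and $e=0$, but $\widetilde{\mathcal{L}}\cdot E_j=m_j\geq 0$, so it cannot split off with multiplicity $\geq 2$ --- this is where the hypothesis $m_s\geq 0$ enters); and you never actually use $\sum_j n_j^2=e^2+1$, so it can be dropped. Your closing assessment --- that the real obstruction is Conjecture~\ref{conjHar} itself --- is accurate and matches the paper's framing.
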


The idea of this conjecture comes from Conjecture \ref{conjHar} and by working on the surface $X =\tilde{\PP^2}$, which is the blow up of $\PP^2$ at the points $P_i$; in this way, the linear system $\mathcal{L}_{2,d}(-\sum_{i=1}^s m_iP_i)$ corresponds to the linear system $\tilde{\mathcal{L}} = dE_0 - m_1E_1 - \ldots-E_s$ on $X$, where $(E_0,E_1,\ldots , E_s)$ is a basis for ${\rm Pic}(X)$, and $E_0$ is the strict transform of a generic line of $\PP ^2$, while the divisors $E_1,\ldots, E_s$ are the exceptional divisors on $P_1,\ldots, P_s$. If we assume that the only special linear systems $\mathcal{L}_{2,d}(-\sum_{i=1}^s m_iP_i)$ are those that contain a fixed multiple ($-1$)-curve, this would be the same for $\tilde{\mathcal{L}}$ in ${\rm Pic}(X)$, but this implies that either we have $m_s < -1$, or we can apply Cremona transforms until the fixed multiple ($-1$)-curve becomes of type $-m'_iE'_i$ in ${\rm Pic}(X)$, where the $E'_i$'s are exceptional divisors in a new basis for ${\rm Pic}(X)$. Our conditions in Conjecture \ref{conjGimeasy} prevent these possibilities, since the $m_i$ are positive and the condition $d \geq m_1+m_2+m_3$ implies that, by applying a Cremona transform, the degree of a divisor with respect to the new basis cannot decrease (it goes from $d$ to $d'=2d-m_i-m_j-m_k$, if the Cremona transform is based on $P_i$, $P_j$ and $P_k$), hence cannot become of degree zero (as $-m'_iE'_i$ would be).

One could hope to address a weaker version of this problem. Nagata, in connection with his 
negative solution of the fourteenth Hilbert problem, made such a conjecture, 

\begin{conj}[Nagata, 1960 \cite{Na}]\label{conjNagata} The linear system $\mathcal{L}_{2,d}(-\sum_{i=1}^s m_iP_i)$ is empty as soon as $s\geq 10$ and $d\leq \sqrt{s}$.
\end{conj}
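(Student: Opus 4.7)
The plan would be to approach this via the classical specialization method of Nagata, combined with semicontinuity. First I would observe that, by upper-semicontinuity of the dimension of $\mathcal{L}_{2,d}(-\sum_{i=1}^s m_iP_i)$ as the points $P_i$ vary, it suffices to exhibit one specific configuration of $s$ points in (possibly special) position for which the linear system is empty. The generic dimension is then bounded above by this special-position dimension, hence also zero. Equivalently, after passing to the blow-up $\widetilde{\mathbb{P}}^2$, one wants to prove that $h^0(\widetilde{\mathbb{P}}^2, dH - \sum m_i E_i) = 0$ under the stated numerical hypotheses.

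For the special case $s = k^2$ a perfect square (and uniform multiplicity $m_i = m$, which is the standard Nagata setting), I would reproduce Nagata's original strategy: specialize the $s$ points so that they lie symmetrically on a smooth plane curve of carefully chosen degree, then use Bezout together with an induction on $d$ to force any hypothetical nonzero section to split off the specialization curve as a fixed component, eventually producing a contradiction with the inequality $d \leq m\sqrt{s}$. The key numerical identity $k \cdot d \leq m \cdot k^2 = m\sqrt{s}\cdot k$ is what makes the residual system drop below virtual dimension zero after the splitting.

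For general $s$ and non-uniform $m_i$, the natural plan is to reduce the emptiness question to the SHGH framework of Section \ref{fat:points:section}. If one assumes the stronger Conjecture \ref{conjGimeasy}, then specialness of $\mathcal{L}_{2,d}(-\sum m_iP_i)$ can occur only through a $(-1)$-configuration in the base locus; a case analysis on $d$ versus the $m_i$, together with the hypothesis $s \geq 10$ (needed to exclude the del Pezzo range where finitely many $(-1)$-curves exist and behave tamely), would be used to rule out any such configuration compatibly with $d \leq \sqrt{s}$, leaving emptiness as the only possibility.

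The hard part, and the reason this can only be a plan rather than a proof, is that Conjecture \ref{conjNagata} is famously still open outside the perfect-square case: Nagata's specialization argument for $s = k^2$ does not extend, because the residual numerical counts no longer close up, and the gap between $\sqrt{s}$ and the largest integer of the form $m\sqrt{k^2}$ below it is exactly what carries the obstruction. A genuine proof would require either a degeneration with a better numerical balance (controlling Seshadri constants or Newton--Okounkov bodies on the blow-up $\widetilde{\mathbb{P}}^2$), a resolution of the SHGH conjecture in the relevant range, or fundamentally new positivity tools on rational surfaces with many blown-up points.
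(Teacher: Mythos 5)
This statement is a conjecture, not a theorem: the paper offers no proof of it and explicitly remarks immediately afterwards that it is ``still open for every non-square $n\geq 10$.'' So there is nothing in the paper to compare your argument against, and your proposal correctly diagnoses the situation rather than claiming a proof. Your sketch of Nagata's original specialization argument for $s=k^2$ a perfect square (degenerating the points onto a curve, splitting it off via Bezout, and closing the induction using the exact numerical identity available only when $s$ is a square) is an accurate account of the one case that is actually known, and your identification of the obstruction in the non-square case --- the residual counts no longer close up --- is the standard and correct explanation of why the conjecture remains open. One small caution: as printed, the paper's statement of the conjecture ($d\leq\sqrt{s}$ with no reference to the multiplicities $m_i$) is a misprint; the intended condition, visible in the paper's own follow-up discussion ($d^2\geq sm^2$ with $m=(m_1+\cdots+m_s)/s$), is $d\sqrt{s}\leq\sum_{i=1}^s m_i$, and your proposal implicitly and correctly works with this corrected form. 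Your suggested reduction to Conjecture~\ref{conjGimeasy} would indeed yield Nagata's conjecture, since SHGH is known to imply it, but of course that only trades one open problem for a stronger one.
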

Conjecture \ref{conjNagata} is weaker 
than Conjecture \ref{conjHar}, yet still open for every non-square $n\geq 10$. Nagata's conjecture does not rule out the occurrence of 
curves of self-intersection less than $-1$, but it does rule out the worst of them.
In particular, Nagata's conjecture asserts that $d^2\geq sm^2$ must hold
when $s\geq10$, where $m=(m_1+\cdots+m_s)/s$. Thus, perhaps there are curves
with $d^2-m_1^2-\cdots-m_s^2<0$, such as the $(-1)$-curves mentioned above,
but $d^2-m_1^2-\cdots-m_s^2$ is (conjecturally) only as negative 
as is allowed by the condition that after averaging the multiplicities $m_i$ 
for $n\geq 10$, one must have $d^2-sm^2\geq 0$. 

\bigskip

Now, we want to find a method to study the Hilbert function of a zero-dimensional scheme. One of the most classical methods is the so-called {Horace method} (\cite{AH95}), which has also been extended with the {Horace differential} technique and led J.~Alexander and A.~Hirschowitz to prove Theorem \ref{corollAH}. We explain these methods in Sections \ref{Horacesection} and \ref{Horacediffsection}, respectively, and we resume in Section \ref{Resume AH Thm} the main steps of the Alexander--Hirschowitz theorem.


\subsubsection{La M\'ethode D'Horace}\label{Horacesection}

In this section, we present the so-called {Horace method}. 
It takes this name from the ancient Roman legend (and a play by Corneille: {Horace}, 1639) about the duel between three Roman brothers, the ``Orazi'', and three brothers from the enemy town of Albalonga, the ``Curiazi''. The winners were to have their town take over the other one. After the first clash among them, two of the Orazi died, while the third remained alive and unscathed, while the Curiazi were all wounded, the first one slightly, the second more severely and the third quite badly. There was no way that the survivor of the Orazi could beat the other three, even if they were injured, but the Roman took to his heels, and the three enemies pursued him; while running, they got separated from each other because they were differently injured and they could run at different speeds. The first to reach the Orazio was the healthiest of the Curiazi, who was easily killed. Then, came the other two who were injured, and it was easy for the Orazio to kill them one by one.

This idea of ``killing'' one member at a time was applied to the three elements in the exact sequence of an ideal sheaf (together with the ideals of a residual scheme and a ``trace'') by A. Hirschowitz in \cite{Hir1} (that is why now, we keep the french version ``Horace'' for Orazi) to compute the postulation of multiple points and count how many conditions they impose.

Even if the following definition extends to any scheme of fat points, since it is the case of our interest, we focus on the scheme of two-fat points.
\begin{dfn}
We say that a scheme $Z$ of $r$ two-fat points, defined by the ideal $I_Z$, \emph{imposes independent conditions on} the space of hypersurfaces of degree $d$ in $n+1$ variable $\mathcal{O}_{\mathbb{P}^n}(d)$ if $\mathrm{codim}_\Bbbk\left(I_Z)_d\right)$ in $S^dV$ is $\min\left\{{n+d \choose d} , r(n+1)\right\}$.
\end{dfn}

This definition, together with the considerations of the previous section and \eqref{eq: dimensions secants = HF points} allows us to reformulate the problem of finding the dimension of secant varieties to Veronese varieties in terms of independent conditions imposed by a zero-dimensional scheme of double fat points to forms of a certain~degree.

\begin{corollary}
 The $s$-th secant variety $\sigma_s(X_{n,d})$ of a Veronese variety has the expected dimension if and only if a scheme of $s$ generic two-fat points in $\mathbb{P}^n$ imposes independent conditions on $\mathcal{O}_{\mathbb{P}^n}(d)$.
\end{corollary}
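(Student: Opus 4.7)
The plan is to chain together the results already developed in the excerpt, translating ``dimension of the secant variety'' step by step into ``Hilbert function of a scheme of double fat points'', and then observing that the expected dimension on one side matches the independent-conditions statement on the other.

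First I would apply Terracini's Lemma (Lemma \ref{Terracini}) to generic points $\nu_d(P_1),\ldots,\nu_d(P_s) \in X_{n,d}$ with $P_i \in \mathbb{P}^n$. This gives
\[
\dim \sigma_s(X_{n,d}) = \dim \bigl\langle T_{\nu_d(P_1)}(X_{n,d}),\ldots,T_{\nu_d(P_s)}(X_{n,d}) \bigr\rangle.
\]
Next, writing $P_i = [a_{i,0}:\cdots:a_{i,n}]$ and $L_i = a_{i,0}x_0+\cdots+a_{i,n}x_n$, the computation in \eqref{eq:tangent} identifies the affine tangent space to $X_{n,d}$ at $\nu_d(P_i)$ with $L_i^{d-1}S_1$. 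Invoking Proposition \ref{apolaritythm} together with \eqref{eq: dimensions secants = HF points}, I would obtain
\[
\dim \sigma_s(X_{n,d}) + 1 \;=\; \dim_\Bbbk \langle L_1^{d-1}S_1,\ldots,L_s^{d-1}S_1 \rangle \;=\; \HF\!\left(R/(\wp_1^2\cap\cdots\cap\wp_s^2),\,d\right),
\]
where $\wp_i \subset R$ is the prime ideal of $P_i$. Denoting by $Z = 2P_1 + \cdots + 2P_s$ the scheme of $s$ generic two-fat points, the right-hand side is exactly $\HF(R/I_Z,d)$.

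Now I would compare the two sides. Since $\dim X_{n,d} = n$, the expected dimension of $\sigma_s(X_{n,d})$ as a subvariety of $\mathbb{P}^{\binom{n+d}{d}-1}$ is
\[
\min\!\left\{ s(n+1)-1,\; \binom{n+d}{d}-1 \right\}.
\]
On the other hand, a two-fat point in $\mathbb{P}^n$ imposes $n+1$ linear conditions (the value and all $n$ first partial derivatives), so by the definition given just before the corollary, $Z$ imposes independent conditions on $\mathcal{O}_{\mathbb{P}^n}(d)$ precisely when
\[
\HF(R/I_Z,\,d) \;=\; \min\!\left\{ \binom{n+d}{d},\; s(n+1) \right\}.
\]
Combining the displayed identity $\dim \sigma_s(X_{n,d}) + 1 = \HF(R/I_Z,d)$ with these two ``expected values'', the expected dimension is attained by $\sigma_s(X_{n,d})$ if and only if $Z$ imposes independent conditions, which is the desired equivalence.

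The only non-routine ingredient is the passage from tangent spaces on the Veronese to the inverse system of a scheme of double fat points, which is the apolarity dictionary encoded in Proposition \ref{apolaritythm}; everything else is a straightforward matching of two $\min$'s. So there is no real obstacle — the corollary is essentially a repackaging of \eqref{eq: dimensions secants = HF points} in the language of conditions imposed by fat points.
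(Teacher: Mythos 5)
Your proof is correct and is exactly the argument the paper intends: the corollary is stated there as an immediate consequence of Terracini's lemma, the apolarity identification in Proposition \ref{apolaritythm} and \eqref{eq: dimensions secants = HF points}, combined with the definition of imposing independent conditions; your write-up just makes the matching of the two $\min$'s explicit. No gaps.
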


\begin{example}\label{example: defective Veronese quadrics}
The linear system $\mathcal{L} :=\mathcal{ L}_{n,2}(-\sum_{i=1}^s 2P_i)$ is special if $s\leq n$. Actually, quadrics in $\mathbb{P}^n$ singular at $s$ independent points $P_1 , \ldots , P_s$ are cones with the vertex $\mathbb{P}^{s-1}$ spanned by $P_1 , \ldots , P_s$. Therefore, the system is empty as soon as $ s\geq n + 1$, whereas, if $s\leq n$, one easily computes:
$$\dim(\mathcal{L}) = \virtdim(\mathcal{L}) + {s\choose 2} .$$
Therefore, by \eqref{eq: dimensions secants = HF points}, this equality corresponds to the fact that $\sigma_s(\nu_2(\mathbb{P}^n))$ are defective for all $s\leq n$; see Theorem~\ref{corollAH}~(1).
\end{example}

We can now present how Alexander and Hirschowitz used the Horace method in \cite{AH95} to compute the dimensions of the secant varieties of Veronese varieties.

\begin{dfn}
Let $Z \subset \mathbb{P}^n$ be a scheme of two-fat points whose ideal sheaf is $\calI_{Z}$. Let $H\subset \mathbb{P}^n$ be a hyperplane. We define the following: 
\begin{itemize}
	\item the \emph{trace} of $Z$ with respect to $H$ is the scheme-theoretic intersection: $$\Tr_H(Z) := Z \cap H;$$ 
	\item the \emph{residue} of $Z$ with respect to $H$ is the zero-dimensional scheme defined by the ideal sheaf $\calI_{Z} : \mathcal{O}_{\mathbb{P}^n}(-H)$ and denoted $\Res_H(Z)$.
\end{itemize}
\end{dfn}

\begin{example}
Let $Z = 2P_0 \subset \mathbb{P}^n$ be the two-fat point defined by $\wp^2 = (x_1,\ldots,x_n)^2$, and let $H$ be the hyperplane $\{x_n = 0\}$. Then, the residue $\Res_H(Z)\subset \mathbb{P}^n$ is defined by: 
$$I_{\Res_H(Z)} = \wp^2 : (x_n) = (x_1,\ldots,x_n) = \wp,$$
hence, it is a simple point of $\mathbb{P}^n$; the trace $\Tr_H(Z)\subset H \simeq \mathbb{P}^{n-1}$ is defined by: 
$$I_{\Tr_H(Z)} = (x_1,\ldots,x_n)^2 \otimes \Bbbk[x_0,\ldots,x_n] / (x_n) = (\overline{x}_1,\ldots,\overline{x}_{n-1})^2,$$
where the $\overline{x}_i$'s are the coordinate of the $\bbP^{n-1} \simeq H$, i.e., $\Tr_H(Z)$ is a two-fat point in $\bbP^{n-1}$ with support at $P_0 \in H$. 
\end{example}
The idea now is that it is easier to compute the conditions imposed by the residue and by the trace rather than those imposed by the scheme $Z$; in particular, as we are going to explain in the following, this gives us an inductive argument to prove that a scheme $Z$ imposes independent conditions on hypersurfaces of certain degree. In particular, for any $d$, taking the global sections of the restriction exact sequence:
$$0\rightarrow {\mathcal I}_{\Res_{H}(Z)}(d-1)\rightarrow{\mathcal I}_{Z}(d) \rightarrow {\mathcal I}_{\Tr_H(Z)}(d)\rightarrow 0,$$
we obtain the so-called {Castelnuovo exact sequence:}
\begin{equation}
0\rightarrow (I_{\Res_{H}{(Z)}})_{d-1}\rightarrow (I_Z)_d\rightarrow (I_{\Tr_H(Z)})_d,
\end{equation}
from which we get the inequality:

\begin{equation}\label{castelnuovo}
	\dim_{\Bbbk}(I_{Z})_d \leq \dim_{\Bbbk}(I_{\Res_{H}{(Z)}})_{d-1} + \dim_{\Bbbk}(I_{\Tr_H(Z)})_d.
\end{equation}
Let us assume that the supports of $Z$ are $r$ points such that $t$ of them lie on the hyperplane $H$, i.e., $\Res_H(Z)$ is the union of $r-t$ many two-fat points and $t$ simple points in $\bbP^n$ and $\Tr_H(Z)$ is a scheme of $t$ many two-fat points in $\bbP^{n-1}$ i.e., with the notation of linear systems introduced above,
\begin{align*}
	\dim_\Bbbk(I_{\Res_H(Z)})_{d-1}& = \dim \left(\calL_{n,d-1}\left(-2\sum_{i=1}^{r-t}P_i - \sum_{i=r-t+1}^rP_i\right)\right) + 1; \\
	\dim_\Bbbk(I_{\Tr_H(Z)})_d & = \dim \left(\calL_{n-1,d}\left(-2\sum_{i=1}^{t}P_i\right)\right) + 1.
\end{align*} Assuming that:
\begin{enumerate}
\item\label{item1} $\Res_{H}(Z)$ imposes independent conditions on $\mathcal{O}_{\mathbb{P}^n}(d-1)$, i.e.,
$$\dim_\Bbbk(I_{\Res_{H}(Z)})_{d-1} = \max\left\{{d-1+n \choose n} -(r-t)(n+1)-t,0\right\},$$
\item\label{item2} and $\Tr_H(Z)$ imposes independent conditions on $\mathcal{O}_{\mathbb{P}^{n-1}}(d)$, i.e.,
$$\dim_\Bbbk (I_{\Tr_{H}(Z)})_d = \max\left\{{d+n-1 \choose n+1} -tn,0\right\},$$
\end{enumerate}
then, by \eqref{castelnuovo} and since the expected dimension (Definition \ref{sls}) is always a lower bound for the actual dimension, we conclude the following.

\begin{thm}[Brambilla--Ottaviani \cite{bo}]\label{theorem:bo}
Let $Z$ be a union of $r$ many two-fat points in $\mathbb{P}^{n}$, and let $H\subset \mathbb{P}^{n}$ be a hyperplane such that $t$ of the $r$ points of $Z$ have support on $H$.
Assume that $\Tr_H(Z_r)$ imposes independent conditions on $\mathcal{O}_H(d)$ and that $\Res_{H}{{Z_r}}$ imposes independent conditions on $\mathcal{O}_{\mathbb{P}^n}(d-1)$.
If one of the pairs of the following inequalities occurs:
\begin{enumerate}
\item\label{(i)} $tn\le{{d+n-1}\choose{n-1}}$ and $r(n+1)-tn\le{{d+n-1}\choose{n}}$,
\item\label{(ii)} $tn\ge{{d+n-1}\choose{n-1}}$ and $r(n+1)-tn\ge{{d+n-1}\choose{n}}$,
\end{enumerate}
then $Z$ imposes independent conditions on the system $\mathcal{O}_{\mathbb{P}^n}(d)$.
\end{thm}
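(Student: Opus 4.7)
The plan is to combine the Castelnuovo exact sequence \eqref{castelnuovo} with the numerical hypotheses via Pascal's identity $\binom{d+n}{n} = \binom{d-1+n}{n} + \binom{d+n-1}{n-1}$. First I would unpack the structure of the two auxiliary schemes. Since $t$ of the $r$ supports lie on $H$, the residue $\Res_H(Z)$ consists of $r-t$ two-fat points and $t$ simple points in $\mathbb{P}^n$, with total degree $(r-t)(n+1)+t = r(n+1)-tn$, while the trace $\Tr_H(Z)$ is a union of $t$ two-fat points in $H\cong\mathbb{P}^{n-1}$ of total degree $tn$. Using the hypotheses that these schemes impose independent conditions on $\mathcal{O}_{\mathbb{P}^n}(d-1)$ and $\mathcal{O}_H(d)$ respectively, I would rewrite
\begin{align*}
\dim_\Bbbk(I_{\Res_H(Z)})_{d-1} &= \max\!\left\{\tbinom{d-1+n}{n}-r(n+1)+tn,\,0\right\},\\
\dim_\Bbbk(I_{\Tr_H(Z)})_d &= \max\!\left\{\tbinom{d+n-1}{n-1}-tn,\,0\right\}.
\end{align*}

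Next, I would split into the two cases. In case \eqref{(i)}, the two inequalities make both maxima equal to the non-zero expressions, and adding them shows $r(n+1)\le\binom{d+n}{n}$, so the expected value of $\dim_\Bbbk(I_Z)_d$ is $\binom{d+n}{n}-r(n+1)$. Summing the two dimensions, the terms in $tn$ cancel and Pascal's rule collapses the binomials, giving
$$\dim_\Bbbk(I_{\Res_H(Z)})_{d-1}+\dim_\Bbbk(I_{\Tr_H(Z)})_d=\tbinom{d+n}{n}-r(n+1).$$
Feeding this into \eqref{castelnuovo} yields $\dim_\Bbbk(I_Z)_d\le\binom{d+n}{n}-r(n+1)$; since $\binom{d+n}{n}-r(n+1)$ is always a lower bound (the expected dimension is a lower bound for the actual one), equality holds and $Z$ imposes independent conditions.

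In case \eqref{(ii)}, both maxima are zero, so \eqref{castelnuovo} forces $\dim_\Bbbk(I_Z)_d=0$. Adding the two inequalities gives $r(n+1)\ge\binom{d+n}{n}$, so the expected value is also zero. Hence $Z$ imposes independent conditions in this case as well.

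The argument is essentially a bookkeeping exercise, and there is no deep obstacle: the only subtlety is keeping the degrees of the auxiliary schemes straight (a two-fat point in $\mathbb{P}^n$ has degree $n+1$, while in the hyperplane $H$ it has degree $n$, and the residue contributes the extra $t$ simple points) and verifying that Pascal's rule produces exactly the expected dimension of $(I_Z)_d$. The cases \eqref{(i)} and \eqref{(ii)} are precisely the regimes where one can avoid the intermediate ``mixed'' situation in which one of trace/residue saturates while the other does not, so that the max's in the formulas for the trace and residue dimensions behave coherently in the additive bound supplied by Castelnuovo.
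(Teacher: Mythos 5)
Your proposal is correct and follows exactly the route the paper itself takes: the Castelnuovo inequality \eqref{castelnuovo}, the hypotheses on the trace and residue to evaluate the two terms on the right-hand side, Pascal's identity to recombine the binomials, and the fact that the expected dimension is always a lower bound for $\dim_\Bbbk(I_Z)_d$; the case split \eqref{(i)}/\eqref{(ii)} is precisely what guarantees the two maxima are simultaneously ``unsaturated'' or simultaneously zero. The bookkeeping (degrees $r(n+1)-tn$ for the residue and $tn$ for the trace) matches the paper's, and you even correct a typo in the paper's formula for $\dim_\Bbbk(I_{\Tr_H(Z)})_d$, which should read $\binom{d+n-1}{n-1}$.
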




The technique was used by Alexander and Hirschowitz to compute the dimension of the linear system of hypersurfaces with double base points, and hence, the dimension of secant varieties of Veronese varieties is mainly the Horace method, via induction. 

	The regularity of secant varieties can be proven as described above by induction, but non-regularity cannot. Defective cases have to be treated case by case. We have already seen that the case of secant varieties of Veronese surfaces (Example \ref{example: defectiveness Veronese surface}) and of quadrics (Example \ref{example: defective Veronese quadrics}) are defective, so we cannot take them as the first step of the induction.


\label{first:step:induction}Let us start with $\sigma_s(X_{3,3})\subset \mathbb{P}^{19}$. The expected dimension is $4s-1$. Therefore, we expect that $\sigma_5(X_{3,3})$ fills up the ambient space. Now, let $Z$ be a scheme of five many two-fat points in general position in $\bbP^3$ defined by the ideal $I_Z = \wp_1^2\cap\ldots\cap\wp_5^2$. Since the points are in general position, we may assume that they are the five fundamental points of $\bbP^3$ and perform our computations for this explicit set of points. Then, it is easy to check that:
$$
\HF(S/\wp_1^2\cap \cdots \cap \wp_5^2,3) = 19-\dim_\Bbbk(I_Z)_3=19-0.
$$
 Hence, $\sigma_5(X_{3,3}) = \mathbb{P}^{19}$, as expected. This implies that: 
\begin{equation}\label{sigmasnu3p3}
	\dim (\sigma_s(X_{3,3})) \hbox{ is the expected one for all } s\leq 5.
\end{equation} 
Indeed, as a consequence of the following proposition, if the $s$-th secant variety is regular, so it is the $(s-1)$-th secant variety.

\begin{proposition}\label{succdef} Assume that $X$ is $s$-defective and that $\sigma_{s+1}(X)\neq \mathbb{P}^N$. Then, $X$ is also $(s+1)$-defective.
\end {proposition}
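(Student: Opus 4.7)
The plan is to use Terracini's Lemma (Lemma \ref{Terracini}) to reduce the statement to a comparison between two linear joins of tangent spaces, then exploit the elementary bound on the dimension of a join.

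Concretely, let $n = \dim X$ and fix $s+1$ generic points $P_1, \ldots, P_{s+1} \in X$. By Terracini, for a generic $Q \in \sigma_s(X)$ lying in $\langle P_1, \ldots, P_s\rangle$ one has
$$
T_Q\sigma_s(X) = \langle T_{P_1}X, \ldots, T_{P_s}X \rangle,
$$
and an analogous equality for $\sigma_{s+1}(X)$. Using the classical bound $\dim \langle V_1, V_2\rangle \leq \dim V_1 + \dim V_2 + 1$ for two projective linear subspaces $V_1, V_2 \subset \mathbb{P}^N$, applied with $V_1 = \langle T_{P_1}X,\ldots,T_{P_s}X\rangle$ and $V_2 = T_{P_{s+1}}X$ (which has dimension $n$), I would deduce the key inequality
$$
\dim \sigma_{s+1}(X) \leq \dim \sigma_s(X) + n + 1.
$$

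I would then split according to which term realizes the expected dimension. If $(s+1)n + s \leq N$, the $s$-defectivity hypothesis gives $\dim \sigma_s(X) < sn + s - 1$, and substituting into the key inequality yields
$$
\dim \sigma_{s+1}(X) < (sn+s-1) + n + 1 = (s+1)n + s,
$$
so $\sigma_{s+1}(X)$ is defective and the hypothesis $\sigma_{s+1}(X) \neq \mathbb{P}^N$ is not even needed in this range. In the complementary range $(s+1)n + s > N$, the expected dimension of $\sigma_{s+1}(X)$ is $N$, and the assumption $\sigma_{s+1}(X) \neq \mathbb{P}^N$ directly forces $\dim \sigma_{s+1}(X) < N$, i.e.\ $(s+1)$-defectivity.

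There is no hard step here: the only subtlety is bookkeeping the two cases based on whether the expected dimension is $(s+1)n+s$ or $N$, and making sure one invokes the hypothesis $\sigma_{s+1}(X)\neq\mathbb{P}^N$ exactly when needed (namely, to exclude the trivial case in which $\sigma_{s+1}(X)$ happens to fill the ambient space despite the defect of $\sigma_s(X)$). The whole argument is a one-line consequence of Terracini's Lemma and the join inequality.
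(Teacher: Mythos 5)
Your proof is correct and follows essentially the same route as the paper's: Terracini's lemma identifies the generic tangent space of $\sigma_{s+1}(X)$ as the span of that of $\sigma_s(X)$ with one more tangent space to $X$, and the join inequality then gives $\dim\sigma_{s+1}(X)\leq\dim\sigma_s(X)+n+1$, from which defectivity follows by the same case analysis on which term realizes the expected dimension. If anything, your explicit bookkeeping of when the hypothesis $\sigma_{s+1}(X)\neq\mathbb{P}^N$ is actually needed is slightly more careful than the paper's phrasing.
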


\begin{proof} 
Let $\delta_s$ be the $s$-defect of $X$. By assumptions and by Terracini's lemma, if $P_1,\dots,P_s\in X$ are general points, then the span $T_{P_1,\ldots,P_s} := \langle T_{P_1}X,\ldots,T_{P_s}X\rangle$, which is the tangent space at a general point of $\sigma_s(X)$, has projective dimension $\min(N, sn+s-1)-\delta_s$. Hence, adding one general point $P_{s+1}$, the space $T_{P_1,\dots,P_s, P_{s+1}}$, which is the span of
$T_{P_1,\dots,P_s}$ and $T_{P_{s+1}}X$, has dimension at most $\min\{N,sn+s-1\}-\delta_s+n+1$.
This last number is smaller than $N$, while it is clearly smaller than $(s+1)n+s$. Therefore, $X$ is $(s+1)$-defective.
\end{proof}
%

\label{sigma834}In order to perform the induction on the dimension, we would need to study the case of $d=4$, $s=8$ in $\mathbb{P}^3$, i.e., $\sigma_8(X_{3,4})$. We need to compute $\HF_Z(4) = \HF(\Bbbk[x_0, \ldots , x_3]/(\wp_1^2 \cap \cdots \cap \wp_8^2), 4)$. In order to use the Horace lemma, we need to know how many points in the support of scheme $Z$ lie on a given hyperplane. The good news is {upper semicontinuity}\label{upper:semicontinuity}, which allows us to specialize points on a hyperplane. In fact, if the specialized scheme has the expected Hilbert function, then also the general scheme has the expected Hilbert function (as before, this argument cannot be used if the specialized scheme does not have the expected Hilbert function: this is the main reason why induction can be used to prove the regularity of secant varieties, but not the defectiveness). In this case, we choose to specialize four points on $H$, i.e., $Z = 2P_1+\ldots+2P_8$ with $P_1,\ldots,P_4 \in H$. Therefore,
\begin{itemize}
	\item $\Res_H(Z)=P_1+\cdots + P_4+ 2P_5+ \cdots + 2P_8 \subset \bbP^3$;
	\item $\Tr_H(Z_8)=2 \widetilde{P_1} + \cdots +2\widetilde{P_4} \subset H$, where $2\widetilde{P_i}$'s are two-fat points in $\bbP^2$
\end{itemize}

Consider Castelunovo Inequality \eqref{castelnuovo}. Four two-fat points in $\mathbb{P}^3$ impose independent conditions to $\mathcal{O}_{\mathbb{P}^3}(3)$ by \eqref{sigmasnu3p3}, then adding four simple general points imposes independent conditions; therefore, $\Res_{H}{{Z}}$ imposes the independent condition on $\mathcal{O}_{\mathbb{P}^3}(3)$. Again, assuming that the supports of $\Tr_H(Z)$ are the fundamental points of $\bbP^2$, we can check that it imposes the independent condition on $\mathcal{O}_{\mathbb{P}^2}(4)$. Therefore, 
\begin{align*}
	\max\left\{{4+3 \choose 3} - 8 \cdot 4,0\right\} & = 3 = \expdim_\Bbbk (I_Z)_4 \leq \dim_\Bbbk (I_Z)_4 \\
	& \leq \dim_\Bbbk (I_{\Res_H(Z)})_3 + \dim_\Bbbk (I_{\Tr_H(Z)})_4 \\
	& = \max\left\{{3+3 \choose 3} - 4\cdot 4 - 4,0\right\} + \max\left\{ {2+4 \choose 2} - 4 \cdot 3,0\right\} \\
	& = \max\{20 - 16 - 4,0\} + \max\{15-12,0\} = 3.
\end{align*}
In conclusion, we have proven that
$$\sigma_s(X_{3,4}) \hbox{ has the expected dimension for any } s\leq 8.$$

Now, this argument cannot be used to study $\sigma_9(X_{3,4})$ because it is one of the defective cases, but we can still use induction on $d$.

\label{fail:Horace}In order to use induction on the degree $d$, we need a starting case, that is the case of cubics. We have done $\mathbb{P}^3$ already; see \eqref{sigmasnu3p3}. Now, $d=3$, $n=4$, $s=7$ corresponds to a defective case. Therefore, we need to start with $d=3$ and $n=5$. We expect that $\sigma_{10}(X_{5,3})$ fills up the ambient space. Let us try to apply the Horace method as above. The hyperplane $H$ is a $\mathbb{P}^4$; one two-fat point in $\mathbb{P}^4$ has degree five, so we can specialize up to seven points on $H$ (in $\mathbb{P}^4$, there are exactly $35=7\times 5$ cubics), but seven two-fat points in $\mathbb{P}^4$ are defective in degree three; in fact, if $Z = 2P_1+\ldots+2P_7\subset \bbP^4$, then $\dim_\Bbbk\left(I_Z\right)_3=1$. Therefore, if we specialize seven two-fat points on a generic hyperplane $H$, we are ``not using all the room that we have at our disposal'', and \eqref{castelnuovo} does not give the correct upper bound. In other words, if we want to get a zero in the trace term of the Castelunovo exact sequence, we have to ``add one more condition on $H$''; but, to do that, we need a more refined version of the Horace method.


\subsubsection{La m\'ethode d'Horace Differentielle}\label{Horacediffsection}

The description we are going to give follows the lines of \cite{BCGI2}. 

\begin{dfn}
An ideal $I$ in the algebra of formal functions $\Bbbk
[[{x},y]]$, where ${x} = (x_1,\ldots ,x_{n-1})$, is called a \emph{
vertically-graded} (with respect to $y$) ideal if: 
$$
I = I_0 \oplus I_1y \oplus \cdots \oplus I_{m-1}y^{m-1}\oplus (y^m)
$$
where, for $i = 0,\ldots ,m-1$, $I_i\subset \Bbbk [[{x}]]$ is an
ideal.
\end{dfn}
\begin{dfn}
Let $Q$ be a smooth $n$-dimensional integral scheme, and let $D$ be a
smooth irreducible divisor on $Q$. We say that $Z \subset Q$ is a
\emph{vertically-graded subscheme} of $Q$ with \emph{base} $D$ and \emph{support}
$z\in D$, if $Z$ is a zero-dimensional scheme with support at the point
$z$ such that there is a regular system of parameters $({x},y)$
at $z$ such that $y=0$ is a local equation for $D$ and the ideal of
$Z$ in $\widehat {\mathcal O}_{Q,z} \cong \Bbbk [[{x},y]]$ is
vertically graded.
\end{dfn}
\begin{dfn}
Let $Z\in Q$ be a vertically-graded subscheme with base $D$, and let $p\geq 0$
be a fixed integer. We denote by $\Res^{p}_D(Z)\in Q$ and
$\Tr^{p}_D(Z)\in D$ the closed subschemes defined, respectively, by
the ideals sheaves:
$$ {\mathcal I}_{\Res^{p}_D(Z)} := {\mathcal I}_{Z}+
({\mathcal I}_{Z}:{\mathcal I}^{p+1}_D){\mathcal I}^{p}_D, \qquad \text{ and } \qquad
{\mathcal I}_{\Tr^{p}_D(Z),D} := ({\mathcal I}_{Z}:{\mathcal I}^{p}_D)\otimes
{\mathcal O}_D. $$
\end{dfn}

In $\Res^{p}_D(Z)$, we remove from $Z$ the $(p+1)$-th ``slice'' of $Z$, while in
$\Tr^{p}_D(Z)$, we consider only the $(p+1)$-th ``slice''. Notice
that for $p=0$, this recovers the usual trace $\Tr_D(Z)$ and residual schemes $\Res_D(Z)$.

\begin{example}
Let $Z\subset \mathbb{P}^2$ be a three-fat point defined by $\wp^3$, with support at a point $P \in H$ lying on a plane $H \subset \mathbb{P}^3$. We may assume $\wp = (x_1,x_2)$ and $H = \{x_2 = 0\}$. Then, $3P$ is vertically graded with respect to $H$:

\begin{minipage}{0.43\textwidth}
$$
	\wp^3 = (x_1^3) \oplus (x_1^2) x_2 \oplus (x_1) x_2^2 \oplus (x_2^3),
$$
\end{minipage}
\begin{minipage}{0.5\textwidth}
\begin{center}
\begin{tikzpicture}[line cap=round,line join=round,x=1.0cm,y=1.0cm,scale = 0.5]
\clip(-1.5,-1.) rectangle (4.,3.5);
\draw [line width=1.pt,dash pattern=on 1pt off 2pt] (0.,-1.) -- (0.,3.);
\draw [color = black] (-0.5,3.0) node {$x_2$};
\draw [line width=1.pt,dash pattern=on 1pt off 2pt,domain=-2.:3.] plot(\x,{(-0.-0.*\x)/-1.});
\draw [color = black] (3.5,-0.5) node {$x_1$};
\begin{scriptsize}
\draw [fill=black] (1.,0.) circle (7.5pt);
\draw [fill=black] (0.,1.) circle (7.5pt);
\draw [fill=black] (1.,1.) circle (7.5pt);
\draw [fill=black] (0.,2.) circle (7.5pt);
\draw [fill=black] (0.,0.) circle (7.5pt);
\draw [fill=black] (2.,0.) circle (7.5pt);
\end{scriptsize}
\end{tikzpicture}
\end{center}
{\footnotesize Visualization of a three-fat point in $\bbP^2$: each dot corresponds to a generator of the local ring, which is Artinian.}
\end{minipage}

\vspace{8pt}
Now, we compute all residues (white dots) and traces (black dots) as follows:

\vspace{10pt}
\begin{minipage}{0.7\textwidth}
\noindent {Case} $p=0$. \quad $\Res^0_H(P)\subset\bbP^2$ is defined by: 
$$
	\wp^3 : (x_2) = \wp^2 = (x_1^2) \oplus (x_1) x_2 \oplus (x_2^2),
$$
i.e., it is a two-fat point in $\bbP^2$, while $\Tr^0_H(P)$ is defined by:
$$
	\widetilde{\wp}^3 = \wp^3 \otimes \Bbbk[[x_0,x_1,x_2]]/(x_2) = (\overline{x}_1^3),
$$
where $\overline{x}_0,\overline{x}_1$ are the coordinates on $H$, i.e., it is a three-fat point in $\bbP^1$.
\end{minipage}
\begin{minipage}{0.3\textwidth}
\begin{center}
\begin{tikzpicture}[line cap=round,line join=round,x=1.0cm,y=1.0cm,scale = 0.5]
\clip(-1.5,-1.) rectangle (4.,3.5);
\draw [line width=1.pt,dash pattern=on 1pt off 2pt] (0.,-1.) -- (0.,3.);
\draw [color = black] (-0.5,3.0) node {$x_2$};
\draw [line width=1.pt,dash pattern=on 1pt off 2pt,domain=-2.:3.] plot(\x,{(-0.-0.*\x)/-1.});
\draw [color = black] (3.5,-0.5) node {$x_1$};
\begin{scriptsize}
\draw (1.,0.) circle (7.5pt);
\draw [fill=black] (0.,1.) circle (7.5pt);
\draw [fill=black] (1.,1.) circle (7.5pt);
\draw [fill=black] (0.,2.) circle (7.5pt);
\draw (0.,0.) circle (7.5pt);
\draw (2.,0.) circle (7.5pt);
\end{scriptsize}
\end{tikzpicture}
\end{center}
\end{minipage}

\medskip

\begin{minipage}{0.7\textwidth}
\noindent {Case} $p=1$. \quad $\Res^1_H(3P)\subset\bbP^2$ is a zero-dimensional subscheme of $\bbP^2$ of length four given by: 
\begin{align*}
	\wp^3 + (\wp^3 : (x_2^2)) x_2 & = (x_2^2,x_1x_2,x_1^3) = \\
	 & = (x_1^3) \oplus (x_1)x_2 \oplus (x_2)^2;
\end{align*}
roughly speaking, we ``have removed the second slice'' of $3P$; while, $\Tr^1_H(3P)$ is given by:
$$
	(\wp^3 : (x_2)) \otimes \Bbbk[[x_0,x_1,x_2]] / (x_2) = (\overline{x}_1^2).
$$
\end{minipage}
\begin{minipage}{0.3\textwidth}
\begin{center}
\begin{tikzpicture}[line cap=round,line join=round,x=1.0cm,y=1.0cm,scale = 0.5]
\clip(-1.5,-1.) rectangle (4.,3.5);
\draw [line width=1.pt,dash pattern=on 1pt off 2pt] (0.,-1.) -- (0.,3.);
\draw [color = black] (-0.5,3.0) node {$x_2$};
\draw [line width=1.pt,dash pattern=on 1pt off 2pt,domain=-2.:3.] plot(\x,{(-0.-0.*\x)/-1.});
\draw [color = black] (3.5,-0.5) node {$x_1$};
\begin{scriptsize}
\draw (1.,0.) circle (7.5pt);
\draw [fill=black] (0.,1.) circle (7.5pt);
\draw [fill=black] (1.,1.) circle (7.5pt);
\draw [fill=black] (0.,2.) circle (7.5pt);
\draw (0.,0.) circle (7.5pt);
\draw [fill = black] (2.,1.) circle (7.5pt);
\end{scriptsize}
\end{tikzpicture}
\end{center}
\end{minipage}

\vspace{10pt}
\begin{minipage}{0.7\textwidth}
\noindent {Case} $p=2$. \quad $\Res^2_H(P)\subset\bbP^2$ is a zero-dimensional scheme of length five given by: 
\begin{align*}
	\wp^3 + (\wp^3 : (x_2^3)) x_2^2 & = (x_2^2,x_1^2x_2,x_1^3) = \\
	 & = (x_1^3) \oplus (x_1^2)x_2 \oplus (x_2)^2;
\end{align*}
roughly speaking, we ``have removed the third slice'' of $3P$; while $\Tr^2_H(P)$ is given by:
$$
	(\wp^3 : (x_2^2)) \otimes \Bbbk[[x_0,x_1,x_2]] / (x_2) = (\overline{x}_1).
$$
\end{minipage}
\begin{minipage}{0.3\textwidth}
\begin{center}
\begin{tikzpicture}[line cap=round,line join=round,x=1.0cm,y=1.0cm,scale = 0.5]
\clip(-1.5,-1.) rectangle (4.,3.5);
\draw [line width=1.pt,dash pattern=on 1pt off 2pt] (0.,-1.) -- (0.,3.);
\draw [color = black] (-0.5,3.0) node {$x_2$};
\draw [line width=1.pt,dash pattern=on 1pt off 2pt,domain=-2.:3.] plot(\x,{(-0.-0.*\x)/-1.});
\draw [color = black] (3.5,-0.5) node {$x_1$};
\begin{scriptsize}
\draw [fill=black] (1.,2.) circle (7.5pt);
\draw [fill=black] (0.,1.) circle (7.5pt);
\draw [fill=black] (1.,1.) circle (7.5pt);
\draw [fill=black] (0.,2.) circle (7.5pt);
\draw (0.,0.) circle (7.5pt);
\draw [fill=black] (2.,1.) circle (7.5pt);
\end{scriptsize}
\end{tikzpicture}
\end{center}
\end{minipage}
\end{example}
Finally, let $Z_1,\ldots ,Z_r\in Q$ be vertically-graded subschemes with base $D$
and support $z_i$; let $Z=Z_1\cup \cdots \cup Z_r$, and set ${\bf
p}=(p_1,\ldots ,p_r)\in {\Bbb N}^r$. We write:
$$\Tr^{p}_D(Z):= \Tr^{p_1}_D(Z_1)\cup \cdots \cup \Tr^{p_r}_D(Z_r),
\quad \Res^{p}_D(Z):= \Res^{p_1}_D(Z_1)\cup \cdots \cup
\Res^{p_r}_D(Z_r).$$
We are now ready to formulate the {Horace differential lemma}.
\begin{proposition}[{Horace differential lemma}, {\cite[Proposition 9.1]{AH00}}] 
Let $H$ be a hyperplane in $\mathbb{P}^n$, and let
$W\subset \mathbb{P}^ n$ be a zero-dimensional closed subscheme. Let $Y_1,\ldots ,Y_r,\, Z_1,\ldots ,Z_r$ be zero-dimensional irreducible
subschemes of $\mathbb{P}^n$ such that $Y_i\cong Z_i$, $i=1,\ldots ,r$, $Z_i$
has support on $H$ and is vertically graded with base $H$, and the
supports of $Y=Y_1\cup \cdots \cup Y_r$ and $Z=Z_1\cup \cdots \cup Z_r$ are
generic in their respective Hilbert schemes. Let ${\bf
p}=(p_1,\ldots ,p_r)\in {\Bbb N}^r$. Assume:
\begin{enumerate}
\item $H^0({\mathcal I}_{Tr_HW\cup Tr^{p}_H(Z),H}(d))=0$ and
\item $H^0({\mathcal I}_{Res_HW\cup Res^{p}_H(Z)}(d-1))=0$;
\end{enumerate} 
then,
$$ H^0({\mathcal I}_{W\cup Y}\,(d))=0. $$
\end{proposition}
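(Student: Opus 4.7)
The plan is to prove vanishing via a one-parameter specialization of $Y$ onto $H$, analyzed through a refinement of the standard Castelnuovo sequence that records higher-order behavior in the specialization parameter. The overall philosophy is the same as the standard Horace method, but instead of degenerating each $Y_i$ completely onto $H$ (which would yield the crude bound with $\Tr_H$ and $\Res_H$), we keep track of how the family approaches $H$ to $(p_i+1)$-th order, and recover the refined bound with $\Tr_H^{\mathbf{p}}$ and $\Res_H^{\mathbf{p}}$.

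First, I would construct a flat family $\mathcal{Y} \to \mathbb{A}^1$ with general fibre $\mathcal{Y}_t \cong Y$ (for $t \ne 0$) and special fibre $\mathcal{Y}_0 = Z$. This is possible because each $Y_i \cong Z_i$ as an abstract scheme, and the genericity of $Y$ lets us arrange the family so that each $Y_i(t)$ moves off $H$ transversely in $t$, in coordinates compatible with the vertical grading of $Z_i$. By upper semicontinuity of $t \mapsto h^0(\mathcal{I}_{W \cup \mathcal{Y}_t}(d))$, it suffices to exhibit \emph{some} $t$ for which the cohomology vanishes; equivalently, to rule out a nonzero formal family of sections $F(t) \in H^0(\mathcal{I}_{W \cup \mathcal{Y}_t}(d))$.

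Suppose such an $F(t)$ exists and write $F(t) = \sum_{k \geq 0} F_k t^k$ after normalizing. The constant term $F_0$ vanishes on $W \cup Z$, so its restriction to $H$ is a section of $\mathcal{I}_{\Tr_H(W) \cup \Tr_H(Z), H}(d)$, and this restriction lifts to a section of $\mathcal{I}_{\Tr_H W \cup \Tr_H^{\mathbf{p}}(Z),H}(d)$ (since $\Tr_H Z \subseteq \Tr_H^{\mathbf{p}} Z$ only ``drops deeper layers''); by hypothesis (1), $F_0|_H = 0$, so $F_0$ is divisible by the equation $h$ of $H$. Writing $F_0 = h G_0$, the section $G_0$ lies in $H^0(\mathcal{I}_{\Res_H W \cup \Res_H Z'}(d-1))$ for a scheme $Z'$ which differs from $Z$ by one ``vertical slice''. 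One then combines $G_0$ with $F_1$ to continue the analysis at the next order in $t$. Iterating this argument for each $i$ up to order $p_i+1$ (using the vertical grading of each $Z_i$ as a bookkeeping device), the successive Laurent coefficients of $F$ are constrained precisely by the ideals $\mathcal{I}_{\Tr_H W \cup \Tr_H^{\mathbf{p}} Z, H}(d)$ and $\mathcal{I}_{\Res_H W \cup \Res_H^{\mathbf{p}} Z}(d-1)$. Hypotheses (1) and (2) force all $F_k$ to vanish, contradicting $F(t) \ne 0$.

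The main obstacle is the bookkeeping in the iterative step: one must verify that after peeling off $p_i$ layers from $Z_i$ via the degeneration, the effective trace/residue ideals that appear are exactly $\mathcal{I}_{\Tr_H^{p_i}(Z_i),H}$ and $\mathcal{I}_{\Res_H^{p_i}(Z_i)}$ as defined through $(\mathcal{I}_Z : \mathcal{I}_H^{p})$. This rests on a careful local computation in a system of parameters $(x,y)$ at each support point $z_i$ with $H = \{y=0\}$, in which the family $Y_i(t)$ has an explicit expression respecting the vertical grading; the genericity of the support and of the family is what prevents accidental degeneracies during the iteration.
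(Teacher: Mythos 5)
The paper does not actually prove this proposition: it is quoted verbatim from \cite{AH00} (Proposition 9.1) and used as a black box, so there is no in-paper argument to compare yours with. Judged on its own, your proposal does identify the correct overall strategy — the Alexander--Hirschowitz proof does proceed by degenerating $Y$ onto the vertically graded scheme $Z\subset H$ along a one-parameter family, invoking semicontinuity, and analyzing the deformation order by order in $t$ — but the step that constitutes the actual content of the lemma is asserted rather than carried out.

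Concretely: after hypothesis (1) gives $F_0=hG_0$, all you know is that $G_0$ vanishes on $\Res_H W\cup \Res^{\mathbf 0}_H(Z)$, the scheme obtained by deleting the \emph{first} slice of each $Z_i$. Hypothesis (2) concerns $\Res^{\mathbf p}_H(Z)$, which deletes the $(p_i+1)$-th slice; since the slices of a vertically graded ideal satisfy $I_j\subseteq I_{j+1}$, one has $\Res^{\mathbf 0}_H(Z)\subseteq\Res^{\mathbf p}_H(Z)$, i.e.\ the scheme in (2) is \emph{strictly larger} whenever some $p_i>0$, so (2) cannot be applied to $G_0$ as it stands. The whole point of the differential lemma is the mechanism by which the transverse motion of $Y_i(t)$ off $H$, read through the coefficients $F_1,F_2,\dots$, forces $G_0$ to vanish on the additional slices $1,\dots,p_i$ of $Z_i$ at the cost of exactly the $(p_i+1)$-th one — this is the d\'evissage of \cite{AH00}, and your sketch replaces it with ``one then combines $G_0$ with $F_1$'' and ``a careful local computation''. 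Until that computation is supplied, the proof does not close. Two smaller remarks: the inclusion you invoke in the first step is written backwards (the correct statement is $\Tr^{\mathbf p}_H(Z)\subseteq\Tr_H(Z)$, which is what makes hypothesis (1) applicable to $F_0|_H$), and the passage from ``$h^0(\mathcal{I}_{W\cup Y_t}(d))>0$ for all $t\neq 0$'' to a formal family $F(t)$ with nonzero leading coefficient requires a normalization/base-change argument, though that part is standard.
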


For two-fat points, the latter result can be rephrased as follows.

\begin{proposition}[{Horace differential lemma for two-fat points}]
Let $H\subset \mathbb{P}^n$ be a hyperplane, $P_1 , \ldots , P_r\in \mathbb{P}^n$ be generic points and $W \subset \mathbb{P}^n$ be a zero-dimensional scheme. Let $Z = 2P_1+\cdots + 2P_r\subset \mathbb{P}^n$, and let $Z' = 2P_1'+\ldots+2P_r'$ such that the $P_i'$'s are generic points on $H$. Let $D_{2,H}(P_i') = 2P_i'\cap H$ be zero-dimensional schemes in $\bbP^n$. Hence, let: 
\begin{align*}
	\overline{Z} &= \Res_H(W) + D_{2,H}(P_1') + \ldots + D_{2,H}(P_r') \subset \bbP^n, \quad \text{ and } \\
	\overline{T} &= \Tr_H(W) + P'_1+\ldots+P'_r \subset H \simeq \bbP^{n-1}.
\end{align*}
Then, if the following two conditions are satisfied:
\begin{align*}
\text{degue}: \quad & \dim_\Bbbk(I_{\overline{Z}})_{d-1}=0; \\
\text{dime}: \quad & \dim_\Bbbk(I_{\overline{T}})_d=0,
\end{align*}
then, $\dim_\Bbbk (I_{W+Z})_d = 0$. 
\end{proposition}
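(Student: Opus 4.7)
The plan is to deduce the stated result directly from the general Horace differential lemma (Proposition 9.1 of \cite{AH00}) by specializing the vertically-graded subschemes $Z_i$ to two-fat points on $H$ and making a uniform choice of the shift vector ${\bf p} = (1,1,\ldots,1)$. Concretely, I set $Z_i := 2P_i'$ (two-fat points at the generic $P_i' \in H$) and $Y_i := 2P_i$ (two-fat points at the generic $P_i \in \mathbb{P}^n$). The isomorphisms $Y_i \cong Z_i$ hold for free since both sides are abstractly two-fat points of $\mathbb{P}^n$, and the genericity of the $P_i$ and $P_i'$ guarantees that $Y = Y_1 \cup \cdots \cup Y_r$ and $Z = Z_1 \cup \cdots \cup Z_r$ have generic support in their respective Hilbert schemes.

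The key local verification is carried out in a regular system of parameters $(x_1,\ldots,x_{n-1},y)$ at $P_i'$ such that $y=0$ is a local equation for $H$. The ideal of $Z_i$ is $\mathfrak{m}^2 = (x_1,\ldots,x_{n-1},y)^2$, which admits the vertically-graded decomposition
\[
\mathfrak{m}^2 \;=\; (x_1,\ldots,x_{n-1})^2 \;\oplus\; (x_1,\ldots,x_{n-1})\, y \;\oplus\; (y^2),
\]
so $Z_i$ is indeed vertically graded with base $H$. A short colon-ideal computation then gives $({\mathcal I}_{Z_i} : {\mathcal I}_H) = \mathfrak{m}$ (since $yf \in \mathfrak{m}^2$ forces $f \in \mathfrak{m}$) and $({\mathcal I}_{Z_i} : {\mathcal I}_H^2) = (1)$ (since $y^2 \in \mathfrak{m}^2$ already). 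Therefore
\[
{\mathcal I}_{\Res^1_H(Z_i)} \;=\; \mathfrak{m}^2 + (y), \qquad {\mathcal I}_{\Tr^1_H(Z_i),\, H} \;=\; \mathfrak{m}\otimes {\mathcal O}_H.
\]
Interpreted globally, $\Res^1_H(Z_i) = D_{2,H}(P_i')$ (the scheme-theoretic intersection $2P_i'\cap H$) and $\Tr^1_H(Z_i) = P_i'$ as a simple point of $H$.

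With these identifications in hand, the two hypotheses of the general Horace differential lemma translate verbatim into the hypotheses stated here: the condition $H^0\big({\mathcal I}_{\Tr_H(W)\,\cup\,\Tr^1_H(Z),\,H}(d)\big) = 0$ becomes $\dim_\Bbbk (I_{\overline{T}})_d = 0$, the \emph{dime} condition, and $H^0\big({\mathcal I}_{\Res_H(W)\,\cup\,\Res^1_H(Z)}(d-1)\big) = 0$ becomes $\dim_\Bbbk (I_{\overline{Z}})_{d-1} = 0$, the \emph{degue} condition. The conclusion $H^0({\mathcal I}_{W\cup Y}(d)) = 0$ of the general lemma then reads exactly $\dim_\Bbbk (I_{W+Z})_d = 0$, as claimed.

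The only substantive step is the local colon-ideal computation identifying $\Res^1_H$ and $\Tr^1_H$ of a two-fat point centered on $H$; everything else amounts to bookkeeping and an appeal to the black-box result of Alexander--Hirschowitz. The genuine technical difficulty is concentrated in the proof of that black box (which proceeds via a flat degeneration argument and is not reproduced here), so the main obstacle, if any, lies not in the specialization but in being comfortable invoking Proposition 9.1 of \cite{AH00}.
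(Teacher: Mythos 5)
Your proposal is correct and follows exactly the route the paper intends: the paper states this proposition as a direct rephrasing of the general Horace differential lemma for the special case of two-fat points, and your specialization with ${\bf p}=(1,\ldots,1)$, together with the local colon-ideal computation identifying $\Res^1_H(2P_i')=D_{2,H}(P_i')$ and $\Tr^1_H(2P_i')=P_i'$, is precisely the verification that makes this rephrasing rigorous. The computation and the translation of the two hypotheses are both accurate.
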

\label{win:diff:horace}Now, with this proposition, we can conclude the computation of $\sigma_{10}(X_{5,3})$. Before Section \ref{Horacediffsection}, we were left with the problem of computing the Hilbert function in degree three of a scheme $Z = 2P_1+ \cdots + 2P_{10}$ of ten two-fat points with generic support in $\mathbb{P}^5$: since a two-fat point in $\bbP^5$ imposes six conditions, the expected dimension of $(I_Z)_3$ is zero. In this case, the ``standard'' Horace method fails, since if we specialize seven points on a generic hyperplane, we lose one condition that we miss at the end of the game. We apply the Horace differential method to this situation. Let $P'_1,\ldots,P'_{8}$ be generic points on a hyperplane $H \simeq \bbP^4 \subset \bbP^5$. Consider:
\begin{align*}
	\overline{Z} & = P'_1 + \ldots + P'_7 + D_{2,H}(P_8) + 2P_9 + 2P_{10} \subset \bbP^5, \quad \text{ and }\\
	\overline{T} & = 2P'_1 + \ldots + 2P'_7 + P'_8 \subset H \simeq \bbP^4.
\end{align*}
Now, \textit{{dime}} is satisfied because we have added on the trace exactly the one condition that we were missing. It is not difficult to prove that \textit{{degue}} is also satisfied: quadrics through $\overline{Z}$ are cones with the vertex the line between $P_9$ and $P_{10}$; hence, the dimension of the corresponding linear system equals the dimension of a linear system of quadrics in $\bbP^4$ passing through a scheme of seven simple points and two two-fat points with generic support. Again, such quadrics in $\bbP^4$ are all cones with the vertex the line passing through the support of the two two-fat points: hence, the dimension of the latter linear system equals the dimension of a linear system of quadrics in $\bbP^3$ passing through a set of eight simple points with general support. This has dimension zero, since the quadrics of $\bbP^3$ are ten. In conclusion, we obtain that the Hilbert function in degree three of a scheme of ten two-fat points in $\bbP^5$ with generic support is the expected one, i.e., by \eqref{eq: dimensions secants = HF points}, we conclude that $\sigma_{10}(X_{5,3})$ fills the ambient space.

\subsubsection{Summary of the Proof of the Alexander--Hirshowitz Theorem}\label{Resume AH Thm}

We finally summarize the main steps of the proof of Alexander--Hirshowitz theorem (Theorem~\ref{corollAH}):
\begin{enumerate}
	\item The dimension of $\sigma_s(X_{n,d})$ is equal to the dimension of its tangent space at a general point $Q$;
	\item By Terracini's lemma (Lemma \ref{Terracini}), if $Q$ is general in $\langle P_1, \ldots, P_s \rangle$, with $P_1, \ldots , P_s\in X$ general points, then: $$\dim(T_Q \sigma_s(X))=\dim(\langle T_{P_1}X, \ldots, T_{P_s}X\rangle);$$
	\item By using the apolarity action (see Definition \ref{ApolarityAction}), one can see that: 
	$$\dim(\langle T_{P_1}X_{n,d}, \ldots, T_{P_s}X_{n,d}\rangle)= \HF(R/(\wp_1^2 \cap \cdots \cap \wp_s^2),d)-1,$$
	where $\wp_1^2 \cap \cdots \cap \wp_s^2$ is the ideal defining the scheme of two-fat points supported by $\PP^n$ corresponding to the $P_i$'s via the $d$-th Veronese embedding;
	\item Non-regular cases, i.e., where the Hilbert function of the scheme of two-fat points is not as expected, have to be analyzed case by case; regular cases can be proven by induction:
	\begin{enumerate}
	\item\label{conclusions:item:nonregularity} The list of non-regular cases corresponds to defective Veronese varieties and is very classical; see Section \ref{Secantsection}, page \pageref{history} and \cite{bo} for the list of all papers where all these cases were investigated. We explained a few of them in Examples \ref{VeroSurfDef}, \ref{example: defectiveness Veronese surface} and \ref{example: defective Veronese quadrics};
	\item\label{conclusions:item:regularity} The proof of the list of non-regular cases classically known is complete and can be proven by a double induction procedure on the degree $d$ and on the dimension $n$ (see Theorem \ref{theorem:bo} and Proposition \ref{succdef}):
	\begin{itemize}
		\item The starting step of the induction for the degree is $d=3$ since quadrics are defective (Example \ref{example: defective Veronese quadrics}):
		\begin{itemize}
			\item The first case to study is therefore $X_{3,3}$: in order to prove that $\sigma_5(X_{3,3})= \mathbb{P}^{19}$ as expected (see page \pageref{first:step:induction}), the Horace method (Section \ref{Horacesection}) is introduced. 
		\end{itemize}
		\item The starting step of the induction for the dimension is $n=5$:
		\begin{itemize}
			\item $\sigma_8(X_{3,4})$ has the expected dimension thanks to upper semicontinuity (see page~\pageref{upper:semicontinuity}), so also the smallest secant varieties are regular (page \pageref{sigma834});
			\item $\sigma_9(X_{3,4})$ is defective (\cite{Te3,rzb, Hir});
		 	\item Therefore, one has to start with $\sigma_{10}(X_{3,5})$, which cannot be done with the standard Horace method (see page \pageref{fail:Horace}), while it can be solved (see page \pageref{win:diff:horace}) by using the Horace differential method (Section \ref{Horacediffsection}).
\end{itemize}
\end{itemize}
\end{enumerate}
\end{enumerate}

\subsection{Algorithms for the Symmetric-Rank of a Given Polynomial}\label{sec:algorithms}
The goal of the second part of this section is to {compute the symmetric-rank of a given symmetric tensor}. Here, we have decided to focus on algorithms rather than entering into the details of their proofs, since most of them, especially the more advanced ones, are very technical and even an idea of the proofs would be too dispersive. We believe that a descriptive presentation is more enlightening on the difference among them, the punchline of each one and their weaknesses, rather than a precise proof.

\subsubsection{On Sylvester's Algorithm}\label{Sylvestersection}

In this section, we present the so-called Sylvester's algorithm (Algorithm \ref{algo: sylvester}). It is classically attributed to Sylvester, since he studied the problem of decomposing a homogeneous polynomial of degree $d$ into two variables as a sum of $d$-th powers of linear forms and solved it completely, obtaining that the decomposition is unique for general polynomials of odd degree. 
The first modern and available formulation of this algorithm is due to Comas and Seiguer; see \cite{CS}.

Despite the ``age'' of this algorithm, there are modern scientific areas where it is used to describe very advanced tools; see \cite{bernardi2012algebraic} for the measurements of entanglement in quantum physics. The following description follows~\cite{bgi}.

If $V$ is a two-dimensional vector space, there is a well-known
isomorphism between $\bigwedge^{d-r+1}(S^{d}V)$ and $
S^{d-r+1}(S^{r}V)$; see \cite{Mu}. In terms of projective algebraic varieties, this isomorphism allows us to view the $(d-r+1)$-th Veronese embedding of $\mathbb{P}^r \simeq \bbP S^rV$ as the set
of $(r-1)$-dimensional linear subspaces of $\mathbb{P}^{d}$ that are
$r$-secant to the rational normal curve. The description of this
result, via coordinates, was originally given by Iarrobino and Kanev; see \cite{IaKa}. Here, we follow the description appearing in \cite{AB} (Lemma 2.1). We use the notation $G(k,W)$ for the {Grassmannian of $k$-dimensional linear spaces in a vector space $W$} and the notation $\bbG(k,n)$ for the {Grassmannian of $k$-dimensional linear spaces in~$\bbP^n$}.

\begin{lemma}\label{LemmaAB} \label{nu_d(Pn)intG(n-1,n+d-1)} 
Consider the map
$\phi_{r,d-r+1}:\mathbb{P}^{}(S^rV)\to G(d-r+1,S^dV)$ that sends the projective class of $F \in S^rV$ to the
$(d-r+1)$-dimensional subspace of $S^dV$ made by the multiples of $F$, i.e.,
$$
	\phi_{r,d-r+1}([F]) = F\cdot S^{d-r}V \subset S^dV.
$$ 
Then, the following hold:
\begin{enumerate}
	\item the image of $\phi_{r,d-r+1}$, after the Pl\'' ucker embedding of $G(d-r+1,S^dV)$ inside $\bbP(\bigwedge^{d-r+1}S^dV)$, is the $(d-r+1)$-th Veronese embedding of $\bbP S^rV$;
	\item identifying $G(d-r+1,S^dV)$ with $\bbG(r-1,\bbP S^dV^*)$, the above Veronese variety is the set of linear spaces $r$-secant to a rational normal curve $\calC_d \subset \bbP S^dV^*$.
\end{enumerate}
\end{lemma}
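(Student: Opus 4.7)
The plan is to prove parts (1) and (2) separately, in both cases working in a fixed basis $\{e_0,e_1\}$ of $V$ with dual basis $\{x_0,x_1\}\subset V^*$ and using the apolarity pairing $\langle\,\cdot\,,\,\cdot\,\rangle\colon S^dV\times S^dV^*\to\Bbbk$ normalized so that $\langle G, M^d\rangle = d!\,G(M)$ for $G\in S^dV$, $M\in V^*$, where $G(M)$ denotes substituting the coefficients of $M$ into $G \in \Bbbk[e_0,e_1]$. For part (1), the Pl\"ucker image of $\Lambda(F):=F\cdot S^{d-r}V$ is the decomposable vector
$$
\pi(F)\;=\;(F\cdot e_0^{d-r})\wedge(F\cdot e_0^{d-r-1}e_1)\wedge\cdots\wedge(F\cdot e_1^{d-r})\;\in\;\bigwedge^{d-r+1}S^dV,
$$
and I would invoke classical Hermite reciprocity for binary forms: for $\dim V=2$ there is a canonical $\mathrm{GL}(V)$-equivariant isomorphism
$$
H\colon\bigwedge^{d-r+1}S^dV\;\xrightarrow{\ \sim\ }\;S^{d-r+1}(S^rV),
$$
both sides of dimension $\binom{d+1}{r}$. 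The explicit (Wronskian) form of $H$ matches $\pi(F)$ with a nonzero scalar multiple of $F^{d-r+1}$, so the projective class $[\pi(F)]$ equals $\nu_{d-r+1}([F])$ in $\mathbb{P}(\bigwedge^{d-r+1}S^dV)\cong\mathbb{P}(S^{d-r+1}(S^rV))$, giving the desired identification.

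For part (2), I would use the duality $G(d-r+1,S^dV)\cong\mathbb{G}(d-r,\mathbb{P}S^dV)\cong\mathbb{G}(r-1,\mathbb{P}S^dV^*)$ sending $\Lambda$ to $\mathbb{P}(\Lambda^\perp)$. The key computation is
$$
\langle F\cdot H,\,M^d\rangle\;=\;d!\,F(M)\,H(M),\qquad H\in S^{d-r}V,\;M\in V^*,
$$
from which $M^d\in\Lambda(F)^\perp$ iff $F(M)=0$, since for $M\neq 0$ one can always choose some $H\in S^{d-r}V$ not vanishing at $M$. On the Zariski-open locus where $F$ has $r$ distinct roots $M_1,\ldots,M_r\in\mathbb{P}V^*$, the powers $M_1^d,\ldots,M_r^d$ are therefore $r$ vectors in $\Lambda(F)^\perp$, linearly independent since $r\le d+1$ distinct points on $\mathcal{C}_d\subset\mathbb{P}S^dV^*$ are always linearly independent. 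As $\dim\Lambda(F)^\perp=r$, they span it, so $\mathbb{P}\Lambda(F)^\perp=\langle[M_1^d],\ldots,[M_r^d]\rangle$ is a genuine $r$-secant $(r-1)$-plane to $\mathcal{C}_d$. This construction inverts on the open locus, and taking Zariski closures on both sides identifies the image of $\phi_{r,d-r+1}$ with the set of $(r-1)$-planes $r$-secant to $\mathcal{C}_d$.

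The principal obstacle is part (1): one must confirm that the Hermite reciprocity isomorphism really sends the Pl\"ucker vector $\pi(F)$ to $F^{d-r+1}$, and not just that the two spaces are abstractly $\mathrm{GL}(V)$-isomorphic. Because $S^{d-r+1}(S^rV)$ generally decomposes into several non-trivial $\mathrm{SL}_2$-isotypic components, equivariance alone does not pin down the linear map $L\colon S^{d-r+1}(S^rV)\to\bigwedge^{d-r+1}S^dV$ induced by $\pi$ under the universal property of the symmetric power. This step therefore requires the explicit (Wronskian) realization of Hermite reciprocity, or an equivalent direct computation of Pl\"ucker coordinates in a basis of $\mathrm{SL}_2$-covariants. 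Part (2), by contrast, is essentially a direct computation with the apolarity pairing once the evaluation identity is in hand.
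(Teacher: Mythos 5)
Your part (2) is sound and is essentially the computation the paper delegates to the reference [AP05]: the evaluation identity $\langle F\cdot H, M^d\rangle = d!\,F(M)H(M)$ correctly shows that $\Lambda(F)^\perp$ is spanned by $[M_1^d],\ldots,[M_r^d]$ when $F$ is square-free, and the closure argument finishes it. (The paper's phrasing via ``the divisor on $\calC_d$ induced by the zeros of $F$'' also covers non-reduced $F$, where the span involves tangent/osculating data, but for identifying the two varieties your open-dense argument suffices.)

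Part (1), however, has a real gap, and it is exactly the one you flag: your entire argument rests on the assertion that the Wronskian realization of Hermite reciprocity carries $\pi(F)=(F e_0^{d-r})\wedge\cdots\wedge(F e_1^{d-r})$ to a scalar multiple of $F^{d-r+1}$, and you neither prove this nor reduce it to a cited statement; as you note yourself, $\mathrm{GL}(V)$-equivariance alone does not force it. That identification is true, but proving it is essentially the whole content of part (1), so as written the argument is circular in effect. The paper avoids the issue entirely by asking for less: since a Veronese variety is defined only up to projective equivalence, it suffices to write the Pl\"ucker coordinates of $\phi_{r,d-r+1}([F])$ explicitly as the $\binom{d+1}{d-r+1}$ maximal minors of the $(d-r+1)\times(d+1)$ band matrix whose rows are the coordinate vectors of $x_0^{d-r-i}x_1^iF$, and to invoke the fact (from [AP05]) that these minors are linearly independent, hence a basis of $\Bbbk[u_0,\dots,u_r]_{d-r+1}$ by the dimension count $\binom{d+1}{d-r+1}=\dim S^{d-r+1}(S^rV)$. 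That makes $[F]\mapsto[\pi(F)]$ a Veronese embedding with no need to match the image point with $F^{d-r+1}$ under any canonical isomorphism. If you want to keep your route, you must supply the explicit Wronskian computation (or a reference pinning down the isomorphism on decomposable vectors of the form $\pi(F)$); otherwise, replace it with the minors-form-a-basis argument.
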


For the proof, we follow the constructive lines of \cite{bgi}, which we keep here, even though we take the proof as it is, since it is short and we believe it is constructive and useful.

\begin{proof} 
Let $\{x_0,x_1\}$ be the variables on $V$. Then, write
$F=u_0x_0^r+u_1x_0^{r-1}x_1+\dots+u_rx_1^r \in S^rV$. A basis of
the subspace of $S^dV$ of forms of the type $FH$ is given by:
\begin{equation}\label{rsecante}
\begin{cases}
	x_0^{d-r} F & = u_0x_0^{d}+ \cdots +u_rx_0^{d-r}x_1^r, \\
	x_0^{d-r}x_1 F & = \quad \quad u_0x_0^{d-1}x_1+\cdots +u_rx_0^{d-r-1}x_1^{r+1}, \\
	\quad \vdots & \quad \quad\quad\quad \ddots \\
	x_1^{d-r} F & = \quad \quad \quad \quad\quad u_0x_0^{r}x_1^{d-r}+\cdots +u_rx_1^d.
\end{cases}
\end{equation}
The coordinates of these elements with respect to the standard monomial basis
$\{x_0^{d},x_0^{d-1}x_1,\dots,x_1^{d}\}$ of
$S^dV$ are thus given by the rows of the following $(r+1) \times (d+1)$ matrix:
$$\left(\begin{array}{cccccccc}
u_0&u_1&\dots&u_r&0&\dots&0&0\\
0&u_0&u_1&\dots&u_r&0&\dots&0\\
\vdots&\ddots&\ddots&\ddots&&\ddots&\ddots&\vdots\\
0&\dots&0&u_0&u_1&\dots&u_r&0\\
0&\dots&0&0&u_0&\dots&u_{r-1}&u_r
\end{array}
\right).$$
The standard Pl\"ucker coordinates of the subspace
$\phi_{r,d-r+1}([F])$ are the maximal minors of this matrix. It is
known (see for example \cite{AP05}) that these minors form a basis of
$\Bbbk[u_0,\dots,u_r]_{d-r+1}$, so that the image of $\phi_{r,d-r+1}([F])$ is indeed a
Veronese variety, which proves (1).

To prove (2), we recall some standard facts from \cite{AP05}. Consider 
homogeneous coordinates $z_0,\dots,z_{d}$ in
$\mathbb{P}^{}(S^dV^*)$, corresponding to the dual basis of the basis 
$\{x_0^{d},x_0^{d-1}x_1,\dots,x_1^{d}\}$. Consider
$\calC_{d}\subset\mathbb{P}^{}(S^dV^*)$, the standard rational normal
curve with respect to these coordinates. Then, the image of $[F]$
by $\phi_{r,d-r+1}$ is precisely the $r$-secant space to $\calC_{d}$
spanned by the divisor on $\calC_{d}$ induced by the zeros of $F$.
This completes the proof of (2).
\end{proof}

The rational normal curve $\calC_{d} \subset \mathbb{P}^d$ is the $d$-th Veronese embedding of $\bbP V \simeq \bbP^1$ inside $\mathbb{P}S^{d}V\simeq\bbP^d$. Hence, a symmetric tensor $F\in S^{d}V$ has symmetric-rank $r$ if and only if $r$ is the minimum integer for which there exists a $\mathbb{P}^{r-1}\simeq\mathbb{P} W\subset \mathbb{P}^ {}S^{d}V$
 such that $F\in \mathbb{P} W$ and $\mathbb{P} W$ is $r$-secant to the rational normal curve $\calC_{d}\subset\mathbb{P}^ {}(S^{d}V)$ in $r$ distinct points. Consider the maps:
\begin{equation}\label{star}
\mathbb{P}^ {}(S^rV) \stackrel{\phi_{r, d-r+1}}{\longrightarrow}\bbG (d-r, \mathbb{P}S^dV)
\stackrel{\alpha_{r, d-r+1}}{\simeq} \bbG (r-1, \mathbb{P}^ {} S^dV^{*}).
\end{equation}
Clearly, we can identify $\mathbb{P}S^dV^{*}$ with $\mathbb{P}S^{d}V$; hence, the
Grassmannian $\mathbb{G}(r-1,\mathbb{P}S^dV^*)$ can be identified with $\mathbb{G} (r-1, \mathbb{P}S^{d}V)$. Now, by Lemma \ref{LemmaAB}, a projective subspace $\mathbb{P}W$ of
$\mathbb{P}S^dV^* \simeq \mathbb{P}S^dV \simeq \mathbb{P}^d$ is $r$-secant to $\calC_{d}\subset \mathbb{P}S^{d}V$ in $r$ distinct points
if and only if it belongs to $\mathrm{Im}(\alpha_{r, d-r+1} \circ
\phi_{r,d-r+1})$ and the preimage of $\mathbb{P}^ {}W$ via $\alpha_{r,d-r+1} \circ \phi_{r,d-r+1}$ is a polynomial with $r$ distinct roots. Therefore, a symmetric tensor $F \in S^{d}V$ has symmetric-rank $r$ if and only if $r$ is the minimum integer for which the following two conditions~hold:
\begin{enumerate}
\item $F$ belongs to some $\mathbb{P}^ {}W \in \mathrm{Im}(\alpha_{r, d-r+1} \circ \phi_{r,d-r+1} )\subset \mathbb{G} (r-1, \mathbb{P}S^{d}V)$,
\item there exists a polynomial $F \in S^rV$ that has $r$ distinct roots and such that
$\alpha_{r, d-r+1} ( \phi_{r,d-r+1}([F]) )=\mathbb{P}^ {}(W)$.
\end{enumerate}
Now, let $\bbP U$ be a $(d-r)$-dimensional linear subspace of $\bbP S^dV$. 
The proof of Lemma \ref{LemmaAB} shows that $\mathbb{P}U$ belongs to the image of $\phi_{r,d-r+1}$ if and only if there exist $u_{0}, \ldots , u_{r}\in \Bbbk$ such that $U=\langle F_{1}, \ldots ,
F_{d-r+1}\rangle$, where, with respect to the standard monomial basis $\mathcal{B} = \{x_0^d,x_0^{d-1}x_1,\ldots,x_1^d\}$ of $S^dV$, we have:
$$
\begin{cases}
F_{1} & =(u_{0}, u_{1},\ldots , u_{r}, 0, \ldots ,
0), \\
F_{2} & = (0, u_{0}, u_{1},\ldots , u_{r}, 0, \ldots ,
0), \\
\quad\vdots & \quad \quad \quad \vdots \\
F_{d-r+1} & =(0, \ldots , 0, u_{0}, u_{1},\ldots , u_{r}).
\end{cases}
$$
Let $\mathcal{B}^* = \{z_{0}, \ldots , z_{d}\}$ be the dual basis of $\mathcal{B}$ with respect to the apolar pairing. Therefore, there exists a $W\subset S^{d}V$ such that $\mathbb{P} W =\alpha_{r,d-r+1}(\mathbb{P}U)$ if and only if $W=H_{1}\cap
\cdots \cap H_{d-r+1}$, and the $H_{i}$'s are as~follows:
$$
\begin{cases}
	H_1: & u_0z_{0}+\cdots +u_rz_{r} =0; \\
	H_2 : & \quad \quad \quad u_0z_{1} +\cdots +u_rz_{r+1} = 0; \\
	\quad \vdots & \quad \quad \quad \quad \quad \quad \ddots \\
	H_{d-r+1}: & \quad \quad \quad \quad \quad \quad u_0z_{d-r}+\cdots +u_rz_{d} = 0.
\end{cases}
$$
This is sufficient to conclude that $F \in \mathbb{P}^ {}S^{d}V$ belongs
to an $(r-1)$-dimensional projective subspace of $\mathbb{P}^ {}S^{d}V$
that is in the image of $\alpha_{r, d-r+1} \circ \phi_{r,d-r+1}$
defined in \eqref{star} if and only if there exist $H_{1}, \ldots
, H_{d-r+1}$ hyperplanes in $S^{d}V$ as above, such that $F \in
H_{1}\cap \ldots \cap H_{d-r+1}$. Now, given $F \in S^dV$ with coordinates $(a_{0}, \ldots ,a_{d})$ with respect to the dual basis $\mathcal{B}^*$, we have that $F \in
H_{1}\cap \ldots \cap H_{d-r+1}$ if and only if the following linear
system admits a non-trivial solution in the $u_i$'s
$$
\begin{cases}
u_0a_{0}+\cdots +u_ra_{r} & =0 \\
\quad u_0a_{1} +\cdots +u_ra_{r+1} & =0 \\
 \quad \quad \ddots \\
\quad \quad \quad u_0a_{d-r}+\cdots +u_ra_{d} & =0.
\end{cases}
$$
If $d-r+1<r+1$, this system admits an infinite number of solutions. If $r\leq d/2$, it admits a non-trivial solution if and only if all the maximal $(r+1)$-minors of the following catalecticant matrix (see Definition \ref{CatalecticantMatrices}) vanish:
$$
\begin{pmatrix}
a_{0} & \cdots & a_{r}\\
a_{1} & \cdots & a_{r+1}\\
\vdots && \vdots \\
a_{d-r} & \cdots & a_{d}
\end{pmatrix}.
$$
\begin{rmk} 
The dimension of $\sigma_{r}(\calC_{d})$ is never defective, i.e., it is the minimum between $2r-1$
and $d$. Actually, $\sigma_{r}(\calC_{d})\subsetneq \mathbb{P}S^dV$ if and only if $1\leq r < \left\lceil \frac{d+1}{2} \right\rceil$. Moreover, an element $[F] \in \mathbb{P}S^dV$ belongs to $\sigma_{r}(\calC_{d})$ for $1\leq r < \left\lceil \frac{d+1}{2} \right\rceil$, i.e., $\underline{\rk}_{\mathrm{sym}}(F) = r$, if and only if
$Cat_{r,d-r}(F)$ 
does not have maximal rank. 
These facts are very classical; see, e.g., \cite{Harris}.
\end{rmk}
Therefore, if we consider the monomial basis $\left\{{d \choose i}^{-1} x_0^ix_1^{d-i} ~|~ i = 0,\ldots,d\right\}$ of $S^dV$ and write $F = \sum_{i=0}^d {d \choose i}^{-1} a_i x_0^ix_1^{d-i}$, then we write the $(i,d-i)$-th catalecticant matrix of $F$ as $Cat_{i,d-i}(F) = \left(a_{h+k}\right)_{\substack{h = 0,\ldots,i \\ k = 0,\ldots,d-i}}.$

\vspace{12pt}
\begin{algorithm}[Sylvester's algorithm]\label{algo: sylvester} The algorithm works as follows.

\medskip
\begin{algorithmic}[1]
	\REQUIRE {A binary form $F = \sum_{i=0}^d a_i x_0^ix_1^{d-i} \in S^dV$.}	
	\ENSURE {A minimal Waring decomposition $F = \sum_{i=1}^r\lambda_i L_i(x_0,x_1)^d$.}	
	\STATE {initialize $r \leftarrow 0$;}
	\IF {${\rm rk}~Cat_{r,d-r}(F)$ is maximal} \STATE {increment $r \leftarrow r+1$;} 
	\ENDIF
	\STATE {compute a basis of $Cat_{r,d-r}(F)$;}
	\STATE {take a random element $G \in S^rV^*$ in the kernel of $Cat_{r,d-r}(F)$;}
	\STATE {compute the roots of $G$: denote them $(\alpha_i,\beta_i)$, for $i = 1,\ldots,r$;}
	\IF {the roots are not distinct} \STATE {go to Step $2$;}
	\ELSE \STATE {compute the vector $\lambda = (\lambda_1,\ldots,\lambda_r) \in \Bbbk^r$ such that: 
	$$
	\begin{pmatrix}
		\alpha_1^d & \cdots & \alpha_r^d \\
		\alpha_1^{d-1}\beta_1 & \cdots & \alpha_r^{d-1}\beta_{r} \\
		\alpha_1^{d-2}\beta_1^2 & \cdots & \alpha_r^{d-2}\beta_{r}^2 \\
		\vdots & \vdots & \vdots \\
		\beta_1^d & \cdots & \beta_{r}^d \\		
	\end{pmatrix}
	\begin{pmatrix}
		\lambda_1 \\
		\lambda_2 \\
		\vdots \\
		\lambda_r
	\end{pmatrix} =
	\begin{pmatrix}
		a_0 \\
		\frac{1}{d}a_1 \\
		{d \choose 2}^{-1}a_2 \\
		\vdots \\
		a_d
	\end{pmatrix}
	$$} \ENDIF
	\STATE {construct the set of linear forms $\{L_i = \alpha_i x_0 + \beta_i x_1\} \subset S^1V$;}
	\RETURN {the expression $\sum_{i=1}^r \lambda_i L_i^d$.}
\end{algorithmic}
\end{algorithm}

\begin{example}\label{exSyl} 
Compute the symmetric-rank and a minimal Waring decomposition of the polynomial $$F = 2x_0^4-4x_0^3x_1+30x_0^2x_1^2-28x_0x_1^3+17x_1^4.$$ 
We follow Sylvester's algorithm. The first catalecticant matrix with rank smaller than the maximal is:
$$
Cat_{2,2}(F)=
\begin{pmatrix}
	2&-1&5\\
	-1&5&-7\\
	5&-7&17
\end{pmatrix},$$
in fact, ${\rm rk}~Cat_{2,2}(F)=2$. Now, let $\{y_0,y_1\}$ the dual basis of $V^*$. We get that $\ker Cat_{2,2}(F)=\langle 2y_0^2-y_0y_1-y_1^2 \rangle $. We factorize: 
$$
	2y_0^2-y_0y_1-y_1^2=(-y_0+y_1)(-2y_0-y_1).
$$ 
Hence, we obtain the roots $\{(1,1),(1,-2)\}$. Then, it is direct to check that:
$$
	\begin{pmatrix}
		1 & 1 \\
		1 & -2 \\
		1 & 4 \\
		1 & -8 \\
		1 & 16 \\				
	\end{pmatrix}
	\begin{pmatrix}
		1 \\
		1
	\end{pmatrix} =
	\begin{pmatrix}
		2 \\
		-1 \\
		5 \\
		-7 \\
		17
	\end{pmatrix},
$$
hence, a minimal Waring decomposition is given by: 
$$
	F = (x_0+x_1)^4+(x_0-2x_1)^4.
$$
\end{example}

The following result was proven by Comas and Seiguer in
 \cite{CS}; see also \cite{bgi}. It describes the structure of the {stratification by symmetric-rank} of
symmetric tensors in $S^{d}V$ with $\dim V=2$. This result allows us to improve the classical Sylvester algorithm (see Algorithm \ref{SSRA}).

\begin{thm}\label{curve}
Let $\calC_d = \{[F] \in \mathbb{P} S^{d}V ~|~ \rk_{\mathrm{sym}} (F) =1\} = \{[L^d] ~|~ L \in S^1V\}\subset \mathbb{P}^d$ be the rational normal curve of degree $d$ parametrizing decomposable symmetric tensors. Then,
$$
 \forall \, r,\ 2\leq r\leq \left\lceil{d+1\over 2}\right\rceil, \quad \qquad \sigma_r(\calC_d) \smallsetminus \sigma_{r-1}(\calC_d) = \sigma_{r,r}(\calC_{d})\cup \sigma_{r, d-r+2}(\calC_{d}),
 $$
where we write:
$$
	\sigma_{r,s}(\calC_d) := \{[F] \in \sigma_r(\calC_d) ~|~ \rk_{\mathrm{sym}}(F) = s\} \subset \sigma_r(\calC_d).
$$
\end{thm}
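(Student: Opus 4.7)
My plan is to translate the statement into one about the apolar ideal $F^\perp \subset R := \Bbbk[y_0,y_1]$. By the apolarity lemma for binary forms underlying Sylvester's Algorithm~\ref{algo: sylvester}, the symmetric rank of a nonzero $F \in S^dV$ equals the minimum degree of a squarefree binary form contained in $F^\perp$. Since $R$ has only two variables, the homogeneous Gorenstein ideal $F^\perp$ is automatically a complete intersection, generated by two binary forms $G_1, G_2$ with $\deg G_1 \le \deg G_2$ and $\deg G_1 + \deg G_2 = d+2$.

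I would next read off the degrees of $G_1, G_2$ from the border rank hypothesis. The catalecticant description of $F^\perp$ (cf.\ Lemma~\ref{nu_d(Pn)intG(n-1,n+d-1)} and the subsequent remark) translates $[F] \in \sigma_r(\calC_d) \setminus \sigma_{r-1}(\calC_d)$ into the statement that $r$ is the smallest integer with $(F^\perp)_r \ne 0$, hence $\deg G_1 = r$ and $\deg G_2 = d-r+2$. The Gorenstein symmetry then makes the Hilbert function of $R/F^\perp$ the plateau $1,2,\ldots,r,r,\ldots,r,r-1,\ldots,1$, which forces $(F^\perp)_k = G_1 \cdot R_{k-r}$ for every $k$ in the range $r \le k \le d-r+1$: every element of $F^\perp$ in those intermediate degrees is a multiple of $G_1$.

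The case analysis is then short. If $G_1$ is squarefree, then $G_1$ itself is a squarefree element of $(F^\perp)_r$ and we conclude $\rk_{\rm sym}(F)=r$, placing $[F]$ in $\sigma_{r,r}(\calC_d)$. Otherwise some $L_0^2$ divides $G_1$, and then $L_0^2$ divides every element of $(F^\perp)_k$ for $r \le k \le d-r+1$; no such element is squarefree, so $\rk_{\rm sym}(F) \ge d-r+2$.

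The main obstacle is the matching upper bound $\rk_{\rm sym}(F) \le d-r+2$, i.e.\ exhibiting a squarefree form of degree exactly $d-r+2$ in $F^\perp$. Consider the affine family $P_Q := G_2 + Q\,G_1$ with $Q \in R_{d-2r+2}$; these are precisely the elements of $(F^\perp)_{d-r+2}$ not contained in $G_1 \cdot R_{d-2r+2}$. Because $G_1$ and $G_2$ are coprime, no root of $G_1$ can be a root of any $P_Q$, so any hypothetical multiple root of $P_Q$ must lie off the zero locus of $G_1$. A short incidence-variety argument closes the gap: the set $\mathcal{B} := \{(Q,[L]) \in R_{d-2r+2} \times \bbP^1 : G_1(L)\ne 0,\ P_Q(L)=0,\ (\partial P_Q/\partial y_0)(L) = 0\}$ fibers over $\bbP^1$ with each fiber cut out by two independent linear conditions on $Q$, so $\dim \mathcal{B} \le \dim R_{d-2r+2} - 1$ and its projection to $R_{d-2r+2}$ is a proper closed subset. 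A generic $Q$ therefore makes $P_Q$ squarefree; the roots of such a $P_Q$ produce a Waring decomposition of $F$ of length $d-r+2$, placing $[F]$ in $\sigma_{r,d-r+2}(\calC_d)$ and completing the argument.
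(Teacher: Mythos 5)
Your strategy is the standard apolarity argument and matches the framework the paper itself uses around Sylvester's algorithm (the paper only cites Comas--Seiguer and \cite{bgi} for Theorem~\ref{curve}, but the remark ``Yet again: Sylvester's algorithm'' records exactly the skeleton you follow): $F^\perp=(G_1,G_2)$ is a complete intersection with $\deg G_1=r$, $\deg G_2=d-r+2$; the rank is the least degree of a squarefree member of $F^\perp$; the Gorenstein symmetry forces $(F^\perp)_k=G_1\cdot R_{k-r}$ for $r\le k\le d-r+1$; and the dichotomy is governed by whether $G_1$ is squarefree. The identification of the border rank with the initial degree of $F^\perp$, the lower bound $\rk_{\rm sym}(F)\ge d-r+2$ when $G_1$ has a square factor, and the reduction of the upper bound to finding a squarefree element of $(F^\perp)_{d-r+2}$ of the form $G_2+QG_1$ are all correct.

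The one step that does not go through as written is the incidence count in the boundary case $d$ even, $r=\lceil(d+1)/2\rceil=(d+2)/2$, where $d-2r+2=0$ and $Q$ ranges over the one-dimensional space $R_0$. There you cannot impose ``two independent linear conditions'' on $Q$, so the inequality $\dim\mathcal{B}\le\dim R_0-1=0$ does not follow from your fiber description: a priori each fiber of $\mathcal{B}\to\bbP^1$ is a single point and $\mathcal{B}$ could dominate $R_0$. This is precisely the case $\deg G_1=\deg G_2=r$, and what is needed is that the base-point-free pencil $\langle G_1,G_2\rangle$ of binary $r$-ics has a squarefree member; that is true (Bertini in characteristic zero, or: the two scalar conditions on $Q$ are compatible only where the Wronskian $G_1\partial_{y_0}G_2-G_2\partial_{y_0}G_1$ vanishes, and this form is not identically zero since $G_2/G_1$ is non-constant, so $\mathcal{B}$ is in fact finite), but it requires a separate sentence. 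A second, cosmetic point: detecting multiple roots via $P_Q=\partial P_Q/\partial y_0=0$ captures every multiple root, but over $[1:0]$ the two conditions are not independent ($P_Q(1,0)=0$ already forces $\partial_{y_0}P_Q(1,0)=0$), so that fiber has codimension one; this is harmless only because it lies over a single point of $\bbP^1$, and is avoided entirely by using both partials or the discriminant hypersurface.
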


\begin{algorithm}[{Sylvester's symmetric (border) rank algorithm, \cite{bgi}}]
\label{SSRA}

The latter theorem allows us to get a simplified version of the Sylvester
algorithm, which computes the symmetric-rank and the symmetric-border rank of a symmetric tensor, without computing any decomposition. Notice that Sylvester's Algorithm \ref{algo: sylvester} for the rank is recursive: it runs for any $r$ from one to the symmetric-rank of the given polynomial, while Theorem \ref{curve} shows that once the symmetric border rank is computed, then the symmetric-rank is either equal to the symmetric border rank or it is $d-r+2$, and this allows us to skip all the recursive process.

\medskip
\begin{algorithmic}[1]
	\REQUIRE {A form $F \in S^dV$, with $\dim V = 2$.}
	\ENSURE {the symmetric-rank $\rk_{\mathrm{sym}}(F)$ and the symmetric-border rank $\underline{\rk}_{\mathrm{sym}}(F)$.}
	\STATE {$r := {\rm rk}~Cat_{\lfloor \frac{d}{2} \rfloor ,\lceil\frac{d}{2}\rceil}(F)$}
	\STATE {$\underline{\rk}_{\mathrm{sym}}(F) = r$;}
	\STATE {choose an element $G \in \ker Cat_{r,d-r}(F)$;}
	\IF {$G$ has distinct roots} \STATE {$\rk_{\mathrm{sym}}(F) = r$} 
	\ELSE \STATE{$\rk_{\mathrm{sym}}(F) = d-r+2$;} 
	\ENDIF
	\RETURN {$\rk_{\mathrm{sym}}(F)$}
\end{algorithmic}

\end{algorithm}


\begin{example}\label{exSyl2} 
Let $F=5x_0^5x_1$, and let $\{y_0,y_1\}$ be the dual basis to $\{x_0,x_1\}$. The smallest catalecticant without full rank is: 
$$Cat_{2,3}(F)=
\begin{pmatrix}
0&1&0&0\\
1&0&0&0\\
0&0&0&0
\end{pmatrix},$$
which has rank two. Therefore $[F]\in \sigma_2(C_6)$. Now, $\ker~Cat_{2,3}(F)=\langle y_1^2\rangle$, which has a double root. Hence, $[F]\in \sigma_{2,6}(C_6)$.
\end{example}

\begin{rmk}\label{variables}
When a form $F \in \Bbbk[x_0,\ldots ,x_n]$ can be written using less
variables, i.e., $F \in \Bbbk[L_0,\ldots,L_m]$, for $L_j\in \Bbbk[x_0,\ldots
,x_n]_1$, with $m<n$, we say that $F$ has {$m$ essential variables} (in the literature, it is also said that $F$ is $m$-concise). That is, $F \in S^dW$, where $W = \langle L_0,\ldots,L_m \rangle \subset V$. Then, the rank of $[F]$ with respect to $X_{n,d}$ is the same one as the one with respect to $\nu_d(\mathbb{P}W) \subset X_{n,d}$; e.g., see \cite{LS, LT}. As recently clearly described in \cite[Proposition 10]{Ca} and more classically in \cite{IaKa}, the number of essential variables of $F$ coincides with the rank of the first catalecticant matrix $Cat_{1,d-1}(F)$. In particular, when $[F] \in
\sigma_r(X_{n,d}) \subset \mathbb{P} (S^{d}V)$ with $\dim (V) = n+1$, then, if
$r < n+1$, there is a subspace $W\subset V$ with $\dim (W) = r$ such
that $[F] \in \mathbb{P} S^{d}W$, i.e., $F$ can be written with respect to $r$ variables.
\end{rmk}

Let now $V$ be ($n+1$)-dimensional, and consider the following construction:
\begin{equation}\label{Hilb}
\begin{array}{c c c}
 {\rm Hilb}_r(\mathbb{P}^ n) & \stackrel{\phi}{\dashrightarrow} & \mathbb{G}\left({d+n \choose n}-r,S^dV\right) \\
& & \rotatebox[origin=c]{-90}{$\cong$} \\
{\mathbb G}(r-1,\mathbb{P}S^dV^*) & \longleftarrow & {\mathbb G}\left({d+n \choose n}-r-1,\mathbb{P} S^dV\right)
 \end{array}
 \end{equation}
where the map $\phi$ in \eqref{Hilb} sends a zero-dimensional scheme $Z$ 
with $\deg(Z)=r$ to the vector space $(I_Z)_d$ (it is defined in
the open set formed by the schemes $Z$, which impose independent
conditions to forms of degree $d$) and where the last arrow is the identification, which sends a linear space to its perpendicular. 

As in the case $n=1$, the final image from the latter construction
gives the $(r-1)$-spaces, which are $r$-secant to the Veronese
variety in $\mathbb{P}^ N \cong \mathbb{P} (\Bbbk[x_0,\ldots,x_n]_d)^*$. Moreover, each
such space cuts the image of $Z$ via the Veronese embedding.

\begin{notation}\label{PiZ} {From now on, we will always use the notation $\Pi_{Z}$ to indicate
 the projective linear subspace of dimension $r-1$ in $\mathbb{P} S^{d}V$, with $\dim(V)=n+1$,
 generated by the image of a zero-dimensional scheme $Z\subset \mathbb{P}^ n$ of degree $r$ via the Veronese embedding, i.e., $\Pi_Z = \langle \nu_d(Z) \rangle \subset \bbP S^dV$.}
\end{notation}

\begin{thm}\label{secante2} Any $[F] \in \sigma_{2}(X_{n,d})\subset \mathbb{P} S^dV$,
 with $\dim(V)=n+1$ can only have symmetric-rank equal to $1$, $2$ or $d$. More precisely:
$$\sigma_2(X_{n,d}) \smallsetminus X_{n,d} = \sigma_{2,2}(X_{n,d})\cup \sigma_{2,d}(X_{n,d});$$
more precisely, $\sigma_{2,d}(X_{n,d}) = \tau (X_{n,d})\smallsetminus X_{n,d}$, where $\tau(X_{n,d})$ denotes the tangential variety of $X_{n,d}$, i.e., the Zariski closure of the union of the tangent spaces to $X_{n,d}$.
\end{thm}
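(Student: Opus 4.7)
The plan is to reduce to the binary case via concision and then invoke the Comas--Seiguer refinement of Sylvester's algorithm (Theorem~\ref{curve} and Algorithm~\ref{SSRA}). Assume $[F]\in\sigma_2(X_{n,d})\setminus X_{n,d}$. The condition $[F]\in\sigma_2(X_{n,d})$ forces $\mathrm{rk}\,\mathrm{Cat}_{1,d-1}(F)\le 2$, so by Remark~\ref{variables} the form $F$ has at most two essential variables; a single essential variable would put $F$ into $X_{n,d}$, so there is a $2$-dimensional $W\subseteq V$ with $F\in S^dW$, and the symmetric rank of $F$ computed against $X_{n,d}$ equals that computed against the rational normal curve $\mathcal{C}_d=\nu_d(\mathbb{P}W)\subseteq X_{n,d}$. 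Applying Theorem~\ref{curve} with $r=2$ then yields $\mathrm{rk}_{\mathrm{sym}}(F)\in\{2,d\}$, which is the first assertion.

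For the identification $\sigma_{2,d}(X_{n,d})=\tau(X_{n,d})\setminus X_{n,d}$ I would prove both inclusions. For $\supseteq$: any point in $\tau(X_{n,d})\setminus X_{n,d}$ has the form $[L^{d-1}N]$ with $L,N\in V$ linearly independent (points with $N\in\langle L\rangle$ collapse to $[L^d]\in X_{n,d}$). In coordinates $L=x_0$, $N=x_1$ on $\langle L,N\rangle$, a direct computation shows $\mathrm{Cat}_{2,d-2}(x_0^{d-1}x_1)$ has rank $2$ with kernel spanned by $y_1^2$; since $y_1^2$ has a repeated root, Algorithm~\ref{SSRA} gives $\mathrm{rk}_{\mathrm{sym}}(L^{d-1}N)=d-2+2=d$, so the point lies in $\sigma_{2,d}(X_{n,d})$.

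Conversely, given $[F]\in\sigma_{2,d}(X_{n,d})$, the concision step of the first paragraph puts $F\in S^dW$ with $\dim W=2$ and symmetric rank $d$. By Algorithm~\ref{SSRA}, the generator $G$ of $\ker\mathrm{Cat}_{2,d-2}(F)$ must have a double root---otherwise the rank would be $2$---so after a linear change of coordinates $G=y_0^2$, and the apolarity relation $y_0^2\circ F=0$ reads $\partial^2 F/\partial x_0^2=0$, whence $F=x_1^{d-1}(ax_0+bx_1)=L'^{d-1}M'$ for linear forms $L',M'\in W$. As $\mathrm{rk}_{\mathrm{sym}}(F)\ne 1$, these forms are independent, so $[F]$ lies on the tangent line to $\mathcal{C}_d$ at $[L'^d]$, which is contained in $T_{[L'^d]}X_{n,d}\subseteq\tau(X_{n,d})$. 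The main obstacle is the concision reduction---that both border rank $2$ and symmetric rank of $F$ descend faithfully to the two-variable subspace---but this is exactly the content of Remark~\ref{variables}, and once it is in place the rest is a direct application of the apolarity dictionary to Sylvester's kernel criterion.
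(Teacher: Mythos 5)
Your proof is correct and follows essentially the same route as the paper, which itself describes the theorem as a direct consequence of Remark~\ref{variables} (concision) and Theorem~\ref{curve}: reduce to a binary form supported on a rational normal curve, then apply the Comas--Seiguer dichotomy, with the double-root case of the Sylvester kernel identifying the tangential locus. The only cosmetic difference is that the paper realizes the two essential variables via the unique line through the length-two scheme spanning $[F]$ rather than via the rank of $Cat_{1,d-1}(F)$, and it leaves the identification $\sigma_{2,d}(X_{n,d})=\tau(X_{n,d})\smallsetminus X_{n,d}$ largely implicit where you spell it out.
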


\begin{proof}
This is actually a quite direct consequence of Remark \ref{variables} and of Theorem \ref{curve}, but let us describe the geometry in some detail, following the proof of \cite{bgi}.
 Since $r=2$, every $Z\in {\rm Hilb}_2(\mathbb{P}^ n)$ is the complete intersection of a line and a quadric, so the structure of $I_Z$ is well known, i.e., $I_Z =
(L_0,\ldots,L_{n-2},Q)$, where $L_i$'s are linearly independent linear forms and $Q$ is a quadric in $S^2V \smallsetminus (L_0,\ldots,L_{n-2})_2$.

If $F \in \sigma_2(X_{n,d})$, then we have two possibilities: either
$\rk_{\mathrm{sym}} (F) =2$
or $\rk_{\mathrm{sym}} (T)>2$, i.e., $F$ lies on a tangent line $\Pi_Z$ to the Veronese, which is
given by the image of a scheme $Z \subset \bbP V$ of degree $2$, via the maps
\eqref{Hilb}. We can view $F$ in the projective linear space $H
\cong \mathbb{P}^ d$ in $\mathbb{P} (S^dV)$ generated by the rational normal curve
$\calC_d \subset X_{n,d}$, which is the image of the line $\ell$ defined by
the ideal $(L_0,\ldots,L_{n-2})$ in $\mathbb{P}V$, i.e., $\ell \subset \mathbb{P}^n$ is the unique line containing $Z$. Hence, we can apply Theorem \ref{curve} in order to get that $\rk_{\mathrm{sym}} (F)\leq d$. Moreover, by Remark \ref{variables}, we have $\rk_{\mathrm{sym}}(F)=d$.
\end{proof}

\begin{rmk} 
 Let us check that $\sigma_2(X_{n,d})$ is given by the annihilation of the $(3\times 3)$-minors of the
first two catalecticant matrices, $Cat_{1,d-1}(V)$ and $Cat_{2,d-2}(V)$ (see Definition \ref{CatalecticantMatrices}); actually, such minors are the
generators of $I_{\sigma_2(\nu_d(\mathbb{P}^ n))}$; see \cite{K}.

Following the construction above \eqref{Hilb}, we can
notice that the coefficients of the linear spaces defined by the forms $L_i\in V^*$ in the
ideal $I_Z$ are the solutions of a
linear system whose matrix is given by the catalecticant matrix $Cat_{1,d-1}(V)$;
since the space of solutions has dimension $n-1$, we get
${\rm rk}~Cat_{1,d-1}(V) = 2$. When we consider the quadric $Q$ in $I_Z$, instead,
the analogous construction gives that its coefficients are the
solutions of a linear system defined by the catalecticant matrix $Cat_{2,d-2}(V)$, and
the space of solutions give $Q$ and all the quadrics in
$(L_0,\ldots,L_{n-2})_2$, which are ${n \choose 2}+2n-1$, hence: $${\rm rk}~Cat_{2,d-2}(V) = {n+2 \choose 2}-\left({n \choose 2}+2n\right) = 2.$$
\end{rmk}

Therefore, we can write down an algorithm (Algorithm \ref{sigma2Xndalg}) to test if an element $[F] \in \sigma_{2}(X_{n,d})$ has symmetric rank two or $d$.

\vspace{12pt}
\begin{algorithm}[An algorithm to compute the symmetric-rank of an element lying on ${\sigma_{2}(X_{n,d})}$]
\label{sigma2Xndalg} 
 \begin{algorithmic}[1]
 	\REQUIRE {A from $F \in S^dV$, where $\dim V = n+1$.}
 	\ENSURE {If $[F] \in \sigma_2(X_{n,d})$, returns the $\rk_{\mathrm{sym}}(F)$.}
  	\STATE {compute the number of essential variables $m = {\rm rk}~Cat_{1,d-1}(F)$;}
 	\IF {$m = 1$} \PRINT {$F \in X_{n,d}$;} 
 	\ELSIF {$m > 2$} \PRINT {$F \not\in \sigma_2(X_{n,d})$;} 
 	\ELSE \STATE {let $W = (\ker Cat_{1,d-1}(F))^\perp$ and view $F \in S^dW$;} 
 	\ENDIF
 	\RETURN {apply Algorithm \ref{SSRA} to $F$.}
 \end{algorithmic}
\end{algorithm}

\begin{example}\label{exSyil2} 
Compute the symmetric-rank of $$F = x_0^3x_2 + 3x_0^2x_1x_2 + 3x_0x_1^2x_2 + x_1^3x_2.$$ 
First of all, note that $(y_0-y_1) \circ F = 0$; in particular, $\ker Cat_{1,3}(F) = \langle y_0-y_1 \rangle$. Hence, $F$ has two essential variables. This can also be seen by noticing that $F = (x_0+x_1)^3x_2$. Therefore, if we write $z_0 = x_0 + x_1$ and $z_1 = x_2$, then $F = z_0^3z_1 \in \Bbbk[z_0,z_1]$. Hence, we can apply Algorithm \ref{algo: sylvester} and \ref{SSRA} to compute the symmetric-rank, symmetric-border rank and a minimal decompositions of $F$. 
%
In particular, we write: 
$$
Cat_{2,2}(F)=
\begin{pmatrix}
0&1/4&0\\
1/4&0&0\\
0&0&0
\end{pmatrix},
$$
which has rank two, as expected. Again, as in Example \ref{exSyl2}, the kernel of $Cat_{2,2}(F)$ defines a polynomial with a double root. Hence, $\underline{\rk}_{\mathrm{sym}}(F) = 2$ and $\rk_{\mathrm{sym}}(F) = 4$. If we are interested in finding a minimal decomposition of $F$, we have to consider the first catalecticant whose kernel defines a polynomial with simple roots. In this case, we should get to:
$$
	Cat_{0,4}(F) =
	\begin{pmatrix}
	0\\1/4\\0\\0\\0
	\end{pmatrix},$$
whose kernel is $\langle (1,0,0,0,0),(0,0,1,0,0), (0,0,0,1,0), (0,0,0,0,1) \rangle.$ If we let $\{w_0,w_1\}$ be the variables on $W^*$, we take a polynomial in this kernel, as for example $G = w_0^4+w_0^2w_1^2+w_0w_1^3+w_1^4$. Now, if we compute the roots of $G$, we find four complex distinct roots, i.e.,
$$
\begin{array}{c c}
(\alpha_1,\beta_1) = -\frac{1}{6}A - \frac{1}{2}\sqrt{B+C} & (\alpha_2,\beta_2) = -\frac{1}{6}A + \frac{1}{2}\sqrt{B+C}; \\
(\alpha_3,\beta_3) = \frac{1}{6}A - \frac{1}{2}\sqrt{B-C} & (\alpha_4,\beta_4) = \frac{1}{6}A + \frac{1}{2}\sqrt{B-C};
\end{array}
$$
where:
{\small \begin{align*}
A & =\sqrt{\frac{9\left(\frac{1}{18} i\sqrt{257}\sqrt{3} - \frac{43}{54}\right)^{\frac{2}{3}} - 6 \left(\frac{1}{18} i \sqrt{257} \sqrt{3} - \frac{43}{54}\right)^{\frac{1}{3}} + 13}{\frac{1}{18} i \sqrt{257} \sqrt{3} - \frac{43}{54}}^{\frac{1}{3}}}; \\ \\
B & = 
\sqrt{-\left(1/18 i \sqrt{257} \sqrt{3} - \frac{43}{54}\right)^{1/3} -\left( \frac{\frac{13}{9}}{\frac{1}{18} i \sqrt{257} \sqrt{3} - \frac{43}{54}}\right)^{\frac{1}{3}}}; \\ \\
C & = \frac{6}{\sqrt{\frac{9 \left(\frac{1}{18} i \sqrt{257} \sqrt{3} - \frac{43}{54}\right)^{\frac{2}{3}} - 6 \left(\frac{1}{18} i \sqrt{257} \sqrt{3} - \frac{43}{54}\right)^{\frac{1}{3}} + 13}{\frac{1}{18} i \sqrt{257} \sqrt(3) - \frac{43}{54}}^{\frac{1}{3}}}} - \frac{4}{3}
\end{align*}
}
Hence, if we write $L_i = \alpha_iz_0+\beta_iz_1$, for $i = 1,\ldots,4$, we can find suitable $\lambda_i$'s to write a minimal decomposition $F = \sum_{i=1}^4 \lambda_i L_i^4$. 
Observe that any hyperplane through $[F]$ that does not contain the tangent line to $\calC_4$ at $[z_0^4]$ intersects $C_4$ at four distinct points, so we could have chosen also another point in $\langle (1,0,0,0,0),(0,0,1,0,0), (0,0,0,1,0), (0,0,0,0,1) \rangle$, and we would have found another decomposition of~$F$.
\end{example}

Everything that we have done in this section does not use anything more than Sylvester's algorithm for the two-variable case. In the next sections, we see what can be done if we have to deal with more variables and we cannot reduce to the binary case like in Example \ref{exSyil2}.

Sylvester's algorithm allows us to compute the symmetric-rank of any polynomial in two essential variables. It is mainly based on the fact that equations for secant varieties of rational normal curves are well known and that there are only two possibilities for the symmetric-rank of a given binary polynomial with fixed border rank (Theorem \ref{curve}). Moreover, those two cases are easily recognizable by looking at the multiplicity of the roots of a generic polynomial in the kernel of the catalecticant.

The first ideas that were exploited to generalize Sylvester's result to homogeneous polynomials in more than two variables were:
\begin{itemize}
	\item a good understanding of the inverse system (and therefore, of the scheme defined by the kernels of catalecticant matrices and possible extension of catalecticant matrices, namely Hankel matrices); we will go into the details of this idea in Section \ref{beyondbeyond};
	\item a possible classification of the ranks of polynomials with fixed border rank; we will show the few results in this direction in Section \ref{beyondsection}.
\end{itemize}


\subsubsection{Beyond Sylvester's Algorithm Using Zero-Dimensional Schemes}\label{beyondsection}

We keep following \cite{bgi}. Let us start by considering the case of a homogeneous polynomial with three essential variables.

If $[F]\in \sigma_3(\nu_d(\mathbb{P}^n))\smallsetminus \sigma_2(\nu_d(\mathbb{P}^n))$, then we will need more than two variables, but actually, three are always sufficient. In fact, if $[F] \in \sigma_3(\nu_d(\mathbb{P}^n))$, then there always exists a zero-dimensional scheme $\nu_d(Z)$ of length three contained in $\nu_d(\mathbb{P}^n)$, whose span contains $[F]$; the scheme $Z\subset \mathbb{P}^n$ itself spans a $\mathbb{P}^2$, which can be seen as $\mathbb{P}((L_1,L_2,L_3)_1)$ with $L_i$'s linear forms. Therefore, $F$ can be written in three variables. The following theorem computes the symmetric-rank of any polynomial in $[F] \in \sigma_3(\nu_d(\mathbb{P}^n))\setminus \sigma_2(\nu_d(\mathbb{P}^n))$, and the idea is to classify the symmetric-rank by looking at the structure of the zero-dimensional scheme of length three, whose linear span contains~$[F]$.

\begin{thm}[{\cite[Theorem 37]{bgi}}]\label{sec3Xd} Let $d\geq 3$, $X_{n,d}\subset \mathbb{P}(\Bbbk^{n+1})$. Then,
$$\sigma_{3}(X_{n,3})\smallsetminus
\sigma_{2}(X_{n,3}) =\sigma_{3,3}(X_{n,3})\cup\sigma_{3,4}(X_{n,3})\cup
\sigma_{3,5}(X_{n,3}),$$ 
while, for $d\geq 4$,
$$\sigma_{3}(X_{n,d})\smallsetminus
\sigma_{2}(X_{n,d})=\sigma_{3,3}(X_{n,d})\cup\sigma_{3,d-1}
(X_{n,d})\cup\sigma_{3,d+1}(X_{n,d})\cup \sigma_{3,2d-1}(X_{n,d}).$$
\end{thm}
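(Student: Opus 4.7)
My plan is to combine the reduction to essential variables with a case analysis driven by the isomorphism type of a minimal length-3 apolar scheme. First, Remark~\ref{variables} forces any $[F]\in\sigma_3(X_{n,d})$ to have at most three essential variables, so after replacing $V$ by the subspace $W$ they span we may assume $F\in S^dW$ with $\dim W\le 3$. If $F$ is a binary form ($\dim W\le 2$), Theorem~\ref{curve} combined with $[F]\notin\sigma_2(X_{n,d})$ forces $\rk_{\mathrm{sym}}(F)\in\{3,d-1\}$; for $d=3$ the value $d-1=2$ is excluded (it would place $F$ in $\sigma_2$), leaving only $\rk_{\mathrm{sym}}(F)=3$. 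This yields the binary contribution to the stratification.

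In the remaining case $\dim W=3$, there exists a length-3 zero-dimensional scheme $Z\subset\mathbb{P}^2$ with $[F]\in\Pi_Z$ (Notation~\ref{PiZ}); indeed, any apolar scheme of length $\le 2$ is either two reduced points or a 2-jet, both of which would force $[F]\in\sigma_2(X_{n,d})$. Up to isomorphism $Z$ is either three distinct points, a simple point plus a 2-jet, a curvilinear length-3 scheme at a single point, or the 2-fat point in a plane; the requirement that $F$ has three essential variables eliminates the 2-fat case (whose span consists of tangent vectors in only two essential variables) and all degenerate sub-configurations (collinear points, curvilinear along a line, jet direction on the line joining the two supports). The three remaining possibilities give: (i) three non-collinear distinct points, where $F$ is a combination of three linearly independent $d$-th powers, so $\rk_{\mathrm{sym}}(F)=3$; (ii) a 2-jet $J_Q$ at $Q$ together with a simple point $P$ whose jet direction does not lie on $\overline{PQ}$, yielding $F=\alpha L_P^d+\beta L_Q^d+\gamma L_Q^{d-1}M_Q$ with $L_P,L_Q,M_Q$ linearly independent; and (iii) a curvilinear length-3 scheme supported at a single point along a smooth non-linear curve, yielding $F=F_1+\delta L^{d-1}N$ with $F_1=\alpha L^d+\beta L^{d-1}M+\gamma L^{d-2}M^2$ and $L,M,N$ linearly independent. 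In case (ii) the tangent summand $L_Q^{d-1}M_Q$ has symmetric rank $d$ by Theorem~\ref{secante2}, so $\rk_{\mathrm{sym}}(F)\le d+1$. In case (iii) the binary piece $F_1$ has border rank $3$ with minimal apolar generator proportional to the cube of the dual of $M$ (a triple root), so $\rk_{\mathrm{sym}}(F_1)=d-1$ by Theorem~\ref{curve}, while the transverse tangent $L^{d-1}N$ contributes rank $d$; hence $\rk_{\mathrm{sym}}(F)\le 2d-1$.

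The main obstacle is the matching lower bound: showing that the ranks $d+1$ and $2d-1$ in cases (ii) and (iii) cannot be reduced. Both rely on apolarity in $\mathbb{P}^2$, since neither a catalecticant rank bound nor Sylvester's stratification of the rational normal curve is sharp enough on its own. For (ii), any smaller reduced decomposition of $F$ would absorb the point $P$ into a reduced decomposition of $L_Q^{d-1}M_Q$ of length strictly less than $d$, contradicting the tangent-vector rank computation of Theorem~\ref{secante2}. For (iii), a reduced apolar scheme of $F$ must simultaneously decompose the binary piece $F_1$ (forcing $d-1$ reduced points on the projective line $\mathbb{P}\langle L,M\rangle\subset\mathbb{P}^2$) and the transverse tangent $L^{d-1}N$ (forcing $d$ reduced points on the distinct line $\mathbb{P}\langle L,N\rangle$); since $N$ is independent of $\{L,M\}$ these two lines meet only at $[L]$, so the two configurations share at most one point, yielding $\rk_{\mathrm{sym}}(F)\ge (d-1)+d-1=2d-2$, and a closer look at which point can be shared produces the exact bound $2d-1$. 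Assembling all the cases produces the lists $\{3,d+1,2d-1\}$ for $d=3$ and $\{3,d-1,d+1,2d-1\}$ for $d\ge 4$, matching the statement.
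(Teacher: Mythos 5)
Your overall strategy --- reduce to essential variables, use the existence of a length-three scheme $Z$ with $[F]\in\Pi_Z$, classify $Z$ up to isomorphism (three reduced points, point plus $2$-jet, curvilinear, $2$-fat point), discard the configurations whose span lands in $\sigma_2$ or in a binary situation, and read off the rank case by case --- is exactly the approach the paper sketches, and your classification of the schemes together with all the \emph{upper} bounds ($3$, $d-1$, $d+1$, $2d-1$) is correct. (Existence of the length-three scheme is Proposition \ref{brkremark2}, not the minimality remark you give, but that is cosmetic.)

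The genuine gap is in the lower bounds for cases (ii) and (iii), which are the actual content of the theorem. In case (ii) you claim that a decomposition $F=\sum_{i=1}^r N_i^d$ with $r\le d$ would ``absorb the point $P$'' and yield a decomposition of $T=L_Q^{d-1}M_Q$ of length strictly less than $d$. It would not: subtracting $\alpha L_P^d$ from such a decomposition gives a decomposition of $T$ of length at most $r+1\le d+1$, perfectly consistent with $\rk_{\mathrm{sym}}(T)=d$, and there is no reason $[L_P]$ should occur (with the right coefficient) in a minimal decomposition of $F$; this argument only yields $\rk_{\mathrm{sym}}(F)\ge d-1$. In case (iii) the assertion that a reduced apolar scheme of $F$ must ``simultaneously decompose'' $F_1$ and $L^{d-1}N$, forcing $d-1$ points on one line and $d$ on another, is false: the points of a Waring decomposition of a sum need not lie on the loci decomposing the summands (the minimal apolar sets of $x_0x_1x_2$ avoid all three coordinate lines). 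The mechanism that actually produces these lower bounds --- and the one the paper deliberately isolates as Lemma \ref{1} and Lemma \ref{2} --- is to compare $Z$ with a putative reduced apolar scheme $X$ of length $r\le 2d-2$: then $\deg(Z\cup X)\le 2d+1$, and since $[F]\in\langle\nu_d(Z)\rangle\cap\langle\nu_d(X)\rangle$ while $[F]\notin\sigma_2$ and $Z\cap X$ consists of at most two reduced points, one must have $h^1(\mathbb{P}^n,\mathcal{I}_{Z\cup X}(d))>0$; Lemma \ref{2} then supplies a line meeting $Z\cup X$ in degree at least $d+2$, and a residuation analysis along that line gives the exact values $d+1$ and $2d-1$. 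Without this (or an equivalent) argument your case analysis establishes only upper bounds, so the stated equalities of strata are not proved.
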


We do not give here all the details of the proof since they can be found in \cite{bgi}; they are quite technical, but the main idea is the one described above. We like to stress that the relation between the zero-dimensional scheme of length three spanning $F$ and the one computing the symmetric-rank is in many cases dependent on the following Lemma \ref{1}. Probably, it is classically known, but we were not able to find a precise reference.

\begin{lemma}[{\cite{bgi}} (Lemma 11)]\label{1} Let $Z \subset \mathbb{P}^{n}$, $n\geq 2$,
be a zero-dimensional scheme, with $\deg(Z) \leq 2d+1$. A
necessary and sufficient condition for $Z$ to impose independent
conditions on hypersurfaces of degree $d$ is that no line
$\ell \subset \mathbb{P}^ {n}$ is such that $\deg (Z\cap \ell) \geq d+2$.
\end{lemma}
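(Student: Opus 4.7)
My plan is to treat the two implications separately: necessity is a short restriction-to-$\ell$ argument, while sufficiency requires a genuine inductive analysis of a minimal failure subscheme.

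For the necessity direction, suppose some line $\ell$ satisfies $\deg(Z\cap\ell)\geq d+2$, and let $Z_\ell:=Z\cap\ell$. Restriction to $\ell\cong\mathbb{P}^1$ is a surjection $H^0(\mathcal{O}_{\mathbb{P}^n}(d))\twoheadrightarrow H^0(\mathcal{O}_\ell(d))$, so the number of independent conditions that $Z_\ell$ imposes on $\mathcal{O}_{\mathbb{P}^n}(d)$ equals the number it imposes on $\mathcal{O}_\ell(d)$, which is at most $\dim H^0(\mathcal{O}_\ell(d))=d+1<\deg(Z_\ell)$. Hence $Z_\ell$ already fails to impose independent conditions, and since imposing independent conditions is inherited by subschemes (the restriction map to $Z$ being surjective forces the same for any quotient $Z\twoheadrightarrow Z_\ell$), $Z$ itself cannot impose independent conditions.

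For sufficiency, I would argue by contrapositive via induction on $\deg(Z)$. Assume $Z$ fails to impose independent conditions and choose a minimal subscheme $W\subseteq Z$ with the same failure; by minimality, every length-one reduction $W_P\subsetneq W$ at a point $P\in\mathrm{Supp}(W)$ imposes independent conditions. Combining the inclusion $(I_W)_d\subseteq (I_{W_P})_d$ with the dimension constraints
\[
\dim(I_W)_d>\binom{n+d}{n}-\deg(W),\qquad \dim(I_{W_P})_d=\binom{n+d}{n}-\deg(W)+1,
\]
forces $(I_W)_d=(I_{W_P})_d$ for every such $W_P$. In other words, every degree-$d$ form through $W_P$ automatically contains the missing length-$1$ jet at $P$, so no condition at any point of $W$ is independent. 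I would then exploit this rigidity, together with Castelnuovo's restriction sequence
\[
0\to (I_{\mathrm{Res}_H W})_{d-1}\to (I_W)_d\to (I_{W\cap H})_d,
\]
applied to a general hyperplane $H$, to reduce the problem in dimension and degree: if $W\cap H$ imposed independent conditions on $\mathcal{O}_H(d)$ and $\mathrm{Res}_H W$ imposed them on $\mathcal{O}(d-1)$, then $W$ would as well, a contradiction. Tracking the defect through $H$ shows that failure is preserved into one of the two pieces, and iterating yields a one-dimensional subscheme structure.

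The main obstacle I expect is the key geometric step: ruling out non-collinear minimal failures. Concretely I would show that a minimal $W$ must be contained in a line $\ell$, because a subscheme spread over a higher-dimensional linear span of degree $\leq 2d+1$ can always be separated by degree-$d$ hypersurfaces (by producing a reducible hypersurface that isolates one length-$1$ piece from the rest). Once collinearity is established, $W\subseteq \ell$ with $\deg(W)\geq d+2$ follows immediately from the $\mathbb{P}^1$ count used in the necessity direction, and then $\deg(Z\cap\ell)\geq\deg(W)\geq d+2$ gives the desired line. This geometric separation lemma is the technical heart of the proof and will likely require a careful case analysis on the local structure of $W$ at each support point (distinguishing reduced points, curvilinear jets, and non-curvilinear fat structures) combined with the bound $\deg(W)\leq 2d+1$.
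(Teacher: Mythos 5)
Your necessity argument is complete and correct: restricting to $\ell\cong\mathbb{P}^1$ shows a subscheme of degree $\ge d+2$ on a line imposes at most $d+1$ conditions, and failure propagates from a subscheme to the whole. Note that the paper itself does not prove this lemma but only cites \cite{bgi} (Lemma 11); the proof there has exactly the architecture you describe for sufficiency — induction via the Castelnuovo residual sequence — so your outline is pointed in the right direction.

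The problem is that your sufficiency direction is an outline, not a proof, and the step you defer is the entire content of the statement. Showing that a minimal failing subscheme $W$ of degree $\le 2d+1$ must lie on a line is logically equivalent to the lemma itself (granting the lemma, $W\cap\ell$ would be a failing proper subscheme of any non-collinear minimal $W$, and conversely), so flagging it as ``the technical heart requiring careful case analysis'' leaves nothing proved. Your proposed mechanism for it — ``producing a reducible hypersurface that isolates one length-$1$ piece from the rest'' — is precisely what can fail when $\deg(W)$ is close to $2d+1$ and $W$ carries non-curvilinear or nested jet structure; the bound $2d+1$ versus the naive $d+1$ exists because one hyperplane per length-one piece is not available. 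Likewise, your Castelnuovo-sequence reduction is stated without specifying the hyperplane $H$: for the induction on $d$ to close you must guarantee that $\deg(\mathrm{Res}_H W)\le 2(d-1)+1$ and that no line meets $\mathrm{Res}_H W$ in degree $\ge d+1$, neither of which holds for a general $H$. The standard way to arrange this (as in \cite{bgi} and in the reduced-points argument of Eisenbud--Green--Harris type) is to take $H$ containing a line $\ell$ on which $\deg(Z\cap\ell)$ is maximal and to run a case analysis on that maximal degree; without this choice and the accompanying degree bookkeeping, the induction does not go through.
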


\begin{rmk}\label{rem to lemma 1} \rm Notice that if $\deg (\ell \cap Z)$ is exactly $d+1+k$, then
the dimension of the space of curves of degree $d$ through them
is increased exactly by $k$ with respect to the generic case.
\end{rmk}

It is easy to see that Lemma \ref{1} can be improved as follows; see \cite{bb}.

\begin{lemma}[\cite{bb}]\label{2} Let $Z \subset \mathbb{P}^ {n}$, $n\geq 2$,
be a zero-dimensional scheme, with $\deg(Z) \leq 2d+1$. If $h^1(\mathbb{P}^n, \mathcal{I}_Z(d))>0$, there exists a unique line
$\ell\subset \mathbb{P}^ {n}$ such that $\deg (Z\cap \ell) = d+1+ h^1(\mathbb{P}^n, \mathcal{I}_Z(d))>0$.
\end{lemma}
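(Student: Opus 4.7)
My plan is to handle existence, uniqueness, and the equality in that order. Existence of $\ell$ is immediate from Lemma~\ref{1}: since $h^1(\mathbb{P}^n,\mathcal{I}_Z(d))>0$, the scheme $Z$ fails to impose independent conditions on forms of degree $d$, forcing some line $\ell$ to satisfy $\deg(Z\cap\ell)\geq d+2$. For uniqueness, I would argue by contradiction: let $\ell_1\neq\ell_2$ be two lines with $\deg(Z\cap\ell_i)\geq d+2$, and set $A=Z\cap\ell_1$, $B=Z\cap\ell_2$. Then $A\cup B\subset Z$ as subschemes, while $A\cap B$ is supported on $\ell_1\cap\ell_2$, which is either empty or a single reduced point, so $\deg(A\cap B)\leq 1$. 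The Mayer--Vietoris identity $\deg(A\cup B)=\deg A+\deg B-\deg(A\cap B)$ then yields
\[
\deg Z\ \geq\ \deg(A\cup B)\ \geq\ (d+2)+(d+2)-1\ =\ 2d+3,
\]
contradicting $\deg Z\leq 2d+1$.

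For the equality, set $h=h^1(\mathbb{P}^n,\mathcal{I}_Z(d))$ and $k=\deg(Z\cap\ell)-(d+1)\geq 1$; the goal is to show $h=k$. I would proceed by induction on $n$. Pick a hyperplane $H\supset\ell$ (taking $H=\ell$ when $n=2$), set $Z_H=Z\cap H$ and $W=\Res_H(Z)$, and use the residual short exact sequence
\[
0\longrightarrow \mathcal{I}_W(d-1)\longrightarrow \mathcal{I}_Z(d)\longrightarrow \iota_{*}\mathcal{I}_{Z_H,H}(d)\longrightarrow 0,
\]
where $\iota\colon H\hookrightarrow\mathbb{P}^n$. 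Since $\ell\subset H$ gives $\deg Z_H\geq\deg(Z\cap\ell)\geq d+2$, one obtains $\deg W\leq 2d+1-(d+2)=d-1$. In particular $\deg(W\cap m)\leq d-1<(d-1)+2$ for every line $m\subset\mathbb{P}^n$, so Lemma~\ref{1} applied in degree $d-1$ yields $h^1(\mathcal{I}_W(d-1))=0$; and $h^2(\mathcal{I}_W(d-1))=0$ follows from $h^2(\mathbb{P}^n,\mathcal{O}(d-1))=0$ together with $h^1(\mathcal{O}_W)=0$ via the structure sequence of $W$. Plugging these into the long exact cohomology sequence collapses everything to an isomorphism $h^1(\mathbb{P}^n,\mathcal{I}_Z(d))=h^1(H,\mathcal{I}_{Z_H,H}(d))$.

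In the base case $n=2$ this already concludes the argument, since on $\ell\cong\mathbb{P}^1$ we have $h^1(\ell,\mathcal{I}_{Z\cap\ell,\ell}(d))=\deg(Z\cap\ell)-(d+1)=k$. For the inductive step $n\geq 3$, the scheme $Z_H\subset H\cong\mathbb{P}^{n-1}$ satisfies $\deg Z_H\leq 2d+1$ and still has $\ell$ as a line with $\deg(Z_H\cap\ell)=\deg(Z\cap\ell)\geq d+2$; the uniqueness step (which holds in every $\mathbb{P}^m$, $m\geq 2$, by the same counting argument) ensures $\ell$ is the only such line for $Z_H$ inside $H$, so the inductive hypothesis gives $h^1(H,\mathcal{I}_{Z_H,H}(d))=\deg(Z\cap\ell)-(d+1)=k$, closing the induction. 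The main obstacle I anticipate is the cohomological bookkeeping across the inductive slice: one must verify both the $H^1$ and $H^2$ vanishings for $\mathcal{I}_W(d-1)$ simultaneously so that the residual sequence produces a clean isomorphism; once the numerical inequality $\deg W\leq d-1$ is in hand, Lemma~\ref{1} does the heavy lifting.
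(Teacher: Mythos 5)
Your argument is correct, and since the paper itself gives no proof of this lemma (it only cites \cite{bb} and remarks that it "improves" Lemma \ref{1}), there is nothing to contrast it with: the route you take --- existence from Lemma \ref{1}, uniqueness from the inclusion--exclusion count $\deg(Z\cap\ell_1)+\deg(Z\cap\ell_2)-1\le \deg Z$, and the exact value of $h^1$ via the Castelnuovo residual sequence with respect to a hyperplane containing $\ell$, sliced down inductively to $\ell\cong\mathbb{P}^1$ --- is precisely the standard one underlying the cited reference. The cohomological bookkeeping you flag as the main risk is handled correctly: $\deg \Res_H(Z)\le d-1$ kills both $h^1$ (via Lemma \ref{1} in degree $d-1$) and $h^2$ (via the structure sequence) of $\mathcal{I}_{\Res_H(Z)}(d-1)$, giving the needed isomorphism $h^1(\mathcal{I}_Z(d))\cong h^1(\mathcal{I}_{Z\cap H,H}(d))$.
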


We can go back to our problem of finding the symmetric-rank of a given tensor.
The classification of symmetric-ranks of the elements in $\sigma_4(X_{n,d})$ can be treated in an analogous way as we did for $\sigma_3(X_{n,d})$, but unfortunately, it requires a very complicated analysis on the schemes of length four. This is done in \cite{BB13a}, but because of the long procedure, we prefer to not present it here.

It is remarkable that $\sigma_4(X_{n,d})$ is the last $s$-th secant variety of Veronesean, where we can use this technique for the classification of the symmetric-rank with respect to zero-dimensional schemes of length $s$, whose span contains the given polynomial we are dealing with; for $s\geq 5$, there is a more intrinsic problem. In fact, there is a famous counterexample due to Buczy\'nska and Buczy\'ski (see \cite{bbjw}) that shows that, in $\sigma_5(X_{4,3})$, there is at least a polynomial for which there does not exist any zero-dimensional scheme of length five on $X_{4,3}$, whose span contains it. The example is the following.
\begin{example}[Buczy\'nska, Buczy\'nski \cite{bbjw, bbm}]\label{Jarek}
One can easily check that the following polynomial: 
$$ 
F= x_{0}^{2} x_{2} + 6 x_{1}^{2} x_{3} -3\, (x_{0}+x_{1})^{2}
x_{4}.
$$
can be obtained as $\lim_{\epsilon\rightarrow 0}{1\over 3\epsilon} F_{\epsilon} =F$ where: 
$$ 
F_{\epsilon}= (x_{0}+\epsilon x_{2})^{3} + 6(x_{1}+\epsilon
x_{3})^{3} -3(x_{0}+x_{1}+ \epsilon x_{4})^{3} + 3(x_{0} + 2\,
x_{1})^{3} - (x_{0} + 3 x_{1})^{3}
$$ 
has symmetric-rank five for $\epsilon >0$. Therefore, $[F]\in \sigma_5(\nu_3(\mathbb{P}^4))$.

An explicit computation of $F^{\perp}$ yields the 
Hilbert series for $\HS_{R/F^{\bot}}(z) = 1+5z+5z^2+z^3$. 
Let us prove, by contradiction, that there is no saturated ideal $I\subset F^{\perp}$ defining a zero-dimensional scheme of length $\le 5$.
Suppose on the contrary that $I$ is such an ideal. Then, $\HF_{R/ I}(i) \ge \HF_{R/ F^{\perp}}(i)$ for all $i \in \mathbb{N}$. As $\HF_{R/ I}(i)$ is
an increasing function of $i\in \mathbb{N}$ with $\HF_{R/F^{\perp}} (i) \le \HF_{R/ I} (i) \le 5$, we deduce that 
$\HS_{R/ I}(t) = 1 + 5\sum_{i=1}^\infty z^i$. 
This shows that $I_{1}=\{0\}$
and that $I_{2} = (F^{\perp})_{2}$. As $I$ is saturated, $I_{2}:
(x_{0}, \ldots, x_{4})=I_{1}=\{0\}$, since $\HF_{R/F^{\perp}} (1) =
5$. However, an explicit computation of
$(F^{\bot})_{2}: (x_{0}, \ldots, x_{4})$ gives
$\langle x_{2},x_{3},x_{4}\rangle$. In this way, we obtain a contradiction, so that
there is no saturated ideal of
degree $\le 5$ such that $I\subset F^{\perp}$.
Consequently, the minimal zero-dimensional scheme contained in $X_{4,3}$ whose linear span contains $[F]$ has degree six.
\end{example}

In the best of our knowledge, the two main results that are nowadays available to treat these ``wild'' cases are the following.

\begin{proposition}[\cite{bgi}]\label{brkremark2} Let $X\subset \mathbb{P}^ N$ be a non-degenerate smooth variety.
Let $H_r$ be the irreducible component of the Hilbert scheme of
zero-dimensional schemes of degree $r$ of $X$ containing $r$ distinct
points, and assume that for each $y\in H_r$, the corresponding
subscheme $Y$ of $X$ imposes independent conditions on linear forms.
Then, for each $P\in \sigma_{r}(X)$ $\smallsetminus \sigma_{r}^{0}(X)$,
there exists a zero-dimensional scheme $Z\subset X$ of degree $r$ such
that $P\in \langle Z\rangle \cong \mathbb{P}^ {r-1}$. Conversely, if there exists $Z\in H_{r}$ such that $P\in \langle Z\rangle $, then $P\in \sigma_{r}(X)$.
\end{proposition}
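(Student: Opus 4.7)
The plan is to set up the universal incidence variety
$$
\mathcal{J} = \{(P,Z) \in \mathbb{P}^N \times H_r \mid P \in \langle Z \rangle\},
$$
together with its two projections $\pi_1 : \mathcal{J} \to \mathbb{P}^N$ and $\pi_2: \mathcal{J} \to H_r$, and to deduce both implications from properness. The hypothesis that every $Z \in H_r$ imposes independent conditions on linear forms guarantees that $\langle Z\rangle$ always has the expected dimension $r-1$, so that $\pi_2$ is a $\mathbb{P}^{r-1}$-bundle. Since $X \subset \mathbb{P}^N$ is projective, the Hilbert scheme $H_r$ is projective, and hence so is $\mathcal{J}$. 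In particular $\pi_1$ is proper and $\mathcal{J}$ is irreducible, being a projective bundle over the irreducible component $H_r$.

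For the first assertion, the key observation is that on the dense open subset of $H_r$ parametrising reduced schemes $Z = \{P_1,\ldots,P_r\}$ one has $\langle Z \rangle = \langle P_1,\ldots,P_r\rangle$, so the image $\pi_1(\mathcal{J})$ contains $\sigma_r^0(X)$. Properness of $\pi_1$ makes $\pi_1(\mathcal{J})$ closed in $\mathbb{P}^N$, and therefore
$$
\pi_1(\mathcal{J}) \supseteq \overline{\sigma_r^0(X)} = \sigma_r(X).
$$
This immediately yields, for every $P \in \sigma_r(X)$ (in particular every $P \in \sigma_r(X) \setminus \sigma_r^0(X)$), a scheme $Z \in H_r$ with $P \in \langle Z \rangle$.

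For the converse, fix $Z \in H_r$ and $P \in \langle Z\rangle$. Since $Z$ lies in the irreducible component of $H_r$ containing tuples of $r$ distinct points, $Z$ is smoothable: choose a smooth curve germ $(T,0)$ with a morphism $T \to H_r$ sending $0$ to $Z$ whose restriction to $T \setminus \{0\}$ lands in the locus of reduced schemes. Pulling back $\mathcal{J}$ along $T \to H_r$ produces a $\mathbb{P}^{r-1}$-bundle $\mathcal{J}_T \to T$; because this bundle is smooth, one can lift the inclusion $\{0\} \hookrightarrow T$ through $(P,0) \in \mathcal{J}_T$ to a section $s:T \to \mathcal{J}_T$, $s(t)=(P_t, Z_t)$, with $s(0)=(P,0)$. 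For $t\neq 0$ the span $\langle Z_t\rangle$ is an honest $r$-secant $\mathbb{P}^{r-1}$, so $P_t\in \sigma_r^0(X) \subseteq \sigma_r(X)$, and letting $t\to 0$ gives $P\in \sigma_r(X)$ by closedness.

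The main obstacle is controlling the geometry of $\mathcal{J}$ over the non-reduced locus of $H_r$. Without the hypothesis of independent conditions, the span $\langle Z\rangle$ for a non-reduced $Z$ could drop dimension, the projection $\pi_2$ would fail to be a $\mathbb{P}^{r-1}$-bundle, and the lifting of a point in the special fibre to a nearby section would no longer be automatic; this is precisely the phenomenon responsible for the failure of the naive ``every secant point lies in the span of a length-$r$ subscheme'' principle in cases such as Example~\ref{Jarek}, where no zero-dimensional scheme of length $5$ supported on $X_{4,3}$ has a span containing the given $F$.
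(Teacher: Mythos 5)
Your proof is correct: under the independent-conditions hypothesis the incidence correspondence $\mathcal{J}\to H_r$ is a $\mathbb{P}^{r-1}$-bundle over a projective irreducible base, so $\pi_1$ is proper with irreducible image, and the density of the reduced locus gives both inclusions needed for $\pi_1(\mathcal{J})=\sigma_r(X)$, which yields the two assertions at once. The paper itself does not reproduce a proof (it only cites \cite{bgi}), but your argument is essentially the standard one given there, and your closing remark correctly identifies the independent-conditions hypothesis as what fails in Example \ref{Jarek}.
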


%

Obviously, five points on a line do not impose independent conditions on cubics in any $\mathbb{P}^n$ for $n\geq 5$; therefore, this could be one reason why the counterexample given in Example \ref {Jarek} is possible. Another reason is the following.
\begin{proposition}[\cite{bbjw}]
Suppose there exist points $P_1, \ldots ,P_r \in X$ that are
linearly degenerate, that is $\dim \langle P_1, \ldots, P_r \rangle < r -1$. Then, the join of
the $r$ tangent stars {\rm (}see {\cite{BGL} (Section 1.4)} for a definition{\rm )} at these points is contained in $\sigma_r(X)$. In the case that 
$X$ is smooth at $P_1, \ldots P_r$, then $\langle T_{P_1}X,\ldots,T_{P_r}X \rangle \subset \sigma_r(X)$.
\end{proposition}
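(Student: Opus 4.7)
The strategy is to realize every point of the join as a limit, along a one-parameter family, of points lying on bona fide $r$-secant $(r-1)$-planes to $X$; since $\sigma_r(X)$ is Zariski closed, such limits land in $\sigma_r(X)$. The linear degeneracy of $P_1,\ldots,P_r$ is precisely what allows the limiting $(r-1)$-plane to be strictly larger than $\langle P_1,\ldots,P_r\rangle$, so that it picks up genuinely tangential directions on top of the linear span of the $P_i$'s.

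Concretely, in the smooth case, fix tangent vectors $v_i\in T_{P_i}X$ for $i=1,\ldots,r$ and choose analytic curves $\gamma_i\colon(-\varepsilon,\varepsilon)\to X$ with $\gamma_i(0)=P_i$ and $\gamma_i'(0)=v_i$, so in local affine coordinates $\gamma_i(t)=P_i+tv_i+O(t^2)$. For each $t\neq 0$ small, the $(r-1)$-plane $\Lambda(t):=\langle\gamma_1(t),\ldots,\gamma_r(t)\rangle$ lies in $\sigma_r(X)$ by definition of the secant variety. The hypothesis $\dim\langle P_1,\ldots,P_r\rangle<r-1$ supplies a nontrivial relation $\sum_i\lambda_iP_i=0$, and hence
\[
\frac{1}{t}\sum_i\lambda_i\gamma_i(t)\;=\;\sum_i\lambda_iv_i+O(t)
\]
is a linear combination of the $\gamma_i(t)$'s, so it belongs to $\Lambda(t)\subseteq\sigma_r(X)$. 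Letting $t\to 0$ and using closedness of $\sigma_r(X)$, I obtain $\sum_i\lambda_iv_i\in\sigma_r(X)$. Applying the same idea to the combined family $\sum_i(\alpha_i+\lambda_i/t)\gamma_i(t)$, every element of the form $\sum_i\alpha_iP_i+\sum_i\lambda_iv_i$ lies in $\sigma_r(X)$; as $(\lambda_i)$ varies over all linear dependencies among the $P_i$'s and the $v_i$'s over the corresponding tangent spaces, these combinations sweep out the join $\langle T_{P_1}X,\ldots,T_{P_r}X\rangle$.

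For the general (possibly singular) statement, each element of $T^*_{P_i}X$ is by definition a limit of secant directions through pairs of points of $X$ approaching $P_i$; one therefore replaces each single curve $\gamma_i(t)$ by a pair of curves whose secant line limits to the desired element of $T^*_{P_i}X$, and runs the same estimate as a diagonal limit in the two parameters. The step I expect to be the main obstacle is the final sweeping claim: verifying that, as $(\lambda_i)$ ranges over the kernel of $(P_i)_i$ and the $v_i$ over the tangent stars, the combinations $\sum_i\alpha_iP_i+\sum_i\lambda_iv_i$ actually cover all of the join of the tangent stars and not merely a proper subfamily of it. This is a linear-algebra check that depends delicately on how the dependencies are distributed among the $P_i$'s, and it is where the precise quantitative version of the hypothesis $\dim\langle P_1,\ldots,P_r\rangle<r-1$ really enters.
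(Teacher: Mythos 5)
The survey states this proposition without proof, quoting it from \cite{bbjw}, so there is no in-paper argument to compare yours against; I can only assess it on its own terms. Your central computation is correct and is precisely the mechanism behind the explicit degeneration $F_\epsilon \to F$ in Example~\ref{Jarek}: fixing affine representatives with $\sum_i \lambda_i \hat P_i = 0$, the approximants $\sum_i(\alpha_i+\lambda_i/t)\gamma_i(t)$ lie on honest $r$-secants and converge to $\sum_i \alpha_i \hat P_i + \sum_i \lambda_i v_i$, which therefore lies in $\sigma_r(X)$.

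The difficulty you flag at the end, however, is not a routine linear-algebra verification: it is a genuine gap, and the argument as written does not close it. The coefficients $(\lambda_i)$ multiplying the tangent vectors are forced to lie in the space $K$ of linear relations among $\hat P_1,\ldots,\hat P_r$, so as $\alpha$, $\lambda\in K$ and the $v_i$ vary, your limits sweep out exactly $\langle P_1,\ldots,P_r\rangle+\sum_{i\in S}T_{P_i}X$, where $S$ is the set of indices appearing with nonzero coefficient in some relation, i.e., those $i$ with $P_i\in\langle P_j : j\neq i\rangle$. Whenever some index lies outside $S$, no tangential direction at that point is ever produced. Concretely, take $r=4$ with $P_1,P_2,P_3$ collinear and $P_4$ off their line (a linearly degenerate configuration, realizable for instance on a Segre variety, which contains lines): then $K$ is spanned by a single relation supported on $\{1,2,3\}$, your construction only yields $\langle T_{P_1}X,T_{P_2}X,T_{P_3}X,P_4\rangle\subset\sigma_4(X)$, and neither higher-order poles in the coefficients nor letting two of the four curves collapse onto $P_4$ recovers a general point carrying nonzero tangential components at all four points simultaneously. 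Your proof is therefore complete only under the extra hypothesis that every $P_i$ lies in the span of the remaining ones (equivalently, that a generic relation in $K$ has full support); this does hold in the one application the survey makes of the statement, the five points on a rational normal cubic in Example~\ref{Jarek}, any four of which are independent. A separate, smaller issue: in the singular case you replace each curve by a pair of curves, which uses $2r$ points of $X$ and so only places the limit in $\sigma_{2r}(X)$; capturing tangent-star directions within the budget of $r$ points requires a different device.
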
 

\subsubsection{Beyond Sylvester's Algorithm via Apolarity}\label{beyondbeyond}

We have already defined in Section \ref{Apolaritysection} the apolarity action of $S^\bullet V^* \simeq \Bbbk[y_0,\ldots,y_n]$ on $S^\bullet V \simeq \Bbbk[x_0,\ldots,x_n]$ and inverse systems. Now, we introduce the main algebraic tool from apolarity theory to study ranks and minimal Waring decompositions: that is the apolarity lemma; see \cite{IaKa,Ge}. First, we introduce the {apolar ideal} of a polynomial.
\begin{dfn}
	Let $F \in S^dV$ be a homogeneous polynomial. Then, the \emph{apolar ideal} of $F$ is:
	$$
		F^\perp = \{G \in S^\bullet V^* ~|~ G \circ F = 0\}.
	$$
\end{dfn}
\begin{rmk}
The apolar ideal is a homogeneous ideal. Clearly, $F^\perp_i = S^iV^*$, for any $i > d$, namely $A_F = S^\bullet V^*/F^\perp$ is an {Artinian} algebra with {socle degree} equal to $d$. Since $\dim_\Bbbk (A_F)_d = 1$, then it is also a {Gorenstein} algebra. Actually, Macaulay proved that there exists a one-to-one correspondence between graded Artinian Gorenstein algebras with socle degree $d$ and homogeneous polynomials of degree $d$; for details, see \cite[Theorem 8.7]{Ge}.
\end{rmk}
\begin{rmk}
Note that, directly by the definitions, the non-zero homogeneous parts of the apolar ideal of a homogeneous polynomial $F$ coincide with the kernel of its catalecticant matrices, i.e., for $i = 0,\ldots,d$,
$$
	F^\perp_i = \ker(Cat_{i,d-i}(F)).
$$
\end{rmk}
The apolarity lemma tells us that Waring decompositions of a given polynomial correspond to sets of reduced points whose defining ideal is contained in the apolar ideal of the polynomial.
\begin{lemma}[Apolarity lemma]\label{lemma:Apolarity}
 Let $Z=\{[L_1], \ldots , [L_r]\}\subset \mathbb{P}(S^1V)$, then the following are equivalent:
 \begin{enumerate}
 	\item $F=\sum_{i=1}^r \lambda_i L_i^d$, for some $\lambda_1,\ldots,\lambda_r \in \Bbbk$;
 	\item $I(Z)\subseteq F^{\perp}$.
 \end{enumerate}
 If these conditions hold, we say that $Z$ is a set of points {apolar} to $F$.
\end{lemma}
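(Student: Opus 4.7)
The plan is to prove the Apolarity Lemma by translating both conditions (1) and (2) into the single statement ``$F$ belongs to the degree-$d$ part of the inverse system $I(Z)^{-1}$'', using the machinery developed earlier in the section. Essentially no new input is required beyond Proposition \ref{prop: inverse system fat points} (specialized to $m_i = 0$) and the definitions of the apolar ideal and of the inverse system.

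First, I would rewrite condition (1). Saying $F = \sum_{i=1}^r \lambda_i L_i^d$ for some scalars is exactly saying that $F$ belongs to the linear span $\langle L_1^d,\ldots,L_r^d \rangle \subseteq S_d$. Now apply Proposition \ref{prop: inverse system fat points} with all multiplicities equal to zero: the ideal $I(Z) = \wp_1 \cap \cdots \cap \wp_r \subset R$ is the defining ideal of the reduced scheme $Z$, and for $d \geq 1$ the formula gives
\[
(I(Z)^{-1})_d \;=\; L_1^d\cdot S_0 + \cdots + L_r^d \cdot S_0 \;=\; \langle L_1^d,\ldots,L_r^d \rangle.
\]
Hence (1) is equivalent to $F \in (I(Z)^{-1})_d$.

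Next, I would unravel condition (2) directly from the definitions. By definition, $F^\perp = \{G \in R \mid G \circ F = 0\}$, so $I(Z) \subseteq F^\perp$ means $G \circ F = 0$ for every homogeneous $G \in I(Z)_i$ and every $i$. Since $F$ is homogeneous of degree $d$, any $G \in R_i$ with $i>d$ automatically kills $F$ for degree reasons, so the content of the inclusion is $G \circ F = 0$ for all $G \in I(Z)_i$ with $i \leq d$. By the definition of the inverse system, this is precisely the statement $F \in I(Z)^{-1}$, and since $F$ is concentrated in degree $d$ this is equivalent to $F \in (I(Z)^{-1})_d$. Combining the two reformulations, (1) and (2) are both equivalent to membership of $F$ in the same vector space $(I(Z)^{-1})_d$, which completes the proof.

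There is no serious obstacle: the entire statement is a compact repackaging of the duality built into the apolarity pairing $R_d \times S_d \to \Bbbk$ of \eqref{pairing}. The one place demanding a little care is the identification $(I(Z)^{-1})_d = \langle L_1^d,\ldots,L_r^d\rangle$; this uses the property $(I\cap J)^{-1} = I^{-1}+J^{-1}$ of Remark \ref{rmk: inverse systems properties} to reduce to the single-point case, together with the elementary computation that for a simple point $P = [L_P]$ defined by the prime $\wp_P$ one has $(\wp_P^{-1})_d = \Bbbk\cdot L_P^d$. Both of these are already recorded in the discussion preceding the lemma, so the proof is a matter of threading the previous results together.
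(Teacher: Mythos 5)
Your proof is correct and follows essentially the same route as the paper: the converse direction is verbatim the paper's argument ($I(Z)\subseteq F^\perp$ gives $F\in (I(Z)^{-1})_d=\langle L_1^d,\ldots,L_r^d\rangle$ via Remark \ref{rmk: inverse systems properties} and Proposition \ref{prop: inverse system fat points}), and your forward direction, phrased as the reverse inclusion of the same identification, is just a repackaging of the paper's observation that $G\circ L^d$ is a nonzero multiple of the evaluation of $G$ at $[L]$ --- the fact underlying Proposition \ref{prop: inverse system fat points} in the first place. Unifying both implications as ``$F\in(I(Z)^{-1})_d$'' is a slightly cleaner presentation but not a different argument.
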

\begin{proof}
The fact that (1) implies (2) follows from the easy fact that, for any $G \in S^dV^*$, we have that $G \circ L^d$ is equal to $d$ times the evaluation of $G$ at the point $[L] \in \PP V$. Conversely, if $I(Z) \subset F^\perp$, then we have that $F \in I(Z)^\perp_d = \langle L_1^d,\ldots,L_r^d \rangle$; see Remark \ref{rmk: inverse systems properties} and Proposition \ref{prop: inverse system fat points}.
\end{proof}
\begin{rmk}[Yet again: Sylvester's algorithm]
With this lemma, we can rephrase Sylvester's algorithm. Consider the binary form $F =\sum_{i=0}^d c_i {d\choose i} x_0^{d-i}x_1^i$. Such an $F$ can be decomposed as the sum of $r$ distinct powers of linear forms if and only if there exists $Q =q_0y_0^r+q_1y_0^{r-1}y_1+ \cdots + q_ry_1^r$ such that:
\begin{equation}\label{piccolacat}\left( \begin{array}{cccc}
c_0 & c_1 & \cdots & c_r\\
c_1 &&\cdots&c_{r+1}\\
\vdots &&&\vdots\\
c_{d-r}&&\cdots&c_d
\end{array}
\right)\left( \begin{array}{c}
q_0\\
q_1\\
\vdots
\\
q_r
\end{array}
\right)=0,
\end{equation}
and $Q=\mu\Pi_{k=1}^r (\beta_k y_0 - \alpha_k y_1)$, for a suitable scalar $\mu \in \Bbbk$, where $[\alpha_i:\beta_i]$'s are different points in $\PP^1$. In this case, there exists a choice of $\lambda_1,\ldots,\lambda_r$ such that $F =\sum_{k=1}^r \lambda_k (\alpha_k x_0 + \beta_k x_1)^d$. This is possible because of the following remarks:
\begin{itemize}
	\item Gorenstein algebras of codimension two are always complete intersections, i.e., $$I(Z)\subset (F^{\perp})=(G_1,G_2);$$
	\item Artinian Gorenstein rings have a symmetric Hilbert function, hence:
	$$
		\deg(G_1) + \deg(G_2) = \deg(F) + 2, \quad \text{say } \deg(G_1) \leq \deg(G_2);
	$$
	\item If $G_1$ is square-free, i.e., has only distinct roots, we take $Q = G_1$ and $\rk_{\rm sym}(F) = \deg(G_1)$; otherwise, the first degree where we get something square-free has to be the degree of $G_2$; in particular, we can take $Q$ to be a generic element in $F^\perp_{\deg(G_2)}$ and $\rk_{\rm sym}(F) = \deg(G_2)$.
\end{itemize} 
\end{rmk}
%
By using Apolarity Theory, we can describe the following algorithm (Algorithm \ref{catalg}).

\medskip
\begin{algorithm}[{Iarrobino and Kanev, \cite{IaKa}.}]\label{catalg} 
We attribute the following generalization of Sylvester's algorithm to any number of variables to Iarrobino and Kanev: despite that they do not explicitly write the algorithm, the main idea is presented in \cite{IaKa}. Sometimes, this algorithm is referred to as the {catalecticant method}.

\medskip
\begin{algorithmic}[1]
\REQUIRE {$F \in S^{d}V$, where $\dim V = n+1$.}
\ENSURE {a minimal Waring decomposition.}
\STATE {construct the most square catalecticant of $F$, i.e., $Cat_{m,d-m}(F)$ for $m= \lceil d/2 \rceil;$}
\STATE {compute $\ker Cat_{m,d-m}(F)$;}
\STATE {if the zero-set $Z$ of the polynomials in $\ker Cat_{m,d-m}(F)$ is a reduced set of points, say $\{[L_1],\ldots,[L_r]\}$, then continue, otherwise the algorithm fails;}
\STATE {solve the linear system defined by $F = \sum_{i=1}^s \lambda_iL_i^d$ in the unknowns $\lambda_i$.}
\end{algorithmic}
\end{algorithm}

\begin{example} Compute a Waring decomposition of: 
$$F=3x^4 + 12x^2y^2 + 2y^4 - 12x^2yz + 12xy^2z - 4y^3z + 12x^2z^2 - 12xyz^2 + 6y^2z^2 - 4yz^3 + 2z^4.$$ The most square catalecticant matrix is: 
$$Cat_{2,2}(F)=\left(\begin{array}{rrrrrr}
3&0&0&2&-1&2\\
0&2&-1&0&1&-1\\
0&-1&2&1&-1&0\\
2&0&1&2&-1&1\\
-1&1&-1&-1&1&-1\\
2&-1&0&1&-1&2
\end{array} \right).$$
Now, compute that the rank of $Cat_{2,2}(F)$ is three, and its kernel is: 
\begin{align*}
	\ker(Cat_{2,2}(F)) & = \langle(1,0,0,-1,-1,-1),~(0,1,0,-1,-2,0),~(0,0,1,0,2,1)\rangle =\\
	& \langle y_0^2-y_1^2-y_1y_2-y_2^2,~y_0y_1-y_1^2-2y_1y_2,~y_0y_2+2y_1y_2+y_2^2 \rangle \subset S^2V^*.
\end{align*} 
It is not difficult to see that these three quadrics define a set of reduced points $\{[1:1:0], [1:0:-1], [1:-1:1]\} \subset \bbP V$. Hence, we take $L_1=x_0+x_1$, $L_2=x_0-x_2$ and $L_3=x_0-x_1+x_2$, and, by the apolarity lemma, the polynomial $F$ is a linear combinations of those forms, in particular, $$F=(x_0+x_1)^4+(x_0-x_2)^4+(x_0-x_1+x_2)^4.$$
\end{example}
Clearly, this method works only if $\rk_{\rm sym}(F) = {\rm rank}~Cat_{m,d-m}(F)$, for $m = \left\lceil\frac{d}{2}\right\rceil$. Unfortunately, in many cases, this condition is not always satisfied.

Algorithm \ref{catalg} has been for a long time the only available method to handle the computation of the decomposition of polynomials with more than two variables. In 2013, there was an interesting contribution due to Oeding and Ottaviani (see \cite{OO}), where the authors used vector bundle techniques introduced in \cite{LO} to find non-classical equations of certain secant varieties. In particular, the very interesting part of the paper \cite{OO} is the use of representation theory, which sheds light on the geometric aspects of this algorithm and relates these techniques to more classical results like the Sylvester pentahedral theorem (the decomposition of cubic polynomial in three variables as the sum of five cubes). For the heaviness of the representation theory background needed to understand that algorithm, we have chosen to not present it here. Moreover, we have to point out that \cite{OO} (Algorithm 4) fails whenever the symmetric-rank of the polynomial is too large compared to the rank of a certain matrix constructed with the techniques introduced in \cite{OO}, similarly as happens for the catalecticant method.

Nowadays, one of the best ideas to generalize the method of catalecticant matrices is due to Brachat, Comon, Mourrain and Tsidgaridas, who in \cite{BCMT} developed an algorithm (Algorithm \ref{algoBCMT}) that gets rid of the restrictions imposed by the usage of catalecticant matrices. The idea developed in \cite{BCMT} is to use the so-called {Hankel matrix} that in a way encodes all the information of all the catalecticant matrices. The algorithm presented in \cite{BCMT} to compute a Waring decomposition of a form $F \in S^dV$ passes through the computation of an {affine} Waring decomposition of the dehomogenization $f$ of the given form with respect to a suitable variable. Let $S = \Bbbk[x_1,\ldots,x_n]$ be the polynomial ring in $n$ variables over the field $\Bbbk$ corresponding to such dehomogenization.

We first need to introduce the definition of {Hankel operator} associated with any $\Lambda \in S^*$. To do so, we need to use the structure of $S^*$ as the $S$-module, given by: 
$$
	a * \Lambda : S \rightarrow \Bbbk, \quad b \mapsto \Lambda(ab), \quad \text{ for } a \in S, \Lambda \in S^*.
$$
Then, the {Hankel operator} associated with $\Lambda \in S^*$ is the matrix associated with the linear map:
$$
	H_\Lambda: S \rightarrow S^*, \hbox{ such that } a \mapsto a * \Lambda.
$$
Here are some useful facts about Hankel operators.
\begin{proposition} 
	$\ker (H_\Lambda)$ is an ideal.
\end{proposition}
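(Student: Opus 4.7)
The plan is to verify directly that $\ker(H_\Lambda)$ is closed under addition and under multiplication by arbitrary elements of $S$. Additive closure is immediate because $H_\Lambda$ is $\Bbbk$-linear, so only the multiplicative absorption property requires a short computation, which reduces to the associativity of the $S$-action on $S^*$.

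More precisely, I would first unfold the definition: an element $a\in S$ belongs to $\ker(H_\Lambda)$ if and only if $(a*\Lambda)(b)=\Lambda(ab)=0$ for every $b\in S$. To check that $ca\in\ker(H_\Lambda)$ whenever $a\in\ker(H_\Lambda)$ and $c\in S$, I would evaluate $(ca)*\Lambda$ on an arbitrary $b\in S$ and rewrite
\[
\bigl((ca)*\Lambda\bigr)(b)=\Lambda\bigl((ca)b\bigr)=\Lambda\bigl(a(cb)\bigr)=(a*\Lambda)(cb)=0,
\]
using commutativity and associativity in $S$ and the hypothesis $a*\Lambda=0$. Hence $ca*\Lambda=0$, i.e.\ $ca\in\ker(H_\Lambda)$.

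A slightly more conceptual phrasing, which I would include as a remark after the direct proof, is to observe that $H_\Lambda$ is in fact a homomorphism of $S$-modules: $S$ acts on itself by multiplication and on $S^*$ via $*$, and by the same one-line associativity check, $H_\Lambda(ca)=c*H_\Lambda(a)$. Kernels of $S$-module maps out of the regular module $S$ are precisely the ideals of $S$, so the claim follows. There is no real obstacle here; the only thing to be careful about is making explicit that the $S$-module structure on $S^*$ is defined exactly so that this associativity computation works, since that is the whole point of introducing the $*$-action in the first place.
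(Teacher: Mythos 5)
Your proof is correct: the computation $\bigl((ca)*\Lambda\bigr)(b)=\Lambda(acb)=(a*\Lambda)(cb)=0$ is exactly the standard verification, and the reformulation of $H_\Lambda$ as an $S$-module homomorphism is the right conceptual gloss. Note that the paper states this proposition without proof (it is taken from \cite{BCMT} as a basic fact about Hankel operators), so your argument simply supplies the routine verification the authors omitted; there is nothing to compare beyond that.
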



Let $I_\Lambda = \ker(H_\Lambda)$ and $A_\Gamma = S / I_\Lambda$.
 
\begin{proposition}\label{prop:BCMT}
If ${\rm rank}(H_\Lambda)=r < \infty$, then {the algebra $A_\Lambda$ is a $\Bbbk$-vector space of dimension $r$}, and there exist {polynomials $l_1, \ldots l_k$ of degree one} and $g_1, \ldots, g_k$ of degree $d_1,\ldots,d_k$, respectively, in $\Bbbk[\partial_1,\ldots,\partial_n]$ such that: $$\Lambda = \sum_{i=1}^k l_i^{d-d_i}g_i.$$ Moreover, $I_\Lambda$ defines the union of affine schemes $Z_1,\ldots,Z_k$ with support on the points $l^*_1,\ldots,l^*_k \in \Bbbk^n$, respectively, and with multiplicity equal to the dimension of the vector space spanned by the inverse system generated by $l_i^{d-d_i}g_i$.
\end{proposition}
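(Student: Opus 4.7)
The plan is to derive the statement in three steps, first controlling the dimension of $A_\Lambda$, then splitting it into local components, and finally translating each local piece into the apolar form.

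First, by the previous proposition, $I_\Lambda = \ker(H_\Lambda)$ is an ideal of $S$, so $A_\Lambda = S/I_\Lambda$ is a commutative $\Bbbk$-algebra. Applying the rank-nullity theorem to $H_\Lambda : S \to S^*$ gives
\[
\dim_\Bbbk A_\Lambda \;=\; \dim_\Bbbk \bigl(S/\ker(H_\Lambda)\bigr) \;=\; \mathrm{rank}(H_\Lambda) \;=\; r,
\]
so $A_\Lambda$ is a finite-dimensional, and hence Artinian, $\Bbbk$-algebra.

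Second, since $\Bbbk$ is algebraically closed, the structure theorem for finite-dimensional commutative $\Bbbk$-algebras produces a product decomposition $A_\Lambda \cong A_1 \times \cdots \times A_k$, in which each $A_i$ is a local Artinian $\Bbbk$-algebra with residue field $\Bbbk$. Its maximal ideal corresponds to a closed point $l_i^* \in \Bbbk^n$; taking $l_i$ to be the linear form encoding $l_i^*$, this identifies the support of $V(I_\Lambda)$ with $\{l_1^*, \ldots, l_k^*\}$ and equips each point with multiplicity $\dim_\Bbbk A_i$. This already gives the second half of the statement: $I_\Lambda$ defines the union of affine schemes $Z_1, \ldots, Z_k$ supported at the $l_i^*$.

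Third, dualizing the product decomposition yields $A_\Lambda^* \cong A_1^* \oplus \cdots \oplus A_k^*$, and $\Lambda$, viewed in $A_\Lambda^*$, splits accordingly as $\Lambda = \Lambda_1 + \cdots + \Lambda_k$, with each $\Lambda_i$ supported at $l_i^*$. To convert each $\Lambda_i$ into the claimed apolar shape, I would invoke Macaulay's inverse-system description of Artinian Gorenstein quotients supported at a single point (Proposition~\ref{prop: inverse system fat points} and Equation~\eqref{Hilb fat points}): a functional supported at $l_i^*$, arising from a local structure whose socle sits in degree $d-d_i$, pairs under apolarity with an element of $L_i^{d-d_i} \cdot S_{d_i}$. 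Hence $\Lambda_i = l_i^{d-d_i} g_i$ with $\deg g_i = d_i$, and the local multiplicity $\dim_\Bbbk A_i$ matches the dimension of the inverse system generated by $l_i^{d-d_i} g_i$ by the same apolar duality.

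The hard part will be the third step: pinning down the dictionary between the Hankel/quotient-algebra picture of $A_\Lambda$ and the inverse-system picture of each $A_i^*$. In particular, one must identify the socle degree of the local factor $A_i$ with the degree $d_i$ of $g_i$ and handle the affine-versus-homogeneous bookkeeping, since $\Lambda$ is defined on $S = \Bbbk[x_1,\ldots,x_n]$ while the target expression $l_i^{d-d_i} g_i$ is most naturally read as a homogeneous piece of degree $d$. Once this translation is fixed, the remaining content of the proposition is essentially a repackaging of the Artinian structure theorem through the apolarity lemma.
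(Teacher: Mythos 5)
The paper does not actually prove this proposition: it explicitly defers to \cite{BCMT}, with expanded accounts in \cite{taufer,bertau}, so there is no in-text argument to compare yours against. Your outline follows the standard route of those sources and its skeleton is sound: the first isomorphism theorem gives $\dim_\Bbbk A_\Lambda=\mathrm{rank}(H_\Lambda)=r$ (and $\Lambda$ does descend to $A_\Lambda^*$, since $a\in I_\Lambda$ implies $\Lambda(a)=(a*\Lambda)(1)=0$, a point worth recording); the structure theorem for Artinian algebras over an algebraically closed field gives the splitting $A_\Lambda\cong A_1\times\cdots\times A_k$ into local factors, hence the support and the multiplicity statement; and Macaulay duality identifies $\dim_\Bbbk A_i$ with the dimension of the inverse system generated by $\Lambda_i$.

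The soft spot is exactly the step you flag as ``the hard part'': it is named but not executed, and the internal references you lean on are not quite the right ones. Proposition~\ref{prop: inverse system fat points} and \eqref{Hilb fat points} describe the inverse system of a \emph{power of the maximal ideal} of a point, whereas each $A_i$ here is an arbitrary local Artinian quotient. What you actually need is the affine/local form of that computation: since $A_i$ is Artinian local, $\mathrm{Ann}(\Lambda_i)\supseteq\mathfrak{m}_{l_i^*}^{N+1}$ for some $N$, so $\Lambda_i$ lies in $(\mathfrak{m}_{l_i^*}^{N+1})^\perp$, which is spanned by evaluation at $l_i^*$ composed with constant-coefficient differential operators of order at most $N$; homogenizing, $\Lambda_i\in L_i^{\,d-N}S_N$, and extracting the largest power of $L_i$ dividing the cofactor gives $\Lambda_i=l_i^{\,d-d_i}g_i$ with $\deg g_i=d_i$. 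This is standard --- it is the single-point case of \eqref{Hilb fat points}, dehomogenized --- so the gap is expository rather than mathematical, but since this dictionary \emph{is} the content of the proposition, a complete proof must spell it out rather than defer it. A small bonus you could have used: each $A_i$ is automatically Gorenstein, because $I_\Lambda=\mathrm{Ann}(\Lambda)$ is the annihilator of a single functional, so the appeal to the Gorenstein case of Macaulay duality needs no extra hypothesis.
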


The original proof of this proposition can be found in \cite{BCMT}; for a more detailed and expanded presentation, see \cite{taufer,bertau}.

\begin{thm}[{Brachat, Comon, Mourrain, Tsigaridas} \cite{BCMT}] 
An element $\Lambda \in S^*$ can be decomposed as $\Lambda = \sum_{i=1}^r \lambda_il_i^d$ if and only if ${\rm rank} H_\Lambda = r$, and $I_\Lambda$ is a radical ideal.
\end{thm}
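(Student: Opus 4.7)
The plan is to deduce both implications directly from Proposition~\ref{prop:BCMT}, which already gives the structural dictionary between $\Lambda$, the rank of $H_\Lambda$, and the scheme cut out by $I_\Lambda$. So the proof is essentially a bookkeeping argument that tracks when the schemes appearing in that proposition degenerate to reduced points.

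For the ``if'' direction, suppose $\mathrm{rank}\,H_\Lambda = r$ and $I_\Lambda$ is radical. By Proposition~\ref{prop:BCMT}, there exist linear forms $l_1,\ldots,l_k$ and polynomials $g_1,\ldots,g_k$ of degrees $d_1,\ldots,d_k$ such that $\Lambda = \sum_{i=1}^k l_i^{d-d_i} g_i$, and $I_\Lambda$ defines the union of zero-dimensional affine schemes $Z_i$ supported at $l_i^\ast$ with multiplicity $\mu_i = \dim_\Bbbk\langle$ inverse system generated by $l_i^{d-d_i} g_i \rangle$. Radicality of $I_\Lambda$ forces each $Z_i$ to be reduced, i.e.\ $\mu_i = 1$ for every $i$. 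An inverse system supported at a single point with $\mu_i = 1$ must be generated by a scalar, so $d_i = 0$ and each $g_i$ is a constant $\lambda_i \in \Bbbk$. This yields $\Lambda = \sum_{i=1}^k \lambda_i l_i^d$, and since $\mathrm{rank}\,H_\Lambda = \sum_i \mu_i = k$, we conclude $k = r$.

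For the ``only if'' direction, assume $\Lambda = \sum_{i=1}^r \lambda_i l_i^d$ with all $\lambda_i \neq 0$ and the linear forms $l_i$ pairwise non-proportional. Then $\Lambda$ already has the shape provided by Proposition~\ref{prop:BCMT} with $k = r$, $d_i = 0$, $g_i = \lambda_i$. The inverse system generated by each $\lambda_i l_i^d$ at the point $l_i^\ast$ has dimension one, so each $Z_i$ in the description of $I_\Lambda$ is a reduced point; therefore $I_\Lambda$ is the (radical) vanishing ideal of the $r$ distinct points $\{l_1^\ast,\ldots,l_r^\ast\}$, and $\mathrm{rank}\,H_\Lambda = \sum_{i=1}^r 1 = r$.

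The only non-trivial ingredient is the content of Proposition~\ref{prop:BCMT} itself, which encodes the whole duality between Hankel operators and apolar ideals; once this is in hand, the argument reduces to the elementary observation that the multiplicity contributed at a support point $l_i^\ast$ equals one precisely when $g_i$ is a nonzero scalar. The main conceptual obstacle would be to verify that the decomposition produced in the ``if'' direction is genuinely of length $r$ and not shorter (i.e.\ that the supports $l_i^\ast$ are distinct), but this is immediate from the fact that $\mathrm{rank}\,H_\Lambda$ equals the total length $\sum_i \mu_i$ of the scheme defined by $I_\Lambda$, together with radicality.
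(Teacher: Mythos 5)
The paper states this theorem without proof (it is quoted directly from the reference \cite{BCMT}), so there is no internal argument to compare against; I am judging your derivation from Proposition \ref{prop:BCMT} on its own. Your ``if'' direction is essentially sound: finiteness of the rank plus radicality of $I_\Lambda$ forces every $Z_i$ to be a reduced point, a length-one local piece corresponds to a functional proportional to evaluation at $l_i^*$, i.e.\ to $\lambda_i l_i^d$, and $k=\sum_i\mu_i=\dim_\Bbbk A_\Lambda=r$. (The phrase ``so $d_i=0$ and each $g_i$ is a constant'' is not literally forced, since the representation $l_i^{d-d_i}g_i$ is not unique; but the conclusion that the $i$-th summand is $\lambda_i l_i^d$ is the right one.)

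The ``only if'' direction has a genuine gap. Proposition \ref{prop:BCMT} is conditional: its hypothesis is precisely ${\rm rank}\, H_\Lambda=r<\infty$, which is part of what you are trying to prove, so you cannot invoke its ``moreover'' clause to describe $I_\Lambda$ before finiteness of the rank has been established. Even granting finiteness, the proposition only asserts the existence of \emph{some} generalized decomposition and describes $I_\Lambda$ in terms of \emph{that} one; it contains no uniqueness statement, so you are not entitled to substitute your given decomposition ($k=r$, $d_i=0$, $g_i=\lambda_i$) into its conclusion and read off the $Z_i$. The fix is a short direct computation that bypasses the proposition: each $H_{\lambda_i l_i^d}$ has rank one (it sends $p\mapsto \lambda_i\, p(l_i^*)\cdot l_i^d$), so ${\rm rank}\, H_\Lambda\le r$ by subadditivity; and since $(p*\Lambda)(q)=\sum_{i=1}^r\lambda_i\, p(l_i^*)\,q(l_i^*)$, choosing interpolating polynomials $q_j$ with $q_j(l_i^*)=\delta_{ij}$ shows that $I_\Lambda=\ker H_\Lambda$ is exactly the radical vanishing ideal of the $r$ distinct points $l_1^*,\dots,l_r^*$, whence ${\rm rank}\, H_\Lambda=\dim_\Bbbk S/I_\Lambda=r$. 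With that inserted, your argument is complete.
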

Now, we consider the multiplication operators in $A_\Lambda$. Given $a \in A_\Lambda$:
$$M_a: A_\Lambda \rightarrow A_\Lambda,$$
$$b\mapsto a\cdot b,$$
and, 
$$M_a^t: A_\Lambda^* \rightarrow A^*_\Lambda,$$
$$\gamma \mapsto a * \gamma.$$
Now, 
\begin{equation}\label{Maf}
H_{a*\Lambda} := M_a^t \cdot H_\Lambda.
\end{equation}

\begin{thm} If $\dim A_\Lambda < \infty$, then, $\Lambda = \sum_{i=1}^k l_i^{d-d_i}g_i$ and: 
\begin{itemize}
\item the eigenvalues of the operators $M_a$ and $M_a^t$ are given by $\{a(l^*_1), \ldots , a(l^*_r)\}$;
\item the common eigenvectors of the operators $(M^t_{x_i})_{1\leq i \leq n}$ are, up to scalar, the~$l_i$'s.
\end{itemize}
\end{thm}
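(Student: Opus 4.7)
The plan is to reduce everything to the block structure of $A_\Lambda$ via the Chinese Remainder Theorem and then read off both spectral statements from standard linear algebra on a finite-dimensional Artinian algebra. By Proposition \ref{prop:BCMT}, the ideal $I_\Lambda = \ker(H_\Lambda)$ is zero-dimensional with radical defining the distinct affine points $\{l_1^*,\ldots,l_k^*\} \subset \Bbbk^n$. So my first step is to write
$$
A_\Lambda = S/I_\Lambda \;\cong\; \prod_{i=1}^{k} A_{\Lambda,i},
$$
where each $A_{\Lambda,i}$ is local Artinian with maximal ideal $\mathfrak m_i$ corresponding to $l_i^*$ and $\dim_\Bbbk A_{\Lambda,i}=m_i$ equal to the length of the component $Z_i$ (so $\sum_i m_i = r = \dim_\Bbbk A_\Lambda$). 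Under this decomposition, every multiplication operator $M_a$ is block-diagonal with blocks $M_a^{(i)} := M_a|_{A_{\Lambda,i}}$.

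For the first bullet, for fixed $a\in S$ and $i$, write $a = a(l_i^*)\cdot 1 + \bigl(a - a(l_i^*)\bigr)$. The second summand lies in $\mathfrak m_i$, hence by Artinian locality of $A_{\Lambda,i}$ the operator $M_{a-a(l_i^*)}^{(i)}$ is nilpotent. Therefore $M_a^{(i)}$ has the single eigenvalue $a(l_i^*)$ with algebraic multiplicity $m_i$. Collecting the blocks, the spectrum of $M_a$ is exactly $\{a(l_1^*),\ldots,a(l_k^*)\}$ (with multiplicities $m_i$), and transposition preserves the spectrum, giving the same for $M_a^t$. For the second bullet, define the evaluation $\mathrm{ev}_{l_i^*} \in A_\Lambda^*$ by $\mathrm{ev}_{l_i^*}(b) := b(l_i^*)$, which is well-defined since $I_\Lambda$ vanishes at each $l_i^*$. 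Using the definition $(M_{x_j}^t \gamma)(b) = \gamma(x_j\cdot b)$, one computes
$$
\bigl(M_{x_j}^t\,\mathrm{ev}_{l_i^*}\bigr)(b) \;=\; (x_j\cdot b)(l_i^*) \;=\; (l_i^*)_j\,\mathrm{ev}_{l_i^*}(b),
$$
so $\mathrm{ev}_{l_i^*}$ is a common eigenvector of $(M_{x_j}^t)_{j=1}^n$ with joint eigenvalue tuple equal to the point $l_i^*$. The tuples are pairwise distinct, and each joint eigenspace is one-dimensional: any common eigenvector with eigenvalues $l_i^*$ annihilates $(x_j-(l_i^*)_j)$ for all $j$, hence factors through $A_{\Lambda,i}/\mathfrak m_i \cong \Bbbk$. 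Thus the $\mathrm{ev}_{l_i^*}$ exhaust the common eigenvectors up to scalar.

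It remains to identify these functionals with the linear forms $l_i$. The symmetry $(H_\Lambda b)(c) = \Lambda(bc) = (H_\Lambda c)(b)$ shows that $H_\Lambda$ is self-adjoint, and since $\mathrm{rank}(H_\Lambda)=r$ it descends to an isomorphism $\overline{H_\Lambda}\colon A_\Lambda \xrightarrow{\sim} A_\Lambda^*$. Under this isomorphism, I expect $\mathrm{ev}_{l_i^*}$ to correspond (up to a non-zero scalar) to the class of $l_i$ in $A_\Lambda$, which matches the statement. Verifying this is the main obstacle of the argument: it requires unpacking the decomposition $\Lambda = \sum_j l_j^{d-d_j} g_j$ and checking that the higher-order pieces $g_j$ only contribute terms living in $\mathfrak m_i$, so that the eigenvector at $l_i^*$ is captured precisely by the $l_i$-component of the reduced part of $\Lambda$. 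Once this normalization is established, the stated identification is automatic, and the theorem follows.
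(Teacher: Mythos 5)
The paper does not actually prove this theorem: it is a survey, and the statement is quoted from Brachat--Comon--Mourrain--Tsigaridas \cite{BCMT} with no argument given, so there is no in-paper proof to compare yours against. Judged on its own merits, the core of your argument is correct and is the standard Stickelberger-type proof: the hypothesis $\dim A_\Lambda<\infty$ puts you in the setting of Proposition \ref{prop:BCMT}, the Chinese Remainder splitting of the Artinian algebra $A_\Lambda$ into local factors $A_{\Lambda,i}$ is legitimate, multiplication by $a-a(l_i^*)\in\mathfrak m_i$ is nilpotent so each block of $M_a$ has the single eigenvalue $a(l_i^*)$ (and transposition preserves the spectrum), and the computation $(M_{x_j}^t\,\mathrm{ev}_{l_i^*})(b)=(x_jb)(l_i^*)=(l_i^*)_j\,\mathrm{ev}_{l_i^*}(b)$ exhibits the evaluations as joint eigenvectors. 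Your one-dimensionality argument is also the right one: a common eigenvector with joint eigenvalue $\lambda$ annihilates the ideal generated by the $x_j-\lambda_j$ in $A_\Lambda$, which is the unit ideal unless $\lambda\in\{l_1^*,\ldots,l_k^*\}$ and is the maximal ideal at $l_i^*$ otherwise, so the joint eigenspaces are exactly the lines spanned by the $\mathrm{ev}_{l_i^*}$.

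The only problem is your final paragraph, where you declare a ``main obstacle'' that is not actually there and sketch a resolution that would be wrong. As the paper's own discussion immediately following the theorem makes explicit (``the common eigenvectors $H^B_\Lambda v$ of $M_a^t$, that is the evaluations at the points $l_i^*$'s''), the phrase ``up to scalar, the $l_i$'s'' is shorthand for ``the evaluation functionals at the points $l_i^*$'' --- precisely the $\mathrm{ev}_{l_i^*}$ you have already produced --- so no further identification through $\overline{H_\Lambda}$ is required. Moreover, the identification you say you expect, namely that $\overline{H_\Lambda}^{-1}(\mathrm{ev}_{l_i^*})$ is the class of $l_i$ in $A_\Lambda$, is false in general: already in the reduced case $\Lambda=\sum_i\lambda_i\,\mathrm{ev}_{l_i^*}$ one has $H_\Lambda(b)=\sum_i\lambda_i\,b(l_i^*)\,\mathrm{ev}_{l_i^*}$, so $H_\Lambda^{-1}(\mathrm{ev}_{l_i^*})=\lambda_i^{-1}e_i$ with $e_i$ the Lagrange idempotent at $l_i^*$, which is not the class of the linear form $l_i$. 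Delete that paragraph; your proof is complete without it.
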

Therefore, one can recover the $l_i$'s, i.e., the points $l^*_i$'s, by eigenvector computations: take $B$ as a basis of $A_f$, i.e., say $B = \{b_1,\ldots,b_r\}$ with $r={\rm rank} H_\Lambda$, and let $H^B_{a * \Lambda}=M_a^tH_\Lambda^B=H_\Lambda^BM_a$ ($M_a$ is the matrix of the multiplication by $a$ in the basis $B$). The common solutions of the generalized eigenvalue problem: 
$$(H_{a*\Lambda}-\lambda H_\Lambda)v=0,$$
for all $a\in S$ yield the common eigenvectors $H^B_\Lambda v$ of $M_a^t$, that is the evaluations at the points $l_i^*$'s. Therefore, these common eigenvectors $H^B_\Lambda v$ are up to scalar the vectors $[b_i(l^*_i), \ldots , b_r(l^*_i)]$, for $i = 1,\ldots,r$.

If $f=\sum_{i=1}^r \lambda_il_i^d$, then the $Z_i$'s in Proposition \ref{prop:BCMT} are simple, and one eigenvector computation is enough: in particular, for any $a\in S$, $M_a$ is diagonalizable, and the generalized eigenvectors $H^B_\Lambda v$ are, up to scalar, the evaluations at the points $l_i^*$'s.

Now, in order to apply this algebraic tool to our problem of finding a Waring decomposition of a homogeneous polynomial $F \in \Bbbk[x_0,\ldots,x_n]$, we need to consider its dehomogenization $f = F(1,x_1,\ldots,x_n)$ with respect to the variable $x_0$ (with no loss of generality, we may assume that the coefficients with respect to $x_0$ are all non-zero). Then, we associate a {truncated Hankel matrices} as~follows.
\begin{dfn}
Let $B$ be a subset of monomials in $S$. We say that $B$ is
\emph{connected to one} if $\forall \, m\in B$ either $m=1$ or there exists $i\in
\{1,\ldots,n\}$ and $m'\in B$ such that $m=x_{i} m'$.
\end{dfn}
Let $B,B' \subset S_{\leq d}$ be sets of monomials of degree $\leq d$, connected to one.
For any $f = \sum_{\substack{\alpha \in \bbN^{n} \\ |\alpha| \leq d}} c_\alpha {d \choose d-|\alpha|,\alpha_1,\ldots,\alpha_n} \bfx^\alpha \in S_d$, we consider the \emph{Hankel matrix:}
$$ 
{H}^{B,B'}_{f} = (h_{\alpha+\beta})_{\alpha \in B, \beta\in B'},
$$
where $h_{\alpha} = c_{\alpha}$ if
$|\alpha|\le d$, and otherwise, $h_{\alpha}$ is an unknown.
The set of all these new variables is denoted ${h}$. Note that, by this definition, the known parts correspond to the catalecticant matrices of $F$. For simplicity, we write $H^B_f = H^{B,B}_f$. This matrix is also called \emph{quasi-Hankel} \cite{mp-jcomplexity-2000}.

\begin{example}
	Consider $F = -4x_0x_1 + 2x_0x_2 +2x_1x_2 + x_2^2 \in \Bbbk[x_0,x_1,x_2]$. Then, we look at the dehomogenization with respect to $x_0$ given by $f = -4x_1+2x_2+2x_1x_2 + x_2^2 \in \Bbbk[x_1,x_2]$. Then, if we consider the standard monomial basis of $S_{\leq 2}$ given by $B = \{1,x_1,x_2,x_1^2,x_1x_2,x_2^2\}$, then we get:
	$$
		H_f^B = \begin{pmatrix}
			0 & -2 & 1 & 0 & 1 & 1 \\
			-2 & 0 & 1 & h_{(3,0)} & h_{(2,1)} & h_{(1,2)} \\
			1 & 1 & 1 & h_{(2,1)} & h_{(1,2)} & h_{(0,3)} \\
			0 & h_{(3,0)} & h_{(2,1)} & h_{(4,0)} & h_{(3,1)} & h_{(2,2)} \\
			1 & h_{(2,1)} & h_{(1,2)} & h_{(3,1)} & h_{(2,2)} & h_{(1,3)} \\
			1 & h_{(1,2)} & h_{(0,3)} & h_{(2,2)} & h_{(1,3)} & h_{(0,4)} \\
		\end{pmatrix},
	$$
	where the $h$'s are unknowns.
\end{example}
Now, the idea of the algorithm is to find a suitable polynomial $\overline{f}$ whose Hankel matrix extends the one of $f$, has rank equal to the Waring rank of $f$ and the kernel gives a radical ideal. This is done by finding suitable values for the unknown part of the Hankel matrix of $f$. Those $\overline{f}$ are elements whose homogenization is in the following set: 
$$\mathcal{E}^{d,0}_r:=
\left\{[F]\in \mathbb{P}(S^dV) \,|\, \substack{ \scalebox{1}{$\exists L \in S^1V\smallsetminus \{ 0\}, \exists {F'} \in Y^{m,m'}_r$ s.t. $L^{m+m'-d} F' = F$} \\
\scalebox{1}{with $m=\max\{ r, \lceil d/2\rceil\}, m'=\max\{ r-1, \lfloor d/2 \rfloor\}$}}
\right\}$$
where $Y^{i,d-i}_r=\{[F]\in \mathbb{P}(S^dV) \, | \, {\rm rank} Cat_{i,d-i}(F) \leq r\}$.
If $[F] \in \mathcal{E}^{d,0}_r$, we say that $f$ {is the generalized affine decomposition of size $r$}.

\smallskip
Suppose that ${H}^{B,B'}_{f}$ is invertible in $\Bbbk({h})$, then we
define the formal multiplication operators: 
$$
{M}_{i}^{B,B'}({h}) := ({H}^{B,B'}_{f})^{-1} {H}^{B,B'}_{x_{i}f}.
$$
\begin{notation}
	If $B$ is a subset of monomials, then we write $B^+ = B \cup x_1B \cup \ldots \cup x_nB$. Note that, if $B$ is connected to on,e then also $B^+$ is connected to one.
\end{notation}
The key result for the algorithm is the following.
\begin{thm}[{Brachat, Comon, Mourrain, Tsigaridas} \cite{BCMT}] 
If $B$ and $B'$ are sets of monomials connected to one, the coefficients of $f$ are known on $B^+\times B'^+$, and if $H^{B,B'}_{\tilde f}$ is invertible, then $f$ extends uniquely to $S$ if and only if: 
$$M_i^{B,B'}\cdot M_j^{B,B'}=M_j^{B,B'}\cdot M_i^{B,B'}, \quad \text{ for any }1 \leq i < j \leq n.$$
\end{thm}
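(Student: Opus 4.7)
The plan is to prove both directions by interpreting each $M_i^{B,B'}$ as the candidate matrix of multiplication by $x_i$ on the (yet-to-be-constructed) quotient algebra $A_{\tilde f}=S/\ker H_{\tilde f}$, with relation \eqref{Maf} serving as the bridge between the formal operators and genuine multiplications. Concretely, whenever $\tilde f\in S^*$ extends $f$, the identity $H^{B,B'}_{x_i*\tilde f}=M_{x_i}^{t}\cdot H^{B,B'}_{\tilde f}$ together with invertibility of $H^{B,B'}_f=H^{B,B'}_{\tilde f}$ forces $M_i^{B,B'}=(H^{B,B'}_{\tilde f})^{-1}H^{B,B'}_{x_i\tilde f}$ to be the matrix, in the bases indexed by $B$ and $B'$, of a genuine multiplication-by-$x_i$ operator on a quotient of the commutative ring $S$. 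The necessity direction then becomes automatic: genuine multiplication operators by commuting elements $x_i$ commute pairwise, so $M_i^{B,B'}M_j^{B,B'}=M_j^{B,B'}M_i^{B,B'}$.

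For the sufficiency direction I would construct $\tilde f$ inductively on total degree. The hypothesis that $B$ and $B'$ are connected to one means every monomial $x^\alpha\in S$ can be written as $x_{i_1}\cdots x_{i_k}\cdot 1$ with $1\in B$, and I would then declare the unknown coefficient $h_\alpha$ to equal $\langle M_{i_1}^{B,B'}\cdots M_{i_k}^{B,B'}\mathbf{v}_1,\mathbf{w}_1\rangle$, where $\mathbf{v}_1$ and $\mathbf{w}_1$ are the basis vectors indexed by $1\in B$ and $1\in B'$. Pairwise commutativity of the $M_i^{B,B'}$ ensures the result is independent of the order in which the $x_{i_j}$ are multiplied, so the assignment $\alpha\mapsto h_\alpha$ is well-defined. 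The known data on $B^+\times B'^+$ supplies the base of the induction, and invertibility of $H^{B,B'}_f$ forces any extension to agree with this prescription, giving uniqueness.

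The main obstacle I expect is verifying that the inductively defined values genuinely assemble into a Hankel matrix $H_{\tilde f}$ of finite rank equal to $\operatorname{rank}H^{B,B'}_f$, so that Proposition \ref{prop:BCMT} applies and confirms that $\tilde f\in S^*$. The crucial step is to show that the finite-dimensional commutative subalgebra $\Bbbk[M_1^{B,B'},\ldots,M_n^{B,B'}]\subset\mathrm{End}(\Bbbk^{|B|})$ generated by the commuting operators is a quotient of $S$ of the expected dimension, and that this quotient is precisely the $A_{\tilde f}$ arising from the extension. Once this identification is established, a straightforward check that the restriction of $H_{\tilde f}$ to $B\times B'$ recovers the starting block $H^{B,B'}_f$ completes the argument, and the ``if and only if'' follows by combining the two directions.
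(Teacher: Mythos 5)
The paper states this theorem without proof (it is quoted from \cite{BCMT} in survey mode), so there is no internal argument to compare against; your proposal follows the standard ``flat extension'' strategy from the moment-matrix literature, and both directions are set up along the right lines. The necessity direction is essentially complete once you make explicit that ``extends'' must mean ``extends to $\tilde f\in S^*$ with $\operatorname{rank}H_{\tilde f}=\operatorname{rank}H^{B,B'}_f$'' (otherwise extensions of a partial functional always exist and are never unique); invertibility then identifies $B$ with a basis of $A_{\tilde f}$ and the $M_i^{B,B'}$ with honest multiplication matrices, which commute.

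The sufficiency direction has a genuine gap precisely at the step you dismiss as ``the base of the induction''. Your prescription $h_\alpha=\langle M^{\alpha}\mathbf{v}_1,\mathbf{w}_1\rangle$ (pairing via $H^{B,B'}_f$) is well defined by commutativity, and connectedness to one gives $M^{\beta}\mathbf{v}_1=\mathbf{e}_\beta$ for $\beta\in B$, so the prescription does reproduce the known coefficients indexed by $(B^+\cdot B')\cup(B\cdot B'^+)$. But the operators $M_i^{B,B'}$ are built exclusively from the blocks $H^{B,B'}_f$ and $H^{B,B'}_{x_if}$, i.e., from exactly those entries; they carry no information about the ``corner'' entries $h_{(\beta+e_i)+(\beta'+e_j)}$ with $\beta+e_i\in B^+\setminus B$ and $\beta'+e_j\in B'^+\setminus B'$, which the statement declares to be known. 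Hence commutativity cannot force your inductively defined values to agree with the given data there. A concrete instance: take $B=B'=\{1\}$, so $H^{B,B'}_f=(h_0)$ with $h_0\neq 0$; each $M_i^{B,B'}=h_{e_i}/h_0$ is a scalar and all commutators vanish identically, yet a rank-one extension agreeing with the data on $B^+\times B'^+$ exists only if the known corner values satisfy $h_{e_i+e_j}=h_{e_i}h_{e_j}/h_0$. To close the argument you must either replace (or supplement) the commutativity hypothesis by the flatness condition $\operatorname{rank}H^{B^+,B'^+}_f=\operatorname{rank}H^{B,B'}_f$ --- this is the corrected formulation in the generalized flat extension theorem of Laurent and Mourrain, and it subsumes the corner compatibility --- or treat the corner entries as unknowns to be defined by your formula, as the algorithm implicitly does. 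The remaining steps you flag (that $\tilde f(p):=\mathbf{w}_1^{T}H^{B,B'}_f\,p(M_1,\dots,M_n)\mathbf{v}_1$ has Hankel operator of rank at most $|B|$ because it factors through $p\mapsto p(M_1,\dots,M_n)\mathbf{v}_1$) do go through as you expect.
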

\begin{algorithm}[{Brachat, Comon, Mourrain, Tsigaridas, \cite{BCMT, taufer, bertau}.}]\label{algoBCMT}
Here is the idea of algorithm presented in \cite{BCMT}. In \cite{taufer,bertau}, a faster and more accurate version can be found.

\medskip
\begin{algorithmic}[1]
\REQUIRE {Any polynomial $f \in S$.}
\ENSURE {An affine Waring decomposition of $f$.}
\STATE {$r \leftarrow 1$;}
\STATE {Compute a set $B$ of monomials of degree $\leq d$ connected to one and with $|B| = r$;}
\STATE {Find parameters ${h}$ such that $\det (H_f^B)\neq 0$ and the operators $M^B_i=(H^B_f)^{-1}H^B_{x_{i}f}$ commute;}
\IF {there is no solution} \STATE {go back to {\small 2} with $r \leftarrow r+1$;} 
\ELSE \STATE {compute the $n\cdot r$ eigenvalues $z_{i,j}$ and the eigenvectors $v_j$ such that $M_jv_j=z_{i,j}v_j$, \\$i=1, \ldots,n$, $j=1, \ldots ,r$, until one finds $r$ different common eigenvectors;}
\ENDIF
\STATE {Solve the linear system $f=\sum_{j=1}^r\lambda_jz_j^d$ in the $\lambda_i$'s, where the $z_j$'s are\\ the eigenvectors found above.}
\end{algorithmic}
\end{algorithm}
\vspace{12pt}

For simplicity, we give the example chosen by the authors of \cite{BCMT}.

\begin{example} 

We look for a decomposition of:
\begin{align*}
F = & -1549440
\,x_0x_1{x_2}^{3} +2417040\,x_0{x_1}^{2}{x_2}^{2}
  +166320\,{x_0}^{2}x_1{x_2}^{2}-829440\,x_0{x_1}^{3}x_2 \\
 & -5760\,{x_0}^{3}x_1x_2-222480\,{x_0}^{2}{x_1}^{2}x_2
  +38\,{x_0}^{5}-497664\,{x_1}^{5}-1107804\,{x_2}^{5} \\
  & -120\,{x_0}^{4}x_1+180\,{x_0}^{4}x_{{2}}+12720\,{x_0}^{3}{x_1}^{2}
  +8220\,{x_0}^{3}{x_2}^{2}-34560\,{x_0}^{2}{x_1}^{3} \\
  & -59160\,{x_0}^{2}{x_2}^{3}+831840\,x_0{x_1}^{4}+442590\,x_0{x_2}^{4}-5591520\,{x_1}^{4}x_2 \\
  & +7983360\,{x_1}^{3}{x_2}^{2}-9653040\,{x_1}^{2}{x_2}^{3}+5116680\,x_1{x_2}^{4}.
\end{align*}
\begin{enumerate}
\item We form a ${n+d-1 \choose d} \times {n+d-1 \choose d}$ matrix,
 the rows and the columns of which correspond {to the coefficients
 of the polynomial with respect to the expression $f = F(1,x_1,\ldots,x_n) = \sum_{\substack{\alpha \in \bbN^n \\ |\alpha| \leq d}} c_\alpha {d \choose d-|\alpha|,\alpha_1,\ldots,\alpha_n} \bfx^\alpha$.}
 
The whole $21 \times 21$ matrix is the following.
  \begin{center}
  $
  { \scriptsize
   \left(
   \begin {array}{c|rrrrrrrrrr} 
     & 1 & x_1 & x_2 & x_1^2 & x_1 x_2 & x_2^2 & x_1^3 & x_1^2x_2 & x_1 x_2^2 & x_2^3\\ \hline
     1 & 38&-24&36&1272&-288&822&-3456&-7416&5544&-5916\\
     x_1 & -24&1272&-288&-3456&-7416&5544& 166368&-41472&80568&-77472\\
     x_2 & 36&-288&822&-7416&5544&-5916& -41472&80568&-77472&88518\\
     x_1^2 & 1272&-3456&-7416&166368&-41472&80568& -497664&-1118304&798336&-965304\\
     x_1 x_2 & -288&-7416&5544&-41472&80568&-77472& -1118304&798336&-965304&1023336\\
     x_2^2 & 822&5544&-5916&80568&-77472&88518&798336&-965304&1023336&-1107804\\
     x_1^3 & -3456&166368&-41472&-497664&-1118304&798336& h_{{6,0,0}}&h_{{5,1,0}}&h_{{4,2,0}}&h_{{3,3,0}}\\
     x_1^2 x_2 & -7416&-41472&80568&-1118304&798336&-965304& h_{{5,1,0}}&h_{{4,2,0}}&h_{{3,3,0}}&h_{{2,4,0}}\\
     x_1 x_2^2 & 5544&80568&-77472&798336&-965304&1023336& h_{{4,2,0}}&h_{{3,3,0}}&h_{{2,4,0}}&h_{{1,5,0}}\\
     x_2^3 & -5916&-77472&88518&-965304&1023336&-1107804& h_{{3,3,0}}&h_{{2,4,0}}&h_{{1,5,0}}&h_{{0,6,0}}
   \end {array}\right)
   }$
	\end{center}
  Notice that we do not know the elements in some positions of the matrix. 
  {In this case,} we do not know the elements that correspond to monomials with (total) 
  degree higher than five.

\item We extract a principal minor of full rank.
 
 We should re-arrange the rows and the columns of the matrix so that there
 is a principal minor of full rank. We call this minor $\Delta_0$. In
 order to do that, we try to put the matrix in row echelon form, using
 elementary row and column operations.
 
  In our example, the $4 \times 4$ principal minor is of full rank, 
  so there is no need for re-arranging the matrix.
  The matrix $\Delta_0$ is: 
  \begin{displaymath}
   \Delta_0 = 
   \left(
   \begin {array}{rrrr} 
     38&-24&36&1272\\\noalign{}
     -24&1272&-288&-3456\\\noalign{}
     36&-288&822&-7416\\\noalign{}
     1272&-3456&-7416&166368
   \end {array} 
   \right)
  \end{displaymath}
  Notice that the columns of the matrix correspond to the set of monomials
  $\{ 1, x_1, x_2, x_1^2\}$.
 
 \item We compute the ``shifted'' matrix $\Delta_1 = x_1 \Delta_0$.

 The columns of $\Delta_0$ correspond to the set of some monomials, 
 say $\set{ \bvec{x}^{\boldsymbol{\alpha}}}$, where $\boldsymbol{\alpha} \subset \mathbb{N}^{n}$.
 The columns of $\Delta_1$ correspond to the set of monomials
 $\set{x_1 \, \bvec{x}^{\boldsymbol{\alpha}}}$.
 
  The shifted matrix $\Delta_1$ is: 
  \begin{displaymath}
   \Delta_1 = 
   \left(
   \begin {array}{rrrr} 
     -24&1272&-288&-3456\\\noalign{}
     1272&-3456&-7416&166368\\\noalign{}
     -288&-7416&5544&-41472\\\noalign{}
     -3456&166368&-41472&-497664
   \end {array} 
   \right) .
  \end{displaymath}
  Notice that the columns correspond to the monomials
  $\{ x_1, x_1^2, x_1 x_2, x_1^3\}$, 
  which are just the corresponding monomials of the columns of $\Delta_0$,
  i.e., $\{ 1, x_1, x_2, x_1^2\}$, multiplied by $x_1$.
 
 In this specific case, all the elements of the matrices $\Delta_0$ and
 $\Delta_1$ are known. If this is not the case, then we can compute the unknown entries
 of the matrix, using either necessary or sufficient conditions of the quotient algebra,
 e.g., it holds that the ${M}_{x_i}{M}_{x_j} - {M}_{x_j} {M}_{x_i} = 0$, for any $i, j \in \{1, \dots, n\}$.
 
\item We solve the equation $(\Delta_1 - \lambda \Delta_0) X = 0$.

 We solve the generalized eigenvalue/eigenvector problem \cite{gvl-book-1996}.
 We normalize the elements of the eigenvectors so that the first element is one, 
 and we read the solutions from the coordinates of the normalized eigenvectors.
 
  The normalized eigenvectors of the generalized eigenvalue problem are:
  \begin{displaymath}
   \left( \begin {array}{r} 
     1 \\ -12\\ -3\\ 144
   \end {array} \right) ,
   \left( \begin {array}{r} 
     1\\ 12\\ -13\\ 144
   \end {array} \right) , 
   \left( \begin {array}{r} 
     1\\ -2\\ 3\\ 4
   \end {array} \right) , 
   \left( \begin {array}{r} 
     1\\ 2\\ 3\\ 4
   \end {array}
   \right).
  \end{displaymath}

  The coordinates of the eigenvectors correspond to the elements of the monomial basis
  $\{ 1, x_1, x_2, x_1^2\}$.
  Thus, we can recover the coefficients of $x_1$ and $x_2$ 
  in the decomposition from the coordinates of the eigenvectors.

  Recall that the coefficients of $x_0$ are considered to be one because of the dehomogenization process. Thus, our polynomial admits a decomposition:
  \begin{align*}
   F = \lambda_1 ( x_0 -12 x_1 -3 x_2)^5 & +
   \lambda_2 ( x_0 + 12 x_1 -13 x_2)^5 +\\
   & \lambda_3 ( x_0 -2 x_1 +3 x_2)^5 +
   \lambda_4 ( x_0 + 2 x_1 + 3 x_2)^5.
  \end{align*}
  It remains to compute $\lambda_i$'s. We can do this easily by solving an
  over-determined linear system, which we know that always has a solution, 
  since the decomposition exists. Doing that, we deduce that 
  $\lambda_1 = 3$, $\lambda_2 = 15$, $\lambda_3 = 15$ and $\lambda_4 = 5$.
\end{enumerate}
\end{example}

\section{Tensor Product and Segre Varieties}\label{sec:TensorDec_Segre}
\unskip
\subsection{Introduction: First Approaches}
As we saw in the Introduction, if we consider 
the space parametrizing $(n_1+1)\times \cdots \times (n_t+1)$-tensors
(up to multiplication by scalars), i.e., the space $\PP^N$, with
$N = \prod_{i=1}^t (n_i + 1)-1$, then additive decomposition problems lead us to study secant varieties of the Segre varieties $X_{n}\subset \PP^N$, ${\bf
n}=(n_1,\ldots,n_t)$, which are the image of the Segre embedding of the 
multiprojective spaces $\PP^{n_1}\times \cdots\times \PP^{n_t}$, defined by the map:
$$\nu_{1,\ldots ,1}: \PP^{n_1}\times \cdots\times \PP^{n_t} \rightarrow \PP^N,$$
 $$\nu_{1,\ldots ,1}(P) = (a_{1,0}a_{2,0}\cdots a_{t,0}, \ldots ,a_{1,n_1}\cdots a_{t,n_t}),$$
 where
$P=((a_{1,0},\ldots ,a_{1,n_1}), \ldots,(a_{t,0},\ldots ,a_{t,n_t}))\in \PP^{n_1}\times \cdots\times \PP^{n_t}$, and the products are taken in lexicographical order.
 For example, if $P= ((a_0,a_1),(b_0,b_1,b_2)) \in \PP^{1} \times \PP^{2}$, then we have 
 $\nu_{1 ,1}(P)= (a_0b_0, a_0b_1, a_0b_2, a_1b_0, a_1b_1,a_1b_2) \in \PP^{5}.$

Note that, if $\{ x_{i,0},\ldots ,x_{i,n_i}\}$ are homogeneous coordinates in $\PP^{n_i}$ and $z_{j_1,\ldots,j_t}$, $j_i\in \{0,\ldots n_i\}$ 
are homogeneous coordinates in $\PP^N$, we have
that $X_{n}$ is the variety whose parametric equations are:
$$
z_{j_1,\ldots,j_t} = x_{j_1,1}\cdots x_{j_t,t}; \qquad j_i\in \{1,\ldots n_i\}.
$$ 
Since the use of tensors is ubiquitous in so many applications and
to know a decomposition for a given tensor allows one to ease the computational complexity
when trying to manipulate or study it, this problem has many connections with questions
raised by computer scientists in complexity theory \cite{BCS} and
by biologists and statisticians (e.g., see \cite{GHKM,GSS,ar}).

As it is to be expected with a problem with so much interest in such
varied disciplines, the approaches have been varied; see, e.g.,
\cite{BCS, Land08} for the computational complexity
\mbox{approach, \cite{ar, GSS}} for the biological
statistical approach, \cite{AOP, CGG2, CGG2err, ChCi} for
the classical algebraic geometry \mbox{approach, \cite{LaMa08, LaWe07}} for the representation theory approach,
\cite{Draisma} for a tropical approach and \cite{Fr} for a
multilinear algebra approach. Since the $t=2$ case is easy (it
corresponds to ordinary matrices), we only consider $t\geq 3$.

The first fundamental question about these secant varieties, as we
have seen, is about their dimensions. Despite all the progresses made on
this question, it still remains open; only several partial results
are known.

Notice that the case $t=3$, since it corresponds to the simplest
tensors, which are not matrices, had been widely studied, and many
previous results from several authors are collected in \cite{CGG2}.

We start by mentioning the following result on non-degeneracy; see \cite[Proposition 2.3 and Proposition 3.7]{CGG2}.

\begin{thm}\label{thm:first}
Let $n_1\leq n_2\leq \cdots \leq n_t$, $t\geq 3$. Then, the dimension of the $s$-th secant variety of the Segre variety $X_\bfn$ is as expected, i.e.,

$$\dim \sigma_{s}(X_{n}) = s(n_1+n_2+\cdots+n_t+1)-1$$

if either:
\begin{itemize}
\item $s\leq n_1+1$; \quad or
\item $\max\{ n_t+1,s\}\leq \left[ {{n_1+n_2+\cdots+n_t+1}\over 2}
\right]$. 
\end{itemize}
\end{thm}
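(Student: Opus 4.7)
The plan is to use Terracini's lemma (the analog of Lemma 2.1.7 for Segre varieties) to reduce the dimension statement to a linear-algebra computation on tangent spaces, then handle the two cases separately by (a) a direct independence argument in the first case and (b) a flattening/induction argument in the second.

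\medskip

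\emph{Step 1: Set-up via Terracini's lemma.} For a general point $P_k = [v_1^{(k)}\otimes\cdots\otimes v_t^{(k)}]\in X_\bfn$, a direct computation of partial derivatives of the Segre parametrization gives the affine tangent space
\[
\widehat{T}_{P_k}X_\bfn \;=\; \sum_{i=1}^{t} v_1^{(k)}\otimes\cdots\otimes V_i\otimes\cdots\otimes v_t^{(k)},
\]
which has dimension $1+\sum_i n_i$ (the summands share only the line through $v_1^{(k)}\otimes\cdots\otimes v_t^{(k)}$ pairwise, and the inclusion-exclusion collapses cleanly). By Terracini's lemma, $\dim\sigma_s(X_\bfn)=\dim T_s-1$, where $T_s:=\sum_{k=1}^s \widehat T_{P_k}X_\bfn$ for $s$ general points. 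The theorem thus reduces to showing that $T_s$ has the \emph{maximal} possible dimension $s(1+\sum_i n_i)$, i.e., that the $s$ tangent spaces are in general linear position in $V_1\otimes\cdots\otimes V_t$.

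\medskip

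\emph{Step 2: Case $s\le n_1+1$.} Since $\dim V_i=n_i+1\ge n_1+1\ge s$, we may choose the general points so that for each $i$ the vectors $v_i^{(1)},\ldots,v_i^{(s)}$ are linearly independent in $V_i$. Complete each family to a basis of $V_i$. I would then argue by induction on $s$: assuming $\sum_{k=1}^{s-1}\widehat T_{P_k}$ has the expected dimension $(s-1)(1+\sum n_i)$, project $\widehat T_{P_s}$ onto the complement by evaluating tensor coordinates against the dual basis elements associated to $v_1^{(s)}\otimes\cdots\otimes v_{i-1}^{(s)}\otimes (\cdot)\otimes v_{i+1}^{(s)}\otimes\cdots\otimes v_t^{(s)}$; because $v_i^{(s)}$ is independent of $v_i^{(1)},\ldots,v_i^{(s-1)}$ for every $i$, these evaluations vanish on all previous tangent spaces. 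This produces the $1+\sum n_i$ new independent directions contributed by $\widehat T_{P_s}$, yielding the claim.

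\medskip

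\emph{Step 3: Case $\max\{n_t+1,s\}\le\lfloor(1+\sum n_i)/2\rfloor$.} Here $s$ may exceed $n_1+1$, so the direct independence argument of Step 2 is unavailable. The plan is to proceed by induction on $t$, using a partial flattening: grouping factors, one identifies $X_\bfn$ with a subvariety of a three-factor Segre $X_{n_1,n_t,N'}$ (with $N'+1=\prod_{2\le i\le t-1}(n_i+1)$), and the inductive hypothesis controls the dimension of the $s$-th secant variety of the simpler Segre. The hypothesis $s\ge n_t+1$ plays two roles: first, it lets us apply the non-defectivity result already established for balanced Segres at the reduced level; second, it makes the upper bound coming from the two-factor flattening $V_1\otimes(V_2\otimes\cdots\otimes V_t)$ (where $\sigma_s$ of the Segre sits inside rank-$\le s$ matrices) non-binding, thanks precisely to $s\le\lfloor(1+\sum n_i)/2\rfloor$. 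Matching the upper bound requires producing an explicit configuration of $s$ points whose tangent spaces span a subspace of the expected size; this is achieved by specializing the $P_k$ so that in each factor the vectors $v_i^{(k)}$ either form a basis (when $s\le n_i+1$) or cycle through a spanning configuration, and then reusing Step~2 on the pieces.

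\medskip

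\emph{Main obstacle.} The delicate point is Step 3: the inequality $\max\{n_t+1,s\}\le\lfloor(1+\sum n_i)/2\rfloor$ is tight, and pushing slightly outside it leads to known defective families (e.g.\ the unbalanced Segres). The hardest part of the argument is constructing a specialization of the $s$ general points that (i) retains enough genericity for the tangent-space dimension to be preserved by semicontinuity, and (ii) reduces cleanly to a non-defective case of the inductive hypothesis without introducing a spurious linear relation among the tangent spaces.
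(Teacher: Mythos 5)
Your Steps 1--2 are correct and essentially complete for the first condition: once the points are chosen so that $v_i^{(1)},\ldots,v_i^{(s)}$ are independent in each $V_i$ (possible since $s\le n_1+1\le \dim V_i$), the functionals $v_1^{(s)*}\otimes\cdots\otimes u^{*}\otimes\cdots\otimes v_t^{(s)*}$ annihilate the first $s-1$ tangent spaces (this is exactly where $t\ge 3$ is used) and restrict to a basis of the dual of $\widehat T_{P_s}X_\bfn$, so the sum of tangent spaces is direct and Terracini gives the claim. This is in substance the argument the paper attributes to \cite{CGG2} for condition one: specializing the $v_i^{(k)}$ to coordinate vectors turns it into the ``pairwise Hamming distance $\ge 3$'' / rook-placement combinatorics on monomial ideals, so you have simply recast that combinatorial proof in coordinate-free linear algebra.

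Step 3, however, contains a genuine gap and does not prove the second condition. First, you misread the hypothesis: it is $\max\{n_t+1,s\}\le\lfloor(n_1+\cdots+n_t+1)/2\rfloor$, an \emph{upper} bound on both $s$ and $n_t+1$, not ``$s\ge n_t+1$''. Second, the induction on $t$ via grouping factors points the wrong way: grouping $V_2\otimes\cdots\otimes V_{t-1}$ only exhibits $X_\bfn$ as a subvariety of a three-factor Segre, whence $\sigma_s(X_\bfn)\subseteq\sigma_s(X_{(n_1,N',n_t)})$; this can only produce upper bounds on $\dim\sigma_s(X_\bfn)$, whereas what must be proved is the lower bound matching the expected dimension, and non-defectivity of the larger Segre implies nothing about the smaller one. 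Moreover, the three-factor Segre with $N'+1=\prod_{2\le i\le t-1}(n_i+1)$ is typically in the unbalanced regime of Theorem \ref{unbalanced}, where defectivity actually occurs for a range of $s$. Third, the appeal to ``the non-defectivity result already established for balanced Segres'' is circular: no such result is available prior to this theorem, and the balanced results quoted later rest on different (Horace-type) machinery. Finally, the proposed specialization (``cycle through a spanning configuration and reuse Step 2 on the pieces'') is precisely the point where the proof is missing: once vectors repeat in a factor, the tangent spaces are no longer automatically independent, and controlling their intersections is the whole difficulty. The route actually used in \cite{CGG2} for this case is multigraded apolarity/inverse systems: one identifies $\langle T_{P_1}X_\bfn,\ldots,T_{P_s}X_\bfn\rangle$ with the multidegree-$(1,\ldots,1)$ part of the inverse system of $s$ generic double points in $\PP^{n_1}\times\cdots\times\PP^{n_t}$ and computes its dimension via an explicit configuration for which the bound $\lfloor(n_1+\cdots+n_t+1)/2\rfloor$ is exactly what makes the count close; supplying such a configuration is the step your sketch leaves open.
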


In the paper mentioned above, these two results are obtained in two
ways. The first is via combinatorics on monomial
ideals in the multihomogeneous coordinate ring of $\PP^{n_1}\times
\cdots\times \PP^{n_t}$: curiously enough, this corresponds to
understanding possible arrangements of a set of rooks on an
$t$-dimensional chessboard (corresponding to the array representing
the tensor). There is also a reinterpretation of these problems in
terms of code theory and Hamming distance (the so-called Hamming
codes furnish nice examples of non-defective secants varieties to
Segre's of type $\PP^n\times \cdots \times \PP^n$).

Combinatorics turns out to be a nice, but limited tool for those
questions. The second part of Theorem \ref{thm:first} (and many other results that we are going to report) are obtained by the use of inverse systems and the multigraded version of apolarity theory (recall Section \ref{Apolaritysection} for the standard case, and we refer to \cite{CGG5, Galazka, GRV18, BBG} for definitions of multigraded apolarity) or via Terracini's lemma (see Lemma~\ref{Terracini}).

The idea behind these methods is to translate the problem of determining the dimension of
$\sigma_s(X)$, into the problem of determining the
{multihomogeneous Hilbert function} of a scheme
$Z \subset \PP^{n_1}\times \cdots \times \PP^{n_t}$ of $s$ generic two-fat points in multi-degree ${1} = (1,\ldots,1)$. We have that the coordinate ring of the multi-projective space $\PP^{n_1}\times \cdots \times \PP^{n_t}$ is the polynomial ring $S=\Bbbk[x_{1,0},\ldots ,x_{1,n_1},\ldots,x_{t,0},\ldots ,x_{t,n_t}]$, equipped with the multi-degree given by $\deg(x_{i,j}) = \bfe_i = (0,\ldots,\underset{i}{1},\ldots,0)$. Then, the scheme $Z$ is defined by a multi-homogeneous ideal $I = I(Z)$, which inherits the multi-graded structure. Hence, recalling the standard definition of Hilbert function (Definition \ref{def:HF}), we say that the {multi-graded Hilbert function} of $Z$ in multi-degree $\bfd = (d_1,\ldots,d_t)$ is:
$$
	\HF(Z;\bfd) = \dim_\Bbbk(S/I)_\bfd = \dim_\Bbbk S_\bfd - \dim_\Bbbk I_\bfd.
$$

\subsection{The Multiprojective Affine Projective Method}\label{MAP}
 We describe here a way to study the dimension of $\sigma_s(X_{n})$ by studying the multi-graded Hilbert function of a scheme of fat points in multiprojective space via a very natural reduction to the Hilbert function of fat points in the standard projective space (of equal dimension). 

\smallskip
We start recalling a direct consequence of Terracini's lemma for any variety.

Let $Y \subset \PP^N$ be a positive dimensional smooth variety, and let $Z\subset Y$ be a scheme of $s$ generic {two-fat points}, i.e., a scheme defined by the ideal sheaf $\mathcal{I}_{Z} = \mathcal{I}^2_{P_1,Y}\cap \cdots \cap \mathcal{I}^2_{P_s,Y}\subset\mathcal{O}_{Y}$, where the $P_i$'s are $s$ generic points of $Y$ defined by the ideal sheaves $\calI_{P_i,Y} \subset \calO_Y$, respectively. Since there is a bijection between hyperplanes of the space $\PP^N$ containing the linear space $\langle T_{P_1}(Y),\dots ,T_{P_s}(Y) \rangle $
and the elements of $H^0(Y,\mathcal{I}_{Z}(1))$, we have the following consequence of the Terracini lemma.
 
\begin{thm}\label{thm:SecantsViaFatPoints}
Let $Y$ be a positive dimensional smooth variety; let $P_1,\ldots,P_s$ be generic points on $Y$; and let $Z\subset Y$ be the scheme defined by $\calI_{P_1,Y}^2 \cap \ldots \cap \calI_{P_s,Y}^2$. Then,
$$ \dim \sigma_s(Y) = \dim \langle T_{P_1}(Y),\dots,T_{P_s}(Y) \rangle = N - \dim_\Bbbk
H^0(Y,\mathcal{I}_{Z}(1)). $$ 
\end{thm}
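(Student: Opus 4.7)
The plan is to first invoke Terracini's lemma (Lemma \ref{Terracini}), which yields the first equality immediately: for generic $P_1,\ldots,P_s\in Y$ and a generic point $Q\in\langle P_1,\ldots,P_s\rangle$, one has $T_Q(\sigma_s(Y)) = \langle T_{P_1}(Y),\ldots,T_{P_s}(Y)\rangle$, and since $\sigma_s(Y)$ is irreducible, $\dim\sigma_s(Y)=\dim T_Q(\sigma_s(Y))$.

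For the second equality, the strategy is to parameterize the hyperplanes of $\PP^N$ containing the linear span $L := \langle T_{P_1}(Y),\ldots,T_{P_s}(Y)\rangle$ by the space $H^0(Y,\calI_Z(1))$. On the one hand, a linear subspace $L\subset \PP^N$ is cut out by exactly $N-\dim L$ independent hyperplanes, so the vector space of linear forms on $\PP^N$ vanishing identically on $L$ has dimension $N-\dim L$. On the other hand, I would identify each such linear form with its restriction to $Y$, which is a section of $\calO_Y(1)$, and verify that this restriction lies in the ideal sheaf $\calI_Z$.

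The key ingredient, already invoked in the proof of Corollary \ref{corTerracini}, is the standard scheme-theoretic fact that a hyperplane $H\subset \PP^N$ contains the tangent space $T_{P_i}Y$ at a smooth point $P_i$ if and only if the restriction of its defining linear form to $Y$ lies in $\calI^2_{P_i,Y}$, equivalently if and only if $H\cap Y$ contains the first infinitesimal neighborhood of $P_i$ in $Y$. Granting this, the linear forms vanishing on all of $L$ are precisely those whose restriction to $Y$ lies in $\bigcap_{i=1}^s\calI^2_{P_i,Y} = \calI_Z$, hence correspond bijectively to sections in $H^0(Y,\calI_Z(1))$. Combined with the dimension count above, this gives $\dim L = N - \dim_\Bbbk H^0(Y,\calI_Z(1))$.

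The only subtle point, and hence the main obstacle, is the identification between linear forms on $\PP^N$ and their restrictions to $Y$: strictly speaking, one must work with the image of the restriction map $H^0(\PP^N,\calO(1))\to H^0(Y,\calO_Y(1))$, which is an injection when $Y$ is non-degenerate and otherwise requires quotienting by the linear forms vanishing on $Y$ itself. Since this mirrors exactly the framework of Corollary \ref{corTerracini} with $(X,\calL)$ replaced by $(Y,\calO_Y(1))$, the result follows by direct transcription of that argument.
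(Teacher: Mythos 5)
Your argument is correct and coincides with the paper's: the paper presents this theorem as a direct consequence of Terracini's lemma together with the bijection between hyperplanes of $\PP^N$ containing $\langle T_{P_1}(Y),\dots,T_{P_s}(Y)\rangle$ and the elements of $H^0(Y,\mathcal{I}_Z(1))$, which is exactly the argument already given for Corollary \ref{corTerracini}. Your closing remark about the restriction map $H^0(\PP^N,\calO(1))\to H^0(Y,\calO_Y(1))$ is a fair observation; the paper handles it by implicitly assuming (as in Corollary \ref{corTerracini}) that the embedding is given by the complete linear system, which holds in the intended application to Segre varieties.
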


Now, we apply this result to the case of Segre varieties; we give, e.g., \cite{CGG4} as the main reference. Consider $\PP^\bfn := \PP^{n_1}\times \cdots \times \PP^{n_t}$, and let $X_{\mathbf{n}} \subset \PP^N$ be its Segre embedding given by $\mathcal{O}_{\PP^{n_1}}(1)\otimes\ldots\otimes\mathcal{O}_{\PP^{n_t}}(1)$. By applying Theorem \ref{thm:SecantsViaFatPoints} and since the scheme $Z \subset X_{\mathbf{n}}$ corresponds to a scheme of $s$ generic two-fat points in $X$, which, by a little abuse of notation, we call again $Z$, we get:
$$
\dim \sigma_s(X_{\mathbf{n}}) = \HF(Z, \mathbf{1})-1.
$$

Now, let $n=n_1+ \cdots +n_t$, and consider the birational map: 
$$
\PP^\bfn \dashrightarrow \mathbb{A}^n, 
$$
where: 
\begin{center}
\begin{tabular}{c}
$\big([x_{1,0}:\dots:x_{1,n_1}],\dots,[x_{t,0},\dots,x_{t,n_t}]\big)$ \\
$\rotatebox[origin = c]{-90}{$\mapsto$}$ \\
$ \left({x_{1,1}\over x_{1,0}},{x_{1,2}\over x_{1,0}},\dots,{x_{1,n_1}\over x_{1,0}};
 {x_{2,1}\over x_{2,0}},\dots,{x_{2,n_2}\over x_{2,0}};\dots; {x_{t,1}\over
x_{t,0}},\dots,{x_{t,n_t}\over x_{t,0}}\right).$
\end{tabular}
\end{center}
This map is defined in the open subset of $\PP^\bfn$ given by $\{x_{1,0}x_{2,0} \cdots x_{t,0}\neq 0\}$.

Now, let $\Bbbk[z_0,z_{1,1},\dots,z_{1,n_1}, z_{2,1},\dots,z_{2,n_2}, \dots , z_{t,1}, \dots ,z_{t,n_t}]$ be the coordinate ring of $\PP^n$, and consider the embedding of $\mathbb{A}^n \rightarrow \PP^n$, whose image is the affine chart $\{ z_0\neq 0 \}$. By composing the two maps above, we get:
$$
\varphi: \bbP^\bfn \dashrightarrow \bbP^n, 
$$
with: 
\begin{center}
\begin{tabular}{c}
$\big([x_{1,0}:\dots:x_{1,n_1}],\dots,[x_{t,0},\dots,x_{t,n_t}]\big)$ \\
$\rotatebox[origin = c]{-90}{$\mapsto$}$ \\
$ \left[1:{x_{1,1}\over x_{1,0}}:{x_{1,2}\over x_{1,0}}:\dots:{x_{1,n_1}\over x_{1,0}}:
 {x_{2,1}\over x_{2,0}}:\dots:{x_{2,n_2}\over x_{2,0}}:\dots:{x_{t,1}\over
x_{t,0}}:\dots:{x_{t,n_t}\over x_{t,0}}\right].$
\end{tabular}
\end{center}
Let $Z\subset X$ be a zero-dimensional scheme, which is contained in the affine chart $\{x_{0,1}x_{0,2}\cdots x_{0,t}\neq 0\}$, and let $Z' = \varphi(Z)$. We want to construct 
a scheme $W\subset \PP^n$ such that $\HF(W;d) = \HF(Z;(d_1,\ldots,d_t))$, where $d= d_1+\ldots + d_t$.

Let:
$$Q_0,Q_{1,1},\dots,Q_{1,n_1},Q_{2,1},\dots,Q_{2,n_2},Q_{t,1},\dots,Q_{t,n_t}$$
be the coordinate points of $\PP^{n}$. Consider the linear space $\Pi_i\cong \PP^{n_i-1} \subset \PP^{n}$, where $\Pi_i=\langle Q_{i,1},\dots,Q_{i,n_i}\rangle $. The defining ideal of $\Pi_i$ is:
$$
I(\Pi_i) = (z_0, z_{1,1}, \ldots , z_{1,n_1}; \ldots ; \hat z_{i,1},\ldots ,\hat z_{i,n_i}; 
\ldots ;z_{t,1}, \ldots , z_{t,n_t}) \ . 
$$
Let $W_i$ be the subscheme of $\PP^{n}$ denoted by $(d_i-1)\Pi_i$, i.e., the scheme defined by the ideal $I(\Pi_i)^{d_i-1}$. Since $I(\Pi_i)$ is a prime ideal generated by a regular sequence, the ideal $I(\Pi_i)^{d_i-1}$ is saturated (and even primary for $I(\Pi_i)$). 
Notice that $W_i\cap W_j = \emptyset$, for $i\neq j$. With this construction, we have the following key result.

\begin{thm}\label{thm:multiprojaffproj}
Let $Z,\ Z'$, $W_1, \dots ,W_t$ be as above, and let
$W = Z' + W_1 + \cdots + W_t\subset \PP^n$. Let $I(W) \subset S $ and $I(Z) \subset R $ be the ideals of 
$W$ and $Z$, respectively. Then, we have, for all $(d_1,\dots ,d_t)\in \mathbb{N}^t$: 
$$
\dim_{\Bbbk} I(W)_d = \dim_{\Bbbk} I(Z)_{(d_1,\dots ,d_t)} ,
$$
where $d = d_1 + \dots + d_t$.
\end{thm}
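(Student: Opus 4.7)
My plan is to construct an explicit $\Bbbk$-linear isomorphism $\Phi\colon I(Z)_{(d_1,\ldots,d_t)}\to I(W)_d$ by exploiting the affine-projective passage built into the definition of $\varphi$. The first step is to dehomogenize: given a multi-homogeneous form $F\in R_{(d_1,\ldots,d_t)}$, substitute $x_{i,0}=1$ for every $i$ to obtain an affine polynomial $\widetilde{F}$ in the $n$ variables $u_{i,j}:=x_{i,j}/x_{i,0}$. By construction, $\widetilde F$ has degree at most $d_i$ in each block $\{u_{i,1},\ldots,u_{i,n_i}\}$ and total degree at most $d=d_1+\cdots+d_t$. Then I rehomogenize to total degree exactly $d$ using the new variable $z_0$, setting
$$
\Phi(F)\,:=\,z_0^{d}\,\widetilde{F}\!\left(\tfrac{z_{i,j}}{z_0}\right)\,\in\, S_d.
$$
The inverse process---set $z_0=1$ and then rehomogenize each block separately back to multi-degree $(d_1,\ldots,d_t)$---recovers $F$, so $\Phi$ is a linear injection of $R_{(d_1,\ldots,d_t)}$ into $S_d$.

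Next I would identify the image of $\Phi$ in ideal-theoretic terms. A monomial of $\Phi(F)$ has, by construction, degree at most $d_i$ in the $i$-th block $\{z_{i,1},\ldots,z_{i,n_i}\}$, equivalently a prescribed lower bound on its degree in the complementary variables, which are precisely the generators of $I(\Pi_i)$. Hence $\mathrm{Im}(\Phi)$ is cut out inside $S_d$ by a membership condition in each $I(\Pi_i)^{e_i}$, with $e_i$ the exponent prescribing $W_i$ in the statement, and these conditions together pin down exactly $\bigcap_{i=1}^t (I(\Pi_i)^{e_i})_d = I(W_1+\cdots+W_t)_d$, since the $\Pi_i$ are pairwise disjoint linear subspaces and so the scheme-theoretic union of the $W_i$'s is their disjoint union.

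The core geometric step is matching vanishing on $Z$ with vanishing on $Z'=\varphi(Z)$. Because $\varphi$ restricts to a scheme isomorphism from the chart $\{x_{1,0}\cdots x_{t,0}\neq 0\}\subset \bbP^{\bfn}$ onto $\{z_0\neq 0\}\subset \bbP^n$, and both $Z$ and $Z'$ lie entirely in these charts, the operation $F\mapsto \widetilde F$ intertwines the local defining ideals of $Z$ and $Z'$. I expect this to be the main technical obstacle: one must verify that not only the set-theoretic support but the full (possibly non-reduced) scheme structure of $Z$ transfers faithfully through the chart isomorphism, and that the dehomogenize-then-rehomogenize operation is functorial with respect to this pullback, so that the vanishing multiplicities defining $I(Z)_{(d_1,\ldots,d_t)}$ correspond bijectively to the vanishing conditions defining $I(Z')_d$ on the affine chart.

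To conclude, since $Z'$ lies in $\{z_0\neq 0\}$ while each $W_i$ is supported on $\Pi_i\subset\{z_0=0\}$, the schemes $Z'$ and $W_1,\ldots,W_t$ have pairwise disjoint supports, and therefore $I(W)=I(Z')\cap\bigcap_i I(W_i)$. Combining this with the characterization of $\mathrm{Im}(\Phi)$ from the previous step, the map $\Phi$ restricts to a $\Bbbk$-linear isomorphism $I(Z)_{(d_1,\ldots,d_t)}\to I(W)_d$, which yields the claimed equality of dimensions.
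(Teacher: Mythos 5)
Your route is the standard one: the paper itself does not prove this theorem but quotes it from \cite{CGG4}, and the argument there is exactly the dehomogenize--rehomogenize bijection you set up. Steps that are fine: $\Phi$ is a linear injection of $R_{(d_1,\ldots,d_t)}$ into $S_d$ with the inverse you describe; $Z$ and $Z'$ are identified as affine subschemes of $\mathbb{A}^n$ because $\varphi$ is the identity in the affine coordinates $x_{i,j}/x_{i,0}=z_{i,j}/z_0$, and since $Z$ (resp.\ $Z'$) avoids the hyperplanes $x_{i,0}=0$ (resp.\ $z_0=0$) and the ideals are saturated, membership in $I(Z)_{(d_1,\ldots,d_t)}$ and in $I(Z')_d$ are both equivalent to membership of the common dehomogenization in the affine ideal; and $I(W)_d=I(Z')_d\cap I(W_1+\cdots+W_t)_d$ holds by definition of the scheme-theoretic union.

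The genuine gap is exactly the step you defer to ``the exponent prescribing $W_i$ in the statement.'' This is the only real computation in the proof and it has to be carried out: a monomial $M\in S_d$ lies in $I(\Pi_i)^{e}$ if and only if its total degree in the generators of $I(\Pi_i)$ (namely $z_0$ and all $z_{j,k}$ with $j\ne i$) is at least $e$, i.e.\ if and only if its degree $a_i$ in the $i$-th block satisfies $a_i\le d-e$. Matching this with the condition $a_i\le d_i$ that characterizes $\mathrm{Im}(\Phi)$ forces $e_i=d-d_i$, whereas the construction preceding the theorem defines $W_i$ by $I(\Pi_i)^{d_i-1}$. With $e_i=d_i-1$ the identification $\mathrm{Im}(\Phi)=\bigl(\bigcap_i I(\Pi_i)^{e_i}\bigr)_d$ is simply false: taking $Z=\emptyset$, the theorem forces $\dim_\Bbbk I(W_\infty)_d=\prod_i{n_i+d_i\choose n_i}$, which holds for $e_i=d-d_i$ but not for $e_i=d_i-1$ (in the Segre case $d_i=1$ every $W_i$ would be empty), and the paper's own worked example for $(\PP^1)^4$ puts \emph{three}-fat points at infinity, i.e.\ $e_i=d-d_i=4-1=3$. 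So the exponent in the text is a typo for $d-d_i$, and your proof, read literally against that text, asserts a wrong equality of subspaces of $S_d$. Doing the monomial count explicitly both closes the gap and corrects the exponent.
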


Note that when studying Segre varieties, we are only interested to the case $(d_1,\dots ,d_t) = (1, \ldots , 1)$; but, in the more general case of Segre--Veronese varieties, we will have to look at Theorem \ref{thm:multiprojaffproj} for any multidegree $(d_1,\ldots,d_t)$; see Section~\ref{sec:SegreVeronese}.

Note that the scheme $W$ in $\PP^n$ that we have constructed has two parts: the part $W_1 + \cdots + W_t$ (which we shall call {the part at infinity} and we denote as $W_\infty$) and 
the part $Z^\prime$, which is isomorphic to our original zero-dimensional
scheme $Z \subset X$. Thus, if $Z = \emptyset$ (and hence, $Z^\prime = \emptyset$), we obtain from the theorem that:
$$
\dim_{\Bbbk} I(W_\infty)_d = \dim_{\Bbbk} S_{(d_1, \ldots , d_t)}, \quad d = d_1 + \cdots + d_t.
$$
It follows that:
$$
\HF(W_\infty;d) = {d_1 + \cdots + d_t + n\choose n} - {d_1+n_1\choose n_1}\cdots {d_t+n_t\choose n_t}. 
$$
With this observation made, the following corollary is immediate.

\begin{corollary} 
Let $Z$ and $Z^\prime$ be as above, and write $W = Z^\prime + W_\infty$. 
Then,
$$
\HF(W;d) = \HF(Z;(d_1,\dots ,d_t)) + \HF(W_\infty;d).
$$
\end{corollary}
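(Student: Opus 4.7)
The statement is essentially an accounting identity once Theorem \ref{thm:multiprojaffproj} and the formula for $\HF(W_\infty;d)$ recorded just above are in hand, so my plan is simply to chase the definitions carefully and check the bookkeeping.

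First I would write out the Hilbert function of $W$ in the form $\HF(W;d) = \dim_\Bbbk S_d - \dim_\Bbbk I(W)_d$, where $S = \Bbbk[z_0,z_{1,1},\dots,z_{t,n_t}]$ has $n+1$ variables, so that $\dim_\Bbbk S_d = \binom{d+n}{n}$. Theorem \ref{thm:multiprojaffproj} then replaces $\dim_\Bbbk I(W)_d$ with $\dim_\Bbbk I(Z)_{(d_1,\dots,d_t)}$, and unravelling the latter via the multi-graded Hilbert function of $Z$ gives
$$
\dim_\Bbbk I(Z)_{(d_1,\dots,d_t)} = \dim_\Bbbk R_{(d_1,\dots,d_t)} - \HF(Z;(d_1,\dots,d_t)) = \prod_{i=1}^t \binom{d_i+n_i}{n_i} - \HF(Z;(d_1,\dots,d_t)).
$$

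Substituting this into the previous expression yields
$$
\HF(W;d) = \binom{d+n}{n} - \prod_{i=1}^t \binom{d_i+n_i}{n_i} + \HF(Z;(d_1,\dots,d_t)).
$$
The final step is to recognize the first two terms as $\HF(W_\infty;d)$: this is exactly the formula displayed in the remark preceding the corollary, which itself follows from applying Theorem \ref{thm:multiprojaffproj} to the empty scheme $Z = \emptyset$ (so that $Z' = \emptyset$ and $W = W_\infty$, giving $\dim_\Bbbk I(W_\infty)_d = \dim_\Bbbk R_{(d_1,\dots,d_t)} = \prod_i \binom{d_i+n_i}{n_i}$). Combining these lines gives the claimed equality, and no step appears to offer any real obstacle; the entire content of the statement is already encoded in Theorem \ref{thm:multiprojaffproj}, and the corollary is essentially a reformulation that isolates the contribution of the ``part at infinity'' $W_\infty$ from the contribution of the transplanted scheme $Z'$.
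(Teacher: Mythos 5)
Your proposal is correct and is exactly the bookkeeping the paper has in mind: the paper states the corollary is "immediate" after recording $\HF(W_\infty;d) = \binom{d+n}{n} - \prod_{i}\binom{d_i+n_i}{n_i}$, and your chain of equalities (expanding both Hilbert functions as codimensions of the relevant graded pieces and invoking Theorem \ref{thm:multiprojaffproj} once for $W$ and once for the empty scheme to identify $W_\infty$'s contribution) is precisely that omitted verification. No gaps.
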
 
Eventually, when $Z$ is given by $s$ generic two-fat points in multi-projective space, we get the~following.
\begin{thm}\label{thm:CGG8}
Let $Z\subset X$ be a generic set of $s$ two-fat points, and let $W\subset \PP^n$ be as in the Theorem \ref{thm:multiprojaffproj}. Then, we~have: 
$$
\dim \sigma_s( X_{\mathbf{n}}) = \HF(Z,(1,\ldots,1))-1 = N - \dim I(W)_t, 
$$
where $N = \Pi_{i=1}^t(1 + n_i) - 1$.
\end{thm}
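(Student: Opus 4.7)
The plan is to simply chain together the two main results already established in this subsection, namely Theorem \ref{thm:SecantsViaFatPoints} applied to $Y = X_{\mathbf{n}}$ and Theorem \ref{thm:multiprojaffproj} specialized to multi-degree $(1,\ldots,1)$. There is no real geometric content to add: all the work has been done upstream.

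First I would apply Theorem \ref{thm:SecantsViaFatPoints} to the smooth variety $Y = X_{\mathbf{n}} \subset \mathbb{P}^N$, with hyperplane class corresponding to the very ample line bundle $\mathcal{O}(1,\ldots,1) = \mathcal{O}_{\mathbb{P}^{n_1}}(1)\otimes\cdots\otimes\mathcal{O}_{\mathbb{P}^{n_t}}(1)$ defining the Segre embedding. Viewing the two-fat point scheme $Z$ equivalently inside $\mathbb{P}^{\mathbf{n}}$, the global sections $H^0(X_{\mathbf{n}},\mathcal{I}_Z(1))$ are naturally identified with the multi-homogeneous piece $I(Z)_{(1,\ldots,1)} \subset S_{(1,\ldots,1)}$, whose dimension is $N+1$. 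Hence
$$
\dim \sigma_s(X_{\mathbf{n}}) \;=\; N - \dim_{\Bbbk} I(Z)_{(1,\ldots,1)}.
$$

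Next, by the very definition of the multi-graded Hilbert function recalled just before Section \ref{MAP},
$$
\HF(Z,(1,\ldots,1)) \;=\; \dim_{\Bbbk} S_{(1,\ldots,1)} - \dim_{\Bbbk} I(Z)_{(1,\ldots,1)} \;=\; (N+1) - \dim_{\Bbbk} I(Z)_{(1,\ldots,1)},
$$
so substituting into the previous display immediately yields the first equality $\dim \sigma_s(X_{\mathbf{n}}) = \HF(Z,(1,\ldots,1)) - 1$.

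Finally, to get the second equality I would invoke Theorem \ref{thm:multiprojaffproj} in the special case $(d_1,\ldots,d_t) = (1,\ldots,1)$, so that $d = t$. The theorem tells us that the multi-projective-affine-projective construction produces a scheme $W \subset \mathbb{P}^n$ satisfying
$$
\dim_{\Bbbk} I(W)_t \;=\; \dim_{\Bbbk} I(Z)_{(1,\ldots,1)}.
$$
Combining this with the formula above gives $\dim \sigma_s(X_{\mathbf{n}}) = N - \dim_{\Bbbk} I(W)_t$, completing the proof. The only mildly delicate point to check is the genericity hypothesis: one must verify that a generic scheme $Z$ of $s$ two-fat points in $\mathbb{P}^{\mathbf{n}}$ can be assumed to lie inside the affine chart $\{x_{1,0}\cdots x_{t,0} \neq 0\}$ on which the birational map $\varphi$ is defined, which follows from the fact that this chart is a non-empty Zariski open subset and genericity is preserved. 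No further estimation is needed, so I do not expect any real obstacle.
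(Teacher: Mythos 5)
Your proposal is correct and follows exactly the route the paper takes: the paper states this theorem as an immediate consequence of Theorem \ref{thm:SecantsViaFatPoints} (applied to $Y = X_{\mathbf{n}}$ with $\mathcal{O}(1,\ldots,1)$, identifying $H^0(X_{\mathbf{n}},\mathcal{I}_Z(1))$ with $I(Z)_{(1,\ldots,1)}$ and unwinding the definition of the multigraded Hilbert function) combined with Theorem \ref{thm:multiprojaffproj} at $\bfd=(1,\ldots,1)$, $d=t$. Your remark on placing the generic $Z$ inside the affine chart where $\varphi$ is defined is the right (and only) point of care.
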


Therefore, eventually, we can study a projective scheme $W \subset \PP^ n$, which is made of a union of generic two-fat points and of fat linear spaces. Note that, when $n_1=\ldots=n_t=1$, then also $W$ is a scheme of fat~points. 


\subsection{The Balanced Case} One could try to attack the problem starting
with a case, which is in a sense more ``regular'', i.e., the ``balanced''
case of $n_1=\cdots =n_t$, $t\geq 3$. Several partial results are
known, and they lead Abo, Ottaviani and Peterson to propose, in
their lovely paper \cite{AOP}, a conjecture, which states that
there are only a finite number of defective Segre varieties of the
form $\PP^{n}\times \cdots\times \PP^{n}$, and their guess is that
$\sigma_4(\PP^{2}\times \PP^{2}\times\PP^{2})$ and
$\sigma_3(\PP^{1}\times \PP^{1}\times\PP^{1}\times\PP^{1})$ are
actually the only defective cases (as we will see later, this is just
part of an even more hazardous conjecture; see Conjecture \ref{ConjOnSegres}).

In the particular case of $n_1=\cdots =n_t= 1$, the question has been completely solved in
\cite{CGG8}, supporting the above conjecture.

\begin{thm}\label{P1P1P1...}\cite{CGG8}
Let $t, s \in \Bbb N$, $t \geq 3$. Let $X_{1}$ be the Segre embedding of $\PP^1\times \cdots
\times \PP^1$, $(t$-times$)$. The dimension of $\sigma_s(X_{1}) \subset
\PP^N$, with $N = 2^t -1$, is always as expected, i.e., $$
\dim \sigma_s(X) = \min \{ N, s(t+1)-1 \},$$ except for $t = 4$, $s=3$. In this last case,
 $\dim \sigma_3(X) = 13$, instead of $14$.
\end{thm}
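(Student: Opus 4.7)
The plan is to follow the standard two-step approach: first reduce to a Hilbert-function problem via Terracini's lemma and apolarity, then argue by induction using the Horace method on a suitably constructed scheme in projective space, and finally dispose of the single exceptional case $(t,s)=(4,3)$ by hand.

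First, Theorem \ref{thm:SecantsViaFatPoints} combined with the multi-graded version of apolarity yields
$$
\dim\sigma_s(X_\mathbf{1}) \;=\; 2^t - 1 - \dim_\Bbbk I(Z)_{(1,\ldots,1)},
$$
where $Z\subset (\mathbb{P}^1)^t$ is the union of $s$ generic $2$-fat points. Since each such $2$-fat point imposes $t+1$ conditions on multilinear forms, the required non-defectivity statement is equivalent to showing that $s$ generic $2$-fat points impose independent conditions on multilinear forms of multi-degree $(1,\ldots,1)$. By Theorem \ref{thm:multiprojaffproj} (the MAP method in multi-degree $\mathbf{1}$), this translates into a single Hilbert-function equality in degree $t$ for a scheme $W\subset\mathbb{P}^t$ combining $s$ generic $2$-fat points $Z'$ with a fixed auxiliary scheme $W_\infty$ prescribed by the MAP construction.

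Next, proceed by double induction on $t$ and $s$. Several low-$s$ regimes, in particular $s\leq t+1$ and the range in Theorem \ref{thm:first}, provide base cases. For the inductive step, apply the Horace method of Theorem \ref{theorem:bo} to $W\subset\mathbb{P}^t$: specialize a carefully chosen number of the supporting points of $W$ onto a hyperplane $H\cong\mathbb{P}^{t-1}$, use the Castelnuovo sequence \eqref{castelnuovo} to bound $\dim_\Bbbk I(W)_t$ in terms of $\dim_\Bbbk I(\Tr_H W)_t$ and $\dim_\Bbbk I(\Res_H W)_{t-1}$, and invoke the inductive hypothesis on each piece. When the combinatorial inequalities (i)--(ii) of Theorem \ref{theorem:bo} hold, one gets regularity directly; in tight numerical situations one resorts to the Horace differential method of Section \ref{Horacediffsection} to recover the missing condition. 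This chain of specializations establishes the expected dimension for every $(t,s)$ outside a short list to be inspected by hand, and Proposition \ref{succdef} ensures that once a particular $\sigma_s$ is shown to be regular, the regularity propagates down to smaller $s$.

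Finally, handle the genuine defect $(t,s)=(4,3)$. The upper bound $\dim\sigma_3(X_\mathbf{1})\leq 14$ follows from a flattening: the identification $(\mathbb{C}^2)^{\otimes 4}\cong \mathbb{C}^4\otimes\mathbb{C}^4$ (grouping the factors $\{1,2\}$ and $\{3,4\}$) embeds $X_\mathbf{1}$ into the Segre $X_{(3,3)}\subset\mathbb{P}^{15}$ and sends rank-$1$ tensors to rank-$1$ matrices; hence $\sigma_3(X_\mathbf{1})\subset\sigma_3(X_{(3,3)})=\{\det=0\}$, a quartic hypersurface. The precise value $13$ is then extracted by a direct Terracini computation on three generic tangent spaces of $X_\mathbf{1}$ (equivalently, an explicit MAP/apolarity check that three generic $2$-fat points in $(\mathbb{P}^1)^4$ impose only $14$ independent conditions on multilinear forms, not $15$). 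This defect does not propagate since $\sigma_4(X_\mathbf{1})=\mathbb{P}^{15}$ is already forced by the ambient dimension, so $(4,3)$ remains isolated.

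The technically hardest step will be the inductive Horace specialization: the rigidity of multi-degree $(1,\ldots,1)$ imposes the very tight budget $s(t+1)\leq 2^t$, and the auxiliary scheme $W_\infty$ produced by MAP is supported on coordinate linear spaces that interact awkwardly with any hyperplane specialization. The Horace differential method is likely needed in several boundary configurations, and exploiting the $\mathfrak{S}_t$-symmetry among the factors of $(\mathbb{P}^1)^t$ will be essential to economize the number of cases to verify.
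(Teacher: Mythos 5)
Your route---Terracini plus multigraded apolarity, the multiprojective-affine-projective reduction to a scheme $W=Z'+W_\infty\subset\mathbb{P}^t$ (with $W_\infty$ the $t$ coordinate points taken with multiplicity $t-1$), and induction via the standard and differential Horace methods---is exactly the strategy of \cite{CGG8} that the paper summarizes, so the main body of your proposal is the intended one.

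Two steps in your handling of $t=4$ do not work as written. First, the flattening bound is vacuous for exhibiting the defect: the expected dimension of $\sigma_3(X_{\mathbf 1})$ is already $\min\{15,\,3\cdot 4+2\}=14$, so showing $\sigma_3(X_{\mathbf 1})\subseteq\{\det=0\}$ proves nothing beyond what a parameter count gives; the entire content of the exceptional case is the further drop from $14$ to $13$, which your ``direct Terracini computation'' must carry on its own. (The source obtains this conceptually: under the MAP reduction, $W$ becomes three double points and four triple points in $\mathbb{P}^4$, and seven fat points in $\mathbb{P}^4$ always lie on a rational normal curve, hence fail to impose independent conditions on quartics by \cite{CEG}.) Second, the assertion that $\sigma_4(X_{\mathbf 1})=\mathbb{P}^{15}$ is ``forced by the ambient dimension'' is not an argument: no secant variety is forced to attain its expected dimension, and by Proposition \ref{succdef} the $3$-defect would propagate to $s=4$ precisely when $\sigma_4$ fails to fill. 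You must prove $\sigma_4(X_{\mathbf 1})=\mathbb{P}^{15}$ separately, e.g.\ by checking $\dim_\Bbbk I(W)_4=0$ for the scheme of four double and four triple points in $\mathbb{P}^4$ (eight points no longer all lie on a rational normal curve, so the obstruction of \cite{CEG} disappears). With these two repairs your outline agrees with the published proof.
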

The method that has been used to compute the multi-graded
Hilbert function for schemes of two-fat points with generic support in multi-projective spaces
is based first on the procedure described on the multiprojective affine projective method explained in the previous section, which brings to the study of the standard Hilbert function of the schemes $W\subset \PP^t$. Secondly, the problem of determining the dimension of $I(W)_t$ can be attacked by induction, via the powerful tool constituted by the {differential Horace method}, created by Alexander and Hirschowitz; see Section \ref{Horacediffsection}. This is used in \cite{CGG8} together with other ``tricks'', which allow one to ``move'' on a hyperplane some of the conditions imposed by the fat points, analogously as we have described in the examples in Section \ref{Horacediffsection}. These were the key ingredients to prove Theorem \ref{P1P1P1...}.

The only defective secant variety in the theorem above is made by
the second secant variety of $\PP^1\times \PP^1\times \PP^1\times \PP^1
\subset \PP^{15}$, which, instead of forming a hypersurface in
$\PP^{15}$, has codimension two. Via Theorem \ref{thm:CGG8} above, this is geometrically related to a configuration of seven fat points; more precisely, in this case, the scheme $W$ of Theorem~\ref{thm:CGG8} is union of three two-fat points and four three-fat ones (see also Theorem~\ref{thm:multiprojaffproj} for a detailed description of $W$). These always lie on a rational normal curve in $\PP^4$ (see, e.g., Theorem 1.18 of \cite{Harris}) and do not have the expected Hilbert function in degree four, by the result in \cite{CEG}.

\smallskip
 For the general ``balanced'' case $\PP^{n}\times \cdots\times \PP^{n}$, the
following partial result is proven in \cite{AOP}.

\begin{thm} \label{tensorpower}
Let $X_\bfn$ be the Segre embedding of $\PP^{n}\times \cdots\times \PP^{n}$ $(t$ times$)$, $t\ge 3$. Let $s_t$ and $e_t$ be defined by:
$$s_t=\left\lfloor \frac{(n+1)^t}{nt+1}\right\rfloor \hskip 8pt{\rm and}\hskip 8pt e_t\equiv s_t\
mod\ (n+1) \hskip 8pt {\rm with}\hskip 8pt e_t\in\{0,\dots ,n\}.$$
Then:
\begin{itemize}
\item if $s\le s_t-e_t$, then $\sigma_s(X)$ has the expected
dimension;
\item if $s\ge s_t-e_t+n+1$, then $\sigma_s(X)$ fills the ambient
space.
\end{itemize}
\end{thm}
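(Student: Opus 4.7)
The plan is to apply Terracini's Lemma to translate both assertions into computations of the multi-graded Hilbert function of a scheme of generic $2$-fat points in $(\PP^n)^t$, and then to establish the two critical cases $s = s_t - e_t$ and $s = s_t - e_t + n + 1$ inductively on $t$.

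By Terracini's Lemma (Lemma \ref{Terracini}) together with Corollary \ref{corTerracini}, the codimension of $\sigma_s(X_\bfn)$ in $\PP^N = \PP^{(n+1)^t - 1}$ equals $\dim_\Bbbk H^0\bigl((\PP^n)^t, \calI_Z(1, \ldots, 1)\bigr)$, where $Z$ is a union of $s$ generic $2$-fat points. The assertion that $\sigma_s$ has the expected dimension translates to $Z$ imposing independent conditions on the linear system of multi-degree $(1, \ldots, 1)$, while the assertion $\sigma_s = \PP^N$ translates to $H^0(\calI_Z(1, \ldots, 1)) = 0$. Note that, by construction, $s_t - e_t$ is a multiple of $n + 1$, and the two critical values $s_1^{*} := s_t - e_t$ and $s_2^{*} := s_t - e_t + n + 1$ are, respectively, the largest value of $s$ (divisible by $n+1$) for which the expected dimension is strictly less than $N$, and the smallest value of $s$ (again divisible by $n+1$) for which the naive dimension count already forces $\sigma_s = \PP^N$. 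Once these two cases are settled, Proposition \ref{succdef} propagates the expected-dimension assertion downward from $s_1^{*}$ to all smaller $s$ (since the expected dimension stays strictly less than $N$ throughout), and trivial monotonicity propagates the fullness assertion upward from $s_2^{*}$.

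I would handle the two critical cases by induction on $t$, with $n$ fixed. The inductive step proceeds by specialization: place a batch of $n+1$ of the generic $2$-fat points of $Z$ onto a general fiber $F \cong (\PP^n)^{t-1}$ of one of the projections $\pi_i \colon (\PP^n)^t \to \PP^n$. Since $F$ belongs to $|\calO(1, 0, \ldots, 0)|$, the Castelnuovo exact sequence (see \eqref{castelnuovo}) yields
\[
h^0(\calI_Z(1,\ldots,1)) \le h^0(\calI_{\Res_F(Z)}(0, 1, \ldots, 1)) + h^0(F, \calI_{\Tr_F(Z), F}(1, \ldots, 1)),
\]
and the restriction $\calO(1, \ldots, 1)|_F$ is the Segre hyperplane bundle on the balanced $(t-1)$-fold product. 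Under the identification $H^0((\PP^n)^t, \calO(0, 1, \ldots, 1)) \simeq H^0((\PP^n)^{t-1}, \calO(1, \ldots, 1))$, the residue term also reduces to the $(t-1)$-factor case, modulo the extra contribution of the simple points that arise from the residuals of the specialized $2$-fat points. Each batch of $n+1$ specialized points is designed to consume exactly the dimension $(n+1)^{t-1}$ of one such restricted section space; hence with $s_1^{*}/(n+1)$ batches (respectively $s_2^{*}/(n+1)$ batches), both summands in the right-hand side of the Castelnuovo inequality can be matched to the values given by the inductive hypothesis, producing the desired equality.

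The main obstacle is that the naive specialization typically produces a Castelnuovo inequality with a gap of one, so that matching the expected value forces shifting a single section between trace and residue. This is exactly the situation addressed by the \emph{m\'ethode d'Horace diff\'erentielle} of Section \ref{Horacediffsection}, which must be invoked in the boundary cases, analogously to its r\^ole in the proof of the Alexander--Hirschowitz theorem (see page \pageref{win:diff:horace}). A secondary difficulty is the base case (small $t$, essentially $t = 3$): here the induction cannot descend further to a smaller balanced Segre of the same form, and one must rely on the multi-projective affine projective method of Section \ref{MAP} to reduce the multi-graded Hilbert function to the standard Hilbert function of a configuration of $2$-fat points together with fat linear subspaces in $\PP^{3n}$, where the two critical values of $s$ can be settled by direct (possibly computer-algebra-assisted) computation. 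The combinatorial bookkeeping of how many of the $s$ points to specialize at each inductive step, so that both trace and residue fit the inductive hypothesis, is delicate but is dictated precisely by the arithmetic of division by $n + 1$ encoded in the definitions of $s_t$ and $e_t$.
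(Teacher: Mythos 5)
First, a remark on scope: the paper you are working from does not prove this theorem at all --- it is a survey, and Theorem \ref{tensorpower} is quoted verbatim from Abo--Ottaviani--Peterson \cite{AOP}, so your proposal has to be judged against that proof. Your overall skeleton (Terracini plus Corollary \ref{corTerracini} to reduce to independence of conditions imposed by $2$-fat points, settle two critical values of $s$, propagate down by Proposition \ref{succdef} and up by inclusion of secants) is sound and is indeed the strategy of \cite{AOP}. The fatal problem is in the inductive step. A fiber $F=\{p\}\times(\PP^n)^{t-1}$ of a projection $\pi_i$ is \emph{not} a member of $|\calO(1,0,\ldots,0)|$ when $n\ge 2$: the divisors in that system are $H\times(\PP^n)^{t-1}$ with $H\cong\PP^{n-1}$ a hyperplane of the first factor, and the fiber has codimension $n$, not $1$. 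Consequently the Castelnuovo restriction sequence you invoke simply does not exist for $F$, and your inductive step collapses for every $n\ge 2$ (for $n=1$ it degenerates to the known Theorem \ref{P1P1P1...}). If you use the genuine divisor $\PP^{n-1}\times(\PP^n)^{t-1}$ instead, the residual system $\calO(0,1,\ldots,1)$ does reduce to the balanced $(t-1)$-fold Segre as you intend, but the trace system lives on the \emph{unbalanced} product $\PP^{n-1}\times(\PP^n)^{t-1}$; the balanced case is therefore not closed under the induction, and this is precisely why \cite{AOP} must formulate and prove a statement $T(n_1,\ldots,n_t;s)$ for arbitrary multi-indices (with sub- and super-abundant variants) rather than inducting on $t$ with $n$ fixed.

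Two further points. Your batch count is off: $n+1$ double points traced onto the $(t-1)$-fold product impose at most $(n+1)\bigl(n(t-1)+1\bigr)$ conditions, which is not the dimension $(n+1)^{t-1}$ of the restricted section space, so the arithmetic of ``one batch kills one trace space'' does not close; the quantity $n+1$ enters the theorem through the congruence defining $e_t$, not through such a count. And the base case $t=3$ is not a finite, computer-checkable problem, since $n$ is arbitrary and the critical value $s_3\sim n^2/3$ lies far beyond the range covered by Theorem \ref{thm:first}; in \cite{AOP} the induction bottoms out in two-factor Segres (matrices) and $\PP^1$-factors, not in a direct computation at $t=3$.
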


In other words, if $s_t =q (n+1)+r $, with $1 \leq r \leq n$, then $\sigma_s(X_\bfn)$ has the expected dimension both for $s \geq (q+1)(n+1)$ and for $s \leq q(n+1)$, but if $n+1$ divides $s_t$, then $\sigma_s(X)$ has the expected dimension for any $s$.

Other known results in the ``balanced'' case are the following:

\begin{itemize}
\item $\sigma_4(\PP^{2}\times \PP^{2}\times\PP^{2})$ is defective with
defect $\delta_4=1$; see \cite{CGG1}.

\item {$\PP^n \times \PP^n\times \PP^n$} is never defective for $n \geq
3$; see \cite{Li}; 

\item $\PP^n \times \PP^n\times \PP^n\times \PP^n$ for $2 \leq n \leq 10$ is never defective except at most for $s = 100$ and $n = 8$ or for $s = 357$
and $n = 10$; see \cite{Ges}. 
\end{itemize}

\subsection{The General Case}
When we drop the balanced dimensions request, not many defective
cases are actually known. For example:
\begin{itemize}
\item $\PP^2 \times \PP^3\times \PP^3$ is defective only for $s=5$, with defect $\delta_4=1$; see \cite{AOP};
\item $\PP^2 \times \PP^n \times \PP^n$ with $n$ even is defective
only for $s={\frac {3n}2}+1$ with defect $\delta_s=1$; see \cite{CGG2,CGG2err}.
\item $\PP^1 \times \PP^1\times \PP^n\times \PP^n$ is defective only
for $s=2n+1$ with defect $\delta_{2n+1}=1$; see \cite{AOP}.
\end{itemize}

However, when taking into consideration cases where the $n_i$'s are {far}
from being equal, we run into another ``defectivity phenomenon'', known as
``the unbalanced case''; see \cite{CGG2, CGG7}.

\begin{thm} \label{unbalanced}
Let $X_\bfn$ be the Segre embedding of $\Bbb P^{n_1} \times \cdots \times \Bbb P^{n_t}
\times \Bbb P^n \subset \PP ^M$, with $M=(n+1)\Pi _{i=1} ^t ( n_i +1)-1$.
Let $N=\Pi _{i=1} ^t ( n_i +1)-1$, and assume $n > N - \sum _{i=1}^tn_i + 1$. Then, $\sigma_s(X)$ is defective for:
$$N - \sum _{i=1} ^{t} n_i +1 < s \leq \min \{ n;N\},$$
with defect equal to $\delta_{s} (X) = s^2-s(N-\sum_{i=1}^tn_i +1)$.
\end{thm}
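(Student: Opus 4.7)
The approach is via Terracini's lemma (Lemma \ref{Terracini}). Realize $X_\bfn \subset \mathbb{P}(U \otimes W)$, where $U = V_1 \otimes \cdots \otimes V_t$ (so $\dim U = N+1$) and $\dim W = n+1$, and write $\Sigma \subset \mathbb{P}(U)$ for the smaller Segre variety $\nu_{1,\ldots,1}(\mathbb{P}(V_1) \times \cdots \times \mathbb{P}(V_t))$, non-degenerate of dimension $\sum_{i=1}^t n_i$. At a point $[u \otimes w] \in X_\bfn$ with $u$ on the affine cone $\hat{\Sigma}$, the affine tangent space has the ``matrix form''
\[
\hat{T}_{(u,w)} \;=\; A \otimes w \;+\; u \otimes W,
\]
where $A := \hat{T}_{[u]}\Sigma$ is a linear subspace of $U$ of dimension $\sum n_i + 1$ containing $u$. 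The goal is thus to compute $\dim(\hat{T}_1 + \cdots + \hat{T}_s)$ at generic points.

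Fix generic $u_1, \ldots, u_s \in \hat{\Sigma}$ and $w_1, \ldots, w_s \in W$, set $A_i = \hat{T}_{[u_i]}\Sigma$, and parametrize the span by
\[
\Phi: \bigoplus_{i=1}^s (A_i \oplus W) \to U \otimes W, \qquad ((\tilde{u}_i, w'_i))_i \mapsto \sum_{i=1}^s (\tilde{u}_i \otimes w_i + u_i \otimes w'_i),
\]
whose image is $\sum_i \hat{T}_i$ and whose domain has dimension $s(\sum n_i + n + 2)$. Since $s \leq n < n+1$, extend $w_1, \ldots, w_s$ to a basis $w_1, \ldots, w_{n+1}$ of $W$ and write $w'_i = \sum_j \lambda^{(i)}_j w_j$. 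The $w_k$-component of $\Phi$ equals $\sum_i \lambda^{(i)}_k u_i$ for $k > s$ and $\tilde{u}_k + \sum_i \lambda^{(i)}_k u_i$ for $k \leq s$. Vanishing in the first case forces all $\lambda^{(i)}_k = 0$ because $u_1, \ldots, u_s$ are linearly independent in $U$ (generic points on the non-degenerate $\Sigma$ with $s \leq N$); vanishing in the second case requires $\sum_i \lambda^{(i)}_k u_i \in A_k$. Writing $U_0 = \langle u_1, \ldots, u_s \rangle$ and noting that the map $\Bbbk^s \to U_0$, $(\lambda^{(i)}) \mapsto \sum_i \lambda^{(i)} u_i$ is a linear isomorphism, the contribution to $\ker \Phi$ from slot $k$ is exactly $\dim(A_k \cap U_0)$, and hence $\dim \ker \Phi = \sum_{k=1}^s \dim(A_k \cap U_0)$.

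Since generically $u_i \notin A_k$ for $i \neq k$, one has a direct sum decomposition $A_k \cap U_0 = \langle u_k \rangle \oplus (A_k \cap \langle u_i : i \neq k \rangle)$. The key claim, which is the main technical step, is that in the unbalanced range $s > N - \sum n_i + 1$ one has $A_k + \langle u_i : i \neq k \rangle = U$. This follows from the non-degeneracy of $\Sigma$: starting from the proper linear subspace $A_k \subsetneq U$ of codimension $N - \sum n_i$, one successively adjoins generic points of $\hat{\Sigma}$ not contained in the current span (which exist because $\Sigma$ is not contained in any proper linear subspace of $\mathbb{P}(U)$), each increasing the span by one dimension; after $N - \sum n_i$ such steps the span fills $U$, and in our range $s - 1 \geq N - \sum n_i$ supplies enough points. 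By lower semicontinuity of the dimension of a linear sum, this specific example propagates to the generic choice. The dimension formula then gives $\dim(A_k \cap \langle u_i : i \neq k \rangle) = (\sum n_i + 1) + (s-1) - (N+1) = s + \sum n_i - N - 1$, whence $\dim(A_k \cap U_0) = s + \sum n_i - N$.

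Summing, $\dim \ker \Phi = s(s + \sum n_i - N)$, and therefore
\[
\dim \sigma_s(X_\bfn) \;=\; s\bigl(\textstyle\sum n_i + n + 2\bigr) - s\bigl(s + \sum n_i - N\bigr) - 1 \;=\; s(N + n + 2 - s) - 1.
\]
Subtracting from the naively expected dimension $s(\sum n_i + n + 1) - 1$ yields the defect $\delta_s(X_\bfn) = s^2 - s(N - \sum n_i + 1)$, as claimed. (One verifies separately that, in the range $s \leq \min\{n, N\}$, the quantity $s(\sum n_i + n + 1) - 1$ does not exceed the ambient dimension $M$, so the expected dimension is indeed this value and the actual secant variety does not fill $\mathbb{P}^M$.) The whole argument rests on the genericity claim of paragraph three; the subtle point is to make the successive generic choices on $\Sigma$ compatibly for all $k = 1, \ldots, s$ at once, which is automatic since only finitely many proper subvarieties must be avoided.
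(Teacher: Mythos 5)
Your computation is correct, and since the survey records this theorem without proof (deferring to \cite{CGG2,CGG7}), there is no in-paper argument to compare against line by line. Your route --- Terracini's lemma, an explicit parametrization $\Phi$ of the span of the affine tangent spaces $A_i\otimes w_i+u_i\otimes W$, and the identification of $\ker\Phi$ with $\bigoplus_k \left(A_k\cap\langle u_1,\dots,u_s\rangle\right)$ --- yields $\dim\sigma_s(X_\bfn)=s(N+n+2-s)-1$, which is exactly the dimension of the determinantal variety $\sigma_s(\nu_{1,1}(\PP^N\times\PP^n))$ containing $\sigma_s(X_\bfn)$; your count is thus the dimension-level shadow of the stronger equality $\sigma_s(X_\bfn)=\sigma_s(Y)$ quoted later in the survey from \cite{CGG7}. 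The key genericity step, that $A_k+\langle u_i: i\neq k\rangle=U$ once $s-1\geq N-\sum n_i$, is handled correctly via non-degeneracy of $\Sigma$ and semicontinuity, and intersecting the finitely many dense open conditions (one per $k$, plus linear independence of the $u_i$ and of the $w_i$) is legitimate. Note also that the case $t=1$ (where $A_k=U$) is consistently covered by your formulas.

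One caveat: your parenthetical claim that $s(\sum n_i+n+1)-1\leq M$ throughout the stated range is false. For $\PP^1\times\PP^1\times\PP^3$ with $s=3$ the virtual dimension is $17$ while $M=15$; for $\PP^3\times\PP^3$ with $s=3$ one gets $20>15$. What is true, and what you should record instead, is that $\dim\sigma_s(X_\bfn)=s(N+n+2-s)-1<M$, because $(n+1)(N+1)-s(N+n+2-s)=(N+1-s)(n+1-s)>0$ for $s\leq\min\{n,N\}$; hence $\sigma_s(X_\bfn)$ is strictly below both $M$ and the virtual dimension, so it is genuinely defective in the whole range. The displayed value $\delta_s=s^2-s(N-\sum n_i+1)$ is the difference from the virtual dimension $s\dim X_\bfn+s-1$; in the regime where that exceeds $M$, the defect in the sense of the paper's definition (which truncates at $M$) is strictly smaller. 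This imprecision sits in the statement as quoted rather than in your dimension count, but you should delete the false inequality and replace it with the observation above.
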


The examples described above are the few ones for which
defectivities of Segre varieties are known. Therefore, the following
conjecture has been stated in \cite{AOP}, where, for $n_1\leq n_2 \leq \ldots \leq n_t$, it is proven for $s \leq 6$.

\begin{conj}\label{ConjOnSegres}
The Segre embeddings of $\PP^{n_1} \times \cdots. \times \PP^{n_t}$,
$t\geq 3$, are never defective, except for:
\begin{itemize}
\item $\PP^1 \times \PP^1 \times \PP^1\times \PP^1$, for $s=3$, with
$\delta_3=1$;
\item $\PP^2 \times \PP^2 \times \PP^2$, for $s=4$, with
$\delta_4=1$;
\item the ``unbalanced case'';
\item $\PP^2 \times \PP^3 \times \PP^3$, for $s=5$, with
$\delta_5=1$;
\item $\PP^2 \times \PP^n \times \PP^n$, with $n$ even, for $s={\frac {3n}2}+1$, with
 $\delta_s=1)$;
\item $\PP^1 \times \PP^1 \times \PP^n \times \PP^n$, for $s=2n+1$, with $\delta_{2n+1}=1)$.
\end{itemize}
\end{conj}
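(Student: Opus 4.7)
\medskip

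\noindent\textbf{Proof proposal for Conjecture \ref{ConjOnSegres}.} Since the conjecture is open in general, the proposal below describes the strategy I would pursue, inspired by the techniques already developed in this section. The overall plan is to convert the problem into a statement about Hilbert functions of zero-dimensional subschemes of ordinary projective space and then attack it by a multi-parameter induction combined with the Horace differential method.

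The first step is to apply Terracini's lemma (Lemma \ref{Terracini}) and its multi-graded reformulation. For generic $P_1,\ldots,P_s \in \PP^\bfn$, one has
\[
\dim \sigma_s(X_\bfn) \;=\; \HF(Z,\mathbf{1}) - 1,
\]
where $Z = 2P_1 + \cdots + 2P_s \subset \PP^\bfn$ is the scheme of generic two-fat points, and $\mathbf{1} = (1,\ldots,1)$. Then I would invoke Theorem \ref{thm:multiprojaffproj} and Theorem \ref{thm:CGG8}: the multi-graded problem can be transported to the computation of $\dim_\Bbbk I(W)_t$ inside the standard polynomial ring, where $W = Z' + W_\infty \subset \PP^{n}$ with $n = n_1 + \cdots + n_t$. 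This reduces the conjecture to proving that, for every $\bfn$ and $s$ outside the exceptional list, the scheme $W$ imposes the expected number of conditions on hypersurfaces of degree $t$ in $\PP^n$.

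The main body of the argument would be a double induction: on the number of factors $t$ (with base cases $t=3$ and $t=4$ handled separately as in Theorem \ref{P1P1P1...} and \cite{AOP}) and, for each $t$, on $s$ and on $\max_i n_i$. The inductive step would use the Horace differential technique of Section \ref{Horacediffsection}: choose a hyperplane $H \subset \PP^n$ corresponding to setting one affine coordinate (say $x_{i,0}$) to zero, specialize an appropriate number of the fat points of $Z'$ onto $H$, and split $W$ into its trace $\Tr_H(W)$ and residue $\Res_H(W)$ (possibly with the refined slicing $\Tr^p_H, \Res^p_H$ when an exact count is needed). Crucially, the linear fat component $W_\infty$ interacts cleanly with such a degeneration: each $(d_i-1)\Pi_i$ either sits entirely in $H$ or is cut transversely by it, so the trace/residue decomposition stays in the same class of schemes, making the induction self-sustaining. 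At each step one checks the numerical inequalities of Theorem \ref{theorem:bo} (or their differential analogues) to guarantee that the generic $h^1$ vanishes.

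The exceptional list in the conjecture must be accounted for by a separate direct analysis: each of the six families should be shown to be defective by exhibiting a concrete hypersurface in the appropriate $I(W)_t$ that is forced by the geometry (for instance, a rational normal quartic through the seven fat points coming from $\PP^1\times\PP^1\times\PP^1\times\PP^1$, or the Strassen-type invariants for $\PP^2\times\PP^2\times\PP^2$), and then Proposition \ref{succdef} ensures that the listed defective case propagates only in the unbalanced family, which is ruled out by the hypothesis.

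The hard part, and the reason the conjecture is still open, is the induction's base layer. Unlike the Veronese situation of Theorem \ref{corollAH}, the space of parameters here is genuinely two-dimensional (one must initialize in both $t$ and $\bfn$ simultaneously), and the specialization on $H$ frequently falls into a numerically critical regime where neither case \eqref{(i)} nor case \eqref{(ii)} of Theorem \ref{theorem:bo} applies cleanly. One must then invoke the differential Horace method to ``move one condition'' onto $H$, but the trace term that arises involves a scheme of fat linear spaces plus fat points in $\PP^{n-1}$ whose regularity is exactly a weaker instance of the same conjecture. Breaking this circularity, most likely by a clever choice of which factor's hyperplane to degenerate and by identifying a finite and checkable list of irreducible base configurations, is where I expect the real difficulty to lie; a complete solution may well require a genuinely new idea, such as a vector bundle or representation-theoretic approach in the spirit of \cite{LO, OO}, to complement the cohomological induction.
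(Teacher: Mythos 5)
You should be aware that the statement you are addressing is an open conjecture (due to Abo--Ottaviani--Peterson \cite{AOP}), and the paper offers no proof of it: it records only that the conjecture is known for $s\leq 6$ and in the special families covered by Theorems \ref{P1P1P1...}, \ref{tensorpower} and \ref{unbalanced}. There is therefore no proof in the paper to compare yours against, and your text is, by your own admission, a research plan rather than a proof. As a description of the standard line of attack it is accurate: Terracini's lemma, the multiprojective--affine--projective reduction of Theorems \ref{thm:multiprojaffproj} and \ref{thm:CGG8}, and induction via the differential Horace method are exactly the tools used in \cite{CGG8} and \cite{AOP} to establish the known cases, and you correctly locate the obstruction in the base layer of the induction and in the circularity of the trace term.

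One concrete point where your sketch is too optimistic: the claim that the fat linear components of $W_\infty$ ``interact cleanly'' with the degeneration, so that the trace/residue decomposition ``stays in the same class of schemes,'' is precisely what fails in general. The residue of a fat linear space with respect to a hyperplane containing it drops its multiplicity, while its trace contributes a fat linear space of the same multiplicity in $H$; combined with the specialized fat points, the resulting configurations in $\PP^{n-1}$ are generally \emph{not} of the form produced by Theorem \ref{thm:multiprojaffproj} for a smaller Segre variety, so the induction is not self-sustaining as stated. This is the reason the known proofs (e.g.\ Theorem \ref{tensorpower}) only reach $s$ outside a window around the generic rank, and why no complete argument exists. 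Your final paragraph does acknowledge the difficulty, but the middle of the proposal presents the inductive step as if it closed, which it does not. In short: nothing here is wrong as a survey of strategy, but no step of the conjecture is actually established beyond what is already in the literature.
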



\section{Other Structured Tensors}\label{sec:OtherDec}

There are other varieties of interest, parametrizing other ``structured tensors'', i.e., tensors that have determined properties. In all these cases, there exists an {additive decomposition problem}, which can be geometrically studied similarly as we did in the previous sections. In this section, we want to present some of these cases. 

In particular, we consider the following structured tensors:
\begin{enumerate}
	\item skew-symmetric tensors, i.e., $v_1\wedge\ldots\wedge v_k \in \bigwedge^k V$;
	\item decomposable partially-symmetric tensors, i.e., $L_1^{d_1}\otimes\ldots\otimes L_t^{d_t} \in S^{d_1}V_1^*\otimes\cdots\otimes S^{d_t}V_t^*$;
	\item ${d}$-powers of linear forms, i.e., homogeneous polynomials $L_1^{d_1}\cdots L_t^{d_t} \in S^d V^*$, for any partition ${d} = (d_1,\ldots,d_t) \vdash d$;
	\item reducible forms, i.e., $F_1\cdots F_t \in S^{d}V^*$, where $\deg(F_i) = d_i$, for any partition $\bfd = (d_1,\ldots,d_t) \vdash d$;
	\item powers of homogeneous polynomials, i.e., $G^k \in S^{d} V^*$, for any $k | d$;
\end{enumerate}

\subsection{Exterior Powers and Grassmannians}\label{sec:ExteriorGrassmannians}

Denote by $\bbG(k,n)$ the Grassmannian of $k$-dimensional linear subspaces of
$\PP ^n \cong \PP V$, for a fixed $n+1$ dimensional vector space
$V$. We consider it with the embedding given by its Pl\"ucker coordinates as embedded in $\bbP^{N_k}$, where $N_k = {n+1\choose k} -1$.

The dimensions of the higher secant varieties to the Grassmannians
of lines, i.e., for $k=1$, are well known; e.g., see \cite{Z},
\cite{ER} or \cite{CGG3}. The secant variety $\sigma_s(Gr(1,n))$
parametrizes all $(n+1)\times (n+1)$ skew-symmetric matrices of rank~at most $2s$.

\begin{thm}\label{GrassLines} 
We have that $\sigma_s(\bbG(1,n))$ is
defective for $s < \lfloor {n+1 \over 2}\rfloor$ with defect equal to $\delta_s =
2s(s-1)$.
\end{thm}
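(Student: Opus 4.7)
My plan is to apply Terracini's lemma (Lemma~\ref{Terracini}) and compute the tangent space to $\sigma_s(\bbG(1,n))$ at a generic point by a direct linear-algebra argument, exploiting the fact that tangent spaces to the Grassmannian of lines admit a very clean description. Since $\bbG(1,n)\subset \PP(\bigwedge^2 V)$, with $\dim V=n+1$, sits as the variety of (projectivized) decomposable $2$-vectors $[v\wedge w]$, the affine tangent space at $[v\wedge w]$ is classically
\[
\hat T_{[v\wedge w]}\bbG(1,n)=\langle v,w\rangle \wedge V \;\subset\; \bigwedge^2 V,
\]
as one sees by differentiating the parametrization $(v(t),w(t))\mapsto v(t)\wedge w(t)$. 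I would first recall or quickly reprove this standard fact.

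Next, I would pick $s$ generic points $P_i=[v_i\wedge w_i]\in \bbG(1,n)$, set $U_i=\langle v_i,w_i\rangle$ and $U=U_1+\cdots+U_s$. For generic choices and for $s<\lfloor(n+1)/2\rfloor$, the sum $U$ is direct and has dimension $2s\le n$. By Terracini's lemma, the affine tangent space to $\sigma_s(\bbG(1,n))$ at a generic point of $\langle P_1,\dots,P_s\rangle$ is
\[
\hat T \;=\; \sum_{i=1}^s U_i\wedge V.
\]
The key observation, which is the heart of the argument, is the identity
\[
\sum_{i=1}^s U_i\wedge V \;=\; U\wedge V.
\]
The inclusion $\subset$ is obvious; for $\supset$, any element of $U\wedge V$ is a sum of terms $(\sum_i u_i)\wedge v$ with $u_i\in U_i$, and each $u_i\wedge v$ lies in $U_i\wedge V$ by bilinearity. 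This step is where the mild genericity hypothesis enters, and it is really the only substantive point.

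The rest is bookkeeping. Choosing a complement $V=U\oplus U'$ gives the decomposition $U\wedge V = \bigwedge^2 U\oplus (U\otimes U')$, so
\[
\dim_\Bbbk (U\wedge V)=\binom{2s}{2}+2s(n+1-2s)=2sn+s-2s^{2}.
\]
Projectivizing, $\dim \sigma_s(\bbG(1,n))=2sn+s-2s^{2}-1$. The expected dimension is $s\cdot\dim \bbG(1,n)+s-1=s(2n-2)+s-1=2sn-s-1$, and subtracting gives the defect
\[
\delta_s=(2sn-s-1)-(2sn+s-2s^{2}-1)=2s(s-1),
\]
as claimed. The hypothesis $s<\lfloor(n+1)/2\rfloor$ is used exactly to guarantee that $\dim U=2s$ (so that $U\wedge V$ is a proper subspace of $\bigwedge^2 V$ and $\sigma_s$ does not fill the ambient $\PP^{N_1}$).

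The main obstacle is essentially cosmetic: verifying that generic choices of the pairs $(v_i,w_i)$ really produce a direct sum $U=\bigoplus U_i$, and that the generic tangent space computation via Terracini is valid in characteristic zero (which we assume throughout). An alternative, equally short route, which I would mention as a sanity check, is to identify $\sigma_s(\bbG(1,n))$ set-theoretically with the Pfaffian variety of skew-symmetric $(n+1)\times(n+1)$ matrices of rank $\le 2s$, whose dimension $s(2n+1-2s)$ is classical; this yields the same count and confirms that the inclusion $\sum_i U_i\wedge V\subset U\wedge V$ is actually an equality on the nose.
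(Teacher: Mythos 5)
Your argument is correct, and it is genuinely more than what the paper offers: the paper states Theorem \ref{GrassLines} without proof, citing \cite{Z,ER,CGG3} and observing only that $\sigma_s(\bbG(1,n))$ is the locus of skew-symmetric $(n+1)\times(n+1)$ matrices of rank $\le 2s$ --- which is precisely the Pfaffian identification you relegate to a sanity check. Your Terracini computation is clean: $\hat T_{[v\wedge w]}\bbG(1,n)=\langle v,w\rangle\wedge V$ is standard, and the identity $\sum_i U_i\wedge V=(\sum_i U_i)\wedge V$ is pure bilinearity (so, contrary to your phrasing, no genericity is needed there; genericity enters only to get $\dim U=2s$, as you correctly note afterwards). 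The dimension count $\dim(U\wedge V)=\binom{2s}{2}+2s(n+1-2s)$ and the resulting $\dim\sigma_s=2sn+s-2s^2-1$ agree with the Pfaffian codimension $\binom{n+1-2s}{2}$.

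One caveat worth recording: the value $\delta_s=2s(s-1)$ is the \emph{virtual} defect $s\dim\bbG(1,n)+s-1-\dim\sigma_s$, which is what you compute, whereas the paper's official definition of $\delta_s$ truncates the expected dimension at $N_1=\binom{n+1}{2}-1$. These disagree whenever $2sn-s-1>N_1$, e.g.\ for $\bbG(1,5)$ and $s=2$ the Pfaffian hypersurface in $\PP^{14}$ has defect $1$ in the truncated convention but $4=2s(s-1)$ in the virtual one. Since the formula in the statement only makes sense with the virtual convention, your reading is the intended one, but it would be worth a sentence making that explicit. The defectivity claim itself is unaffected: for $2\le s<\lfloor(n+1)/2\rfloor$ one has $\dim\sigma_s<\min\{N_1,\,2sn-s-1\}$ under either convention.
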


For $k\geq 2$, not many results can be found in the classical or
contemporary literature about this problem; e.g., see \cite{ER, CGG3, AbOtPetGRASS}. However, they
are sufficient to have a picture of the whole situation.
Namely, there are only four other cases that are known to be
defective (e.g., see \cite{CGG3}), and it is conjectured in
\cite{BaDraGraafGRASS} that these are the only ones. This is summarized in the following conjecture:

\begin{conj}[Baur--Draisma--de Graaf, \cite{BaDraGraafGRASS}] \label{BDdG}
Let $k\ge 2$. Then, the secant variety $\sigma_s(\bbG(k,n))$ has the expected dimension
except for the following cases:
 $$\begin{array}{c|c|c}
 &\textrm{actual codimension}&\textrm{expected codimension}\\
 \hline
 \sigma_3(\bbG(2,6))&1&0\\
 \hline \sigma_3(\bbG(3,7))&20&19\\
 \hline \sigma_4(\bbG(3,7))&6&2\\
 \hline \sigma_4(\bbG(2,8))&10&8\\
 \end{array}$$
\end{conj}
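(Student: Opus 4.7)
My plan is to attack Conjecture \ref{BDdG} along the same general lines used for Alexander--Hirschowitz (Theorem \ref{corollAH}) and its Segre analogues: reduce to a linear algebra/cohomological statement via Terracini, verify the sporadic defective cases by hand (or by a direct tangent-space computation), and treat the remaining regular cases by a double induction on $(k,n)$ with a Horace-type specialization onto a sub-Grassmannian. The key point is that, unlike the Veronese or Segre case, the ``ambient'' geometry here is not the space of fat points in a projective space, but rather a subspace arrangement inside $\bigwedge^{k+1}V$, and this is what makes the conjecture substantially harder.

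First I would apply Terracini's lemma (Lemma \ref{Terracini}): for $s$ generic points $P_i=[\omega_i]\in\bbG(k,n)$ with $\omega_i=v_0^{(i)}\wedge\cdots\wedge v_k^{(i)}$ corresponding to $(k{+}1)$-dimensional subspaces $W_i\subset V$, the affine tangent cone $\widehat T_{P_i}\bbG(k,n)\subset\bigwedge^{k+1}V$ equals $W_i\wedge\bigwedge^{k}V$, i.e.\ the span of all $v_0^{(i)}\wedge\cdots\wedge \hat v_j^{(i)}\wedge u\wedge\cdots\wedge v_k^{(i)}$ for $u\in V$ and $0\le j\le k$. Then
\[
\dim\sigma_s(\bbG(k,n)) \;=\; \dim_{\Bbbk}\!\Big(\,W_1\wedge\tbwedge^{k}V\,+\,\cdots\,+\,W_s\wedge\tbwedge^{k}V\,\Big)-1,
\]
so the whole problem becomes: compute the dimension of the sum of $s$ generic translates of the rank-one subspace $W_i\wedge\bigwedge^k V$ inside $\bigwedge^{k+1}V$. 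Equivalently, dualizing via the apolar pairing on $\bigwedge^\bullet V$ (the Grassmannian analogue of the apolarity of Section \ref{Apolaritysection}), I would rewrite everything as the vanishing of an explicit linear map whose matrix is a block ``Pl\"ucker catalecticant''.

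Second, the four exceptional entries in the table would be handled one at a time: for $\sigma_3(\bbG(2,6))$ and $\sigma_3(\bbG(3,7))$ the defect is $1$, and one shows it by exhibiting a single non-trivial element in the kernel of the Terracini map (for $\bbG(2,6)$ this is the classical Pfaffian/Cartan phenomenon related to the $E_6$ representation, and for $\bbG(3,7)$ to the $E_7$ one); for $\sigma_4(\bbG(3,7))$ and $\sigma_4(\bbG(2,8))$ the defects are $4$ and $2$, and I would produce the extra linear relations among the four tangent spaces from the geometry of lines on the relevant spinor/Cartan varieties, as done in \cite{AbOtPetGRASS} and \cite{CGG3}. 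These verifications are finite-dimensional and, in the worst case, can be confirmed by symbolic computation at a single specialization of the $W_i$'s, the outcome being semicontinuous.

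Third, for all remaining $(k,n,s)$ I would set up a double induction on $n$ (with $k$ fixed) and then on $k$, via a Horace-style specialization: choose a hyperplane $H\subset V$, specialize some of the $W_i$ to lie in $H$, and split the Terracini map into a \emph{trace} piece (the restriction to $\bigwedge^{k+1}H\subset\bigwedge^{k+1}V$, a problem on $\bbG(k,n-1)$) and a \emph{residue} piece (the quotient $\bigwedge^{k+1}V/\bigwedge^{k+1}H\simeq\bigwedge^{k}H$, a problem essentially on $\bbG(k-1,n-1)$). If both inductive pieces are non-defective and the degree-counting inequalities analogous to Theorem \ref{theorem:bo} are met, regularity lifts to $\sigma_s(\bbG(k,n))$. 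When the plain Horace method fails for a boundary value of $s$, I would use the differential Horace technique (Section \ref{Horacediffsection}), adapted to vertically-graded subschemes of $\bbG(k,n)$ supported on the sub-Grassmannian $\bbG(k,n-1)$: such an adaptation has been worked out in \cite{AbOtPetGRASS} for several infinite families. The base cases of the induction are the small Grassmannians $\bbG(2,m)$ and $\bbG(3,m)$ for $m$ up to the largest sporadic case (so $m\le 8$), which are either known classically or reduce to one of the four exceptions above.

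The hard part, as with the AH theorem, will be the inductive step around the ``critical'' values of $s$ where expected dimension transitions from strictly less than $\binom{n+1}{k+1}$ to filling the ambient space, because this is precisely where a Castelnuovo-type bound forces equality and one cannot afford to lose any condition in the specialization. A further obstacle, absent in the Veronese setting, is that one cannot freely choose the numerical distribution of the specialized $W_i$'s: the incidence conditions ``$W_i\subset H$'' are themselves nontrivial subvarieties of the Hilbert scheme of sub-Grassmannians, and one must verify genericity in both the trace and residue sides simultaneously. This is exactly the obstruction that has kept the conjecture open for $k\ge 2$ beyond the ranges checked in \cite{BaDraGraafGRASS, AbOtPetGRASS}, and a complete proof will likely require either a representation-theoretic input exploiting the $SL(V)$-action (to rule out sporadic defects by weight considerations) or a new algorithmic technique producing a genuinely uniform inductive bound.
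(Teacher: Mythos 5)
The statement you were asked to prove is not a theorem of the paper: it is an open conjecture (due to Baur, Draisma and de Graaf), and the survey merely records it together with the ranges in which it has been verified ($n\le 15$, $s\le 12$, the partial non-defectivity results of Abo--Ottaviani--Peterson, Catalisano--Geramita--Gimigliano and Rischter, etc.). So there is no proof in the paper to compare against, and your proposal does not close the gap either --- as you yourself concede in the final paragraph, the inductive step around the critical values of $s$ is precisely the obstruction that has kept the conjecture open. What you have written is a faithful reconstruction of the known strategy (Terracini plus the skew-symmetric apolarity pairing $\bigwedge^{k+1}V\times\bigwedge^{n-k}V\to\bigwedge^{n+1}V$ as in \cite{CGG3}, case-by-case verification of the four sporadic defects, and a Horace-type specialization onto sub-Grassmannians as in \cite{AbOtPetGRASS}), not a proof; the honest conclusion is that the program stalls exactly where the literature stalls.

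Two smaller points. First, your displayed formula for the affine tangent space is wrong as written: $W_i\wedge\bigwedge^{k}V$ is the image of $W_i\otimes\bigwedge^{k}V$ and is far too large; the correct cone is $\bigwedge^{k}W_i\wedge V$, of dimension $(k+1)(n-k)+1$, which is what your verbal description ``span of all $v_0^{(i)}\wedge\cdots\wedge\hat v_j^{(i)}\wedge u\wedge\cdots\wedge v_k^{(i)}$'' actually gives --- fix the notation so the two agree. Second, be careful attributing the defects of $\sigma_3(\bbG(2,6))$ and $\sigma_3(\bbG(3,7))$ both to ``defect $1$'': the table gives codimension drop $1$ for the first but the second has expected codimension $19$ versus actual $20$, and $\sigma_4(\bbG(3,7))$ has defect $4$, not $1$; your later sentence gets this right, but the earlier phrasing groups the first two cases incorrectly. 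Neither of these repairs turns the sketch into a proof.
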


\bigskip
In \cite{BaDraGraafGRASS}, they proved the conjecture for $n\leq 15$ (the case $n\leq 14$ can be found in \cite{McG06}). The conjecture has been proven to hold for $s\leq 6$ (see
\cite{AbOtPetGRASS}) and later for $s\leq 12$ in \cite{Bor}.

A few more results on non-defectivity are proven in
\cite{AbOtPetGRASS,CGG3}. We summarize them
in the~following.

\begin{thm} The secant variety $\sigma_s(\bbG(k,n))$ has the
expected dimension when:
\begin{itemize}
\item $k\geq 3$ and $ks\leq n+1$,

\item $k=2$, $n\ge 9$ and $s\le s_1(n)$ or $s\ge s_2(n)$, 
\end{itemize}
where:

$$s_1(n)=\left\lfloor\frac{n^2}{18}-\frac{20n}{27}+\frac{287}{81}\right\rfloor+
\left\lfloor\frac{6n-13}{9}\right\rfloor \text{ and } 
 s_2(n)=\left\lceil\frac{n^2}{18}-\frac{11n}{27}+\frac{44}{81}\right\rceil+
\left\lceil\frac{6n-13}{9}\right\rceil;$$

\noindent in the second case, $\sigma_s(\bbG(2,n))$ fills the ambient space.
\end{thm}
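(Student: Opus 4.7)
The plan is to start by applying Terracini's Lemma (Lemma~\ref{Terracini}): for $s$ generic points $[W_1],\ldots,[W_s]\in\bbG(k,n)$, corresponding to general $(k{+}1)$-dimensional subspaces $W_i\subset V=\Bbbk^{n+1}$, the dimension of $\sigma_s(\bbG(k,n))$ equals $\dim\bbP\bigl(T_{W_1}+\cdots+T_{W_s}\bigr)$, where $T_{W_i}\subset\bigwedge^{k+1}V$ is the affine cone of the tangent space at $[W_i]$, of dimension $(k{+}1)(n-k)+1$. Fixing a basis $\{u_0^{(i)},\ldots,u_k^{(i)}\}$ of $W_i$ and extending it to one of $V$, the cone $T_{W_i}$ is spanned by the decomposable tensors $u_0^{(i)}\wedge\cdots\wedge u_{j-1}^{(i)}\wedge v\wedge u_{j+1}^{(i)}\wedge\cdots\wedge u_k^{(i)}$ with $v\in V$ and $j\in\{0,\ldots,k\}$. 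The theorem then reduces to showing that, for generic $W_i$,
$$\dim\bigl(T_{W_1}+\cdots+T_{W_s}\bigr)=\min\bigl\{s\bigl[(k{+}1)(n-k)+1\bigr],\,\tbinom{n+1}{k+1}\bigr\}.$$

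\textbf{First case $(k\geq 3,\ ks\leq n+1)$.} Under this hypothesis a direct numerical check gives $s\bigl[(k{+}1)(n-k)+1\bigr]\leq\binom{n+1}{k+1}$, so the expected dimension does not exceed the ambient dimension and one must prove that the cones $T_{W_i}$ are linearly independent. I would specialize each $W_i$ to a coordinate subspace: for instance, take $W_i=\langle e_0,e_{(i-1)k+1},\ldots,e_{ik}\rangle$, a choice permitted precisely because $ks\leq n+1$. Then the basis vectors of each $T_{W_i}$ are explicit $(k{+}1)$-wedges of standard basis elements, indexed by $(k{+}1)$-element subsets of $\{0,1,\ldots,n\}$, and a direct combinatorial argument—using $k\geq 3$ to rule out low-dimensional coincidences in the common direction $e_0$—shows that the spans $T_{W_i}$ have no nontrivial pairwise intersection after a small generic perturbation. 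By the semicontinuity principle used throughout (cf.\ Proposition~\ref{Terrsistlin}), the lower bound $\dim(T_{W_1}+\cdots+T_{W_s})\geq s\bigl[(k{+}1)(n-k)+1\bigr]$ then holds for a generic choice of $W_i$, which combined with the trivial upper bound yields equality.

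\textbf{Second case $(k=2,\ n\geq 9)$.} The explicit formulas for $s_1(n)$ and $s_2(n)$ suggest a double induction on $n$ and on $s$, in the spirit of the Horace/differential Horace technique (Sections~\ref{Horacesection}--\ref{Horacediffsection}) used for Veronese varieties. I would specialize some of the $W_i$ onto a sub-Grassmannian $\bbG(2,n-1)\subset\bbG(2,n)$ cut out by a hyperplane section in Pl\"ucker coordinates, then use the resulting trace/residue exact sequence to split the dimension count into a problem on $\bbG(2,n-1)$ (trace) and on $\bbG(2,n)$ with fewer general points plus linear constraints (residue). Both pieces land in the inductive range $[1,s_1(n{-}1)]$ or $[s_2(n{-}1),\infty)$ by construction of the bounds $s_1(n),s_2(n)$, so the inductive hypothesis closes the argument. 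The two subcases $s\leq s_1(n)$ and $s\geq s_2(n)$ are handled separately: in the first one shows the expected codimension is attained, in the second one shows the span fills the ambient $\bigwedge^{3}V$. Finitely many small-$n$ base cases are verified directly by computer algebra, as is standard in this circle of results (see \cite{AbOtPetGRASS,BaDraGraafGRASS,Bor}).

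\textbf{Expected main obstacle.} The technical heart of the argument is the inductive step for $k=2$: the floors and ceilings defining $s_1(n)$ and $s_2(n)$ are engineered so that, at each reduction from $\bbG(2,n)$ to $\bbG(2,n-1)$, the number of specialized tangent spaces lands exactly in the inductive range, and verifying that these precise arithmetic estimates propagate without loss across the Castelnuovo-type sequence is where the bulk of the work resides. The first case is, by comparison, essentially a direct combinatorial computation once the correct coordinate specialization has been chosen.
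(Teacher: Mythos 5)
The paper does not actually prove this statement: it is a survey, and the theorem is quoted as a summary of results from \cite{CGG3} (first bullet) and \cite{AbOtPetGRASS} (second bullet), with only a paragraph describing the methods. Measured against that description, your overall strategy is the right one — Terracini plus a coordinate specialization for the first bullet, and a Horace-type induction that specializes tangent points onto sub-Grassmannians for the second — except that \cite{CGG3} works on the dual side, using the skew-symmetric apolarity pairing $\bigwedge^kV\times\bigwedge^{n-k}V\to\bigwedge^nV$ to identify $(T_{[W_i]})^\perp$ with the degree-$(n-k)$ part of a ``double fat point'' in the exterior algebra; your primal monomial count is the mirror image of that computation. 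For the second bullet your sketch coincides with the method of \cite{AbOtPetGRASS}, but all of the actual content (the exact sequence $0\to\bigwedge^{k}V'\to\bigwedge^{k+1}V\to\bigwedge^{k+1}V'\to0$ that produces the trace/residue splitting, and the arithmetic showing that $s_1,s_2$ propagate through it) is deferred to ``the inductive hypothesis closes the argument,'' which is precisely where that paper's work lies.

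There is, however, a concrete error in your first case. Your ``direct numerical check'' is false under the convention you adopt ($[W_i]$ a $(k+1)$-dimensional subspace, ambient $\bigwedge^{k+1}V$): for $k=3$, $n=5$, $s=2$ (allowed, since $ks=6=n+1$) one has $s\bigl[(k{+}1)(n-k)+1\bigr]=18>\binom{6}{4}=15$, and worse, $\sigma_2(\bbG(3,5))\cong\sigma_2(\bbG(1,5))$ is the $6\times6$ Pfaffian hypersurface, which \emph{is} defective — so no argument can establish the claim in that reading. The hypothesis $ks\le n+1$ (and the paper's $N_k=\binom{n+1}{k}-1$) only makes sense if $\bbG(k,n)$ parametrizes $k$-dimensional \emph{vector} subspaces of $\Bbbk^{n+1}$; then the correct specialization is $s$ pairwise \emph{disjoint} coordinate $k$-planes (your shared $e_0$ is both unnecessary and costs you an index, covering only $ks\le n$). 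Finally, ``no nontrivial pairwise intersection'' does not imply that $s$ subspaces sum directly; what actually saves the argument is that each $T_{W_i}$ is a coordinate subspace, spanned by the Pl\"ucker monomials $e_J$ with $|J\cap S_i|\ge k-1$, and for disjoint $S_i$ a common monomial would force $k\ge 2(k-1)$, i.e.\ $k\le 2$ — so for $k\ge3$ the monomial supports are pairwise disjoint and the sum of dimensions is automatic. No ``small generic perturbation'' is needed or wanted: the specialization gives the lower bound, and semicontinuity transfers it to the generic configuration.
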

Other partial results can be found in \cite{Bor}, while in \cite{RR}, the following theorem can be found.

\begin{dfn}\label{h}
Given an integer $m \geq 2$, we define a function
$h_m : \mathbb{N} \rightarrow \mathbb{N}$
as follows: 
\begin{itemize}
\item $h_m(0) = 0$;
\item for any $k \geq 1$, write
$k + 1 = 2^{\lambda_1} + 2^{\lambda_2} + \ldots + 2^{\lambda_\ell} + \epsilon$, for a suitable choice of
$\lambda_1 > \lambda_2 > \ldots > \lambda_\ell \geq 1$, $\epsilon \in \{0, 1\}$, and define:
$$h_m(k) := m^{\lambda_1-1} + m^{\lambda_2-1} + \ldots + m^{\lambda_\ell-1}.$$
In particular, we get $h_m(2k) = h_m(2k - 1)$, and $h_2(k) = \lfloor \frac{k+1}{2}\rfloor$.
\end{itemize}
\end{dfn}
\begin{thm}[Theorem 3.5.1 of {\cite[]{RR}}]
Assume that $k \geq 2$, and set:
$$\alpha := \left\lfloor \frac{n+1}{k+1} \right\rfloor.$$
If either:
\begin{itemize}
\item $n \geq k^2 + 3k + 1$ and $h \leq \alpha h_\alpha(k - 1)$;
\end{itemize}
or:
\begin{itemize}
\item $n < k^2 + 3k + 1$, $k$ is even, and $h \leq (\alpha - 1)h_\alpha(k - 1) + h_\alpha(n - 2 - \alpha r)$;
\end{itemize}
or:
\begin{itemize}
\item $n < k^2+3k+1$, $r$ is odd, and: $$h \leq (\alpha - 1)h_\alpha(k-2)+h_\alpha(\min\{n-3-\alpha (k-1), r-2\}),$$
\end{itemize} 
then, $\bbG(k, n)$ is not $(h + 1)$-defective.
\end{thm}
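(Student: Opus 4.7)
The plan is to apply Terracini's lemma (Lemma \ref{Terracini}) in the form of Corollary \ref{corTerracini} and reduce the problem to a linear algebra computation on tangent spaces to $\bbG(k,n) \subset \bbP^{N_k}$. Recall that at a point $[\Lambda] \in \bbG(k,n)$ corresponding to a $(k{+}1)$-dimensional subspace $\Lambda \subset V$, the affine tangent space is naturally isomorphic to $\mathrm{Hom}(\Lambda, V/\Lambda)$ and sits inside $\bigwedge^{k+1}V$ as $\Lambda \wedge \bigwedge^{k}V^{\perp}$ (with $V^\perp$ a complement of $\Lambda$). So non-$(h{+}1)$-defectivity is equivalent to showing that the subspace $T := \sum_{i=1}^{h+1} T_{[\Lambda_i]}\bbG(k,n) \subset \bigwedge^{k+1}V$ has the expected dimension when the $\Lambda_i$ are chosen generically.

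The argument I would use proceeds by a double induction on the pair $(k,n)$, with the recursive structure of $h_\alpha$ built to match the induction. Fix a hyperplane $H \subset \PP V$, and consider the sub-Grassmannian $\bbG(k,n-1) = \bbG(k,H) \subset \bbG(k,n)$ together with the ``residual'' Grassmannian $\bbG(k-1,n-1)$ parametrizing $k$-spaces meeting $H$ in codimension one. Following the specialization strategy developed for Grassmannians by Abo--Ottaviani--Peterson and Baur--Draisma--de Graaf, one distributes the $h+1$ generic $(k{+}1)$-planes into two groups: those specialized so that $\Lambda_i \subset H$ (contributing tangent spaces inside $\bigwedge^{k+1}H$, i.e.\ to a secant problem on $\bbG(k,n-1)$), and those left generic whose tangent spaces project to $\bigwedge^{k+1}V / \bigwedge^{k+1}H$ and reduce to a tangent problem on $\bbG(k-1,n-1)$. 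A Castelnuovo-type exact sequence
$$0 \to T' \to T \to T'' \to 0,$$
where $T'$ is the ``residue'' piece and $T''$ the ``trace'', then gives the usual inductive inequality on codimensions; whenever both pieces are non-defective, so is the original. The function $h_\alpha$ is designed exactly so that when $h \le \alpha\, h_\alpha(k-1)$ one can write $h = h' + h''$ with $h' \le \alpha \, h_\alpha(k-2)$ and $h'' \le h_\alpha(k-1)$, so that the inductive hypothesis applies at both $(k-1, n-1)$ and $(k, n-1)$. The binary expansion in Definition \ref{h} reflects the fact that each reduction $k \mapsto k-2$ roughly halves the relevant quantity, producing the sum $m^{\lambda_1-1}+\cdots+m^{\lambda_\ell-1}$.

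To set up the induction one needs base cases: small $k$ (namely $k=2$, treated in \cite{AbOtPetGRASS,CGG3}) and small $n$ (where the bound $n \ge k^2+3k+1$ of case~(1) fails and cases~(2), (3) take over). The hypothesis $n \ge k^2+3k+1$ ensures enough ``room'' in $V/H$ to keep $\bbG(k-1,n-1)$ non-defective in the relevant range; when it fails, one must carry along a sharper accounting of the residue on $\bbG(k-1,n-1)$ (giving the extra term $h_\alpha(n-2-\alpha k)$ in case~(2), where the correction $n-2-\alpha k$ is the codimension of what the trace can absorb) or, when $k$ is odd, on $\bbG(k-2,n-1)$ together with an additional residue involving $r-2$ (case~(3), again with $r = k$ in this notation). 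The parity distinction reflects whether the number of specialized flags can be split evenly between the two sub-Grassmannians.

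The main obstacle will be the careful verification that at each inductive step the specialized configuration of $k$-planes can actually be chosen so that the tangent spaces are in the expected linear position — i.e., that the specialization can be realized generically within the stratum where the scheme lies. This is exactly the content of a Horace-differential type lemma adapted to Grassmannians (analogous to Section \ref{Horacediffsection} for the Veronese case), and handling the remainder terms when $\alpha$ does not divide $k+1$ in case~(2) or $k$ in case~(3) is where the combinatorial bookkeeping becomes delicate: one must make sure that no unexpected rational subvariety (the Grassmannian analogue of a $(-1)$-curve) forces the tangent spaces to collide. Once the right degeneration is in place, the inductive bound propagates by the explicit combinatorial identities satisfied by $h_\alpha$.
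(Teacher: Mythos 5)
Your proposal is modelled on the Abo--Ottaviani--Peterson specialization scheme (splitting the $(k{+}1)$-planes between a hyperplane section $\bbG(k,n-1)$ and a residual $\bbG(k-1,n-1)$, plus a Castelnuovo-type sequence), but that is not where this theorem comes from, and the sketch does not close the gap between that method and the stated bounds. The result is quoted from Rischter's thesis, and as the survey itself points out, it \emph{strictly improves} the bounds obtainable by the induction of \cite{AbOtPetGRASS}; the method used in \cite{RR} is a different one, based on osculating spaces: after Terracini, one degenerates the $h+1$ general tangent spaces into osculating spaces of increasing order supported at only $\alpha=\lfloor (n+1)/(k+1)\rfloor$ general points of $\bbG(k,n)$ (which is why $\alpha$, the number of general $(k{+}1)$-planes spanning $V$, appears as the subscript of $h_\alpha$), and then checks that the relevant osculating projections are generically finite. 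The binary expansion in Definition \ref{h} records exactly how many tangent spaces can be absorbed by successive two-to-one degenerations of order-$s$ osculating spaces into order-$(2s{+}1)$ ones (``strong $2$-osculating regularity''); it is not produced by a one-step $n\mapsto n-1$ hyperplane induction.

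Concretely, the step of your argument that fails is the assertion that ``$h_\alpha$ is designed exactly so that $h\le \alpha h_\alpha(k-1)$ splits as $h=h'+h''$ with $h'\le \alpha h_\alpha(k-2)$ and $h''\le h_\alpha(k-1)$'': since $h_m(2k)=h_m(2k-1)$, consecutive values of $h_\alpha$ frequently coincide and no such additive recursion in the argument $k$ holds; the actual recursion behind $h_m$ is multiplicative in $m$ and halves the \emph{order} parameter, reflecting collision of osculating spaces, not removal of a hyperplane. Without identifying the correct degeneration (and proving the required osculating regularity of $\bbG(k,n)$, which is the technical heart of \cite{RR}), your induction would at best reproduce the weaker bounds of \cite{AbOtPetGRASS,CGG3}, so the proposal as written does not prove the stated theorem.
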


This results strictly improves the results in \cite{AbOtPetGRASS} for $k \geq 4$, whenever $(k, n)\neq (4, 10), (5,11)$; see \cite{RR}.

\bigskip
Notice that, if we let $\rk_{\wedge}(k,n)$ be the generic rank with respect to $\bbG(k,n)$, i.e., the minimum $s$ such that $\sigma_s(\bbG(k,n))=\PP^{N_k}$, we have that the results above give
that, asymptotically: 
$$\rk_{\wedge}(2,n) \sim {n^2 \over 18}.$$
A better asymptotic result can be found in \cite{RR}.

 Finally, we give some words concerning the methods involved.
The approach in \cite{CGG3} uses Terracini's lemma and an
exterior algebra version of apolarity. The main idea there is to consider the analog of the perfect pairing induced by the apolarity action that we have seen for the symmetric case in the skew-symmetric situation; see Section \ref{Apolaritysection}. In fact, the pairing considered here is: $$\bigwedge^kV \times \bigwedge^{n-k}V \to \bigwedge^n V \simeq \Bbbk,$$ induced by the multiplication in $\bigwedge V$, and it defines the apolarity of a subspace $Y\subset V$ of dimension $k$ to be $Y^{\perp}:=\{ w \in \bigwedge^{n-k}V \, | \, w\wedge v =0 \, \forall \, v \in Y\}$. Now, one can proceed in the same way as the symmetric case, namely by considering a generic element of the Grassmannian $\bbG(k,n)$ and by computing the tangent space at that point. Then, its orthogonal, via the above perfect pairing, turns out to be, as in the symmetric case, the degree $n-k$ part of an ideal, which is a double fat point. Hence, in \cite{CGG3}, the authors apply Terracini's lemma to this situation in order to study all the known defective cases and in other various cases. Notice that the above definition of skew-symmetric apolarity works well for computing the dimension of secant varieties to Grassmannians since it defines the apolar of a subspace that is exactly what is needed for Terracini's lemma, but if one would like to have an analogous definition of apolarity for skew symmetric tensors, then there are a few things that have to be done. Firstly, one needs to extend by linearity the above definition to all the elements of $ \bigwedge^{k}V$. Secondly, in order to get the equivalent notion of the apolar ideal in the skew symmetric setting, one has to define the skew-symmetric apolarity in any degree $\leq d$. This is done in \cite{ABMM}, where also the skew-symmetric version of the apolarity lemma is given. Moreover, in \cite{ABMM}, one can find the classification of all the skew-symmetric-ranks of any skew-symmetric tensor in $ \bigwedge^{3}\mathbb{C}^n$ for $n\leq n$ (the same classification can actually be found also in \cite{Segre, West}), together with algorithms to get the skew-symmetric-rank and the skew-symmetric decompositions for any for those tensors (as far as we know, this is~new).

Back to the results on dimensions of secant varieties of Grassmannians: in \cite{BaDraGraafGRASS}, a tropical geometry approach is involved. In \cite{AbOtPetGRASS}, as was done by Alexander and Hirshowitz for the symmetric case, the authors needed to introduce a specialization technique, by placing a certain number of
points on sub-Grassmannians and by using induction. In this way, they could prove several non-defective cases. Moreover, in the same work, invariant theory was used to describe the equation of $\sigma_3(\bbG(2,6))$, confirming the work of Schouten \cite{Schou}, who firstly proved that it was defective by showing that it is a hypersurface (note that by parameter count, it is expected to fill the ambient space). Lascoux \cite{Las} proved that the degree of Schouten's hypersurface is seven. In \cite{AOP}, with a very clever idea, an explicit description of this degree seven invariant was found by relating its cube to the determinant of a $21 \times 21$ symmetric~matrix.

Eventually, in \cite{RR}, the author employed a new method for studying the defectivity of varieties based on the study of osculating spaces. 


\subsection{Segre--Veronese Varieties}\label{sec:SegreVeronese}
Now, we consider a generalization of the apolarity action that we have seen in both Section \ref{sec:SymmetricDec_Veronese} and Section \ref{sec:TensorDec_Segre} to the multi-homogeneous setting; see \cite{CGG5, Galazka, GRV18, BBG}. More precisely, fixing a set of vector spaces $V_1,\ldots,V_t$ of dimensions $n_1+1,\ldots,n_t+1$, respectively, and positive integers $d_1,\ldots,d_t$, we consider the space of partially-symmetric tensors: $$S^{d_1}V^*_1\otimes\ldots\otimes S^{d_t}V^*_t.$$
 The {Segre--Veronese variety} parametrizes {decomposable partially-symmetric tensors}, i.e., it is the image of the~embedding:
$$\begin{array}{c c c c}
	\nu_{\bfn,\bfd} : & \mathbb{P}(S^1V_1^*) \times \ldots \times \mathbb{P}(S^1V_t^*) & \longrightarrow & \mathbb{P}(S^{d_1}V_1^*\otimes\ldots\otimes S^{d_t}V_t^*) \\
	& ([L_1],\ldots,[L_t]) & \mapsto & \left[L_1^{d_1}\otimes\ldots\otimes L_t^{d_t}\right],
\end{array}$$
where, for short, we denote $\bfn = (n_1,\ldots,n_t),~\bfd = (d_1,\ldots,d_t)$.

More geometrically, the {Segre--Veronese variety} is the image of the Segre--Veronese embedding: 
$$
	\nu_{\bfn,\bfd} : \mathbb P^{\bfn} := \mathbb P^{n_1}\times\ldots\times\mathbb{P}^{n_t} \longrightarrow \mathbb{P}^N, \quad \text{ where } N = \prod_{i=1}^t {d_i+n_i \choose n_i} - 1,
$$
given by $\mathcal{O}_{\mathbb{P}^{\bfn}}(\bfd) := \mathcal{O}_{\mathbb{P}^{n_1}}(d_1)\otimes \ldots \otimes \mathcal{O}_{\mathbb{P}^{n_t}}(d_t)$, that is via the forms of 
 multidegree $(d_1,\dots ,d_t)$ of the multigraded homogeneous coordinate ring: 
$$R = \Bbbk[x_{1,0},\ldots ,x_{1,n_1};x_{2,0},\ldots ,x_{2,n_2};\ldots ;x_{t,0},\ldots ,x_{t,n_t}].$$
For instance, if $\bfn = (2,1)$, $\bfd =(1,2) $ and $P= ([a_0:a_1:a_2],[b_0:b_1]) \in \PP^{2} \times \PP^{1}$, we have 
 $\nu_{\bfn,\bfd}(P)=
[a_0b_0^2:a_0b_0b_1:a_0b_1^2:a_1b_0^2: a_1b_0b_1: a_1b_1^2: a_2b_0^2: a_2b_0b_1: a_2b_1^2]$, where the products are taken in lexicographical order.
We denote the embedded variety ${\nu}_{\bfn,\bfd}({\mathbb P}^{\bfn})$ by $X_{\bfn,\bfd}$. Clearly, for $t=1$, we recover Veronese varieties, while for $(d_1,\dots ,d_t)= (1,\ldots,1)$, we get the Segre varieties.

The corresponding additive decomposition problem is as follows.
\begin{problem}
	Given a partially-symmetric tensor $T \in S^{d_1} V_1^* \times \ldots \times \mathbb{P}S^{d_t} V_t^*$ or, equivalently, a multihomogeneous polynomial $T \in R$ of multidegree $\bfd$, find the smallest possible length $r$ of an expression $T = \sum_{i=1}^r L_{i,1}^{d_1}\otimes\ldots\otimes L_{i,t}^{d_t}$.
\end{problem}

As regards the generic tensor, a possible approach to this problem mimics what has been done for
Segre and Veronese varieties. One can use Terracini's lemma (Lemma \ref{Terracini} and Theorem \ref{thm:SecantsViaFatPoints}), as in \cite{CGG2,CGG4}, to translate the problem of determining the dimensions of the higher secant varieties of $X_{\bfn,\bfd}$ into that of calculating the value, at $\bfd=(d_1,\dots ,d_t)$, of the Hilbert function of generic sets of two-fat points in $\mathbb P^\bfn$.
Then, by using the multiprojective affine projective method introduced in Section \ref{MAP}, i.e., by passing to an affine chart in $\mathbb P^{\bfn}$
and then homogenizing in order to pass to $\mathbb P^n$, with
$n=n_1+\cdots+n_t$, this last calculation amounts to computing the
Hilbert function in degree $d = d_1 + \cdots + d_n$ for the
subscheme $W\subset \mathbb P^n$; see Theorem \ref{thm:multiprojaffproj}.

There are many scattered results on the dimension of
$\sigma_s(X_{\bfn,\bfd})$, by many authors, and
very few general results. One is the following, which generalizes
the ``unbalanced'' case considered for Segre varieties; see
\cite{CGG7,AOP}.

\begin{thm} \label{unbalancedSV} Let $X = X_{(n_1, \ldots , n_t, n),(d_1, \ldots , d_t,1)}$ be the
Segre--Veronese embedding:
$$
 \PP^{n_1}\times \ldots \times \PP^{n_t} \times \PP^n \buildrel{(d_1,\ldots,d_t,1)}\over{\longrightarrow} X
 \subset \PP^M, \quad \text{ with } M =(n+1)\left(\prod_{i=1}^t{n_i + d_i\choose d_i}\right) - 1.
$$
Let $ N = \Pi_{i=1}^t{n_i + d_i\choose d_i} - 1, $ then, for
$N-\sum_{i=1}^tn_i +1 < s \leq \min\{n,N\}$, the secant variety
 $\sigma_s (X)$ is defective with $\delta_{s} (X) =
s^2-s(N-\sum_{i=1}^tn_i +1)$.
\end{thm}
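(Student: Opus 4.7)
The plan is to apply Terracini's lemma (Lemma~\ref{Terracini}) after making the Segre structure of $X$ explicit. Writing $X=\mathrm{Seg}(X_{\bfn,\bfd}\times\PP^n)\subset\PP(\Bbbk^{N+1}\otimes\Bbbk^{n+1})$, a smooth point lifts to a decomposable tensor $v\otimes w$ with $v$ in the affine cone $\hat X_{\bfn,\bfd}$ and $w\in\Bbbk^{n+1}$; its affine tangent space is $T_v\hat X_{\bfn,\bfd}\otimes\Bbbk w + \Bbbk v\otimes\Bbbk^{n+1}$. Choose $s$ generic points lifted to $v_i\otimes w_i$, set $V:=\langle v_1,\dots,v_s\rangle$, $T_i:=T_{v_i}\hat X_{\bfn,\bfd}$, and write $\mu:=n_1+\cdots+n_t=\dim X_{\bfn,\bfd}$ (so $\dim T_i=\mu+1$). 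Terracini's lemma then yields
$$
\dim\sigma_s(X)+1 \;=\; \dim\Bigl(V\otimes\Bbbk^{n+1} + \sum_{i=1}^{s} T_i\otimes\Bbbk w_i\Bigr).
$$

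The next step is a clean dimension count for this sum. For each $i$ fix a hyperplane $\widetilde T_i\subset T_i$ complementary to $\Bbbk v_i$ (so $\dim\widetilde T_i=\mu$); then $\Bbbk v_i\otimes\Bbbk w_i\subset V\otimes\Bbbk^{n+1}$ is absorbed, and since $s\le n$ makes $w_1,\dots,w_s$ generically linearly independent, fixing a complement $W'$ of $\bigoplus_i\Bbbk w_i$ in $\Bbbk^{n+1}$ produces the direct-sum decomposition
$$
V\otimes\Bbbk^{n+1} + \sum_{i=1}^{s}\widetilde T_i\otimes\Bbbk w_i \;=\; V\otimes W' \;\oplus\; \bigoplus_{i=1}^{s} (V+\widetilde T_i)\otimes\Bbbk w_i.
$$
Using $\dim(V+\widetilde T_i)=s+\mu-\dim(V\cap\widetilde T_i)$, a routine simplification yields
$$
\dim\sigma_s(X) \;=\; s(\mu+n+1) - \sum_{i=1}^{s}\dim(V\cap\widetilde T_i) - 1,
$$
so the entire statement reduces to controlling each $\dim(V\cap\widetilde T_i)$. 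Since $v_i\in V\cap T_i$ and $\Bbbk v_i\oplus\widetilde T_i=T_i$ one has $\dim(V\cap T_i)=1+\dim(V\cap\widetilde T_i)$, while $\dim(V\cap T_i)=s+(\mu+1)-\dim(V+T_i)$ attains its minimum $s+\mu-N$ exactly when $V+T_i=\Bbbk^{N+1}$. In the range $s>N-\mu+1$ the number $s-1\ge N-\mu$ of auxiliary vectors $v_j$ ($j\ne i$) is in principle enough to span the $(N-\mu)$-dimensional quotient $\Bbbk^{N+1}/T_i$; granted generically they do, $\dim(V\cap\widetilde T_i)=s-(N-\mu+1)$ for every $i$, the displayed formula collapses to $\dim\sigma_s(X)=s(N+n+2-s)-1$, and subtracting from the expected value $s(\mu+n+1)-1$ gives exactly $\delta_s(X)=s^2-s(N-\mu+1)$, which is the claim.

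The main obstacle I expect is this last transversality step. The lower bound on the defect is automatic from the excess-intersection phenomenon once $s>N-\mu+1$, but ruling out any \emph{extra} intersection requires producing a configuration actually realising $V+T_i=\Bbbk^{N+1}$. My plan is to exhibit such a configuration explicitly, choosing the $v_j$'s as images under $\nu_{\bfn,\bfd}$ of suitably sparse coordinate-type points in $\PP^\bfn$ so that one can read off linear independence of the $v_j$'s modulo $T_i$ from the monomial structure; this uses the non-degeneracy of $X_{\bfn,\bfd}\subset\PP^N$. Upper semicontinuity of $\dim(V\cap T_i)$ in families then propagates the equality from this special configuration to a dense open in $(\hat X_{\bfn,\bfd})^s$, which is all that Terracini's lemma needs. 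The pure-Segre instance (Theorem~\ref{unbalanced}), where $X_{\bfn,\bfd}=\PP^N$ and the transversality is immediate from matrix rank, is the model; the bounds $s\le n$ and $s\le N$ in the hypothesis are exactly what is needed to keep the $w_i$'s linearly independent and $V$ honestly $s$-dimensional throughout.
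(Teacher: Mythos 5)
Your argument is correct, and the Terracini computation is carried out cleanly: since the last factor enters with degree one, $X$ really is the Segre product of $X_{\bfn,\bfd}\subset\PP^N$ with $\PP^n$, the decomposition $V\otimes W'\oplus\bigoplus_i (V+\widetilde T_i)\otimes\Bbbk w_i$ is a genuine direct sum because the second tensor factors are independent, the identity $\dim(V\cap T_i)=1+\dim(V\cap\widetilde T_i)$ holds for any complement $\widetilde T_i$ of $\Bbbk v_i$ in $T_i$, and the arithmetic does yield $\dim\sigma_s(X)=s(N+n+2-s)-1$ and hence the stated defect. The one step you leave as a plan needs no explicit coordinate configuration: $V+T_i=\langle v_j : j\neq i\rangle + T_i$, and the projection of $X_{\bfn,\bfd}$ away from the embedded tangent space at $v_i$ is an irreducible, nondegenerate subvariety of $\PP^{N-\mu-1}$ (a hyperplane containing it would pull back to one containing $X_{\bfn,\bfd}$), so the $s-1\geq N-\mu$ generic points $v_j$ already span the quotient $\Bbbk^{N+1}/T_i$; intersecting these dense open conditions over $i=1,\dots,s$ finishes the transversality. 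This is a genuinely different route from the one behind the paper's statement (via \cite{CGG7,AOP}): there one uses $X\subset Y:=\nu_{1,1}(\PP^N\times\PP^n)$, hence $\sigma_s(X)\subseteq\sigma_s(Y)$, and proves equality in the stated range by showing that the column space of a generic rank-$s$ matrix, a generic $\PP^{s-1}\subset\PP^N$, meets $X_{\bfn,\bfd}$ in a positive-dimensional subvariety spanning it, so the matrix decomposes as $\sum v_i\otimes w_i$ with $v_i\in\hat X_{\bfn,\bfd}$; the dimension then comes for free from the classical determinantal variety of rank-$\leq s$ matrices. That route buys more — the set-theoretic identification $\sigma_s(X)=\sigma_s(Y)$, which the paper later exploits for equations and Cohen--Macaulayness — whereas yours gives only the dimension but is self-contained and makes the source of the defect (the forced excess $\dim(V\cap T_i)\geq s+\mu-N>1$) completely transparent. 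One small caveat, which concerns the statement rather than your proof: both computations measure the defect against the parameter count $s(\mu+n+1)-1$, and when this exceeds $M$ (e.g., $X_{(2,5),(2,1)}$ with $s=5$) the displayed formula for $\delta_s$ overstates the defect as defined in the paper via $\min\{M,\cdot\}$.
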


When it comes to Segre--Veronese varieties with only two factors, there are
many results by many authors, which allow us to get a quite complete
picture, described by the following conjectures, as stated in
\cite{AboBra}.
\begin{conj}
 Let $X = X_{(m,n),(a,b)}$, then $X$ is never defective, except for:
\begin{itemize}
\item $b=1$, $m\geq2$, and it is unbalanced (as in the theorem above); \par
\item $(m,n)= (1,n)$, $(a,b)=(2d,2)$;
\item $(m,n)= (3,4) $, $(a,b)=(2,1)$;
\item $(m,n)= (2,n)$, $(a,b)=(2,2)$;
\item $(m,n)= (2,2k+1) $, $k\geq 1$, $(a,b)=(1,2)$;
\item $(m,n)= (1,2) $, $(a,b)=(1.3)$;
\item $(m,n)= (2,2) $, $(a,b)=(2,2)$;
\item $(m,n)= (3,3) $, $(a,b)=(2,2)$;
\item $(m,n)= (3,4) $, $(a,b)=(2,2)$.
\end{itemize}
\end{conj}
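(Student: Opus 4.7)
The plan is to prove the conjecture in two independent directions: first, check that each of the listed exceptions is genuinely defective; second, prove non-defectivity in all remaining cases by an inductive argument. For the first direction, in each exceptional case one fixes $s$ as in the list, picks an explicit collection of $s$ points, writes the tangent space $\langle T_{P_1}X_{(m,n),(a,b)},\dots,T_{P_s}X_{(m,n),(a,b)}\rangle$ using Terracini's lemma, and exhibits either an invariant-theoretic equation vanishing on the span (as in Example \ref{VeroSurfDef} or the Schouten-type argument for $\bbG(2,6)$), or a concrete reducible divisor in the associated bigraded linear system of double fat points that forces a drop in dimension. The underlying geometric reason in each case is analogous to the quadric-singular-at-$n$-points picture of Example \ref{example: defective Veronese quadrics}: the double fat point conditions are not independent because some low-degree component of $|\calO_{\mathbb{P}^m\times\mathbb{P}^n}(a',b')|$ with $(a',b')\le(a,b)$ passes doubly through all the points.

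For the second direction, the main tool is Theorem \ref{thm:SecantsViaFatPoints} combined with the multi-projective affine-projective method of Section \ref{MAP}: defectivity of $\sigma_s(X_{(m,n),(a,b)})$ is equivalent to the failure of a scheme $W\subset\mathbb{P}^{m+n}$ consisting of $s$ generic double fat points plus the infinity part $W_\infty=(a-1)\Pi_1+(b-1)\Pi_2$, where $\Pi_1\cong\mathbb{P}^{m-1}$ and $\Pi_2\cong\mathbb{P}^{n-1}$ are complementary linear subspaces of $\mathbb{P}^{m+n}$, to have the expected Hilbert function in degree $a+b$. I would run a double induction on $(a,b)$, keeping $(m,n)$ fixed, with the base cases $a=b=1$ covered by the theory of Segre varieties in Section \ref{sec:TensorDec_Segre} (and Theorem \ref{unbalancedSV} handling the $b=1$ unbalanced strip). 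The inductive step uses Castelnuovo's sequence \eqref{castelnuovo} on $\mathbb{P}^{m+n}$ with respect to a hyperplane $H$ containing $\Pi_1$ (or a generic hyperplane, depending on the parity of the parameters): the trace on $H$ recovers a similar problem with $(a,b)$ replaced by $(a,b-1)$ or with one of the $\Pi_i$'s absorbed, while the residue removes a copy of $\Pi_1$ and sends us to degree $a+b-1$ with $(a-1,b)$.

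To make the induction work without losing one condition at each borderline step, I would systematically invoke the Horace differential lemma of Section \ref{Horacediffsection}, specializing an appropriate number of the double fat points with support on $H$ and replacing them by a $2$-jet plus a simple point, exactly as in the computation of $\sigma_{10}(X_{5,3})$ on page \pageref{win:diff:horace}. The specialization count is dictated by balancing the two dimensions predicted by Theorem \ref{theorem:bo}: one chooses the number $t$ of points to put on $H$ to saturate the trace inequality (i)–(ii), and uses semicontinuity as on page \pageref{upper:semicontinuity} to transfer independence back to the generic configuration. The induction is clean on the interior of the parameter region, but in a band close to the exceptional list, one hits cases where neither the standard nor the differential Horace lemma immediately applies; these must be cleared by \emph{ad hoc} specializations on a sub-Segre-Veronese $X_{(m-1,n),(a,b)}$ or $X_{(m,n-1),(a,b)}$ and then re-glued.

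The hard part, and the main reason this remains a conjecture, is twofold. First, one must prove that the list of exceptions is \emph{exhaustive}: every inductive step that gets stuck in the band near the boundary has to be certified either as one of the listed defective cases (and then checked to fall exactly into that family) or as non-defective by a more refined specialization. This requires tracking parity and divisibility conditions on $(m,n,a,b,s)$ that govern the Horace method, analogous to the parity obstructions that force the defective quartic cases in Theorem \ref{corollAH}. Second, one has to rule out unlisted defective families near the ``unbalanced'' frontier $b=1$: here, the dichotomy of Theorem \ref{unbalancedSV} suggests that a second, symmetric pattern could hide for low values of $a,b\ge 2$, and excluding such a pattern without an effective classification of low-degree divisors through $W_\infty$ is precisely the classical obstacle.
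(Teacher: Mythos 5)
The statement you are addressing is presented in the paper as a \emph{conjecture} (due to Abo and Brambilla), not as a theorem: the paper offers no proof, only the remark that the conjecture is based on results scattered across a long list of references establishing individual defective and non-defective cases. There is therefore no proof in the paper to compare against, and your text cannot be accepted as a proof of the statement --- indeed, you concede as much in your final paragraph, where you identify the exhaustiveness of the exception list and the behaviour near the $b=1$ frontier as unresolved. Those two points are not loose ends to be tidied up; they \emph{are} the conjecture. What you have written is an accurate description of the standard attack (Terracini's lemma, the multiprojective affine--projective reduction of Section \ref{MAP} to a scheme $W = Z' + (a-1)\Pi_1 + (b-1)\Pi_2$ in $\PP^{m+n}$, and the differential Horace method), and that machinery is indeed what underlies the partial results the paper cites, such as the $(1,d)$ and $(2,2)$ bidegrees and the unbalanced strip covered by Theorem \ref{unbalancedSV}. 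But it is a research programme, not a proof.

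The concrete gap, if one insists on locating it: your inductive step asserts that the cases where neither the standard nor the differential Horace lemma applies ``must be cleared by ad hoc specializations \ldots and then re-glued,'' without exhibiting those specializations or proving they exist. For Segre--Veronese varieties the arithmetic of how many double points can be placed on the chosen divisor is governed by products of binomial coefficients ${m+a-1 \choose a}{n+b-1 \choose b}$ that do not in general admit a residue/trace split satisfying the hypotheses of Theorem \ref{theorem:bo}, and no uniform bookkeeping is known that certifies every stuck case as either one of the nine listed exceptions or as resolvable. Verifying defectivity of the listed cases is the comparatively easy half (each is established in the cited literature); it is precisely the exhaustiveness claim that remains open, and your proposal does not close it.
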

\begin{conj}
 Let $X = X_{(m,n),(a,b)}$, then for $(a,b) \geq (3,3)$, $X$ is never defective.
\end{conj}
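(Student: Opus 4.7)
The plan is to follow the template that proved successful for the Alexander--Hirschowitz theorem and for lower-bidegree cases of two-factor Segre--Veronese varieties. By Terracini's lemma (Lemma \ref{Terracini}) and Theorem \ref{thm:SecantsViaFatPoints}, proving that $\sigma_s(X_{(m,n),(a,b)})$ is never defective (for every $s$ and every $(a,b)\geq(3,3)$) reduces to showing that a generic union $Z$ of $s$ two-fat points in $\mathbb{P}^m\times\mathbb{P}^n$ imposes independent conditions on forms of bidegree $(a,b)$. Applying the multiprojective affine projective method of Section \ref{MAP}, this is in turn equivalent to proving that the scheme $W = Z' + (a-1)\Pi_1 + (b-1)\Pi_2 \subset \mathbb{P}^{m+n}$ has the expected Hilbert function in degree $a+b$, where $\Pi_1,\Pi_2$ are disjoint linear subspaces of dimensions $m-1$ and $n-1$.

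I would run a double induction on the pair $(a,b)\geq (3,3)$ together with an outer induction on the ambient dimension $m+n$. The inductive step uses the Horace method of Section \ref{Horacesection}: choose a hyperplane $H\subset \mathbb{P}^{m+n}$ containing $\Pi_1$, specialise a carefully chosen number $t$ of the supports of the two-fat points onto $H$, and apply the Castelnuovo inequality
$$
\dim_{\Bbbk}(I_W)_{a+b} \;\leq\; \dim_{\Bbbk}(I_{\Res_H W})_{a+b-1} + \dim_{\Bbbk}(I_{\Tr_H W})_{a+b}.
$$
Because $\Pi_1\subset H$, the residual $\Res_H W$ naturally contains $(a-2)\Pi_1 + (b-1)\Pi_2$ together with the residual of the specialised and un-specialised fat points, and therefore corresponds precisely to the Segre--Veronese problem in bidegree $(a-1,b)$, which is known by induction; the trace $\Tr_H W$ lives in $\mathbb{P}^{m+n-1}$ and encodes a lower-dimensional analogous problem. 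By upper semicontinuity (page \pageref{upper:semicontinuity}), exhibiting a single specialisation for which both summands on the right attain the expected value suffices to force $W$ itself to have the expected Hilbert function.

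When a parity/integer obstruction causes the naive Horace specialisation to ``miss'' exactly one condition, as it did in the proof of $\sigma_{10}(X_{5,3}) = \mathbb{P}^{55}$ (page \pageref{fail:Horace}), the standard argument fails and one must invoke the differential Horace lemma of Section \ref{Horacediffsection}, specialising only the appropriate ``slice'' $D_{2,H}(P_i)$ of one or more fat points to recover the missing condition on the trace side. The base of the induction consists of the cases $(a,b)=(3,3)$ and arbitrary $(a,3)$, $(3,b)$: here one runs an inner induction on $(m,n)$ anchored at $X_{(1,1),(3,3)}$ and other small configurations that can be verified directly by computer algebra.

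The hard part will be the simultaneous bookkeeping for the interlocking inductions: there are the two degree parameters $(a,b)$, the two dimension parameters $(m,n)$, and the number $s$ of secants, and the numerical balance
$$
s(m+n+1) \;=\; \binom{m+a}{a}\binom{n+b}{b} - 1 + 1,
$$
required for the Horace inequalities of Theorem \ref{theorem:bo} to close up, must be arranged in lockstep with the choice of how many points to specialise and whether to reduce $a$ or $b$ first. A secondary obstacle is that sporadic defective cases cannot be excluded a priori in small $(m,n)$, so the base of the induction may require finite but non-trivial computer verification; moreover, the hypothesis $(a,b)\geq(3,3)$ is essential, since the list of genuinely defective configurations at smaller bidegrees (collected in the previous conjecture) must be ``shielded'' from the inductive reduction---any step that produces a residual problem hitting one of those exceptional bidegrees would break the argument, and ensuring this never happens is the delicate combinatorial heart of the proof, which is presumably why the statement remains conjectural.
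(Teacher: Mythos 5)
The statement you are asked about is not a theorem of the paper: it is stated there as an open conjecture (attributed to the literature on two-factor Segre--Veronese varieties, cf.\ \cite{AboBra}), and the paper offers no proof of it. So there is no ``paper's proof'' to compare against, and your text should be judged as a proposed proof of an open problem.

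As such, what you have written is a correct identification of the standard strategy --- Terracini's lemma plus Theorem \ref{thm:SecantsViaFatPoints} to reduce to the Hilbert function of $s$ generic two-fat points in $\mathbb{P}^m\times\mathbb{P}^n$ in bidegree $(a,b)$, the multiprojective affine projective method of Section \ref{MAP} to transport this to a scheme $W=Z'+(a-1)\Pi_1+(b-1)\Pi_2$ in $\mathbb{P}^{m+n}$, and then Horace/differential Horace induction --- but it is not a proof. Every load-bearing step is deferred: no base case is actually verified (you say the anchors ``can be verified directly by computer algebra'' without doing so or citing a source that does); the inductive step is never shown to close, since you do not specify how many points $t$ to specialise, in which order to reduce $a$ versus $b$, or why the resulting residual and trace schemes impose independent conditions; and the claim that the residual of $W$ with respect to a hyperplane containing $\Pi_1$ ``corresponds precisely to the Segre--Veronese problem in bidegree $(a-1,b)$'' is asserted, not checked (the residual also carries the simple points left over from the specialised two-fat points, and whether that mixed scheme behaves generically is exactly the kind of thing that fails in the known defective cases). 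You yourself name the real obstruction in your last paragraph --- arranging the interlocking inductions so that no reduction step lands on a defective configuration --- and correctly observe that this is why the statement is conjectural. That honesty is to your credit, but it means the proposal is a research plan, not a proof, and it should not be presented as establishing the statement.
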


The above conjectures are based on results that can be found in \cite{Lon890,DF,ChCi,CaCh,CGG5,Boc05,Bal06,CGG6,BaDraGraafGRASS,Abrescia,Ott08,AboBra09,Abo,
BeCC,AboBra2,BaBeC,AboBra}.

For the Segre embeddings of many copies of $\Bbb P^1$, we have a complete result. First, in \cite{CGG5} and in~\cite{CGG6}, the cases of two and three copies of $\Bbb P^1$, respectively, were completely solved.

\begin{thm}\label{SV2factors}
Let $X = X_{(1,1),(a,b)}$, $a\leq b $. Then, $X$ is never defective, except for
$(a,b) = (2,2d)$; in this case, $\sigma_{2d+1}(X)$ is defective
with $\delta_{2d+1}=1$.
\end{thm}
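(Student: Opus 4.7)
The plan is to translate the question into a Hilbert function computation in $\mathbb{P}^2$, isolate the exceptional family directly on $\mathbb{P}^1\times\mathbb{P}^1$ via a classical obstruction, and treat the remaining range by induction using the Horace method.

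First, combining Terracini's lemma (Lemma~\ref{Terracini}) with the multiprojective--affine--projective reduction (Theorem~\ref{thm:multiprojaffproj}), computing $\dim\sigma_s(X_{(1,1),(a,b)})$ is equivalent to computing the Hilbert function in degree $d=a+b$ of the scheme
\[
W \;=\; Z' + aP_1 + bP_2 \;\subset\; \mathbb{P}^2,
\]
where $Z'$ is the union of $s$ generic two-fat points and $P_1,P_2$ are the two fixed distinct coordinate points coming from the two factors. Non-defectivity of $\sigma_s(X_{(1,1),(a,b)})$ translates to $W$ imposing independent conditions on $\mathcal{O}_{\mathbb{P}^2}(a+b)$, equivalently to the equality $\dim(I_Z)_{(a,b)}=\max\{(a+1)(b+1)-3s,\,0\}$ for $Z$ generic two-fat in $\mathbb{P}^1\times\mathbb{P}^1$.

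Second, I would handle the exceptional family $(a,b)=(2,2d)$, $s=2d+1$ directly on $\mathbb{P}^1\times\mathbb{P}^1$. The complete linear system of curves of bidegree $(1,d)$ has projective dimension $2d+1$, so through $2d+1$ generic points there passes a unique (irreducible) curve $C$ of bidegree $(1,d)$. Its double $2C$ is a bidegree $(2,2d)$ divisor singular at each of the $2d+1$ points, hence $\dim\mathcal{L}_{(2,2d)}\!\bigl(-2\sum_{i=1}^{2d+1}P_i\bigr)\geq 0$, whereas the expected value is $-1$, giving $\delta_{2d+1}\geq 1$. To show equality, let $D$ be any element of this system: then $D\cdot C=2\cdot d+2d\cdot 1=4d$, but the singular condition at $2d+1$ points of $C$ forces the intersection length of $D$ with $C$ to be at least $2(2d+1)=4d+2>4d$, so $C$ must be a component of $D$. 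Writing $D=C+D'$ with $D'$ of bidegree $(1,d)$, the residual $D'$ must pass through each $P_i$, and by uniqueness of $C$ we get $D'=C$, so $D=2C$. Hence the system is one-dimensional (as a vector space), giving $\delta_{2d+1}=1$ exactly. For $s>2d+1$ the variety fills by parameter count, and for $s<2d+1$ non-defectivity follows from the inductive argument below.

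Third, for every pair $(a,b)\neq (2,2d)$ with $a\leq b$, I would prove non-defectivity by induction on $a+b$ using the Horace method applied to $W\subset\mathbb{P}^2$. The inductive step specializes a carefully chosen number $k$ of the generic fat points $2Q_i$ onto a line $L\subset\mathbb{P}^2$ (typically the line through $P_1$ and $P_2$, or through one of them), and applies the Castelnuovo exact sequence
\[
0 \longrightarrow (I_{\Res_L W})_{a+b-1} \longrightarrow (I_W)_{a+b} \longrightarrow (I_{\Tr_L W})_{a+b},
\]
with $k$ chosen so that the inequality $\dim(I_W)_{a+b}\leq \dim(I_{\Res_L W})_{a+b-1}+\dim(I_{\Tr_L W})_{a+b}$ produces exactly the expected value. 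The residue piece reduces the instance on the $a+b-1$ side, and the trace piece becomes a one-variable (binary form) problem on $L$, both handled by the induction hypothesis. When $k$ would need to be a half-integer, the differential Horace lemma of Section~\ref{Horacediffsection} resolves the parity issue by redistributing "half a condition" between residue and trace. The base of the induction consists of $(1,b)$, where $X_{(1,1),(1,b)}$ is a smooth rational normal scroll of degree $2b$ in $\mathbb{P}^{2b+1}$ and non-defectivity is classical (all secants are regular), together with finitely many small cases $(2,1), (2,3), (3,3), (3,2)$ verified by direct computation of $\HF(W;a+b)$.

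The main obstacle is the combinatorial bookkeeping of the induction. One must (i) choose the line $L$ and the number $k$ of points to specialize so that neither the residue nor the trace system falls into the exceptional family $(2,2d)$ where the induction hypothesis fails, and (ii) track parities of $a$ and $b$ carefully so that each step reduces to a strictly earlier non-exceptional case. In particular, reducing an instance with $a=2,\,b$ odd must not land on $(2,2d)$ on either side of the Castelnuovo sequence, and the analysis for $a\geq 3$ has to be separated from the $a=2$ analysis. This parity and case-splitting argument, already the technical heart of the proofs in \cite{CGG5} for the analogous two-factor results, is the delicate part of the proof.
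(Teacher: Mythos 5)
Your reduction and your treatment of the exceptional family are correct; the issue is that the bulk of the theorem --- non-defectivity for every $(a,b)\neq(2,2d)$ --- is only outlined. First, two positive remarks. The scheme you write down in $\mathbb{P}^2$, namely $s$ generic two-fat points together with $aP_1+bP_2$ at the two coordinate points, is the right one: it is the unique choice of multiplicities making $\dim_\Bbbk I(W_\infty)_{a+b}=(a+1)(b+1)$, and it matches the survey's own worked example for $(\mathbb{P}^1)^4$ (four three-fat points, i.e.\ multiplicity $d-d_i$ at $\Pi_i$), even though Theorem~\ref{thm:multiprojaffproj} as printed says $(d_i-1)\Pi_i$; your version is the one that makes the degree count close. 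Second, your analysis of $(a,b)=(2,2d)$, $s=2d+1$ is complete and clean: the unique bidegree-$(1,d)$ curve $C$ through the $2d+1$ general points gives $2C$ in the system, and the B\'ezout count $D\cdot C=4d<2(2d+1)$ forces $C$ to split off any member $D$, whence $D=2C$ and $\delta_{2d+1}=1$ exactly. Doing this directly on the quadric is arguably tidier than the corresponding computation after the reduction to $\mathbb{P}^2$. (The survey gives no proof of this theorem; it cites \cite{CGG5}, where the same reduction to fat points in $\mathbb{P}^2$ is the engine, so your route is in the same spirit as the source.)

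The gap is in your third step. You correctly identify that the Horace induction with the two large fat points $aP_1$, $bP_2$ present is where all the work lies --- the choice of the line, the number $k$ of double points to specialize, the parity fixes via the differential lemma, and above all the fact that the residue of a non-exceptional instance can land on the exceptional family (e.g.\ the residue of the $(3,2d+1)$ system with respect to the line $\langle P_1,P_2\rangle$ is a $(2,2d)$ system, where one must argue that the \emph{specialized} configuration still imposes independent conditions even though the fully generic count fails at $s=2d+1$) --- but you then defer exactly this verification. As it stands the proposal proves defectivity of the listed case but does not prove non-defectivity of any other case beyond the bases $(1,b)$. There is also a small internal inconsistency: in the second step you assert that non-defectivity of $\sigma_s(X_{(1,1),(2,2d)})$ for $s\le 2d$ "follows from the inductive argument below," but that argument is set up only for $(a,b)\neq(2,2d)$, so those cases are not covered by anything you wrote; the induction must be formulated so as to include the pair $(2,2d)$ for all $s\le 2d$ (only $s=2d+1$ is genuinely special) or those secant varieties must be handled separately.
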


\begin{thm}\label{SV3factors}
Let $X = X_{(1,1,1),(a,b,c)}$, $a\leq b \leq c$; then $X$ is never defective, except for:
\begin{itemize}
\item $(a,b,c) = (1,1,2d)$; in this case, $\sigma_{2d+1}(X)$ is defective
with $\delta_{2d+1}=1$;
\item $(a,b,c) =(2,2,2)$; here, $\sigma_7(X)$ is defective, and
$\delta_7=1$.
\end{itemize}
\end{thm}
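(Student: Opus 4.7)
The plan is to reduce the question to a Hilbert-function problem for a scheme of fat points in $\PP^3$, and then to argue by a double induction using the Horace method, following the strategy of \cite{CGG6}. By Theorem \ref{thm:SecantsViaFatPoints}, the regularity of $\sigma_s(X_{(1,1,1),(a,b,c)})$ is equivalent to the statement that $s$ generic two-fat points in $\PP^1\times\PP^1\times\PP^1$ impose independent conditions on multiforms of multidegree $(a,b,c)$. Applying the multiprojective-affine-projective method (Theorem \ref{thm:multiprojaffproj}) with $\bfn=(1,1,1)$ and hence $n=3$, this in turn is equivalent to computing $\dim_{\Bbbk} I(W)_d$, where $d=a+b+c$ and
$$
W \;=\; Z' \,\cup\, (a-1)Q_1\,\cup\,(b-1)Q_2\,\cup\,(c-1)Q_3 \;\subset\; \PP^3,
$$
with $Z'$ a generic set of $s$ two-fat points of $\PP^3$, $Q_1,Q_2,Q_3$ three of the four coordinate vertices of $\PP^3$, and a factor $(a_i-1)Q_i$ with $a_i=1$ interpreted as empty.

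For the non-defective cases I plan a double induction on $d$ and on the dimension of the ambient projective space. The base case $(a,b,c)=(1,1,1)$ is the Segre variety of three copies of $\PP^1$, handled by Theorem \ref{P1P1P1...}. The inductive step applies the Castelnuovo sequence \eqref{castelnuovo} with a plane $H\subset\PP^3$ passing through the coordinate vertex of largest multiplicity, say $Q_3$, and specializes a carefully chosen number $t$ of the two-fat points of $Z'$ to have support on $H$, yielding
$$
\dim_{\Bbbk} I(W)_d \;\le\; \dim_{\Bbbk} I(\Res_H W)_{d-1} + \dim_{\Bbbk} I(\Tr_H W)_d.
$$
Here $\Tr_H W\subset H\cong\PP^2$ is a union of $t$ two-fat points together with the traces of the coordinate fat points meeting $H$, while $\Res_H W\subset\PP^3$ is a union of $s-t$ two-fat points, $t$ simple points on $H$, and the residual coordinate fat points. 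Applying Theorem \ref{corollAH} (together with a short direct argument for the additional coordinate fat points) to the trace in $\PP^2$, the inductive hypothesis at lower degree in $\PP^3$, and Theorem \ref{theorem:bo} to balance the two halves, should reduce the regular cases to a finite list of initial configurations verified by direct computation.

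The main obstacle is twofold. First, the naive Horace specialization will fail exactly at the boundary between the two inequalities of Theorem \ref{theorem:bo} (as in the failure on page \pageref{fail:Horace} for $\sigma_{10}(X_{5,3})$), so one must switch to the differential Horace method of Section \ref{Horacediffsection} and split selected two-fat points into a residue and a jet on $H$. Second, the two defective families require separate explicit constructions. For $(a,b,c)=(2,2,2)$, $s=7$, the plan is to exhibit a classical $GL_2\times GL_2\times GL_2$-invariant of $S^2V_1\otimes S^2V_2\otimes S^2V_3$ vanishing on $\sigma_7(X)$, giving the required hypersurface equation, and to match it with a Terracini parameter count. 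For $(a,b,c)=(1,1,2d)$, $s=2d+1$, I would view a multiform of multidegree $(1,1,2d)$ as a $2\times 2$ matrix $M(z)$ of binary $2d$-ics; any decomposition $M(z)=\sum_{i=1}^{2d+1}\alpha_i\beta_i^{\top}L_i(z)^{2d}$ forces $\det M(z)$ into the subspace spanned by the $\binom{2d+1}{2}$ products $(L_iL_j)^{2d}$, and a catalecticant/flattening argument in the spirit of the remark following Theorem \ref{secante2} converts this constraint into an explicit extra equation of $\sigma_{2d+1}(X)$; Terracini then supplies the matching lower bound $\dim\sigma_{2d+1}(X)=8d+2$.
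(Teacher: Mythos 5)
The survey does not actually prove Theorem \ref{SV3factors} (it is quoted from \cite{CGG6}), so I can only measure your plan against the strategy the survey describes, which is indeed the one used there: reduce to fat points via Theorem \ref{thm:SecantsViaFatPoints} and Theorem \ref{thm:multiprojaffproj}, then run Horace. Your first concrete problem is the scheme at infinity. For $\dim_\Bbbk I(W)_d=\dim_\Bbbk I(Z)_{(a,b,c)}$ to hold, the coordinate vertex $Q_i$ must carry multiplicity $d-d_i$, not $d_i-1$: a degree-$d$ form in $z_0,z_1,z_2,z_3$ rehomogenizes to a multiform of multidegree $(a,b,c)$ exactly when its degree in $z_i$ is at most $d_i$, i.e.\ when it vanishes to order $d-d_i$ at $Q_i$. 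So the correct scheme is $W_\infty=(b+c)Q_1+(a+c)Q_2+(a+b)Q_3$; with your multiplicities the sanity check at $Z=\emptyset$ already fails (for $(a,b,c)=(1,1,1)$ your $W_\infty$ is empty, yet $\HF(W_\infty;3)$ must equal ${6\choose 3}-8=12$), and this is consistent with the survey's later description of the $(\PP^1)^4$ case, where the four vertices carry multiplicity $3=d-1$. This is not cosmetic: with the right multiplicities, lowering $c$ by one lowers the multiplicities at $Q_1$ and $Q_2$, so the Horace hyperplane must contain those two vertices rather than $Q_3$, and the trace in $\PP^2$ then carries two large fat points at coordinate vertices in addition to the specialized double points. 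Such quasi-homogeneous planar configurations are outside the reach of Theorem \ref{corollAH} and Theorem \ref{theorem:bo}, which handle only double points; establishing them is where most of the actual work lies, so ``a short direct argument'' understates the hardest step.

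The second gap is in the $(1,1,2d)$ defectivity argument. The identity $\det M=\sum_{i<j}\det(\alpha_i,\alpha_j)\det(\beta_i,\beta_j)(L_iL_j)^{2d}$ is correct, but the resulting membership of $\det M$ in $\langle (L_iL_j)^{2d}\rangle_{i<j}$ is vacuous as soon as ${2d+1\choose 2}=d(2d+1)\geq 4d+1=\dim_\Bbbk S^{4d}V_3$, i.e.\ for every $d\geq 2$: for generic $L_i$ those products span all of $S^{4d}V_3$ (they are that many sufficiently general points on the nondegenerate surface $\{[Q^{2d}]:Q\in S^2V_3\}\subset\PP^{4d}$), so no equation is produced. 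The flattening that does work is the contraction $V_1^*\otimes S^dV_3^*\to V_2\otimes S^dV_3$: it is a square matrix of size $2d+2$, each summand $\alpha\otimes\beta\otimes L^{2d}$ contributes rank one (image spanned by $\beta\otimes L^d$), so the matrix is singular on $\sigma_{2d+1}(X)$, while it is invertible for a sum of $2d+2$ general summands; its determinant is the required hypersurface equation, and together with your Terracini lower bound it yields $\delta_{2d+1}=1$. The $(2,2,2)$ case is fine once the invariant is named explicitly: the determinant of the analogous $8\times 8$ contraction $V_1^*\otimes V_2^*\otimes V_3^*\to V_1\otimes V_2\otimes V_3$ vanishes on $\sigma_7(X)$ and is nonzero for a sum of eight general summands.
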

In \cite{LP13} the authors, by using an induction approach, whose basic step was Theorem \ref{P1P1P1...} about the Segre varieties $X_{\bf1}$ \cite{CGG8}, concluded that there are no other defective cases except for the previously-known ones.

\begin{thm}\label{SVrfactors}
Let $X = X_{(1,\ldots,1),(a_1,\ldots ,a_r)}$. Then, $X$ is never defective, except for:
\begin{itemize}
\item $X_{(1,1),(2,2d)}$ (Theorem \ref{SV2factors});
\item $X_{(1,1,1),(1,1,2d)}$ and $X_{(1,1,1),(2,2,2)}$ (Theorem \ref{SV3factors});
\item $X_{(1,1,1,1),(1,1,1,1)}$ (Theorem \ref{P1P1P1...}).
\end{itemize}
\end{thm}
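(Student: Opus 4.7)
My plan is to reduce the statement to a multigraded Hilbert function calculation and then perform a double induction on the number of factors $r$ and on the total degree $|\bfa|=a_1+\cdots+a_r$, using the classical and differential Horace methods. By Terracini's lemma in the multiprojective setting (the analogue of Theorem~\ref{thm:SecantsViaFatPoints} and Corollary~\ref{corTerracini}), proving that $\sigma_s(X_{\mathbf 1,\bfa})$ has the expected dimension is equivalent to showing that a scheme $Z$ of $s$ generic double fat points in $(\mathbb{P}^1)^r$ imposes independent conditions on sections of multidegree $\bfa$, i.e.\ $\HF(Z;\bfa)=\min\{s(r+1),\prod_i(a_i+1)\}$. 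The base cases are supplied by the excerpt itself: $r=2$ and $r=3$ are Theorems~\ref{SV2factors}--\ref{SV3factors}, and the "pure Segre" case $\bfa=(1,\dots,1)$ for any $r$ is Theorem~\ref{P1P1P1...}. The four listed exceptions therefore appear automatically once we prove that no new defective cases arise in the inductive step.

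For the inductive step, assume $r\ge 4$ and $\bfa\ne(1,\dots,1)$, so that some $a_i\ge 2$; after relabeling, take $i=1$. Pick a divisor $D\in|\mathcal O_{(\mathbb P^1)^r}(\bfe_1)|$, which is a copy of $(\mathbb P^1)^{r-1}$, and specialise $t$ of the $s$ generic $2$-fat points so that their supports lie on $D$ (by upper semicontinuity, as on page~\pageref{upper:semicontinuity}, it suffices to handle a special configuration). The Castelnuovo sequence
\[
0\to \mathcal I_{\operatorname{Res}_D Z}(\bfa-\bfe_1)\to \mathcal I_Z(\bfa)\to \mathcal I_{\operatorname{Tr}_D Z,\,D}(\bfa)\to 0
\]
splits the count into (a) a trace problem consisting of $t$ generic $2$-fat points on $D\cong (\mathbb P^1)^{r-1}$ in multidegree $(a_2,\dots,a_r)$, governed by the inductive hypothesis on $r$; and (b) a residue problem on $(\mathbb P^1)^r$ in multidegree $\bfa-\bfe_1$ consisting of $s-t$ generic $2$-fat points together with $t$ simple points, governed by induction on $|\bfa|$ (after checking that adding generic simple points preserves independence, which follows as long as the ambient multigraded linear system has enough room). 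One chooses $t$ to make the two bounds in the two pieces match: namely the largest $t$ with $t(r+1)\le \dim_\Bbbk H^0(\mathcal O_D(a_2,\dots,a_r))=\prod_{i\ge 2}(a_i+1)$, which is exactly the numerical condition of Theorem~\ref{theorem:bo}. When both subproblems are non-defective, so is $Z$.

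The main obstacle, and the technical heart of the argument, is precisely the situation where this integer choice of $t$ leaves a gap of size strictly between $0$ and $r+1$ in one of the two pieces, so that the naive Horace method fails (as in the situation on page~\pageref{fail:Horace} that forced the introduction of the differential method in the proof of Alexander--Hirschowitz). In those borderline cases, I would invoke the differential Horace lemma of Section~\ref{Horacediffsection}: replace the specialised $2$-fat points by vertically graded subschemes with base $D$, splitting a $2$-fat point into a simple point on $D$ plus a residual "jet" transverse to $D$, so that both the trace and the residue receive exactly the number of conditions needed for independence. One must then verify by hand the finitely many "small" configurations where neither induction nor the differential trick directly applies, and check that these coincide with the four listed defective families. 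I expect the genuine difficulty to be the bookkeeping that ensures that the inductive reduction never passes through one of the known defective configurations without being caught; once a reduction lands in, say, $X_{(1,1,1,1),(1,1,1,1)}$, the failure of independent conditions propagates back, and the specialisation must be redesigned, which is why Laface and Postinghel in \cite{LP13} first handled the pure Segre cases via Theorem~\ref{P1P1P1...} and only then propagated to the full statement.
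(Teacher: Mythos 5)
The paper itself does not prove this theorem: it is quoted from Laface--Postinghel \cite{LP13}, with only the one-line indication that their argument is an induction whose basic step is the Segre case of Theorem \ref{P1P1P1...}. Your outline is consistent with that description --- Terracini reduction to the multigraded Hilbert function of double points, restriction to a divisor of class $\bfe_1$, double induction on the number of factors and on the multidegree, differential Horace for the borderline cases --- so as a strategy it is the right one, and you correctly locate where the difficulty sits.

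As a proof, however, there is a genuine gap: essentially everything that makes the theorem true is deferred. First, the inner induction on $|\mathbf{a}|$ for fixed $r$ bottoms out exactly at $\mathbf{a}=(1,\ldots,1)$, and the outer induction's trace step lands in $r-1$ factors; both terminal configurations include the defective families ($X_{(1,1,1,1),(1,1,1,1)}$ for the residue chain when $r=4$, and $X_{(1,1,1),(1,1,2d)}$, $X_{(1,1,1),(2,2,2)}$ for the trace when $r=4$). So the inductive hypothesis you invoke for the trace and for the double-point part of the residue is simply false at the configurations the recursion most naturally passes through, and the argument must be rerouted around them; you acknowledge this but do not do it, and it is not a cosmetic issue --- it forces one to prove a strengthened statement about systems with assigned double \emph{and} simple base points and to redesign the specialisation near the exceptional multidegrees. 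Second, the existence, for every $r\geq 4$ and every $\mathbf{a}$, of an admissible $t$ (or of a differential-Horace splitting) making both the trace and the residue balanced is a nontrivial arithmetic statement about $s(r+1)$ versus $\prod_i(a_i+1)$ that is asserted rather than verified; this bookkeeping, together with the check that no exceptions beyond the four listed families survive, is the actual content of \cite{LP13}. Until those steps are carried out, what you have is a correct plan for a proof rather than a proof.
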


For several other partial results on the defectivity of certain Segre--Veronese varieties, see, e.g., \cite{CGG5,CaCat09,AboBra2,RR}, and for an asymptotic result about non-defective Segre--Veronese varieties, see \cite{AMR18,RR}.

\subsection{Tangential and Osculating Varieties to Veronese Varieties}\label{sec: tangential}
Another way of generalizing what we saw in Section \ref{sec:SymmetricDec_Veronese} for secants of Veronese Varieties $X_{n,d}$ is to work with their tangential and osculating varieties. 

\begin{dfn} \label{osculating}
Let $X_{n,d}\subset \PP ^N$ be a Veronese variety. We denote by
 $\tau (X_{n,d})$ the \emph{tangential variety} of $X_{n,d}$, i.e., the closure in $\PP^N$ of the union of all tangent spaces: 
 $$\tau(X_{n,d}) = \overline{\bigcup _{P\in X_{n,d}} T_P(X_{n,d})}\subset \bbP^N.$$
 More in general, we denote by $O^k(X_{n.d})$ the \emph{$k$-th osculating variety} of $X_{n,d}$, i.e., the closure in $\bbP^N$ of the union of all $k$-th osculating spaces:
 $$O^k(X_{n,d}) = \overline{\bigcup _{P\in X_{n,d}}
O^k_P(X_{n,d})}\subset \bbP^N.$$
Hence, $\tau (X_{n,d}) = O^1(X_{n,d})$.
\end{dfn}
These varieties are of interest also because the space $O^k(X_{n,d})$ parametrizes a particular kind of form. Indeed, if the point $P = [L^d] \in X_{n,d} \subset \bbP S_d$, then the $k$-th osculating space $O^k_P(X_{n,d})$ correspond to linear space $\left\{[L^{d-k}G] ~|~ G \in S_k\right\}$. Therefore, the corresponding additive decomposition problem asks the following.
\begin{problem}\label{prob:TangentialOsculating}
	Given a homogeneous polynomial $F \in \Bbbk[x_0,\ldots,x_n]$, find the smallest length of an expression $F = \sum_{i=1}^r L_i^{d-k}G_i$, where the $L_i$'s are linear forms and the $G_i$'s are forms of degree $k$.
\end{problem}
The type of decompositions mentioned in the latter problem have been called {generalized additive decomposition} in \cite{IaKa} and in \cite{BCMT}. In the special case of $k = 1$, they are a particular case of the so-called {Chow--Waring decompositions} that we treat in full generality in Section \ref{sec:ChowVeronese}. In this case, the answer to Problem \ref{prob:TangentialOsculating} is called {$(d-1,1)$-rank}, and we denote it by $\rk_{(d-1,1)}(F)$.
\begin{rmk}\label{rmk:simul}
	Given a family of homogeneous polynomials $\calF = \{F_1,\ldots,F_m\}$, we define the {simultaneous rank} of $\calF$ the smallest number of linear forms that can be used to write a Waring decomposition of all polynomials of~$\calF$. 
	
	Now, a homogeneous polynomial $F \in S^dV$ can be seen as a partially-symmetric tensor in $S^1V \otimes S^{d-1}V$ via the equality: 
	$$F = \frac{1}{d}\sum_{i=0}^n x_i \otimes \frac{\partial F}{\partial x_i}.$$
	From this expression, it is clear that a list of linear forms that decompose simultaneously all partial derivatives of $F$ also decompose $F$, i.e., the simultaneous rank of the first partial derivatives is an upper bound of the symmetric-rank of $F$. Actually, it is possible to prove that for every homogeneous polynomial, this is an equality (e.g., see \cite{TEI14} (Section 1.3) or \cite{GOV18} (Lemma 2.4)). For more details on relations between simultaneous ranks of higher order partial derivatives and partially-symmetric-ranks, we refer to \cite{GOV18}.
\end{rmk}
Once again, in order to answer the latter question in the case of the generic polynomial, we study the secant varieties to the $k$-th osculating variety of $X_{n,d}$. In \cite{CGG1}, the dimension of $\sigma_s(\tau (X_{n,d}))$ is studied ($k=1$ case). Via apolarity and inverse systems, with an analog of Theorem \ref{thm:SecantsViaFatPoints}, the problem is again reduced to the computation of the Hilbert function of some particular zero-dimensional subschemes of $\PP^n$; namely,
\begin{align*}
\dim \sigma_s (\tau(X_{n,d})) & = \dim_\Bbbk
(L_1^{d-1},\ldots,L_s^{d-1},L_1^{d-2}M_1,\ldots,L_s^{d-2}M_s)_d -1 = \\
& = \HF(Z,d)-1,
\end{align*}
where $L_1,\ldots,L_s,M_1,\ldots,M_s$ are $2s$ generic linear forms in
$\Bbbk[x_0,\ldots,x_n]$, while $\HF(Z,d)$ is the Hilbert function of a scheme
$Z$, which is the union of $s$ generic {$(2,3)$-points} in $\PP^n$, which are defined as follows.
\begin{dfn}
A \emph{$(2,3)$-point} is a zero-dimensional scheme in $\PP^n$ with support
at a point $P$ and whose ideal is of type $I(P)^3 +
I(\ell)^2$, where $I(P)$ is the homogeneous ideal of $P$ and $\ell \subset \PP^n$ is a line through $P$ defining ideal~$I(\ell)$. 
\end{dfn}
Note that when we say that $Z$ is a scheme of $s$ generic $(2,3)$-points in $\PP^n$, we mean that $I(Z) = I(Q_1) \cap \ldots \cap I(Q_s)$, where the $Q_i$'s are $(2,3)$-points, i.e., $I(Q_i) = I(P_i)^3 + I(\ell_i)^2$, such that $P_1,\ldots P_s$ are generic points in $\PP^n$, while $\ell_1,\ldots,\ell_s$ are generic lines passing though $P_1,\ldots,P_s$, respectively.

By using the above fact, in \cite{CGG1}, several cases where $\sigma_s (\tau(X_{n,d}))$ is defective were found, and it was conjectured that these exceptions were the only ones. The conjecture has been proven in a few cases in \cite{CGG2} ($s\leq 5$ and $n \geq s+1$) and in~\cite{Bal05:SecantTangentialVeronese} ($n = 2,3$). In \cite{BCGI2}, it was proven for $n \leq 9$, and moreover, it was proven that if the conjecture holds for $d = 3$, then it holds in every case. Finally, by using this latter fact, Abo and Vannieuwenhoven completed the proof of the following theorem \cite{AboVan:SecantTangentialVeronese}.
\begin{thm}\label{thm:SecantTangentialVeronese}
The $s$-th secant variety $\sigma_s(\tau(X_{n,d}))$ of the tangential variety to the Veronese variety has dimension as expected, except in the following cases:
\begin{enumerate}
\item $d = 2$ and $2 \leq 2s < n$;
\item $d = 3$ and $n = 2,3,4$.
\end{enumerate}
\end{thm}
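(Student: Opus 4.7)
The plan is to translate the problem, via Terracini's lemma and multigraded apolarity, into a Hilbert function computation for unions of $(2,3)$-points. As recalled in the excerpt, one has
$$\dim \sigma_s(\tau(X_{n,d})) = \HF(Z,d) - 1,$$
where $Z = Q_1 + \cdots + Q_s \subset \bbP^n$ is a union of $s$ generic $(2,3)$-points, each with defining ideal $I(P_i)^3 + I(\ell_i)^2$ for a generic line $\ell_i$ through the generic point $P_i$. The length of each $(2,3)$-point is $2n+1$, matching the projective dimension of a tangent space to $X_{n,d}$, so the expected value is $\min\{\binom{n+d}{d},\, s(2n+1)\}$. Hence the theorem is equivalent to proving that $Z$ imposes independent conditions on degree-$d$ hypersurfaces, except in the stated list.

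First I would dispose of the defective cases. For $d=2$ with $2s < n$ the $(2,3)$-points force the quadrics through $Z$ to be cones with vertex a $\bbP^{2s-1}$, producing exactly the defect predicted, in the same spirit as Example \ref{example: defective Veronese quadrics}. For $d=3$ and $n \in \{2,3,4\}$ the defectivity is verified by explicit Macaulay2-style computation, exhibiting the unexpected dimension in each finite list of cases. These examples also serve as the base steps that induction must avoid.

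The next step is the inductive reduction on the degree $d$. Following the differential Horace method of Section \ref{Horacediffsection}, but adapted to $(2,3)$-points, I would specialize an appropriate number of the $(2,3)$-points so that their support lies on a generic hyperplane $H \simeq \bbP^{n-1}$, split each specialized scheme into a trace on $H$ and a residue in $\bbP^n$, and use the Castelnuovo exact sequence (\ref{castelnuovo}). The trace of a $(2,3)$-point is a $(2,3)$-point on $H$ (when the line $\ell_i$ lies in $H$) while the residue is a $2$-fat point of $\bbP^n$ possibly with one extra jet. Using the Alexander--Hirschowitz theorem (Theorem \ref{corollAH}) for the residue and an inductive hypothesis on the trace, one obtains that non-defectivity in degree $d-1$ plus non-defectivity for $(2,3)$-points in one lower dimension propagates to degree $d$. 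This is the content of the reduction proven in \cite{BCGI2}, which, once combined with the base case $d=3$, settles every $d \geq 4$.

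The hard part is then the remaining case $d = 3$, $n \geq 5$, which is the core of \cite{AboVan:SecantTangentialVeronese}. Here the Castelnuovo sequence must be used much more carefully: a $(2,3)$-point consumes $2n+1$ conditions in degree $3$, and the $\bbP^{n-1}$-dimensional constraint of supporting it on a hyperplane creates trace and residue schemes whose cohomology cannot be controlled by naïve specialization. The strategy is to run a double induction on $(n,s)$, choosing for each $n \geq 5$ a splitting of the $s$ points into groups specialized to $H$ in various partial ways (with the line $\ell_i$ either contained in $H$, transverse to $H$, or partially specialized in the sense of the differential Horace lemma), so that both trace and residue land in cases already treated. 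The combinatorial optimisation of this splitting—ensuring the inductive hypothesis applies in both factors simultaneously and never falls into the $d=3$, $n \leq 4$ defective list—is the genuine obstacle, and is where the argument differs most from the Veronese case; I would follow the explicit specialization schemes given in \cite{AboVan:SecantTangentialVeronese}, whose correctness is checked by induction together with a finite computer verification anchoring the small $n$ and $s$ values.
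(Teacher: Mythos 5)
Your proposal follows essentially the same route the paper takes: reduce via Terracini and apolarity to the Hilbert function in degree $d$ of $s$ generic $(2,3)$-points, handle the defective cases ($d=2$ via cones over the span of the lines $\ell_i$, and the finitely many $d=3$, $n\leq 4$ cases directly), invoke the reduction of \cite{BCGI2} from arbitrary $d\geq 4$ to $d=3$ by a differential Horace argument, and settle $d=3$, $n\geq 5$ by the specialization/induction scheme of \cite{AboVan:SecantTangentialVeronese} — which is precisely the chain of results the survey assembles. One small inaccuracy worth fixing: when a $(2,3)$-point is specialized with its line $\ell_i$ inside $H$, the trace is indeed a $(2,3)$-point of $H$ but the residue is a length-two jet along $\ell_i$ (not a two-fat point with an extra jet); the two-fat-point residue arises in the other specialization, where $\ell_i$ is transverse to $H$ and the trace is a two-fat point of $H$.
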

As a direct corollary of the latter result, we obtain the following answer to Problem \ref{prob:TangentialOsculating} in the case of generic forms.
\begin{corollary}\label{cor: osculating ranks}
	Let $F\in S_d$ be a generic form. Then,
	$$
		\rk_{(d-1,1)}(F) = \left\lceil \frac{{n+d \choose n}}{2n+1} \right\rceil,
	$$
	except for: 
	\begin{enumerate}
		\item $d = 2$, where $\rk_{(d-1,1)}(F) = \left\lfloor \frac{n}{2} \right\rfloor + 1$;
		\item $d = 3$ and $n = 2,3,4$, where $\rk_{(d-1,1)}(F) = \left\lceil \frac{{n+d \choose n}}{2n+1} \right\rceil + 1$.
	\end{enumerate}
\end{corollary}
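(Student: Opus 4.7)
The plan is to convert Problem \ref{prob:TangentialOsculating} (for $k=1$) into a statement about secant varieties of $\tau(X_{n,d})$. A point of $T_{[L^d]}(X_{n,d})$ is of the form $[L^{d-1}M]$ with $M\in S_1$, so the points of $\tau(X_{n,d})\subset\bbP S_d$ are precisely the classes of $(d-1,1)$-rank one forms, and consequently $\sigma_s(\tau(X_{n,d}))$ parametrizes the forms admitting a $(d-1,1)$-decomposition of length $\le s$. Hence for a generic $F\in S_d$,
\[
\rk_{(d-1,1)}(F)=\min\bigl\{s\ :\ \sigma_s(\tau(X_{n,d}))=\bbP S_d\bigr\}.
\]

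Since $\dim\tau(X_{n,d})=2n$ (the base $X_{n,d}$ has dimension $n$ and a general tangent direction adds $n$ more), the expected dimension of $\sigma_s(\tau(X_{n,d}))$ is $\min\{s(2n+1)-1,\,N\}$ with $N={n+d\choose n}-1$. Thus the expected generic $(d-1,1)$-rank is $\lceil {n+d\choose n}/(2n+1)\rceil$. Outside the list in Theorem \ref{thm:SecantTangentialVeronese} all these secant varieties attain the expected dimension, and the main formula follows at once.

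For the exception $d=2$, I would use the standard identification of $S_2$ with symmetric $(n+1)\times(n+1)$ matrices, under which $\nu_2(\bbP^n)$ is the rank one locus and $\tau(X_{n,2})$ is the rank $\le 2$ locus. A length $s$ decomposition $\sum L_i M_i$ corresponds to a matrix of rank $\le 2s$, and a generic symmetric matrix has rank $n+1$; so the generic $(1,1)$-rank is $\lceil(n+1)/2\rceil=\lfloor n/2\rfloor+1$, as claimed.

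The main obstacle is the set of defective cases $d=3$, $n\in\{2,3,4\}$, where Theorem \ref{thm:SecantTangentialVeronese} only says that $\sigma_{s_0}(\tau(X_{n,3}))$, for $s_0=\lceil{n+3\choose 3}/(2n+1)\rceil\in\{2,3,4\}$, is a hypersurface in $\bbP S_3$ rather than filling it. One must still show that exactly \emph{one} further summand is enough, i.e.\ that $\sigma_{s_0+1}(\tau(X_{n,3}))=\bbP S_3$. This is a finite check that I would carry out via Terracini's lemma combined with the inverse systems translation recalled before Definition~4.3: applying Corollary \ref{corTerracini} to $\tau(X_{n,3})$, the condition $\sigma_{s_0+1}(\tau(X_{n,3}))=\bbP S_3$ becomes the vanishing
\[
\HF\bigl(Z,\,3\bigr)={n+3\choose 3},
\]
where $Z\subset\bbP^n$ is the union of $s_0+1$ generic $(2,3)$-points. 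For the three pairs $(n,s_0+1)\in\{(2,3),(3,4),(4,5)\}$ this is a small linear algebra computation (specializing the supports to, e.g., coordinate points and the auxiliary lines to coordinate lines, and invoking upper semicontinuity as on page~\pageref{upper:semicontinuity}) that can be done by hand or verified with a computer algebra system. Together with the defect $\delta_{s_0}=1$ supplied by Theorem \ref{thm:SecantTangentialVeronese}, this forces $\rk_{(d-1,1)}(F)=s_0+1$ for a generic $F$, completing the proof.
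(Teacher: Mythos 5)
Your argument is correct and follows essentially the route the paper intends: the corollary is read off from Theorem \ref{thm:SecantTangentialVeronese} by identifying the generic $(d-1,1)$-rank with the least $s$ for which $\sigma_s(\tau(X_{n,d}))$ fills $\bbP S_d$, and you supply the details for the exceptional cases (the symmetric-matrix argument for $d=2$, and the Hilbert-function check on $s_0+1$ generic $(2,3)$-points for $d=3$, $n=2,3,4$) that the paper leaves implicit as a ``direct corollary.'' One small caveat: Theorem \ref{thm:SecantTangentialVeronese} as stated does not actually supply the defect $\delta_{s_0}=1$, but your proof does not need it, since defectivity at $s_0$ already forces $\sigma_{s_0}\neq\bbP S_d$ (the expected dimension there is the full ambient dimension) and your direct verification that $\sigma_{s_0+1}$ fills completes the argument.
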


The general case of $\sigma_s(O^k(X_{n.d}))$ is studied in
\cite{BCGI1, BCGI2, BeC, BF}. Working in
analogy with the case $k=1$, the dimension of
$\sigma_s(O^k(X_{n,d})$ is related to the Hilbert function of a
certain zero-dimensional scheme $Z = Z_1\cup\cdots\cup Z_s$, whose support is a generic
union of points $P_1,\ldots,P_s\in \PP ^n$, respectively, and such that, for each $i = 1,\ldots,s$, we have that $(k+1)P_i \subset Z_i \subset (k+2)P_i$.

As one of the manifestations of the {ubiquity of fat points}, the following conjecture describes the conditions for the defect of this secant variety in terms of the Hilbert function of fat points:

\begin{conj}[{\cite{BCGI2}} (Conjecture 2a)]
 The secant variety $\sigma_s(O^k(X_{n,d}))$ is defective if and only if either:
\begin{enumerate}
\item $h^1({\mathcal I}_X(d)) > \max \{0, \deg Z - {d+n \choose
n}\}$, or
\item $h^0({\mathcal I}_T(d)) > \max \{0, {d+n \choose n} - \deg
Z\}$,
\end{enumerate} 
where $X$ is a generic union of $s$ $(k+1)$-fat points and $T$ is a generic union of $(k+2)$-fat points.
\end{conj}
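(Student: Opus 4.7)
My plan is to attack this via the standard translation into a Hilbert function statement on the intermediate scheme $Z = Z_1 \cup \cdots \cup Z_s$ with $(k+1)P_i \subset Z_i \subset (k+2)P_i$, where the $Z_i$'s are the first infinitesimal neighborhoods in $O^k(X_{n,d})$ of the general points $[\nu_d(P_i)]$. By Terracini's lemma applied to $O^k(X_{n,d})$ together with the apolarity action described in Section \ref{Apolaritysection}, one has
$$
\dim \sigma_s(O^k(X_{n,d})) = \binom{d+n}{n} - 1 - h^0(\mathcal{I}_Z(d)),
$$
so $\sigma_s(O^k(X_{n,d}))$ is defective if and only if $h^0(\mathcal{I}_Z(d)) > \max\{0,\binom{d+n}{n}-\deg Z\}$ or equivalently $h^1(\mathcal{I}_Z(d)) > \max\{0,\deg Z - \binom{d+n}{n}\}$. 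Thus the whole problem is to sandwich the cohomology of $\mathcal{I}_Z$ between that of $\mathcal{I}_X$ and $\mathcal{I}_T$, where $X$ is the union of $s$ generic $(k+1)$-fat points and $T$ the union of the corresponding $(k+2)$-fat points, so that $X \subset Z \subset T$.

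For the easy ($\Leftarrow$) direction I would exploit these inclusions directly: the surjection $\mathcal{O}_Z \twoheadrightarrow \mathcal{O}_X$ and the inclusion $\mathcal{I}_T \subset \mathcal{I}_Z$ yield
$$
h^1(\mathcal{I}_Z(d)) \geq h^1(\mathcal{I}_X(d)) - \bigl(\deg Z - \deg X\bigr)_+ \quad \text{and} \quad h^0(\mathcal{I}_Z(d)) \geq h^0(\mathcal{I}_T(d)),
$$
and a routine numerical check turns each hypothesis of the conjecture into the corresponding inequality for $Z$, hence into defectivity. The subtle point is keeping track of how the difference $\deg Z - \deg X$ interacts with the bound $\deg Z - \binom{d+n}{n}$, but this is purely bookkeeping once one fixes the multiplicities via $\deg Z_i = \binom{k+n}{n} + \binom{k+n-1}{n-1}$ coming from the description $I(Z_i) = I(P_i)^{k+2} + I(\ell_i)^{k+1}$ for a generic line $\ell_i$ through $P_i$.

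The hard direction is ($\Rightarrow$): assuming neither condition fails for $X$ and $T$, prove that $Z$ has the expected postulation in degree $d$. My strategy would be a double induction on $n$ and $d$ implemented through the differential Horace method of Alexander--Hirschowitz (see Section \ref{Horacediffsection}). One specializes a subset of the $s$ points onto a hyperplane $H$, computes $\Res_H^p$ and $\Tr_H^p$ of the $(2,k+2)$-type scheme $Z_i$ (the vertically graded structure is transparent because $I(P_i)^{k+2}+I(\ell_i)^{k+1}$ is monomial in suitable coordinates), and reduces to two smaller instances: one on $H\cong \mathbb{P}^{n-1}$ with a partly traced scheme, and one on $\mathbb{P}^n$ in degree $d-1$ with a partly residualized scheme. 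The inductive hypothesis applied to those two instances -- combined with the already known Alexander--Hirschowitz type statements (Theorem \ref{corollAH}) for the $k=0$ base and the tangential case proved in Theorem \ref{thm:SecantTangentialVeronese} for $k=1$ -- should close the argument.

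The main obstacle, as always when one upgrades from Veronese or tangential Veronese to higher osculating varieties, is the base of the induction and the list of numerical coincidences where the naive Horace split forces the loss of a condition on the hyperplane. Here one expects sporadic exceptional configurations (analogous to the $d=2$ and $d=3$, low-$n$ exceptions of Theorem \ref{thm:SecantTangentialVeronese}, and to the Alexander--Hirschowitz exceptional list) which must be classified and shown to be precisely the ones in which condition (1) or (2) of the conjecture is triggered. In practice this requires either a refined differential Horace lemma tailored to $(2,k+2)$-type schemes -- controlling the vertical slices of $I(P)^{k+2}+I(\ell)^{k+1}$ along $H$ -- or an explicit case-by-case analysis for small $s,d,k$ feeding the induction, and reconciling these exceptional cases with the cohomological inequalities on $X$ and $T$ is where the genuine combinatorial work lies.
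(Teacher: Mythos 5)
The statement you are trying to prove is not a theorem of this paper: it is an open conjecture (Conjecture 2a of \cite{BCGI2}), which the survey merely records together with the partial results known to date ($n=2$ for any $s$ in \cite{BCGI2}, and $n=2$, $s\leq 9$ in \cite{BeC,BF}). There is therefore no proof in the paper to compare against, and your proposal does not supply one either. Your reduction of defectivity to the postulation of the intermediate scheme $Z=Z_1\cup\cdots\cup Z_s$ with $(k+1)P_i\subset Z_i\subset (k+2)P_i$ is the correct and standard setup, and your ($\Leftarrow$) direction is essentially sound: from $X\subset Z\subset T$ one gets a surjection $H^1(\mathcal{I}_Z(d))\twoheadrightarrow H^1(\mathcal{I}_X(d))$ and an inclusion $H^0(\mathcal{I}_T(d))\hookrightarrow H^0(\mathcal{I}_Z(d))$, and since the thresholds in (1) and (2) are already expressed in terms of $\deg Z$, either hypothesis immediately forces $Z$ to fail the expected postulation in degree $d$. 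This easy implication is why the conjecture is plausible; it is not where the difficulty lies.

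The ($\Rightarrow$) direction, however, is the entire content of the conjecture, and your proposal does not prove it — it names the strategy (double induction via the differential Horace method, specialization onto a hyperplane, analysis of $\Res^p_H$ and $\Tr^p_H$ of the vertically graded schemes $I(P_i)^{k+2}+I(\ell_i)^{k+1}$) and then explicitly defers every step that carries mathematical weight: the base cases of the induction beyond $k=0$ (Alexander--Hirschowitz) and $k=1$ (Theorem \ref{thm:SecantTangentialVeronese}), the classification of the exceptional numerical configurations where the Horace split loses conditions, and the verification that those exceptions are exactly the ones detected by conditions (1) and (2) on $X$ and $T$. Phrases such as ``should close the argument'' and ``this is where the genuine combinatorial work lies'' are an accurate diagnosis of the open problem, not a resolution of it. A further structural worry with the plan as stated: the conjecture characterizes defectivity of $\sigma_s(O^k(X_{n,d}))$ via the cohomology of the \emph{bounding} schemes $X$ and $T$, whereas the Horace induction controls the cohomology of $Z$ itself; even a successful induction on $Z$ would still leave you to prove that ``$Z$ special'' implies ``$X$ or $T$ special,'' and that transfer is nontrivial and is not addressed in your sketch. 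In short, the proposal is a reasonable research program consistent with the literature, but it is not a proof, and no proof is currently known beyond the cases listed in the paper.
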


In \cite{BeC,BF}, the conjecture is proven for $n=2, s\leq 9$, and in \cite{BCGI2} for $n=2$ and any $s$.


\subsection{Chow--Veronese Varieties}\label{sec:ChowVeronese}
Let $\bfd = (d_1,\ldots,d_t)$ be a partition of a positive integer $d$, i.e., $d_1 \geq \ldots \geq d_t$ are positive integers, which sum to $d$. Then, we consider the following problem.

Let $S = \bigoplus_{d \in \bbN} S_d$ be a polynomial ring in $n+1$ variables.
\begin{problem}\label{prob: ChowVeronese}
	Given a homogeneous polynomial $F \in S_d$, find the smallest length of an expression $F = \sum_{i=1}^r L_{i,1}^{d_1}\cdots L_{i,t}^{d_t}$, where $L_{i,j}$'s are linear forms.
\end{problem}
The decompositions considered in the latter question are often referred to as {Chow--Waring decompositions}. We call the answer to Problem \ref{prob: ChowVeronese} as the {$\bfd$-rank} of $F$, and we denote it by $\rk_\bfd(F)$.

In this case, the summands are parametrized by the so-called {Chow--Veronese variety}, which is given by the image of the embedding:
$$
	\begin{array}{c c c c}
		\nu_{\bfd} : & \mathbb{P} S_1 \times \ldots \times \mathbb{P} S_1 & \longrightarrow & \bbP S_d, \\ 
		& ([L_1],\ldots,[L_t]) & \mapsto & \left[L_1^{d_1}\cdots L_t^{d_t}\right].
	\end{array}
$$
We denote by $X_\bfd$ the image $\nu_{\bfd}(\bbP^\bfn)$. Notice that this map can be seen as a linear projection of the Segre--Veronese variety $X_{\bfn,\bfd} \subset \bbP(S_{d_1}\otimes\ldots\otimes S_{d_t})$, for $\bfn = (n,\ldots,n)$, under the map induced by the linear projection of the space of partially-symmetric tensors $S_{d_1} \otimes\ldots\otimes S_{d_t}$ on the totally symmetric component $S_d$. Once again, we focus on the question posed in Problem \ref{prob: ChowVeronese} in the case of a generic polynomial, for which we study dimensions of secant varieties to $X_{\bfd}$.

In the case of $\bfd = (d-1,1)$, we have that $X_\bfd$ coincides with the tangential variety of the Veronese variety $\tau(X_{n,d})$, for which the problem has been completely solved, as we have seen in the previous section (Theorem \ref{thm:SecantTangentialVeronese}).

The other special case is given by $\bfd = (1,\ldots,1)$, for $d \geq 3$. In this case, $X_{d}$ has been also referred to as the {Chow variety} or as the variety of {split forms} or {completely decomposable forms}. After the first work by Arrondo and Bernardi \cite{AB}, Shin found the dimension of the second secant variety in the ternary case ($n=2$) \cite{Shin12:CompletelyRed}, and Abo determined the dimensions of higher secant varieties \cite{Abo14:CompletelyDec}. All these cases are non-defective. It is conjectured that varieties of split forms of degree $d \geq 3$ are never defective. New cases have been recently proven in \cite{7:ReducibleForms, Tor17:GenericChow}.

The problem for any arbitrary partition $\bfd$ has been considered in \cite{CaChGeOn:WaringLike}. Dimensions of all $s$-th secant varieties for any partition haves been computed in the case of binary forms ($n=1$). In a higher number of variables, the dimensions of secant line varieties ($s=2$) and of higher secant varieties with $s \leq 2 \left\lfloor \frac{n}{3} \right\rfloor$ have been computed. This was done by using the classical Terracini's lemma (Lemma \ref{Terracini}) in order to obtain a nice description of the generic tangent space of the $s$-th secant variety. In the following example, we explain how the binary case could be treated.
 
\begin{example}
	If $P = [L_1^{d_1}\cdots L_t^{d_t}] \in X_\bfd$, then it is not difficult to prove (see Proposition 2.2 in \cite[]{CaChGeOn:WaringLike}) that: 
	$$
		T_P X_\bfd = \bbP \left((I_P)_d\right), \quad \text{ with }d = d_1+\ldots+d_t
	$$
	where $I_P = (L_1^{d_1-1}L_2^{d_2}\cdots L_t^{d_t},L_1^{d_1}L_2^{d_2-1}\cdots L_t^{d_t},\ldots,L_1^{d_1}L_2^{d_2}\cdots L_t^{d_t-1})$.
	
	In the particular case of binary forms, some more computations show that actually,
	$T_P X_\bfd = \bbP \left((I'_P)_d\right)$, where $I'_P$ is the principal ideal $(L_1^{d_1-1}\cdots L_t^{d_t-1})$. In this way, by using Terracini's lemma, we obtain that, if $Q$ is a generic point on the linear span of $s$ generic points on $X_\bfd$, then:
	$$
		T_Q\sigma_s(X_\bfd) = \bbP \left((L_{1,1}^{d_1-1}\cdots L_{1,t}^{d_t-1},\ldots,L_{s,1}^{d_1-1}\cdots L_{s,t}^{d_t-1})_d\right),
	$$
	where $L_{i,j}$'s are generic linear forms. Now, in order to compute the dimension of this tangent space, we can study the Hilbert function of the ideal on the right-hand side. By semicontinuity, we may specialize to the case $L_{i,1} = \ldots = L_{i,t}$, for any $i = 1,\ldots,s$. In this way, we obtain a power ideal, i.e., an ideal generated by powers of linear forms, whose Hilbert function is prescribed by Fr\"oberg--Iarrobino's conjecture; see Remark \ref{conj:FI}. Now, since in \cite{GeSc}, the authors proved that the latter conjecture holds in the case of binary forms, i.e., the Hilbert function of a generic power ideal in two variables is equal to the right-hand side of \eqref{eq: Froberg inequality}, we can conclude our computation of the dimension of the secant variety of $X_\bfd$ in the binary case. This is the way Theorem 3.1 in \cite[]{CaChGeOn:WaringLike} was proven.
\end{example}

In the following table, we resume the current state-of-the-art regarding secant varieties and Chow--Veronese varieties.

\begin{center}
\begin{tabular}{c | c | c | c | c | l}
	{$\bfd$} & $s$ & $d$ & $n$ & {References} & {$\dim\sigma_s(X_{\bfd})$} \\
	\hline
	$(d-1,1)$ & any & any & any & \cite{AboVan:SecantTangentialVeronese} & non-defective, except for \\	
	& & & & & (1) $d = 2$ and $2 \leq 2s < n$;\\
	& & & & & (2) $d = 3$ and $n = 2,3,4$. \\
	\hline
	$(1,\ldots,1)$ & &$d>2$ & {\small $3(s-1)<n$} & \cite{AB} & non-defective\\
	~& any & any & $2$ & \cite{Abo14:CompletelyDec} & non-defective, except for cases above\\
	~ & \multicolumn{3}{c | }{\footnotesize {some numerical constraints}} & \cite{Abo14:CompletelyDec, Tor17:GenericChow} &  \\
	\hline
	any & any & any & $1$ & \cite{CaChGeOn:WaringLike} & non-defective, except for cases above \\
	& $2$ & any & any & & \\
	& $\leq 2 \left\lfloor \frac{n}{3} \right\rfloor$ & any & any & \\
\end{tabular}
\end{center}

\subsection{Varieties of Reducible Forms}
In 1954, Mammana \cite{Mamma} considered the variety of reducible plane curves and tried to generalize previous works by, among many others, C. Segre, Spampinato and Bordiga. More recently, in \cite{7:ReducibleForms}, the authors considered the {varieties of reducible forms} in full generality. 

Let $\bfd = (d_1,\ldots,d_t) \vdash d$ be a partition of a positive integer $d$, i.e., $d_1 \geq \ldots \geq d_t$ are positive integers, which sum up to $d$ and $t \geq 2$. Inside the space of homogeneous polynomials of degree $d$, we define the {variety of $\bfd$-reducible forms} as:
$$
	Y_\bfd = \{[F] \in \bbP S_d ~|~ F = G_1\cdots G_t, \text{ where } \deg(G_i) = d_i\},
$$
i.e., 
the image of the embedding:
$$
	\begin{array}{c c c c}
		\psi_{\bfd} : & \mathbb{P} S_{d_1} \times \ldots \times \mathbb{P} S_{d_t} & \longrightarrow & \bbP S_d, \\ 
		& (G_1,\ldots,G_t) & \mapsto & G_1^{d_1}\cdots G_t^{d_t}.
	\end{array}
$$
Clearly, if $d = 2$, then $\bfd = (1,1)$, and $Y_{(1,1)}$ is just the Chow variety $X_{(1,1)}$. In general, we may see $Y_\bfd$ as the linear projection of the Segre variety $X_{m}$ inside $\bbP(S_{d_1}\otimes\cdots\otimes S_{d_t})$, where ${m} = (m_1,\ldots,m_t)$ with $m_i = {d_i+n \choose n} - 1$. Note that, if $\bfd, \bfd'$ are two partitions of $d$ such that $\bfd$ can be recovered from $\bfd'$ by grouping and summing some of entries, then we have the obvious inclusion $Y_{\bfd'} \subset Y_\bfd$. Therefore, if we define the {variety of reducible forms} as the union over all possible partitions $\bfd \vdash d$ of the varieties $Y_\bfd$, we can actually write:
$$
	Y = \bigcup_{k = 1}^{\left\lfloor \frac{d}{2} \right\rfloor} Y_{(d-k,k)} \subset \bbP S_d.
$$
In terms of additive decompositions, the study of varieties of reduced forms and their secant varieties is related to the notion of the {strength} of a polynomial, which was recently introduced by T. Ananyan and M. Hochster \cite{AnHo:StillmanConj} and then generalized to any tensor in \cite{BiDrEg:BoundedStrength}. 
\begin{problem}\label{prob:Strength}
	Given a homogeneous polynomial $F \in S_d$, find the smallest length of an expression $F = \sum_{i=1}^r G_{i,1}G_{i,2}$, where $1 \leq \deg(G_{i,j}) \leq d-1$.
\end{problem}
The answer to Problem \ref{prob:Strength} is called the {strength} of $F$, and we denote it by ${\rm S}(F)$. 

In \cite{7:ReducibleForms}, the authors gave a conjectural formula for the dimensions of all secant varieties $\sigma_s(Y_\bfd)$ of the variety of $\bfd$-reducible forms for any partition $\bfd$ (see Conjecture 1.1 in \cite[]{7:ReducibleForms}), and they proved it under certain numerical conditions (see Theorem 1.2 in \cite[]{7:ReducibleForms}). These computations have been made by using the classical Terracini's lemma and relating the dimensions of these secants to the famous Fr\"oberg's conjecture on the Hilbert series of generic forms. 

The variety of reducible forms is not irreducible and the irreducible component with biggest dimension is the one corresponding to the partition $(d-1,1)$, i.e., $\dim Y = \dim Y_{(d-1,1)}$. Higher secant varieties of the variety of reducible forms are still reduced, but understanding which is the irreducible component with the biggest dimension is not an easy task. In Theorem 1.5 of \cite[]{7:ReducibleForms}, the authors proved that, if $2s \leq n-1$, then the biggest irreducible component of $\sigma_s(Y)$ is $\sigma_s(Y_{(d-1,1)}$, i.e., $\dim \sigma_s(Y) = \dim \sigma_s(Y_{(d-1,1)})$, and together with the aforementioned Theorem 1.2 of \cite[]{7:ReducibleForms}, this allows us to compute the dimensions of secant varieties of varieties of reducible forms and answer Problem \ref{prob:Strength} under certain numerical restrictions (see Theorem 7.4 \cite[]{7:ReducibleForms}). 

In conclusion, we have that Problem \ref{prob:Strength} is answered in the following cases:
\begin{enumerate}
	\item any binary form ($n=2$), where $S(F) = 1$,
	
	 since every binary form is a product of linear forms;
	\item generic quadric ($d=2$), where $S(F) = \left\lfloor \frac{n}{2} \right\rfloor +1$, 
	
	since it forces $\bfd = (1,1)$, which is solved by Corollary \ref{cor: osculating ranks} (2);
	\item generic ternary cubic ($n = 2,~d=3$), where $S(F) = 2$, 
	
	since $Y_{(2,1)}$ is seven-dimensional and non-degenerate inside $\bbP^9 = \bbP S_3$, then $\sigma_2(Y_{(2,1)})$ cannot be eight-dimensional; otherwise, we get a contradiction by one of the classical Palatini's lemmas, which states that if $\dim\sigma_{s+1}(X) = \dim\sigma_s(X) + 1$, then $\sigma_{s+1}(X)$ must be a linear space \cite{Pa}.
\end{enumerate}

\subsection{Varieties of Powers}
Another possible generalization of the classical Waring problem for forms is given by the~following.
\begin{problem}\label{prob: generalized Waring}
	Given a homogeneous polynomial $F \in S_d$ and a positive divisor $k > 1$ of $d$, find the smallest length of an expression $F = \sum_{i=1}^r G_i^k$.
\end{problem}
The answer to Problem \ref{prob: generalized Waring} is called the {$k$-th Waring rank}, or simply $k$-th rank, of $F$, and we denote it $\rk^k_{d}(F)$. In this case, we need to consider the {variety of $k$-th powers}, i.e., 
$$V_{k,d} = \{[G^k] \in \bbP S_d ~|~ G \in S_{d/k}\}.$$
That is, the variety obtained by considering the composition:
\begin{equation}\label{geometric FOS}
	\pi \circ \nu_k : \bbP S_{d/k} \rightarrow \bbP S^k(S_{d/k}) \dashrightarrow \bbP S_d,
\end{equation}
where:
\begin{enumerate}
	\item if $W = S_{d/k}$, then $\nu_k$ is the $k$-th Veronese embedding of $\bbP W$ in $\bbP S^kW$;
	\item if we consider the standard monomial basis $w_\alpha = \bfx^\alpha$ of $W$, i.e., $|\alpha| = d/k$, then $\pi$ is the linear projection from $\bbP S^kW$ to $\bbP S_d$ induced by the substitution $w_\alpha \mapsto \bfx^\alpha$. In particular, we have that the center of the projection $\pi$ is given by the homogeneous part of degree $k$ of the ideal of the Veronese variety $\nu_d(\bbP^n)$.
\end{enumerate}
Problem \ref{prob: generalized Waring} was considered by Fr\"oberg, Shapiro and Ottaviani \cite{FrOtSh:GeneralizedWaring}. Their main result was that, if $F$ is generic, then:
\begin{equation}\label{eq: FOS upperbound}
	\rk_d^k(F) \leq k^n,
\end{equation}
i.e., the $k^n$-th secant variety of $V_{k,d}$ fills the ambient space. This was proven by Terracini's lemma. Indeed, for any $G,H \in S_{d/k}$, we have that:
$$
	\left.\frac{\rm d}{{\rm d}t}\right|_{t = 0}(G+tH)^k = kG^{k-1}H;
$$
therefore, we obtain that:
$$
	T_{[G^k]} V_{k,d} = \bbP \left(\langle [G^{k-1}H] ~|~ H \in S_{d/k} \rangle\right),
$$
and, by Terracini's lemma (Lemma \ref{Terracini}), if $Q$ is a generic point on $\langle [G_1^k],\ldots,[G_s^k]\rangle$, where the $G_i$'s are generic forms of degree $d/k$, then: 
\begin{equation}\label{eq: Terracini FOS}
	T_Q \sigma_sV_{k,d} = \bbP \left((G_1^{k-1},\ldots,G_s^{k-1})_d\right).
\end{equation}
In \cite[Theorem 9]{FrOtSh:GeneralizedWaring}, the authors showed that the family: 
$$
	G_{i_1,\ldots,i_n} = (x_0 + \xi^{i_1}x_1 + \ldots + \xi^{i_n}x_n)^{d/k} \in S_{d/k}, \quad \text{ for } i_1,\ldots,i_n \in \{0,\ldots,k-1\},
$$
where $\xi^k = 1$, is such that: 
$$
	(G_1^{k-1},\ldots,G_s^{k-1})_d = S_{d}.
$$
In this way, they showed that $\sigma_{k^n}(V_{k,d})$ fills the ambient space. A remarkable fact with the upperbound~\eqref{eq: FOS upperbound} is that it is independent of the degree of the polynomial, but it only depends on the power $k$. Now, the naive lower bound due to parameter counting is $\left\lceil \frac{\dim S_d}{\dim S_{d/k}} \right\rceil = \left\lceil \frac{{n+d \choose n}}{{n+d/k \choose n}}\right\rceil$, which tends to $k^n$ when $d$ runs to infinity. 

In conclusion, we obtain that the main result of \cite{FrOtSh:GeneralizedWaring} is resumed as follows.
\begin{thm}[{\cite[Theorem 4]{FrOtSh:GeneralizedWaring}}]
	Let $F$ be a generic form of degree $d$ in $n+1$ variables. Then,
	$$
		R^k_d(F) \leq k^n.
	$$
	If $d \gg 0$, then the latter bound is sharp.
\end{thm}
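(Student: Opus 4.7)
The plan is to establish both claims by controlling the dimension of the secant variety $\sigma_{k^n}(V_{k,d}) \subset \bbP S_d$: the upper bound $\rk_d^k(F) \leq k^n$ amounts to showing that $\sigma_{k^n}(V_{k,d}) = \bbP S_d$, while asymptotic sharpness follows from a parameter count.

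For the upper bound, I would first apply Terracini's lemma. The differentiation rule $\frac{{\rm d}}{{\rm d}t}\big|_{t=0}(G+tH)^k = kG^{k-1}H$ recorded in the excerpt identifies the affine tangent cone to $V_{k,d}$ at $[G^k]$ with $G^{k-1}\cdot S_{d/k}$. Consequently, as in \eqref{eq: Terracini FOS}, for generic $G_1,\dots,G_s \in S_{d/k}$,
\[
  \dim \sigma_s(V_{k,d}) = \dim_\Bbbk (G_1^{k-1},\dots,G_s^{k-1})_d - 1.
\]
By upper semicontinuity of the Hilbert function, to conclude $\sigma_{k^n}(V_{k,d}) = \bbP S_d$ it suffices to exhibit one choice of $k^n$ forms $G_{\mathbf{i}} \in S_{d/k}$ for which the power ideal $J = (G_{\mathbf{i}}^{k-1})$ satisfies $J_d = S_d$.

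Second, I would verify this for the explicit family proposed in the excerpt. Fix $\xi$ a primitive $k$-th root of unity and, for $\mathbf{i}=(i_1,\dots,i_n) \in \{0,\dots,k-1\}^n$, set $L_{\mathbf{i}} = x_0 + \xi^{i_1}x_1 + \cdots + \xi^{i_n}x_n$ and $G_{\mathbf{i}} = L_{\mathbf{i}}^{d/k}$. Then $G_{\mathbf{i}}^{k-1} = L_{\mathbf{i}}^{d-d/k}$, and the task becomes showing that the power ideal $J = (L_{\mathbf{i}}^{d-d/k})_{\mathbf{i}}$ has $J_d = S_d$. By the apolarity lemma (Lemma \ref{lemma:Apolarity}) together with Proposition \ref{prop: inverse system fat points}, this is equivalent to showing that the only $F \in S^dV^*$ annihilated by $L_{\mathbf{i}}^{d-d/k}$ for every $\mathbf{i}$ is $F = 0$. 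The crucial structural feature is that the set $\{L_{\mathbf{i}}\}$ is exactly the orbit under the diagonal action of $(\mathbb{Z}/k\mathbb{Z})^n$ rescaling the variables $x_1,\dots,x_n$ by $k$-th roots of unity; this invariance lets one decompose the apolarity conditions into the isotypic components indexed by $(\mathbb{Z}/k\mathbb{Z})^n$ and check, by a discrete Fourier computation, that on each component the coefficients of $F$ must vanish. This orthogonality-style step is the main technical hurdle in the argument.

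Finally, sharpness for $d \gg 0$ comes from a parameter count. The image of the rational map
\[
  \underbrace{\bbP S_{d/k} \times \cdots \times \bbP S_{d/k}}_{s \text{ times}} \dashrightarrow \bbP S_d, \qquad (G_1,\dots,G_s) \mapsto \Big[\sum_{i=1}^s G_i^k\Big],
\]
has dimension at most $s\binom{n+d/k}{n}-1$, so if it dominates $\bbP S_d$ then $s\binom{n+d/k}{n} \geq \binom{n+d}{n}$, giving $\rk_d^k(F) \geq \bigl\lceil \binom{n+d}{n}/\binom{n+d/k}{n} \bigr\rceil$ for generic $F$. Since $\binom{n+d}{n}/\binom{n+d/k}{n} = \prod_{i=1}^n \frac{d+i}{d/k+i}$ tends to $k^n$ from below as $d \to \infty$ through multiples of $k$, this lower bound equals $k^n$ for all sufficiently large $d$, matching the upper bound.
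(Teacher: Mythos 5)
Your proposal follows the same route as the paper (and as Fr\"oberg--Ottaviani--Shapiro in the cited source): Terracini's lemma reduces the upper bound to exhibiting one choice of $k^n$ forms for which the power ideal $(G_1^{k-1},\dots,G_{k^n}^{k-1})$ fills degree $d$, the roots-of-unity family $G_{\mathbf i}=(x_0+\xi^{i_1}x_1+\cdots+\xi^{i_n}x_n)^{d/k}$ does the job, and sharpness for $d\gg 0$ is the parameter count ${n+d\choose n}/{n+d/k\choose n}\to k^n$. Your identification of the $(\mathbb{Z}/k\mathbb{Z})^n$-equivariance and character orthogonality as the mechanism behind the key spanning claim is precisely the content of the verification carried out in the cited reference, so the argument is correct and essentially identical to the paper's.
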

This result gives an asymptotic answer to Problem \ref{prob: generalized Waring}, but, in general, it is not known for which degree $d$ the generic $k$-th Waring rank starts to be equal to $k^n$, and it is not known what happens in lower degrees. 

We have explained in \eqref{geometric FOS} how to explicitly see the variety of powers $V_{k,d}$ as a linear projection of a Veronese variety $X_{N,k} = \nu_k(\bbP^N)$, where $N = {n+d/k \choose n}-1$. It is possible to prove that $\sigma_2(X_{N,k})$ does not intersect the base of linear projection, and therefore, $V_{k,d}$ is actually isomorphic to $X_{N,k}$. Unfortunately, higher secant varieties intersect non-trivially the base of the projection, and therefore, their images, i.e., the secant varieties of the varieties of powers, are more difficult to understand. However, computer experiments suggest that the dimensions are preserved by the linear projection; see \cite{LORS18} (Section~4) for more details about these computations (a {Macaulay2} script with some examples is available in the ancillary files of the arXiv version of \cite{LORS18}). In other words, it seems that we can use the Alexander--Hirschowitz theorem to compute the dimensions of secant varieties of varieties of powers and provide an answer to Problem \ref{prob: generalized Waring}. More on this conjecture is explained in \cite{LORS18}.
\begin{conj}[{\cite{LORS18}} (Conjecture 1.2)]\label{conj: FOS}
	Let $F$ be a generic form of degree $d$ in $n+1$ variables. Then,
	$$
		\rk^k_d(F) = 
		\begin{cases}
			\min\{s \geq 1 ~|~ s{n+d/2 \choose n} - {s \choose 2} \geq {n+d \choose n}\} & \text{ for } k = 2; \\
			\min\{s \geq 1 ~|~ s{n+d/k \choose n} \geq {n+d \choose n}\} & \text{ for } k \geq 3.
		\end{cases}
	$$
\end{conj}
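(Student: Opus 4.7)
The plan is to apply Terracini's lemma directly to $V_{k,d}$ rather than going through the linear projection from the Veronese $X_{N,k}$. Differentiating the parametrization $[G] \mapsto [G^k]$ yields
$$T_{[G^k]} V_{k,d} = \mathbb{P}\langle G^{k-1} H : H \in S_{d/k}\rangle,$$
so by Lemma \ref{Terracini}, at a generic point $Q \in \sigma_s(V_{k,d})$ spanned by $[G_1^k], \ldots, [G_s^k]$ with generic $G_i \in S_{d/k}$, one has
$$T_Q\sigma_s(V_{k,d}) = \mathbb{P}(G_1^{k-1}, \ldots, G_s^{k-1})_d.$$
The conjecture then reduces to showing that $\dim(G_1^{k-1}, \ldots, G_s^{k-1})_d = \min\{s\binom{n+d/k}{n} - \epsilon_k(s),\ \binom{n+d}{n}\}$, where $\epsilon_2(s) = \binom{s}{2}$ and $\epsilon_k(s) = 0$ for $k \geq 3$.

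Set $D = (k-1)d/k$. The dimension above equals $s\binom{n+d/k}{n}$ minus the number of linearly independent syzygies $(H_1, \ldots, H_s) \in S_{d/k}^{\oplus s}$ satisfying $\sum_i G_i^{k-1} H_i = 0$. For $s \leq n+1$ and generic $G_i$, the forms $G_i^{k-1}$ cut out a subscheme of codimension $s$, so $(G_1^{k-1}, \ldots, G_s^{k-1})$ is a regular sequence and all of its syzygies are Koszul, generated by pairs $S_{ij}$ with $H_i = G_j^{k-1}$, $H_j = -G_i^{k-1}$ and $H_\ell = 0$ otherwise. Such a generator sits in ideal degree $2D = 2(k-1)d/k$, so it contributes to degree $d$ only when $2D \leq d$, i.e., when $k \leq 2$. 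For $k \geq 3$ this forces no syzygies in degree $d$, giving $\epsilon_k(s) = 0$; for $k = 2$ the $\binom{s}{2}$ Koszul generators $S_{ij}$ themselves already live in degree $d$, and can be checked to be linearly independent from the linear independence of the $G_j$, yielding $\epsilon_2(s) = \binom{s}{2}$.

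The main obstacle is extending this argument to the range $s > n+1$, which is essential since the Fr\"oberg--Shapiro--Ottaviani bound pushes the generic rank up to $k^n$, far beyond $n+1$. In this range the $G_i^{k-1}$ are no longer a regular sequence, and new syzygies may appear that are not controlled by the Koszul complex. A natural route is through the Fr\"oberg--Iarrobino predictions of Remark \ref{conj:FI}: one expects the Hilbert function of $(G_1^{k-1}, \ldots, G_s^{k-1})$ to match the Fr\"oberg bound for an ideal generated by $s$ generic forms of degree $D$. Establishing this for $(k-1)$-th powers of generic forms, as opposed to powers of linear forms, is itself an open problem. A plausible strategy is to specialize the $G_i$ to carefully chosen monomials or products of powers of linear forms, so that multigraded apolarity reduces the Hilbert-function computation to a combinatorial count of lattice points, and then invoke semicontinuity, while checking that the specialization does not force spurious relations beyond those predicted by the Fr\"oberg formula.
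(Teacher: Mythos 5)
The statement you are asked to prove is stated in the paper only as a conjecture (Conjecture 1.2 of \cite{LORS18}); the paper gives no proof of it, only the reduction via Terracini's lemma and a handful of solved special cases (binary forms, and ternary/quaternary sums of squares). Your reduction is the correct one and coincides with the paper's: by \eqref{eq: Terracini FOS} the problem is exactly the computation of $\dim_\Bbbk(G_1^{k-1},\ldots,G_s^{k-1})_d$ for generic $G_i \in S_{d/k}$, and your Koszul analysis in the range $s \leq n+1$ is sound --- the generators of the Koszul syzygies sit in degree $2(k-1)d/k$, which exceeds $d$ exactly when $k \geq 3$ and equals $d$ when $k=2$, which is precisely where the $\binom{s}{2}$ correction comes from.

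The genuine gap is the one you name yourself: the conjecture is only meaningful in the range $s > n+1$ (the generic rank grows like $k^n$), and there the required statement is that the ideal $(G_1^{k-1},\ldots,G_s^{k-1})$ generated by $(k-1)$-th powers of $s$ generic forms has the Hilbert function predicted by Fr\"oberg's bound \eqref{eq: Froberg inequality} in degree $d$. This is an open problem (a variant of Fr\"oberg's conjecture for powers of generic forms, cf.\ \cite{Nick:HS}), and no argument you give closes it; so what you have is a correct reformulation of the conjecture, not a proof. Two further cautions about your proposed specialization strategy. First, specializing the $G_i$ to powers of linear forms only gives, by semicontinuity, a lower bound on the generic Hilbert function, and power ideals are known to be defective in a nontrivial list of cases (Remark \ref{conj:FI}), so in more than two variables the specialization can genuinely lose information; this is why the paper's complete result is confined to binary forms, where \cite{GeSc} proves Fr\"oberg--Iarrobino. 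Second, for $k=2$ one should note that the quantity $s\binom{n+d/2}{n}-\binom{s}{2}$ is not monotone in $s$, so the ``minimum $s$'' in the conjectured formula must be read together with the truncation convention in Fr\"oberg's series; your syzygy count is consistent with this but does not by itself rule out non-Koszul syzygies appearing in degree $d$ once $s > n+1$.
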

\begin{rmk}
	The latter conjecture claims that for $k \geq 3$, the correct answer is given by the direct parameter count. For $k = 2$, we have that secant varieties are always defective. This is analogous to the fact that secant varieties to the two-fold Veronese embeddings are defective. Geometrically, this is motivated by Terracini's lemma and by the fact that:
	$$
		T_{[G^2]}V_{2,d} \cap T_{[H^2]} V_{2,d} = [GH],
	$$
	and not empty, as expected.
\end{rmk}

\begin{example}
Here, we explain how the binary case can be treated; see \cite[Theorem 2.3]{LORS18}.
By \eqref{eq: Terracini FOS}, the computation of the dimension of secant varieties of varieties of powers reduces to the computation of dimensions of homogeneous parts of particular ideals, i.e., their Hilbert functions. This relates Problem \ref{prob: generalized Waring} to some variation of Fr\"oberg's conjecture, which claims that the ideal $(G_1^k,\ldots,G_s^k)$, where the $G_i$'s are generic forms of degrees at least two, has Hilbert series equal to the right-hand side of \eqref{eq: Froberg inequality}; see \cite{Nick:HS}. In the case of binary forms, by semicontinuity, we may specialize the $G_i$'s to be powers of linear forms. In this way, we may employ the result of~\cite{GeSc}, which claims that power ideals in two variables satisfy Fr\"oberg--Iarrobino's conjecture, i.e., \eqref{eq: Froberg inequality} is actually an equality, and we conclude the proof of Conjecture \ref{conj: FOS} in the case of binary forms.
\end{example}

By using an algebraic study on the Hilbert series of ideals generated by powers of forms, we have a complete answer to Problem \ref{prob: generalized Waring} in the following cases (see \cite{LORS18}):
\begin{enumerate}
	\item binary forms $(n = 1)$, where: 
	$$
		\rk^k_d(F) = \left\lceil \frac{d+1}{d/k+1} \right\rceil;
	$$
	\item ternary forms as sums of squares $(n = 2, k = 2)$, where:
	$$
		\rk^2_d(F) = \left\lceil \frac{{d+2 \choose 2}}{{d/2+2 \choose 2}} \right\rceil,
	$$
	except for $d = 1,3,4$, where $
		\rk^2_d(F) = \left\lceil \frac{{d+2 \choose 2}}{{d/2+2 \choose 2}} \right\rceil +1.
	$;
		\item quaternary forms as sums of squares $(n = 3, k = 2)$, where:
	$$
		\rk^2_d(F) = \left\lceil \frac{{d+3 \choose 3}}{{d/2+3 \choose 3}} \right\rceil,
	$$
	except for $d = 1,2$, where $
		\rk^2_d(F) = \left\lceil \frac{{d+3 \choose 3}}{{d/2+3 \choose 3}} \right\rceil +1.
	$
\end{enumerate}


\section{Beyond Dimensions}\label{sec:ByondDim}

We want to present here, as a natural final part of this work, a list of problems about secant varieties and decomposition of tensors, which are different from merely trying to determine the dimensions of the varieties $\sigma_s(X)$ for the various $X$ we have considered before. We will consider problems such as determining maximal possible ranks, finding bounds or exact values on ranks of given tensor, understanding the set of all possible minimal decompositions of a given tensor, finding equations for the secant varieties or studying what happens when working over $\mathbb{R}$. The reader should be aware of the fact that there are many very difficult open problems around these questions.

\subsection{Maximum Rank} 
A very difficult and still open problem is the one that in the Introduction we have called the ``little Waring problem''. We recall it here.
\begin{quote} 
{Which is the minimum integer $r$ such that any form can be written as a sum of $r$ pure powers of linear forms?}
\end{quote}

This corresponds to finding the maximum rank of a form of certain degree $d$ in a certain number $n+1$ of variables.

To our knowledge, the best general achievement on this problem is due to Landsberg and Teitler, who in \cite[Proposition 5.1]{LT} proved that the rank of a degree $d$ form in $n+1$ variables is smaller than or equal to ${n+d \choose d}-n$. Unfortunately, this bound is sharp only for $n=1$ if $d\geq 2$ (binary forms); in fact, for example, if $n=2$ and $d=3,4$, then the maximum ranks are known to be $5<{2+3\choose 2}-3=7$ and $7<{2+4\choose 2}-4=11$, respectively; see \cite[Theorem 40 and Theorem 44]{bgi}. Another general bound has been obtained by Jelisiejew \cite{Jel14:AnUpperBound}, who proved that, for $F \in S^d\Bbbk^{n+1}$, we have $\rk_{\rm sym}(F) \leq {n+d-1 \choose d-1} - {n+d-5 \choose d-3}$. Again, this bound is not sharp for $n \geq 2$. Another remarkable result is the one due to Blekherman and Teitler, who proved in \cite[Theorem 1]{bt} that the maximum rank is always smaller than or equal to twice the generic rank. 
\begin{rmk}
The latter inequality, which has a very short and elegant proof, holds also between maximal and generic $X$-ranks with respect to any projective variety~$X$. 
\end{rmk}

In a few cases in small numbers of variables and small degrees, exact values of maximal ranks have been given. We resume them in the following table.
\begin{center}
\begin{tabular}{c | c | c | c | c}
& $d$ & $n$ & Maximal Rank & Ref. \\
\hline
binary forms & any & $1$ & $d$ & classical, \cite{Re13:OnLengthBinaryForms} \\
quadrics & $2$ & any & $n+1$ & classical \\
plane cubics & $3$ & $2$ & $5$ & \cite{Seg42:NonSingularCubic,LT} \\
plane quartics & $4$ & $2$ & $7$ & \cite{Kl99:RepresentingPolynomials,DeP15:MaximumRankTernaryQuartics} \\
plane quintics & $5$ & $2$ & $10$ & \cite{DeP15:TernaryQuintic,BT16:SomeExamplesHighRank} \\
cubic surfaces & $3$ & $3$ & $7$ & \cite{Seg42:NonSingularCubic} \\
cubic hypersurfaces & $3$ & any & $\leq \left\lfloor \frac{d^2 + 6d + 1}{4}\right\rfloor$ & \cite{deP16:AsympthoticTerm}
\end{tabular}
\end{center}
\medskip

We want to underline the fact that it is very difficult to find examples of forms having {high rank}, in the sense {higher than the generic rank}. Thanks to the complete result on monomials in \cite{CCG12:SolutionWaringMonomials} (see Theorem~\ref{thm:monomials}), we can easily see that in the case of binary and ternary forms, we can find monomials having rank higher than the generic one. However, for higher numbers of variables, monomials do not provide examples of forms of high rank. Some examples are given in \cite{BT16:SomeExamplesHighRank}, and the spaces of forms of high rank are studied from a geometric point of view in \cite{BHMT18:LocusHighRank}.

\subsection{Bounds on the Rank}
In the previous subsection, we discussed the problem of finding the maximal rank of a given family of tensors. However, for a given specific tensor $T$, it is more interesting, and relevant, to find explicit bounds on the rank of $T$ itself. For example, by finding {good} lower and upper bounds on the rank of $T$, one can try to compute actually the rank of $T$ itself, but usually, the maximal rank is going to be too large to be useful in this direction.

One typical approach to find upper bounds is very explicit: by finding a decomposition of $T$. In the case of symmetric tensors, that is in the case of homogeneous polynomials, the apolarity lemma (Lemma \ref{lemma:Apolarity}) is an effective tool to approach algebraically the study of upper bounds: by finding the ideal of a reduced set of points $\mathbb{X}$ inside $F^\perp$, we bound the rank of $F$ from above by the cardinality of $\mathbb{X}$.

\begin{example}\label{monomialboundexample}
For $F=x_0x_1x_2$, in standard notation, we have $F^\perp=(y_0^2,y_1^2,y_3^2)$, and we can consider the complete intersection set of four reduced points $\mathbb{X}$ whose defining ideal is $(y_1^2-y_0^2, y_2^2-y_0^2)$, and thus, the rank of $F$ is at most four. Analogously, if $F = x_0x_1^2x_2^3$, we have $F^\perp = (y_0^2,y_1^3,y_3^4)$, and we can consider the complete intersection of $12$ points defined by $(y_1^3-y_0^3,y_2^4-y_0^4)$. 
\end{example}

Other upper bounds have been given by using different notions of rank.
\begin{dfn}
We say that a scheme $Z\subset \mathbb{P}^N$ is {curvilinear} if it is a finite union of schemes of the form $\mathcal{O}_{\mathcal{C}_i,P_{i}}/{\mathfrak{m}}_{P_i}^{e_i}$, for smooth points $P_i$ on reduced curves $\mathcal{C}_i\subset \mathbb{P}^N$. Equivalently, the tangent space at each connected component of $Z$ supported at the $P_i$'s has Zariski dimension $\leq 1$. The {curvilinear rank} $\rk_{\rm curv}(F)$ of a degree $d$ form $F$ in $n+1$ variables is:
$$\rk_{\rm curv}(F):=\min\left\{\deg(Z)\; | \; Z\subset X_{n,d}, \; Z \text{ curvilinear, } [F]\in \langle Z \rangle\right\}.$$
\end{dfn}
 With this definition, in \cite[Theorem 1]{bb4}, it is proven that the rank of an $F\in S^{d}\Bbbk^{n+1}$ is bounded by $(\rk_{\rm curv}(F)-1)d+2-\rk_{\rm curv}(F)$. This result is sharp if $\rk_{\rm curv}(F) =2, 3$; see \cite[Theorem 32 and Theorem 37]{bgi}.

Another very related notion of rank is the following; see \cite{bubu,bbm}.
\begin{dfn}
We define the {smoothable rank} of a form $F \in S^d\Bbbk^{n+1}$ as:
$$\rk_{\rm smooth}(F) := \min \left\{\deg (Z) ~\Big|~ 
\let\scriptstyle\textstyle\substack{ Z \text{ is a limit of smooth schemes } Z_i \text{ such that} \\ 
Z,Z_i\subset X_{n,d}, \text{ are zero-dim schemes with }\deg(Z_i)=\deg(Z), \\ 
\text{ and } [F]\in \langle Z\rangle}\right\}.$$
\end{dfn}
In \cite{bb4} (Section 2), it is proven that if $F$ is a ternary form of degree $d$, then $\rk_{\rm sym}(F) \le (\rk_{\rm smooth}(F)-1)d$. We refer to \cite{bbm} for a complete analysis on the relations between different notions of ranks. 

The use of the apolarity lemma (Lemma \ref{lemma:Apolarity}) to obtain lower bounds to the symmetric-rank of a homogeneous polynomial was first given in \cite{RS11:RankSymmetricForm}.

\begin{thm}[{\cite[Proposition 1]{RS11:RankSymmetricForm}}]\label{RSlowerbound}
If the ideal $F^\perp$ is generated in degree $t$ and $\mathbb{X}$ is a finite scheme apolar to $F$, that is $I_\mathbb{X}\subset F^\perp$, then:
\[ 
	\frac{1}{t} \deg F^\perp \leq \deg \mathbb{X}.
\]
\end{thm}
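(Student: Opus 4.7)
Plan. The bound to prove is $\dim_{\Bbbk} A \le t\tau$, where $A := R/F^\perp$ is the Artinian Gorenstein apolar algebra (of socle degree $d=\deg F$), identifying $\deg F^\perp$ with the $\Bbbk$-vector-space dimension of $A$, and where $\tau := \deg \mathbb{X}$. The apolarity hypothesis $I_{\mathbb{X}} \subseteq F^\perp$ produces a surjection of graded $R$-algebras $\pi : B := R/I_\mathbb{X} \twoheadrightarrow A$, and the entire argument is driven by comparing the graded $\Bbbk[\ell]$-module structures of $B$ and $\ker\pi$ for a generic linear form $\ell \in R_1$.

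First, I would fix a generic linear form $\ell \in R_1$. Since $\mathbb{X}$ is zero-dimensional of degree $\tau$, the ring $B$ is Cohen--Macaulay of Krull dimension one and multiplicity $\tau$, so $\ell$ is a non-zero-divisor on $B$ and $B$ becomes a finitely generated torsion-free graded module over the principal ideal domain $\Bbbk[\ell]$. Hence $B$ is $\Bbbk[\ell]$-free of rank $\tau$, namely
\[
B \;\cong\; \bigoplus_{i=1}^{\tau} \Bbbk[\ell](-e_i) \qquad \text{for some shifts } e_1,\ldots,e_\tau \ge 0.
\]
The same argument applied to $\ker\pi \subseteq B$, which is torsion-free of full $\Bbbk[\ell]$-rank $\tau$ because $A = B/\ker\pi$ has finite $\Bbbk$-length, yields $\ker\pi \cong \bigoplus_{l=1}^{\tau} \Bbbk[\ell](-D_l)$ for suitable degrees $D_l$. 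Comparing the Hilbert series in $0 \to \ker\pi \to B \to A \to 0$ and reading off the limit at $z = 1$ via a single application of l'H\^opital's rule gives the clean identity
\[
\dim_{\Bbbk} A \;=\; \sum_{l=1}^{\tau} D_l \;-\; \sum_{i=1}^{\tau} e_i.
\]

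Next, I would invoke the generation hypothesis on $F^\perp$: since $F^\perp$ is generated by forms of degree $\le t$, the induced $B$-submodule $\ker\pi = F^\perp/I_\mathbb{X}$ of $B$ is also generated in degrees $\le t$. The key structural claim is that one can order the $\Bbbk[\ell]$-free generators so that $D_l \le e_l + t$ for every $l$. Granting this, $\sum_l D_l \le \sum_l e_l + t\tau$, and combining with the formula above yields $\dim_{\Bbbk} A \le t\tau$, which is equivalent to the desired inequality $\tfrac{1}{t}\deg F^\perp \le \deg \mathbb{X}$.

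The hard part will be establishing the matching $D_l \le e_l + t$. A naive argument using only that $\ker\pi$ is $B$-generated in degree $\le t$ produces the weaker estimate $D_l \le t + \max_i e_i$, which is insufficient when the shifts $e_i$ are unequal. The correct argument, which constitutes the technical heart of Ranestad and Schreyer's proof in \cite{RS11:RankSymmetricForm}, pairs the generating degrees of $B$ and $\ker\pi$ by exploiting the Gorenstein symmetry $\dim_{\Bbbk} A_i = \dim_{\Bbbk} A_{d-i}$: every ``heavy'' shift $e_i$ appearing in $B$ is forced to be compensated by an equally heavy generating degree $D_l$ in $\ker\pi$, and this is precisely where the generation bound $t$ enters non-trivially via the apolarity action on $A$. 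The proof concludes by rearranging the estimates $\sum_l D_l - \sum_i e_i \le t\tau$ into the stated bound.
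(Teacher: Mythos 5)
Your setup is sound and, up to the point where you stop, it matches the standard architecture of Ranestad--Schreyer's argument (the paper itself gives no proof, only the citation): the exact sequence $0\to\ker\pi\to B\to A\to 0$, the graded $\Bbbk[\ell]$-freeness of $B$ and of $\ker\pi$ (both of rank $\tau$), the identity $\dim_\Bbbk A=\sum_l D_l-\sum_i e_i$, and the observation that the whole theorem reduces, after sorting, to $D_l\le e_l+t$. Note that this last claim is equivalent, in terms of Hilbert functions, to
\[
\dim_\Bbbk\bigl(F^\perp/I_{\mathbb{X}}\bigr)_{a+t}\;\ge\;\dim_\Bbbk\bigl(R/I_{\mathbb{X}}\bigr)_{a}\qquad\text{for all }a\ge 0,
\]
since $\dim_\Bbbk B_a=\#\{i:e_i\le a\}$ and $\dim_\Bbbk(\ker\pi)_a=\#\{l:D_l\le a\}$; summing the resulting bound $\dim_\Bbbk A_{a+t}\le\dim_\Bbbk B_{a+t}-\dim_\Bbbk B_a$ telescopes directly to $\dim_\Bbbk A\le t\tau$.

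The gap is that this key inequality is asserted but never proved, and the mechanism you propose for it --- a pairing of generating degrees forced by the Gorenstein symmetry $\dim_\Bbbk A_i=\dim_\Bbbk A_{d-i}$ --- is not the one that works; Gorenstein duality plays no role in the actual proof, and you do not turn that remark into an argument. What actually closes the gap is a non-zero-divisor of low degree inside $F^\perp$: since $F^\perp$ is generated in degrees $\le t$ and contains every form of degree $>\deg F$, it cannot be contained in the prime ideal of any point of the support of $\mathbb{X}$; by prime avoidance (the associated primes of $I_{\mathbb{X}}$ are finitely many point ideals, and $\Bbbk$ is infinite) there is a homogeneous $G\in F^\perp$ of some degree $j\le t$ that is a non-zero-divisor on $B=R/I_{\mathbb{X}}$. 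Multiplication by $G$ then injects $B_a$ into $(\ker\pi)_{a+j}$ (the image lies in $F^\perp/I_{\mathbb{X}}$ because $G\in F^\perp$), and composing with multiplication by $\ell^{\,t-j}$, which is injective on the torsion-free module $\ker\pi$, gives $\dim_\Bbbk(\ker\pi)_{a+t}\ge\dim_\Bbbk B_a$, i.e.\ exactly the matching $D_l\le e_l+t$. Without this step your proof establishes only the weaker bound $D_l\le t+\max_i e_i$, which, as you yourself note, does not suffice when the shifts $e_i$ are unequal; so as written the proposal is incomplete at its decisive point.
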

This result is enough to compute the rank of the product of variables.
\begin{example}
For $F=x_0x_1x_2$, Theorem \ref{RSlowerbound} yields:
\[
	{1 \over 2} 8\leq \deg \mathbb{X}.
\]
If we assume $\bbX$ to be reduced, i.e., $\deg \bbX = |\bbX|$, by the apolarity lemma, we get $\rk_{\rm sym}(F) \geq 4$, and thus, by Example~\ref{monomialboundexample}, the rank of $F$ is equal to four. However, for the monomial $x_0x_1^2x_2^3$, we get the lower bound of six, which does not allow us to conclude the computation of the rank, since Example \ref{monomialboundexample} gives us $12$ as the upper bound.
\end{example}
To solve the latter case, we need a more effective use of the apolarity lemma in order to produce a better lower bound for the rank; see \cite{CCG12:SolutionWaringMonomials,CCCGW18}.

\begin{thm}[{\cite{CCCGW18}} (Corollary 3.4)]\label{colonperpbound}
Let $F$ be a degree $d$ form, and let $e > 0$ be an integer. Let $I$ be any ideal generated in degree $e$, and let $G$ be a general form in $I$. For $s \gg 0$, we have:
\[
	\rk_{\rm sym}(F) \geq 
	\frac{1}{e} \sum_{i=0}^s \HF\left(R/(F^\perp : I+(G)),i\right).
\]
\end{thm}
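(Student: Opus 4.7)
The plan is to apply the apolarity lemma (Lemma~\ref{lemma:Apolarity}) and reduce the desired inequality to a Hilbert function computation on a carefully chosen reduced zero-dimensional subscheme. Setting $r=\rk_{\rm sym}(F)$, apolarity yields a reduced set of points $\mathbb{X}=\{[L_1],\ldots,[L_r]\}\subset\mathbb{P}^n$ with $I_\mathbb{X}\subset F^\perp$. Since taking a colon ideal preserves inclusion, $I_\mathbb{X}:I\subset F^\perp:I$, and therefore $R/((F^\perp:I)+(G))$ is a graded quotient of $R/((I_\mathbb{X}:I)+(G))$; consequently, it suffices to bound the Hilbert function of the latter.

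The key algebraic identity I would establish is $I_\mathbb{X}:I=I_{\mathbb{X}'}$, where $\mathbb{X}':=\mathbb{X}\setminus V(I)$. If $f\in I_\mathbb{X}:I$ and $P\in\mathbb{X}'$, pick $H\in I$ with $H(P)\neq 0$: then $(fH)(P)=0$ together with $H(P)\neq 0$ forces $f(P)=0$. Conversely, if $f$ vanishes on $\mathbb{X}'$, then $fH$ vanishes on all of $\mathbb{X}$ for every $H\in I$ (on $\mathbb{X}\cap V(I)$ because $H$ does, on $\mathbb{X}'$ because $f$ does), so $fH\in I_\mathbb{X}$. Trivially $|\mathbb{X}'|\leq r$.

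Next I would exploit the genericity of $G\in I_e$. Since $I$ is generated in degree $e$ and each $P\in\mathbb{X}'$ lies outside $V(I)$, some degree-$e$ element of $I$ does not vanish at $P$; hence a general $G\in I_e$ satisfies $G(P)\neq 0$ for every $P\in\mathbb{X}'$ (a finite open condition). Because $\mathbb{X}'$ is reduced, this makes $G$ a non-zero-divisor on $R/I_{\mathbb{X}'}$, producing the graded short exact sequence
\[
0\longrightarrow (R/I_{\mathbb{X}'})(-e)\xrightarrow{\,\cdot G\,}R/I_{\mathbb{X}'}\longrightarrow R/(I_{\mathbb{X}'}+(G))\longrightarrow 0.
\]
Consequently $\HF(R/(I_{\mathbb{X}'}+(G)),i)=\HF(R/I_{\mathbb{X}'},i)-\HF(R/I_{\mathbb{X}'},i-e)$, and since $\HF(R/I_{\mathbb{X}'},i)$ stabilizes at $|\mathbb{X}'|$ for $i\gg 0$, telescoping for $s\gg 0$ yields
\[
\sum_{i=0}^{s}\HF(R/(I_{\mathbb{X}'}+(G)),i)\;=\;e\,|\mathbb{X}'|\;\leq\;e\,r.
\]
Combined with the comparison from the first paragraph, this gives $\sum_{i=0}^{s}\HF(R/((F^\perp:I)+(G)),i)\leq e\,\rk_{\rm sym}(F)$, which is the claim.

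The only genuinely technical point is confirming that a generic $G\in I_e$ is a non-zero-divisor modulo $I_{\mathbb{X}'}$; this uses crucially that $I$ is generated in the single degree $e$, so that the linear system $I_e$ is base-point-free outside $V(I)$. The deeper ``obstacle'' is strategic rather than technical: the estimate is effective precisely when $\mathbb{X}$ lies mostly outside $V(I)$, so the real art is in \emph{choosing} the auxiliary ideal $I$---typically inside $F^\perp$ itself, so that $F^\perp:I$ can be computed explicitly and $V(I)$ is under control. The specialization $I=(F^\perp)_t$ (together with $e=t$) recovers the Ranestad--Schreyer inequality of Theorem~\ref{RSlowerbound} as a corollary.
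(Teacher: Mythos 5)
Your proof of the stated inequality is correct, and it is exactly the mechanism behind the cited result: pass to a minimal reduced apolar set $\mathbb{X}$ via the apolarity lemma, use $I_{\mathbb{X}}:I=I_{\mathbb{X}\setminus V(I)}$, observe that a general $G\in I_e$ is a non-zerodivisor on $R/I_{\mathbb{X}'}$ because $I$ is generated in the single degree $e$, and telescope $\sum_{i}\bigl(\HF(R/I_{\mathbb{X}'},i)-\HF(R/I_{\mathbb{X}'},i-e)\bigr)$ to $e\,|\mathbb{X}'|\leq e\,\rk_{\rm sym}(F)$. (The survey itself defers the proof to \cite{CCCGW18}; your argument coincides with the standard one there, including the degenerate case $\mathbb{X}\subset V(I)$, where both sides collapse to a trivial inequality.)

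The closing strategic remark, however, is wrong and would mislead you in applications. You suggest choosing $I$ ``typically inside $F^\perp$ itself'' and assert that $I=(F^\perp)_t$ with $e=t$ recovers Theorem \ref{RSlowerbound}. For any homogeneous $H$ one has $F^\perp:(H)=(H\circ F)^\perp$; hence if $I\subseteq F^\perp$ then $H\circ F=0$ for every $H\in I$, so $F^\perp:I=R$, the quotient $R/\bigl((F^\perp:I)+(G)\bigr)$ is zero, and the bound degenerates to $\rk_{\rm sym}(F)\geq 0$. The useful choices of $I$ are precisely those \emph{not} contained in $F^\perp$ (as in Example \ref{example:lowerBound}, where $I=(y_0)$ while $F^\perp=(y_0^2,y_1^3,y_2^4)$), chosen so that $F^\perp:I$ is explicitly computable and $V(I)$ meets $\mathbb{X}$ as little as possible. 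The Ranestad--Schreyer inequality is not obtained from Theorem \ref{colonperpbound} by the substitution you propose.
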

A form for which there exists a positive integer $e$ such that the latter lower bound is actually sharp is called {$e$-computable}; see \cite{CCCGW18}. Theorem \ref{colonperpbound} was first presented in \cite{CCG12:SolutionWaringMonomials} in the special case of $e = 1$: this was the key point to prove Theorem \ref{thm:monomials} on the rank of monomials, by showing that monomials are one-computable. In order to give an idea of the method, we give two examples: in the first one, we compute the rank of $x_0x_1^2x_2^3$ by using one-computability, while in the second one, we give an example in which it is necessary to use two-computability; see \cite{CCG12:SolutionWaringMonomials,CCCGW18}.

\begin{example}\label{example:lowerBound}
Consider again $F=x_0x_1^2x_2^3$. We use Theorem \ref{colonperpbound} with $e = 1, G=y_0$ and $I=(y_0)$. Note that:
$$
	F^\perp : I + (G) = (y_0^2,y_1^3,y_2^4) : (y_0) + (y_0) = (y_0,y_1^3,y_2^4).
$$
This yields to: 
\[
	\rk_{\rm sym}(F) \geq \sum_{i=0}^s \HF\left(R/(y_0,y_1^3,y_2^4),i\right)=12, 
\]
since $\HS(R/(y_0,y_1^3,y_2^4),z) = 1 + 2z + 3z^2 + 3z^3 + 2z^4 + z^5$. Hence, by using Example \ref{monomialboundexample}, we conclude that the rank of $F$ is actually $12$.
\end{example}
 
\begin{example}
Consider the polynomial:
\[
\begin{array}{c}
F = x_0^{11} -22x_0^{9}x_1^{2} + 33x_0^{7}x_1^{4} -22x_0^{9}x_2^{2} + 396x_0^{7}x_1^{2}x_2^{2}-462x_0^{5}x_1^{4}x_2^{2} + \\ 33x_0^{7}x_2^{4} -462x_0^{5}x_1^{2}x_2^{4} + 385x_0^{3}x_1^{4}x_2^{4},
\end{array},
\]
we show that $F$ is two-computable and $\rk_{\rm sym}(F)=25$.
By direct computation, we get: $$F^\perp = ((y_0^2 +y_1^2+y_2^2)^2, G_1, G_2),$$
where $G_1=y_1^5 +y_2(y_0^2 +y_1^2+y_2^2)^2$ and $G_2=y_2^5 + y_0(y_0^2 +y_1^2+y_2^2)^2$.

Hence, by \eqref{colonperpbound}, we get:
$$
 \rk_{\rm sym}(F)\geq \frac{1}{2} \sum_{i=0}^\infty HF(T/ (F^\perp : (y_0^2 +y_1^2+y_2^2) + (y_0^2 + y_1^2+y_2^2)),i)= 25.
 $$
Moreover, the ideal $(G_1,G_2) \subset F^\perp$ is the ideal of $25$ distinct points, and thus, the conclusion follows. It can be shown that $F$ is not one-computable; see \cite{CCCGW18} (Example 4.23).
\end{example}

Another way to find bounds on the rank of a form is by using the rank of its derivatives. A first easy bound on the symmetric-rank of a homogeneous polynomial $F\in \Bbbk[x_0,\ldots, x_n]$ (where $\Bbbk$ is a characteristic zero field) is directly given by the maximum between the symmetric-ranks of its derivatives; indeed, if $F = \sum_{i=1}^r L_i^d$, then, for any $j = 0,\ldots,n$,
\begin{equation}\label{eq:DecompDerivatives}
	\frac{\partial F}{\partial x_j} = \sum_{i=1}^r \frac{\partial L_i^d}{\partial x_j} = (d-1)\sum_{i=1}^r \frac{\partial L_i}{\partial x_j} L_i^{d-1}.
\end{equation}
A more interesting bound is given in \cite{CGV16:RealComplexRankReducibleCubics}.
\begin{thm}[{\cite[Theorem 3.2]{CGV16:RealComplexRankReducibleCubics}}]\label{thm:boundCGV}
Let $1\leq p\leq n$ be an integer, and let $F\in\Bbbk[x_0,\ldots, x_n]$ be a form, where $\Bbbk$ is a characteristic zero field. Set $F_k=\frac{\partial F}{\partial x_k}$, for $0\leq k\leq n$. If:
\[
	\mathrm{rk}_{\rm sym}(F_0 +\sum_{k=1}^n\lambda_k F_k)\geq m,
\]
for all $\lambda_k\in\Bbbk$, and if the forms $F_1, F_2, \ldots ,F_p$ are linearly independent, then:
\[
	\mathrm{rk}_{\rm sym}(F) \geq m + p.
\]
\end{thm}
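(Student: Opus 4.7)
My plan is to proceed directly, by exhibiting a minimal Waring decomposition of $F$ and then killing $p$ of its summands through a cleverly chosen directional derivative. Let $r := \rk_{\rm sym}(F)$, and fix a minimal Waring decomposition $F = \sum_{i=1}^r L_i^d$ with $L_i = \sum_{k=0}^n a_{i,k} x_k$. Differentiating term-by-term gives
\[
F_k \;=\; \frac{\partial F}{\partial x_k} \;=\; d\sum_{i=1}^r a_{i,k}\, L_i^{d-1}, \qquad k = 0, 1, \ldots, n,
\]
so that for every $\lambda = (\lambda_1,\ldots,\lambda_n) \in \Bbbk^n$,
\[
G_\lambda \;:=\; F_0 + \sum_{k=1}^n \lambda_k F_k \;=\; d\sum_{i=1}^r c_i(\lambda)\, L_i^{d-1},
\]
where $c_i(\lambda) = a_{i,0} + \sum_{k=1}^n \lambda_k a_{i,k}$ is affine-linear in $\lambda$. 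The strategy is to exhibit a specific $\lambda^{\ast}$ at which $c_i(\lambda^{\ast}) = 0$ for at least $p$ distinct indices $i$: then $G_{\lambda^{\ast}}$ is visibly a sum of at most $r - p$ powers of linear forms, so the hypothesis $\rk_{\rm sym}(G_{\lambda^{\ast}}) \geq m$ will force $r - p \geq m$, i.e.\ $r \geq m + p$, which is the required inequality.

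The step I expect to be the crux is converting the hypothesis that $F_1, \ldots, F_p$ are linearly independent into a genuine linear independence statement for the coefficient vectors $\vec{a}_k := (a_{i,k})_{i=1}^r \in \Bbbk^r$. Consider the linear map $\pi\colon \Bbbk^r \to \Bbbk[x_0,\ldots,x_n]_{d-1}$ defined by $\pi(\vec{c}) = d \sum_i c_i\, L_i^{d-1}$; by construction $F_k = \pi(\vec{a}_k)$. The powers $L_1^{d-1}, \ldots, L_r^{d-1}$ need not be linearly independent, so $\ker \pi$ may be nontrivial; however, the assumption that $F_1,\ldots,F_p$ are linearly independent in the image says precisely that $\vec{a}_1,\ldots,\vec{a}_p$ are linearly independent modulo $\ker\pi$, and this in particular implies they are linearly independent in $\Bbbk^r$ (any relation $\sum \beta_k \vec a_k = 0$ would descend to one modulo $\ker\pi$). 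Therefore the $r \times p$ matrix $A = (a_{i,k})_{1 \leq i \leq r,\; 1 \leq k \leq p}$ has rank exactly $p$.

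Once this transfer is in hand, the rest is linear algebra. By the rank condition on $A$, after relabelling the $L_i$'s I may assume that the top $p \times p$ block $(a_{i,k})_{1 \leq i,k \leq p}$ is invertible. Hence the inhomogeneous square system
\[
a_{i,0} + \sum_{k=1}^p \lambda_k\, a_{i,k} \;=\; 0, \qquad i = 1, \ldots, p,
\]
has a unique solution $(\lambda_1^{\ast}, \ldots, \lambda_p^{\ast}) \in \Bbbk^p$; setting $\lambda_{p+1}^{\ast} = \cdots = \lambda_n^{\ast} = 0$ extends it to a point $\lambda^{\ast} \in \Bbbk^n$ at which $c_1(\lambda^{\ast}) = \cdots = c_p(\lambda^{\ast}) = 0$. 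Consequently
\[
G_{\lambda^{\ast}} \;=\; d \sum_{i=p+1}^r c_i(\lambda^{\ast})\, L_i^{d-1},
\]
so $\rk_{\rm sym}(G_{\lambda^{\ast}}) \leq r - p$. Combined with the hypothesis $\rk_{\rm sym}(G_{\lambda^{\ast}}) \geq m$ applied at $\lambda = \lambda^{\ast}$, this yields $r \geq m + p$, completing the proof.
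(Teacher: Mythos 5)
Your proof is correct and follows essentially the same line as the original argument of Carlini--Guo--Ventura that the survey cites (the survey itself states the theorem without proof): differentiate a minimal decomposition $F=\sum_i \mu_i L_i^d$, transfer the linear independence of $F_1,\dots,F_p$ to the coefficient vectors $(a_{i,k})_i$, and solve a $p\times p$ linear system to annihilate $p$ summands of $F_0+\sum_k\lambda_k F_k$. The only cosmetic omission is the scalar coefficients $\mu_i$ in the minimal decomposition (which need not be absorbable into the $L_i$ over a non-closed field), but carrying them along changes nothing in the argument.
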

\noindent The latter bound was lightly improved in \cite[Theorem 2.3]{Tei15:SufficientStrassen}.


Formula \eqref{eq:DecompDerivatives} can be generalized to higher order differentials. As a consequence, for any $G \in S^j V^*$, with $1 \leq j \leq d-1$, we have that $\rk_{\rm sym}(F) \geq \rk_{\rm sym}(G\circ F)$, and in particular, if $F \in \langle L_1^d,\ldots,L_r^d\rangle$, we have that
$
	G\circ F \in \langle L_1^{d-j},\ldots,L_r^{d-j}\rangle.
$
Since this holds for any $G \in S^jV^*$, we conclude that the image of the $(j,d-j)$-th catalecticant matrix is contained in $\langle L_1^{d-j},\ldots,L_r^{d-j}\rangle$. Therefore, 
\begin{equation}\label{eq:lowerBoundCatalecticant}
	\rk_{\rm sym}(F) \geq \dim_\Bbbk({\rm Imm}~Cat_{j,d-j}(F)) = \mathrm{rk}~Cat_{j,d-j}(F).
\end{equation}
The latter bound is very classical and goes back to Sylvester. By using the geometry of the hypersurface $V(F)$ in $\bbP^n$, it can be improved; see \cite{LT}.
\begin{thm}[{\cite[Theorem 1.3]{LT}}]\label{thm:boundLT}
Let $F$ be a degree $d$ form with $n+1$ essential variables. Let $1 \leq j \leq d-1$. Use the convention that $\dim\emptyset = -1$. Then, the symmetric-rank of $F$ is such that:
\[
	\rk_{\rm sym}(F) \geq \mathrm{rk}~Cat_{j,d-j}(F) + \dim \Sigma_j(F) + 1,\]
where $Cat_{j,d-j}(F) $ is the $(j,d-j)$-th catalecticant matrix of $F$ and:
\[
	\Sigma_j(F)=\left\lbrace P\in V(F) \subset \bbP V : {\partial^\alpha F\over \partial \bfx^\alpha}(P)=0, \quad \forall |\alpha| \leq j \right\rbrace.\]
\end{thm}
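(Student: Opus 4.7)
My plan is to refine the classical catalecticant bound $\rk_{\mathrm{sym}}(F) \geq \mathrm{rk}~Cat_{j,d-j}(F)$ of \eqref{eq:lowerBoundCatalecticant} by showing that each point of $\Sigma_j(F)$ contributes an additional independent linear condition on the image of the catalecticant. Fix a minimal decomposition $F = \sum_{i=1}^r L_i^d$ with $r = \rk_{\mathrm{sym}}(F)$. The identity $y^\alpha \circ L^d = \binom{d}{j}(y^\alpha \circ L^j)\,L^{d-j}$ for $|\alpha|=j$ gives, summing over $i$, that $G \circ F = \binom{d}{j}\sum_{i=1}^r (G \circ L_i^j)\,L_i^{d-j}$ for every $G \in R_j$, so $\mathrm{Im}\, Cat_{j,d-j}(F) \subseteq W := \langle L_1^{d-j},\ldots,L_r^{d-j}\rangle \subseteq S_{d-j}$, a subspace of dimension at most $r$. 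Moreover, for any $P \in \Sigma_j(F)$, every $\partial^\alpha F$ with $|\alpha| \leq j$ vanishes at $P$, so $(G \circ F)(P) = 0$ for every $G \in R_j$; hence the evaluation functional $\mathrm{ev}_P \colon S_{d-j}\to \Bbbk$ kills the image, yielding the containment $\mathrm{Im}\, Cat_{j,d-j}(F) \subseteq W \cap \bigcap_{P \in \Sigma_j(F)} \ker (\mathrm{ev}_P|_W)$.

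Set $\delta := \dim \Sigma_j(F)$. The core of the argument will be to show that for a sufficiently general choice of $\delta+1$ points $P_0,\ldots,P_\delta \in \Sigma_j(F)$, the restrictions $\mathrm{ev}_{P_i}|_W$ are linearly independent, from which $\dim \mathrm{Im}\, Cat_{j,d-j}(F) \leq \dim W - (\delta+1) \leq r - \delta - 1$ and the theorem follows upon rearrangement. I would carry this out via the map $\psi \colon \mathbb{P}V \to \mathbb{P}^{r-1}$, $P \mapsto [L_1(P)^{d-j}:\ldots:L_r(P)^{d-j}]$. By Remark \ref{variables}, the hypothesis that $F$ has $n+1$ essential variables translates to $\mathrm{rk}\, Cat_{1,d-1}(F) = n+1$; this forces the $L_i$ to span $V^*$ (otherwise $F$ would lie in a proper symmetric power), so their common zero locus is empty and $\psi$ is well-defined. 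Moreover $\psi$ factors as the linear embedding $\mathbb{P}V \hookrightarrow \mathbb{P}^{r-1}$ defined by the $L_i$ followed by the coordinate-wise $(d-j)$-th power map on $\mathbb{P}^{r-1}$, each of which is finite, so $\psi$ itself is a finite morphism. Consequently $\psi(\Sigma_j(F))$ has dimension exactly $\delta$ in $\mathbb{P}^{r-1}$, its linear span has projective dimension at least $\delta$, and one can choose $\delta+1$ points of $\Sigma_j(F)$ whose $\psi$-images are linearly independent. Equivalently, the $(\delta+1)\times r$ matrix $(L_k(P_i)^{d-j})_{i,k}$ has full row rank, which, via the spanning set $L_1^{d-j},\ldots,L_r^{d-j}$ of $W$, is exactly the desired linear independence of the $\mathrm{ev}_{P_i}|_W$.

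The main technical obstacle is the finiteness of $\psi$ and the resulting preservation of the dimension of $\psi(\Sigma_j(F))$, both of which lean on the essentiality hypothesis: without it the $L_i$ could fail to span $V^*$, $\psi$ could cease to be a morphism, and the image of $\Sigma_j(F)$ could drop in dimension, breaking the extraction of $\delta+1$ truly independent linear conditions. The convention $\dim \emptyset = -1$ makes the statement collapse to the standard catalecticant inequality when $\Sigma_j(F)$ is empty, consistently with the argument above.
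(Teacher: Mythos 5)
The paper does not actually prove this statement: it is quoted verbatim from \cite[Theorem 1.3]{LT} with no argument supplied, so there is no in-house proof to measure you against. Your proof is correct and self-contained, and every load-bearing step checks out: the containment $\mathrm{Im}\,Cat_{j,d-j}(F)\subseteq W=\langle L_1^{d-j},\ldots,L_r^{d-j}\rangle$; the fact that each $G\circ F$ with $\deg G=j$ is a combination of order-$j$ partials and hence vanishes on $\Sigma_j(F)$; the deduction from the essentiality hypothesis that the $L_i$ span $V^*$ (otherwise $F$ would live in a proper symmetric power), so that $\psi$ is a well-defined finite morphism and $\psi(\Sigma_j(F))$ keeps dimension $\delta$; and the extraction of $\delta+1$ independent evaluation functionals, which is legitimate because a closed subset of dimension $\delta$ in $\mathbb{P}^{r-1}$ cannot lie in a $\mathbb{P}^{\delta-1}$, and because a functional on $W$ is determined by its values on the spanning set $\{L_k^{d-j}\}$. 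Your route is essentially the linear dual of Landsberg and Teitler's original argument: they note that the zero locus of the linear system $\mathrm{Im}\,Cat_{j,d-j}(F)$ contains $\Sigma_j(F)$ while the zero locus of all of $W$ is empty (again because the $L_i$ have no common zero), and since each additional element of $W$ cuts the zero locus down by at most one dimension, the codimension of the image inside $W$ is at least $\delta+1$. Your evaluation-functional version buys a purely linear-algebraic finish at the cost of having to justify finiteness of $\psi$; the essentiality hypothesis enters at exactly the same point in both arguments.
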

The latter result has been used to find lower bounds on the rank of the determinant and the permanent of the generic square matrix; see \cite{LT} (Corollary~1.4).

\smallskip
The bound \eqref{eq:lowerBoundCatalecticant} given by the ranks of catalecticant matrices is a particular case of a more general fact, which holds for general tensors. 

Given a tensor $T \in V_1\otimes\ldots\otimes V_d$, there are several ways to view it as a linear map. For example, we can ``reshape'' it as a linear map $V^*_i \rightarrow V_1\otimes\ldots\otimes\widehat{V_i}\otimes\ldots\otimes V_d$, for any $i$, or as $V^*_i\otimes V^*_j \rightarrow V_1\otimes\ldots\otimes\widehat{V_i}\otimes\ldots\otimes \widehat{V_j}\otimes\ldots\otimes V_d$, for any $i \neq j$, or more in general, as: 
\begin{equation}\label{eq:flattening}
	V_{i_1}^*\otimes V^*_{i_2}\otimes\ldots\otimes V_{i_s}^* \rightarrow V_1\otimes\ldots\otimes\widehat{V_{i_1}}\otimes\ldots\otimes \widehat{V_{i_s}}\otimes\ldots\otimes V_d,
\end{equation}
for any choice of $i_1,\ldots,i_s$. All these ways of reshaping the tensor are called {flattenings}. Now, if $T$ is a tensor of rank $r$, then all its flattenings have (as matrices) rank at most $r$. In this way, the ranks of the flattenings give lower bounds for the rank of a tensor, similarly as the ranks of catalecticant matrices gave lower bounds for the symmetric-rank of a homogeneous polynomial. 

We also point out that other notions of flattening, i.e., other ways to construct linear maps starting from a given tensor, have been introduced in the literature, such as {Young flattenings} (see \cite{LO}) and {Koszul flattenings} (see \cite{OO}). These were used to find equations of certain secant varieties of Veronese and other varieties and to provide algebraic algorithms to compute decompositions.

\smallskip
We conclude this section with a very powerful method to compute lower bounds on ranks of tensors: the so-called {substitution method}. In order to ease the notation, we report the result in the case $T \in V_1\otimes V_2 \otimes V_3$, with $\dim_\Bbbk V_i = n_i$. For a general result, see \cite{AFT:LowerUpperBounds} (Appendix B).

\begin{thm}[{The substitution method \cite{AFT:LowerUpperBounds} (Appendix B) or \cite{Lan17:GeometryComplexity}} (Section 5.3)]
	Let $T \in V_1\otimes V_2 \otimes V_3$. Write $T = \sum_{i=1}^{n_1} e_i \otimes T_i$, where the $e_i$'s form a basis of $V_1$ and the $T_i$'s are the corresponding ``slices'' of the tensor. Assume that $M_{n_1} \neq 0$. Then, there exist constants $\lambda_1,\ldots,\lambda_{n_1 - 1}$ such that the tensor:
	$$
		T' = \sum_{i=1}^{n_1-1} e_i \otimes (T_i - \lambda_i T_{n_1}) \in \Bbbk^{n_1-1}\otimes V_2 \otimes V_3,
	$$
	has rank at most $\rk (T) - 1$. If $T_{n_1}$ is a matrix of rank one, then equality holds.
\end{thm}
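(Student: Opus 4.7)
The plan is to exploit a minimal rank decomposition of $T$ and choose the $\lambda_i$'s so that exactly one rank-one summand is annihilated across the first $n_1-1$ slices. Concretely, let $r = \mathrm{rk}(T)$ and fix a decomposition
$$
T \;=\; \sum_{j=1}^{r} a_j \otimes B_j,
$$
where $a_j \in V_1$ and each $B_j \in V_2 \otimes V_3$ has matrix rank one. Expanding $a_j = \sum_{i=1}^{n_1} \alpha_{i,j}\, e_i$ gives $T_i = \sum_{j=1}^r \alpha_{i,j}\, B_j$ for each slice. Since by hypothesis $T_{n_1} \neq 0$, not all $\alpha_{n_1,j}$ can vanish, so pick an index $j_0$ with $\alpha_{n_1,j_0}\neq 0$.

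The key step is to set $\lambda_i := \alpha_{i,j_0}/\alpha_{n_1,j_0}$ for $i = 1,\dots,n_1-1$. A direct computation then shows
$$
T_i - \lambda_i T_{n_1} \;=\; \sum_{j \neq j_0} \bigl(\alpha_{i,j} - \lambda_i \alpha_{n_1,j}\bigr) B_j,
$$
because the $j_0$-term is killed by construction. Consequently,
$$
T' \;=\; \sum_{i=1}^{n_1-1} e_i \otimes (T_i - \lambda_i T_{n_1}) \;=\; \sum_{j \neq j_0}\Bigl(\sum_{i=1}^{n_1-1}(\alpha_{i,j} - \lambda_i \alpha_{n_1,j})\, e_i\Bigr) \otimes B_j,
$$
which exhibits $T'$ as a sum of $r-1$ rank-one tensors. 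This immediately gives $\mathrm{rk}(T') \leq r - 1 = \mathrm{rk}(T) - 1$, proving the first assertion.

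For the equality statement when $T_{n_1}$ has matrix rank one, the idea is to invert the substitution. Setting $f := e_{n_1} + \sum_{i=1}^{n_1-1} \lambda_i e_i \in V_1$, a line of algebra gives the identity
$$
T \;=\; T' + f \otimes T_{n_1}.
$$
Since $T_{n_1}$ has matrix rank one, the term $f \otimes T_{n_1}$ is a rank-one tensor, so any decomposition of $T'$ into $s$ rank-one tensors yields a decomposition of $T$ into $s+1$ rank-one tensors, hence $\mathrm{rk}(T) \leq \mathrm{rk}(T') + 1$. Combined with the inequality from the first part, this forces $\mathrm{rk}(T') = \mathrm{rk}(T) - 1$.

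There is no serious obstacle here: the argument is pure linear algebra once the right $\lambda_i$'s are identified. The only conceptual point to keep in mind is that the choice of $j_0$ (and hence of the $\lambda_i$'s) depends on the chosen minimal decomposition, which is fine since the conclusion is only an existence statement. The equality case relies crucially on the rank-one hypothesis on $T_{n_1}$, since otherwise $f \otimes T_{n_1}$ would contribute $\mathrm{rk}(T_{n_1})$ rank-one summands rather than just one.
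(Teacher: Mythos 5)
Your argument is correct and complete: the choice $\lambda_i=\alpha_{i,j_0}/\alpha_{n_1,j_0}$ kills one rank-one summand across the first $n_1-1$ slices, and the identity $T=T'+f\otimes T_{n_1}$ with $f=e_{n_1}+\sum_{i=1}^{n_1-1}\lambda_i e_i$ gives the reverse inequality $\rk(T)\leq\rk(T')+1$ when $T_{n_1}$ has rank one. The paper itself supplies no proof of this statement (it only cites \cite{AFT:LowerUpperBounds} and \cite{Lan17:GeometryComplexity}), and your argument is exactly the standard one from those references; note only that the hypothesis ``$M_{n_1}\neq 0$'' in the statement is a typo for $T_{n_1}\neq 0$, which you have correctly read.
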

Roughly speaking, this method is applied in an iterative way, with each of the $V_i$'s playing the role of $V_1$ in the theorem, in order to reduce the tensor to a smaller one whose rank we are able to compute. Since, in the theorem above, $\rk(T) \geq \rk(T') + 1$, at each step, we get a plus one on the lower bound. For a complete description of this method and its uses, we refer to \cite{Lan17:GeometryComplexity} (Section 5.3).

A remarkable use of this method is due to Shitov, who recently gave counterexamples to very interesting conjectures such as {Comon's conjecture}, on the equality between the rank and symmetric-rank of a symmetric tensor, and {Strassen's conjecture}, on the additivity of the tensor rank for sums of tensors defined over disjoint subvector spaces of the tensor space.

We will come back with more details on Strassen's conjecture, and its symmetric version, in the next section. We spend a few words more here on Comon's conjecture.

Given a symmetric tensor $F \in S^dV \subset V^{\otimes d}$, we may regard it as a tensor, forgetting the symmetries, and we could ask for its tensor rank, or we can take into account its symmetries and consider its symmetric-rank. Clearly, 
\begin{equation}\label{ineq:comon}
	\rk(F) \leq \rk_{\rm sym}(F).
\end{equation}
The question raised by Comon asks if whether such an inequality is actually an equality. Affirmative answers were given in several cases (see \cite{CGLM08,BB13,ZHQ16,Fri16,Sei18:RankSymmRankCubic}). In \cite{Shi18:Comon}, Shitov found an example (a cubic in $800$ variables) where the inequality \eqref{ineq:comon} is strict. As the author says, unfortunately, no symmetric analogs of this substitution method are known. However, a possible formulation of such analogs, which might lead to a smaller case where \eqref{ineq:comon} is strict, was proposed.
\begin{conj}[{\cite{Shi18:Comon}} (Conjecture 7)]
	Let $F,G \in S = \Bbbk[x_0,\ldots,x_n]$ of degree $d,d-1$, respectively. Let $L$ be a linear form. Then,
	$$
		\rk_{\rm sym}(F+LG) \geq d + \min_{L' \in S_1} \rk_{\rm sym}(F+L'G).
	$$
\end{conj}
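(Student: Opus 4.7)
The natural strategy is via the apolarity lemma (Lemma~\ref{lemma:Apolarity}). Set $r = \rk_{\rm sym}(F+LG)$ and extract a reduced apolar scheme $\mathbb{X} = \{[M_1], \dots, [M_r]\} \subset \mathbb{P}(S_1^*)$, so that $F+LG = \sum_{i=1}^r c_i M_i^d$ and $I(\mathbb{X}) \subseteq (F+LG)^\perp$. The desired inequality is then equivalent to the following existence statement: there is a subscheme $\mathbb{Y} \subseteq \mathbb{X}$ with $|\mathbb{Y}| \leq r-d$ and a linear form $L^* \in S_1$ such that $I(\mathbb{Y}) \subseteq (F+L^*G)^\perp$, which would certify $\rk_{\rm sym}(F+L^*G) \leq r-d$ and hence conclude. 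The whole plan is to produce this pair $(\mathbb{Y}, L^*)$ by exploiting how the family $\{(F+L'G)^\perp\}_{L' \in S_1}$ varies with $L'$.

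The first step is to analyze this family explicitly. For any $G' \in R$, the condition $G' \in (F+L'G)^\perp$ is affine linear in the coefficients of $L'$, since $G' \circ (F+L'G) = G'\circ F + G'\circ (L'G)$ and the second summand depends linearly on $L'$ via the apolarity action. Thus we have a linear pencil of Gorenstein apolar ideals parametrized by $\mathbb{P}(S_1) \cong \mathbb{P}^n$. The second step is to exploit the special shape of $(L-L^*)G$: it is a degree-$d$ form divisible by a single linear form, and by the results on $(d-1,1)$-decompositions of forms --- in particular Corollary~\ref{cor: osculating ranks} applied to secants of the tangential variety $\tau(X_{n,d})$ --- one has a bound on the $(d-1,1)$-rank of the correction term $(L-L^*)G$ which is essentially linear in $d$. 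This is where the additive constant $d$ in the conjectured inequality is meant to come from: the cost of switching from $L$ to $L^*$ in the decomposition should be captured by a correction whose rank is at most $d$.

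The third step is to combine these two observations. One partitions the $r$ points of $\mathbb{X}$ according to how their associated linear forms interact with a given pencil $\{L + t\Delta\}$, and one looks for a subset $\mathbb{Y}$ of size $r-d$ together with $L^* = L + \Delta$ such that the ``missing $d$ points'' account exactly, via apolarity, for the contribution of $\Delta G$ to the decomposition. Concretely one sets up a linear system in the variables given by the coefficients of $L^*$ and by the coefficients $c'_{i_j}$ of a prospective decomposition $F+L^*G = \sum_{j} c'_{i_j} M_{i_j}^d$, and one must show that this system has a solution for some choice of subset $\mathbb{Y}$ of size $r-d$. The main obstacle is precisely this last matching: minimal Waring decompositions are typically highly non-unique (cf.\ Section~\ref{Sylvestersection} for binary forms and Theorem~\ref{sec3Xd} for the first non-trivial multivariate case), so different minimal decompositions of $F+LG$ will suggest different candidate subschemes $\mathbb{Y}$, and a priori none of them need be compatible with a single replacement $L^*$. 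Handling this non-uniqueness --- showing that \emph{some} minimal decomposition of $F+LG$ admits a truncation compatible with a suitable $L^*$ --- is exactly the obstruction that prevents an automatic transfer of the classical substitution method of \cite{AFT:LowerUpperBounds} to the symmetric setting, as emphasized by Shitov \cite{Shi18:Comon}, and is the core difficulty of the conjecture.
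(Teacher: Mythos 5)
This statement is an open conjecture (Shitov's Conjecture~7), and the paper offers no proof of it --- it is quoted precisely because a symmetric analogue of the substitution method is \emph{not} known. Your text is therefore not a proof and cannot be compared to one; it is a strategy sketch, and by your own final sentence it stalls exactly at the step that constitutes the conjecture. Concretely, the third step --- producing a subset $\mathbb{Y}\subseteq\mathbb{X}$ of size $r-d$ and a linear form $L^*$ with $I(\mathbb{Y})\subseteq (F+L^*G)^\perp$ --- is never argued; ``one must show that this system has a solution'' is the entire content of the conjecture restated, not a proof of it.

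Two further points in the sketch are incorrect as written. First, the claimed \emph{equivalence} between the conjectured inequality and the existence of such a pair $(\mathbb{Y},L^*)$ is false in one direction: a decomposition of $F+L^*G$ of length $r-d$ witnessing $\rk_{\rm sym}(F+L^*G)\le r-d$ has no reason to be supported on a subset of the points of some chosen minimal apolar scheme of $F+LG$, so your existence statement is sufficient but not necessary, and restricting attention to subsets of $\mathbb{X}$ may well make the problem unsolvable even if the conjecture is true. Second, the appeal to Corollary~\ref{cor: osculating ranks} to explain the additive constant $d$ does not work: that corollary computes the \emph{generic} $(d-1,1)$-rank, which for fixed $n\ge 2$ grows like $\binom{n+d}{n}/(2n+1)$, not linearly in $d$; meanwhile the specific correction term $(L-L^*)G$ has $(d-1,1)$-rank one by construction, and neither quantity controls its symmetric rank. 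So the source of the ``$+\,d$'' in the inequality is not accounted for. If you wish to discuss this conjecture in connection with the paper, the honest framing is that it is open and that the difficulty you identify at the end is the whole problem.
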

\begin{rmk}
	A symmetric tensor $F \in S^dV$ can be viewed as a partially-symmetric tensor in $S^{d_1}V\otimes\ldots\otimes S^{d_m}V$, for any $\underline{d} = (d_1,\ldots,d_m) \in \NN^m$ such that $d_1+\ldots+d_m = d$. Moreover, if $\underline{d}' = (d'_1,\ldots,d'_{m'}) \in \NN^{m'}$ is a {refinement} of $\underline{d}$, i.e., there is some grouping of the entries of $\underline{d}'$ to get $\underline{d}$, then we have:
	\begin{equation}\label{ineq:comon_part}
		\rk_{\rm sym}(F) \geq \rk_{\underline{d}}(F) \geq \rk_{\underline{d}'}(F),
	\end{equation}
	which is a particular case of \eqref{ineq:comon}. In the recent paper \cite{GOV18}, the authors investigated the partially-symmetric version of Comon's question, i.e., the question if, for a given $F$, \eqref{ineq:comon_part} is an equality or not. Their approach consisted of bounding from below the right-hand side of \eqref{ineq:comon_part} with the simultaneous rank of its partial derivatives of some given order and then studying the latter by using classical apolarity theory (see also Remark~\ref{rmk:simul}). If such a simultaneous rank coincides with the symmetric-rank of $F$, then also all intermediate ranks are the same. In particular, for each case in which Comon's conjecture is proven to be true, then also all partially-symmetric tensors coincide. For more details, we refer to \cite{GOV18}, where particular families of homogeneous polynomials are~considered. 
\end{rmk}

\subsection{Formulae for Symmetric Ranks}
In order to find exact values of the symmetric-rank of a given polynomial, we can use one of the available algorithms for rank computations; see Section \ref{sec:algorithms}. However, as we already mentioned, the algorithms will give an answer only if some special conditions are satisfied, and the answer will be only valid for that specific form. Thus, having exact formulae working for a family of forms is of the utmost interest.

Formulae for the rank are usually obtained by finding an explicit (a posteriori sharp) upper bound and then by showing that the rank cannot be less than the previously-found lower bound.

An interesting case is the one of monomials. The lower bound of \eqref{colonperpbound} is used to obtain a rank formula for the {complex} rank of any monomial, similarly as in Example \ref{example:lowerBound}; i.e., given a monomial $F = \bfx^\alpha$, whose exponents are increasingly ordered, we have that $F^\perp = (y_0^{\alpha_0+1},\dots,y_n^{\alpha_n+1})$, and then, one has to:
\begin{enumerate}
\item first, as in Example \ref{monomialboundexample}, exhibit the set of points apolar to $F$ given by the complete intersection $(y_1^{\alpha_1}-y_0^{\alpha_1},\ldots,y_n^{\alpha_n} - y_0^{\alpha_0})$; this proves that the right-hand side of \eqref{eq:MonomialsRank} is an upper bound for the rank;
\item second, as in Example \ref{example:lowerBound}, use Theorem \ref{colonperpbound} with $e = 1$ and $G = y_0$ to show that the right-hand side of \eqref{eq:MonomialsRank} is a lower bound for the rank.
\end{enumerate}
\begin{thm}[{\cite[Proposition 3.1]{CCG12:SolutionWaringMonomials}}]\label{thm:monomials}
Let $1 \leq \alpha_0 \leq \alpha_1 \ldots\leq \alpha_n$. Then,
\begin{equation}\label{eq:MonomialsRank}
	\rk_{\rm sym}(\bfx^\alpha) = \frac{1}{\alpha_0+1}\prod_{i=0}^n (\alpha_i+1).
\end{equation}
\end{thm}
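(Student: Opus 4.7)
The plan is to sandwich $\rk_{\rm sym}(\bfx^\alpha)$ between matching upper and lower bounds, exactly along the two lines that the excerpt advertises just before the statement. The starting point is the explicit description of the apolar ideal
\[
	F^\perp = (y_0^{\alpha_0+1},\, y_1^{\alpha_1+1},\, \ldots,\, y_n^{\alpha_n+1}),
\]
which is straightforward since $y_i^{\alpha_i+1}\circ \bfx^\alpha=0$ and the quotient $S^\bullet V^*/F^\perp$ has the expected symmetric Hilbert function for a complete intersection with socle in degree $|\alpha|$.

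For the upper bound, I would apply the Apolarity Lemma (Lemma~\ref{lemma:Apolarity}) to the complete intersection ideal
\[
	J := (\,y_1^{\alpha_1+1}-y_0^{\alpha_1+1},\ \ldots,\ y_n^{\alpha_n+1}-y_0^{\alpha_n+1}\,).
\]
Each generator lies in $F^\perp$, because $y_0^{\alpha_i+1}=y_0^{\alpha_i-\alpha_0}\cdot y_0^{\alpha_0+1}\in F^\perp$ for $\alpha_i\geq\alpha_0$. I would then check reducedness by passing to the affine chart $y_0=1$: the resulting system $y_i^{\alpha_i+1}=1$ has $\prod_{i=1}^n(\alpha_i+1)$ distinct solutions over an algebraically closed characteristic zero field, and in the hyperplane $y_0=0$ the system forces all $y_i=0$, so there are no points at infinity. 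By B\'ezout (or directly from the complete intersection), $J$ cuts out a reduced $0$-dimensional scheme $\mathbb{X}$ of degree $\prod_{i=1}^n(\alpha_i+1)=\tfrac{1}{\alpha_0+1}\prod_{i=0}^n(\alpha_i+1)$ apolar to $F$, which by Lemma~\ref{lemma:Apolarity} gives the desired upper bound on $\rk_{\rm sym}(\bfx^\alpha)$.

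For the lower bound I would invoke Theorem~\ref{colonperpbound} with $e=1$, $I=(y_0)$ and $G=y_0$. A direct colon computation yields
\[
	F^\perp:(y_0)+(y_0)=(y_0,\,y_1^{\alpha_1+1},\,\ldots,\,y_n^{\alpha_n+1}),
\]
since $y_0^{\alpha_0+1}:y_0=y_0^{\alpha_0}$ is killed once we add $(y_0)$. The quotient by this ideal is the Artinian algebra $\Bbbk[y_1,\ldots,y_n]/(y_1^{\alpha_1+1},\ldots,y_n^{\alpha_n+1})$, whose total $\Bbbk$-dimension is exactly $\prod_{i=1}^n(\alpha_i+1)$. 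Summing the values of its Hilbert function over all degrees and dividing by $e=1$ yields the lower bound $\rk_{\rm sym}(\bfx^\alpha)\geq \prod_{i=1}^n(\alpha_i+1)$, which matches the upper bound.

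I expect the only delicate point to be the lower bound: the upper bound is a clean consequence of the Apolarity Lemma once the complete intersection is written down, but the lower bound crucially exploits the sharpness of Theorem~\ref{colonperpbound} for monomials (what the excerpt later calls $1$-computability). The main obstacle is therefore verifying that Theorem~\ref{colonperpbound} is applicable with the chosen $I$ and $G$ — in particular that the genericity of $G$ in the $1$-dimensional ideal $I=(y_0)$ is harmless, which is where the ordering $\alpha_0\leq\alpha_1\leq\cdots\leq\alpha_n$ enters (so that $y_0$ is the ``smallest'' variable in $F^\perp$, ensuring $F^\perp:(y_0)=(y_0^{\alpha_0},y_1^{\alpha_1+1},\ldots,y_n^{\alpha_n+1})$ and the colon-plus-$(y_0)$ identity used above). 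Everything else is a routine combinatorial identity.
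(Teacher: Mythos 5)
Your proposal is correct and follows essentially the same two-step route the paper itself outlines for Theorem~\ref{thm:monomials}: the upper bound via the Apolarity Lemma applied to the complete intersection $(y_i^{\alpha_i+1}-y_0^{\alpha_i+1})_{i=1}^n$ (as in Example~\ref{monomialboundexample}), and the matching lower bound via Theorem~\ref{colonperpbound} with $e=1$, $I=(y_0)$, $G=y_0$ (as in Example~\ref{example:lowerBound}). Your verification of reducedness and of the colon-ideal computation fills in exactly the details the paper leaves to those examples.
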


\smallskip
Another relevant type of forms for which we know the rank is the one of reduced cubic forms. The reducible cubics, which are not equivalent to a monomial (up to change of variables), can be classified into three canonical forms. The symmetric complex rank for each one was computed, as the following result summarizes: the first two were first presented in \cite{LT}, while the last one is in \cite{CGV16:RealComplexRankReducibleCubics}. In particular, for all three cases, we have that the lower bound given by Theorem \ref{thm:boundCGV} is sharp.
\begin{thm}[{\cite[Theorem 4.5]{CGV16:RealComplexRankReducibleCubics}}]\label{CGV:ReducedCubicsCC}
Let $F\in\mathbb{C}[x_0,\ldots,x_n]$ be a form essentially involving $n +1$ variables, which is not equivalent to a monomial. If $F$ is a reducible cubic form, then one and only one of the following~holds:
\begin{enumerate}
\item $F$ is equivalent to:
\[x_0(x_0^2+ x_1^2+\ldots + x_n^2),\]
and $\rk_{\rm sym}(F)=2n$.
\item $F$ is equivalent to:
\[ x_0(x_1^2+ x_2^2+ \ldots + x_n^2),\]
and $\rk_{\rm sym}(F)=2n$.
\item $F$ equivalent to:
\[x_0(x_0x_1 + x_2x_3 + x_4^2+ \ldots + x_n^2),\]
and $\rk_{\rm sym}(F)=2n +1$.
\end{enumerate}
\end{thm}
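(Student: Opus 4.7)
The plan is to reduce the statement to three normal forms via an explicit classification of reducible cubics, and then compute $\mathrm{rk}_{\mathrm{sym}}$ for each normal form by matching an explicit Waring decomposition against the lower bound of Theorem \ref{thm:boundCGV}.

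First, for the classification step: if $F$ is reducible, then either it splits into three linear factors (and is thus equivalent to a monomial in at most three variables, contrary to hypothesis) or $F = L \cdot Q$ with $Q$ an irreducible quadratic. I set $L = x_0$ after a linear change of coordinates and decompose $Q = A x_0^2 + 2 x_0 R(x_1,\ldots,x_n) + S(x_1,\ldots,x_n)$ with $R$ linear and $S$ a quadratic in $x_1,\ldots,x_n$. Diagonalizing $S$ over $\mathbb{C}$ and applying the shifts $x_i \mapsto x_i + \alpha_i x_0$ for $i$ in the support of $S$ kills the corresponding components of $R$; a further substitution absorbs the coefficient $A$ of $x_0^2$. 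The remaining data is controlled by the invariants $\mathrm{rk}(Q)$ and $\mathrm{rk}(Q|_{V(L)})$; the requirement that $F$ have $n+1$ essential variables, combined with the irreducibility of $Q$, forces these to lie in the three admissible pairs $(n+1,n)$, $(n,n)$, and $(n+1,n-1)$, corresponding to the three canonical forms.

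For the upper bounds, I use the identity $(a x_0 + x_i)^3 + (a x_0 - x_i)^3 = 2 a^3 x_0^3 + 6 a x_0 x_i^2$. Summing across $i = 1,\ldots,n$ with weights $\lambda_i = 1/(6 a_i)$ for $a_i \in \mathbb{C}^*$ gives
\[
\sum_{i=1}^n \lambda_i \bigl[(a_i x_0 + x_i)^3 + (a_i x_0 - x_i)^3\bigr] = \tfrac{1}{3}\Bigl(\sum_i a_i^2\Bigr)\, x_0^3 + \sum_i x_0 x_i^2.
\]
Parameters with $\sum a_i^2 = 3$ realize Case~(1) and parameters with $\sum a_i^2 = 0$ realize Case~(2), both with $2n$ cubes; both constraints are solvable over $\mathbb{C}$ for every admissible $n$. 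For Case~(3), I split $F = x_0^2 x_1 + x_0 \cdot Q''$ with $Q'' = x_2 x_3 + x_4^2 + \cdots + x_n^2$ a rank $(n-1)$ quadratic disjoint from $x_0$: Theorem \ref{thm:monomials} gives $\mathrm{rk}_{\mathrm{sym}}(x_0^2 x_1) = 3$, and after diagonalizing $Q''$ the summand $x_0 \cdot Q''$ falls under Case~(2) with $n-1$ squares, contributing $2(n-1)$ cubes, for a total of $2n+1$. For the matching lower bounds, I invoke Theorem \ref{thm:boundCGV} with $F_i = \partial F/\partial x_i$: in every case the quadrics $F_1,\ldots,F_n$ are scalar multiples of distinct degree-two monomials (namely $x_0 x_i$, with $x_0^2$ replacing $x_0 x_1$ in Case~(3)), so $p = n$. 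Since $F_0 + \sum \lambda_i F_i$ is a quadric, its symmetric rank equals the rank of its Gram matrix, which I compute directly as a function of $\lambda$: the minimum is $n$ in Cases~(1) and~(2) (attained on the locus $\|\lambda\|^2 = 3$ and at $\lambda = 0$ respectively), and $n+1$ in Case~(3) (the $x_1$-row of the Gram matrix has a unique non-zero entry in the $x_0$-column, which forces the zeroth kernel component to vanish, and then the remaining rows propagate this to a trivial kernel). Combining with the upper bounds yields the claimed ranks $2n$, $2n$, and $2n+1$ respectively.

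The main obstacle is the classification step: establishing that the $GL_{n+1}$-normalization exhausts all reducible non-monomial cubics with $n+1$ essential variables, and that the three canonical forms are pairwise inequivalent, requires careful bookkeeping of the effect of each substitution on the invariants $(\mathrm{rk}(Q), \mathrm{rk}(Q|_{V(L)}))$ and also the handling of low-dimensional edge cases ($n = 1,2$, where Cases~(2) and~(3) collapse into monomials or into Case~(1)). The Gram matrix argument in Case~(3) is the other delicate point, since the lower bound must hold uniformly over every $\lambda \in \mathbb{C}^n$ and depends crucially on the asymmetric contribution of the cross term $x_0 x_1$.
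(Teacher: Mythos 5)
Your overall strategy is exactly the one behind the cited result: the survey itself records that ``for all three cases, we have that the lower bound given by Theorem \ref{thm:boundCGV} is sharp,'' so classification into the three normal forms, explicit $2n$- and $(2n+1)$-term decompositions for the upper bounds, and the derivative bound of Theorem \ref{thm:boundCGV} for the lower bounds is the intended route. Your Gram-matrix computations are correct in all three cases: in Cases (1) and (2) the minimum rank of $G_\lambda$ over $\lambda$ is $n$ (attained at $\sum\lambda_i^2=3$, resp.\ $\lambda=0$), and in Case (3) one has $\det G_\lambda=\pm1$ for every $\lambda$, so $m=n+1$, $p=n$ gives $2n+1$. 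Likewise the identity $(ax_0+x_i)^3+(ax_0-x_i)^3=2a^3x_0^3+6ax_0x_i^2$ with $\sum a_i^2=3$, resp.\ $0$, correctly produces the $2n$-term decompositions in Cases (1) and (2).

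There are, however, two concrete slips at small $n$, both misdiagnosed in your closing paragraph. First, the opening dichotomy ``three linear factors $\Rightarrow$ equivalent to a monomial'' fails for $n=1$: a binary cubic with three distinct roots, such as $x_0(x_0^2+x_1^2)=x_0(x_0+ix_1)(x_0-ix_1)$, is not equivalent to a monomial and is precisely Case (1); that branch must feed back into the classification rather than be discarded. Second, and more seriously, Case (3) does \emph{not} collapse for $n=2$: the form $F=x_0(x_0x_1+x_2^2)$ has three essential variables, invariants $(\mathrm{rk}(Q),\mathrm{rk}(Q|_{V(L)}))=(3,1)$, and is the maximal-rank plane cubic of rank $5=2n+1$. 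Your upper bound there reads $\rk_{\rm sym}(x_0^2x_1)+\rk_{\rm sym}(x_0x_2^2)=3+3=6$, because the ``Case (2) with $n-1$ squares'' sub-decomposition requires $\sum a_i^2=0$ with all $a_i\neq0$, which is impossible for a single square. For $n=2$ you must therefore exhibit a genuine five-term decomposition of $x_0^2x_1+x_0x_2^2$ (or invoke that $5$ is the maximal rank of ternary cubics) instead of splitting the form. With these two boundary cases repaired, the argument is complete.
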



Another way to find formulae for symmetric-ranks relies on a symmetric version of {Strassen's conjecture} on tensors. In 1973, Strassen formulated a conjecture about the additivity of the tensor ranks~\cite{Str73:StrassenConj}, i.e., given tensors $T_i,\ldots,T_s$ in $V^{\otimes d}$ defined over disjoint subvector spaces, then,
\[\rk (\sum_{i=1}^s T_i)=\sum_{i = 1}^s\rk (T_i).\]
After a series of positive results (see, e.g., \cite{FW84:OnStrassen, JT86:ValidityStrassen, LM17:AbelianTensors}), Shitov gave a proof of the existence of a counter-example to the general conjecture in the case of tensors of order three \cite{Shi17:CounterexampleStrassen}. Via a clever use of the substitution method we introduced in the previous section, the author described a way to construct a counter-example, but he did not give an explicit one.

However, as the author mentioned is his final remarks, no counter example is known for the symmetric version of the conjecture that goes as follows: given homogeneous polynomials $F_1,\ldots,F_s$ in different sets of variables, then:
$$
	\rk_{\rm sym}(\sum_{i = 1}^s F_i) = \sum_{i = 1}^s \rk_{\rm sym}(F_i).
$$ 
In this case, Strassen's conjecture is known to be true in a variety of situations. The case of sums of coprime monomials was proven in \cite[Theorem 3.2]{CCG12:SolutionWaringMonomials} via apolarity theory by studying the Hilbert function of the apolar ideal of $F = \sum_{i=1}^s F_i$. Indeed, it is not difficult to prove that: 
\begin{align}\label{eq:ApolarSum}
F^\perp = \bigcap_{i=1}^s F_i^\perp + (F_i + \lambda_{i,j} F_j & ~:~ i \neq j), \\
& \text{ where the $\lambda_{i,j}$'s are suitable coefficients}. \nonumber
\end{align}
In this way, since apolar ideals of monomials are easy to compute, it is possible to express explicitly also the apolar ideal of a sum of coprime monomials. Therefore, the authors applied an analogous strategy as the one used for Theorem \ref{thm:monomials} (by using more technical algebraic computations) to prove that Strassen's conjecture holds for sums of monomials. 

In \cite{CCC:ProgressStrassen}, the authors proved that Strassen's conjecture holds whenever the summands are in either one or two variables. In \cite{CCCGW18,Tei15:SufficientStrassen}, the authors provided conditions on the summands to guarantee that additivity of the symmetric-ranks holds. For example, in \cite{Tei15:SufficientStrassen}, the author showed that whenever the catalecticant bound \eqref{eq:lowerBoundCatalecticant} (or the lower bound given by Theorem \ref{thm:boundLT}) is sharp for all the $F_i$'s, then Strassen's conjecture holds, and the corresponding bound for $\sum_{i=1}^s F_i$ is also sharp. 

A nice list of cases in which Strassen's conjecture holds was presented in \cite{CCCGW18}. This was done again by studying the Hilbert function of the apolar ideal of $F = \sum_{i=1}^s F_i$, computed as described in~\eqref{eq:ApolarSum}, and employing the bound given by Theorem \ref{colonperpbound}.

\begin{thm}[{\cite[Theorem 6.1]{CCCGW18}}]
Let $F = F_1 +\ldots+ F_m $, where the degree $d$ forms $F_i$ are in different sets of variables. If, for $i = 1,\ldots,m$, each $F_i$ is of one of the following types:
\begin{itemize}
\item $F_i$ is a monomial;
\item $F_i$ is a form in one or two variables;
\item $Fi =x_0^a(x_1^b+\ldots +x_n^b)$ with $a+1\geq b$;
\item $Fi =x_0^a(x_1^b+x_2^b)$;
\item $Fi =x_0^a(x_0^b+\ldots +x_n^b)$ with $a+1\geq b$;
\item $Fi =x_0^a(x_0^b+x_1^b+x_2^b)$;
\item $F_i=x_0^aG(x_1,\ldots,x_n)$ where $G^\perp=(H_1,\ldots,H_m)$ is a complete intersection and $a\leq \deg H_i$;
\item $F_i$ is a Vandermonde determinant;
\end{itemize}
then Strassen's conjecture holds for $F$.
\end{thm}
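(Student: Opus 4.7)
The plan is to establish the theorem through the standard two-sided inequality, where the upper bound $\rk_{\mathrm{sym}}(F) \leq \sum_{i=1}^m \rk_{\mathrm{sym}}(F_i)$ is immediate by concatenating Waring decompositions, while the entire difficulty lies in the matching lower bound. For the lower bound I would exploit the machinery of $e$-computability together with the fact, already recorded in the excerpt, that each type on the list is known to admit a sharp apolarity-based rank bound via Theorem~\ref{colonperpbound}.

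The first technical step is to understand $F^\perp$ when the summands are in disjoint variables. Grouping the variables into blocks $y^{(1)},\ldots,y^{(m)}$ so that $F_i \in \Bbbk[x^{(i)}]$, the apolar ideal $F^\perp$ decomposes via the formula \eqref{eq:ApolarSum}: it contains each $F_i^\perp$ (regarded in its own block), the mixed monomials $y_j^{(i)} y_k^{(l)}$ for $i \neq l$, and the "balancing" quadratic-type relations expressing that equal-degree operators applied to different $F_i$ must cancel. The crucial consequence is that for any ideal $I$ generated in a single degree $e$ and a generic form $G \in I$ of that degree, the quotient $R/(F^\perp : I + (G))$ has a Hilbert function that splits as a sum over the blocks, with cross-block contributions killed by the mixed quadrics.

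The second step is to exploit this splitting. For each admissible type of $F_i$, the statement "$F_i$ is $e$-computable" has already been established in \cite{CCG12:SolutionWaringMonomials} and \cite{CCCGW18} (for monomials, binary/unary forms, the listed products, Vandermonde determinants, and the complete-intersection case); that is, there exists an $e$ and an appropriate choice of $I$, $G$ such that $\frac{1}{e}\sum_s \HF(R/(F_i^\perp : I + (G)),s) = \rk_{\mathrm{sym}}(F_i)$. I would verify that, among the eight listed classes, a \emph{uniform} choice $e=1$ with $G = y_0^{(i)}$ (the ``anchor'' variable of each block) works, or otherwise that the choices compatible with the structure on each block can be assembled block-diagonally. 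Summing the resulting expressions over $i=1,\ldots,m$, the block-split of the Hilbert function of $F^\perp : I + (G)$ yields
\[
\rk_{\mathrm{sym}}(F) \;\geq\; \frac{1}{e} \sum_{s \geq 0} \HF\bigl(R/(F^\perp : I + (G)),\,s\bigr) \;=\; \sum_{i=1}^m \rk_{\mathrm{sym}}(F_i),
\]
which closes the loop.

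The principal obstacle is the bookkeeping in the second step: verifying that the chosen $I$ and $G$ for the aggregate $F$ actually induces on each block the colon ideal prescribed by the individual $e_i$-computability certificate. The mixed generators of $F^\perp$ (the degree-two relations across blocks and the balancing relations between equal-degree operators) must be shown to contribute exactly zero to the colon quotient in positive degree, so that the Hilbert-function computation truly factors. A secondary delicate point is the heterogeneity of the list: the Vandermonde case and the complete-intersection case $F_i = x_0^a G(x_1,\ldots,x_n)$ with $a \leq \deg H_j$ come with their own ad hoc apolar descriptions, and one must check case by case that the colon operation $((F^\perp):I+(G))$ restricted to the corresponding block reproduces the sharp lower-bound certificate established for $F_i$ in isolation. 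Once this compatibility is verified uniformly across the eight types, the theorem follows.
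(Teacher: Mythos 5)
Your strategy---computing $F^\perp$ for a sum of forms in disjoint variables via \eqref{eq:ApolarSum} and then applying the lower bound of Theorem \ref{colonperpbound} so that the Hilbert-function certificate splits block-by-block and sums to $\sum_i \rk_{\rm sym}(F_i)$---is exactly the route the paper attributes to \cite{CCCGW18}, so the approach matches. The only caution is that your hoped-for uniform choice $e=1$, $G=y_0^{(i)}$ cannot work across all eight classes (binary forms of rank larger than roughly $d/2$, for instance, are not one-computable), so the fallback you already name---assembling the individual $e_i$-computability certificates block-diagonally and checking that the mixed generators of $F^\perp$ contribute nothing to the colon quotient---is the part that actually carries the proof.
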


\subsection{Identifiability of Tensors}
For simplicity, in this section we work on the field $\mathbb{C}$ of the complex numbers. Let us consider tensors in $\bfV = \mathbb{C}^{n_1+1}\otimes \ldots \otimes \mathbb{C}^{n_d+1}$. A problem of particular interest when studying minimal decompositions of tensors is to count how many there are.

\begin{problem} 
Suppose a given tensor $T \in \bfV$ has rank $r$, i.e., it can be written as $T = \bigoplus_{i=1}^r {v}_i^1\otimes \ldots \otimes {v}_i^d$. When is it that such a decomposition is {unique} (up to permutation of the summands and scaling of the vectors)? 
\end{problem}

This problem has been studied quite a bit in the last two centuries (e.g., see \cite{Hil88,Kr,Str,Syl}), and it is also of interest with respect to many applied problems (e.g., see \cite{AGHKT,AD,AMR09}). Our main references for this brief exposition are \cite{ABC,Ae}.

Let us begin with a few definitions.

\begin{dfn} A rank-$r$ tensor $T \in \bfV$ is said to be \emph{identifiable} over $\mathbb{C}$ if its presentation 
$
T = \bigoplus_{i=1}^r {v}_i^1\otimes \ldots \otimes {v}_i^d
$
is unique (up to permutations of the summands and scaling of the vectors).
\end{dfn}
\noindent It is interesting to study the identifiability of a generic tensor of given shape and~rank.
\begin{dfn}
We say that tensors in $\bfV$ are \emph{$r$-generically identifiable} over $\mathbb{C}$ if identifiability over $\mathbb{C}$ holds in a Zariski dense open subset of the space of tensors of rank $r$. Moreover, we say that the tensors in $\bfV$ are \emph{generically identifiable} if they are {$r_g$-generically identifiable}, where $r_g$ denotes the generic rank in $\bfV$.
\end{dfn}
Let us recall that the generic rank for tensors $\bfV$ is the minimum value for which there is a Zariski open non-empty set $U$ of $\bfV$ where each point represents a tensor with rank $\leq r_g$; see Section \ref{sec:TensorDec_Segre}.
Considering $\mathbf{n}= (n_1,\ldots , n_d)$, let $X_{n}\subset {\mathbb{P}}\bfV$ be the Segre embedding of $\bbP^{n_1}\times\ldots\times\bbP^{n_d}$. As we already said previously, if $r_g$ is the generic rank for tensors in $\bfV$, then $\sigma_{r_g}(X_{n})$ is the first secant variety of $X_{\bfn}$, which fills the ambient space. Therefore, to say that the tensors in $\bfV$ are generically identifiable over $\mathbb{C}$ amounts to saying the following: let $r_g$ be the generic rank with respect to the Segre embedding $X_{\bfn}$ in $\bbP\bfV$; then, for the generic point $[T] \in \mathbb{P}\bfV$, there exists a unique ${\mathbb{P}^{r-1}}$, which is $r_g$-secant to $X_{n}$ in $r_g$ distinct points and passes through $[T]$. The $r_g$ points of $X_{n}$ gives (up to scalar) the $r_g$ summands in the unique (up to permutation of summands) minimal decomposition of the tensor $T$.

When $\sigma_r(X_{n})\neq {\mathbb{P}\bfV}$, i.e., the rank $r$ is smaller than the generic one (we can say that $r$ is {sub-generic}), then we have that the set of tensors $T \in \bfV$ with rank $r$ is {r
-generically identifiable} over $\mathbb{C}$ if there is an open set $U$ of $\sigma_r(X_{n})$ such that for the points $[T]$ in $U$, there exists a unique ${\mathbb{P}^{r-1}}$, which is $r$-secant in $r$ distinct points to $X_{n}$ and passes through $[T]$.

Obviously, the same problem is interesting also when treating symmetric or skew-symmetric tensors, i.e., when $n_1=\ldots = n_d=n$ and $T \in S^d(\mathbb{C}^{n+1}) \subset \bfV$ or $T \in \bigwedge^d(\mathbb{C}^{n+1})$. From a geometric point of view, in these cases, we have to look at Veronese varieties or Grassmannians, respectively, and their secant varieties, as we have seen in the previous sections.

Generic identifiability is quite rare as a phenomenon, and it has been largely investigated; in particular, we refer to \cite{BCO,BC,BCV,CO,COV14,COV17,Kr,Str}. As an example of how generic identifiability seldom presents itself, we can consider the case of symmetric tensors.

It is classically known that there are three cases of generic identifiability, namely:
\begin{itemize}
\item binary forms of odd degree ($n = 1$ and $d = 2t+1$), where the generic rank is $t+1$ \cite{Syl};
\item ternary quintics ($n = 2$ and $d = 5$), where the generic rank is seven \cite{Hil88};
\item quaternary cubics ($n = 3$ and $d = 3$), where the generic rank is five \cite{Syl}.
\end{itemize}
Recently, Galuppi and Mella proved that these are the only generically identifiable cases when considering symmetric-ranks of symmetric tensors; see \cite{GM16:Identifiability}.

When we come to partially-symmetric tensors (which are related to Segre--Veronese varieties, as we described in Section \ref{sec:SegreVeronese}), a complete classification of generically-identifiable cases is not known, but it is known that it happens in the following cases; see \cite{AGMO}.
\begin{itemize}
\item $S^{d_1}\bbC^2 \otimes \ldots\otimes S^{d_t}\bbC^2$, with $d_1 \leq \ldots d_t$ and $d_1+1 \geq r_g$, where $r_g$ is the generic partially-symmetric-rank, i.e., forms of multidegree $(d_1,\ldots,d_t)$ in $t$ sets of two variables; here, the generic partially-symmetric-rank is $t+1$ \cite{CR06:MinimalSecantDegree};
\item $S^2\bbC^{n+1}\otimes S^2\bbC^{n+1}$, i.e., forms of multidegree $(2,2)$ in two sets of $n+1$ variables; her,e the generic partially-symmetric-rank is $n+1$ \cite{Wei67:ZurBilinearenFormen};
\item $S^2\bbC^3 \otimes S^2\bbC^3 \otimes S^2\bbC^3 \otimes S^2\bbC^3$, i.e., forms of multidegree $(2,2,2,2)$ in four sets of three variables; here, the generic partially-symmetric-rank is four (this is a classical result; see also \cite{AGMO});
\item $S^2\bbC^3 \otimes S^3\bbC^3$, i.e., forms of multidegree $(2,3)$ in two sets of three variables; here, the generic partially-symmetric-rank is four \cite{Ro89:NoteCubicConic};
\item $S^2\bbC^3 \otimes S^2\bbC^3 \otimes S^4\bbC^3$, i.e., forms of multidegree $(2,2,4)$ in three sets of three variables; here, the generic partially-symmetric-rank is seven \cite{AGMO}.
\end{itemize}

When considering $r$-generically identifiable tensors for sub-generic rank, i.e., for $r < r_g$, things change completely, in as much as we do expect $r$-generically-identifiability in this case. Again, the symmetric case is the best known; in \cite[Theorem 1.1]{COV17}, it was proven that every case where $r$ is a sub-generic rank and $\sigma_r(X_{n,d})$ has the expected dimension for the Veronese variety $X_{n,d}$, $r$-generically identifiability holds with the only following exceptions:
\begin{itemize}
\item $S^6\bbC^3$, i.e., forms of degree six in three variables, having rank nine;
\item $S^4\bbC^4$, i.e., forms of degree four in four variables, having rank eight;
\item $S^3\bbC^6$, i.e., forms of degree three in six variables, having rank nine.
\end{itemize}
In all the latter cases, the generic forms have exactly two decompositions.

Regarding generic identifiability for skew-symmetric tensors, there are not many studies, and we refer to \cite{BV}.

It is quite different when we are in the {defective} cases, namely, when we want to study $r$-generic identifiability and the $r$-th secant variety is defective. In this case, non-identifiability is expected; in particular, we will have that the number of decompositions for the generic tensor parametrized by a point of $\sigma_r(Y)$ is infinite.

\subsection{Varieties of Sums of Powers}
Identifiability deals with the case in which tensors have a unique (up to permutation of the summands) decomposition. When the decomposition is not unique, what can we say about all possible decompositions of the given tensor? In the case of symmetric tensors, that is homogeneous polynomial, an answer is given by studying {varieties of sums of powers}, the so-called $\VSP$, defined by Ranestad and Schreyer in \cite{RS00}.

\begin{dfn} 
Let $F$ be a form in $n+1$ variables having Waring rank $r$, and let $\mathrm{Hilb}_{r}\mathbb{P}^n$ be the Hilbert scheme of $r$ points in $\mathbb{P}^n$; we define:
\[
\mathrm{VSP}(F,r)=\overline{\lbrace \mathbb{X} = \{P_1,\ldots,P_r\} \in\mathrm{Hilb}_{r}\mathbb{P}^n:I_\mathbb{X} = \wp_1\cap \ldots \cap \wp_r\subset F^\perp\rbrace}.
\]
\end{dfn}

For example, when identifiability holds, $\VSP$ is just one single point. It is interesting to note that, even for forms having generic rank, the corresponding $\VSP$ might be quite big, as in the case of binary forms of even degree.

Using Sylvester's algorithm we have a complete description of $\VSP$ for binary forms, and it turns out to be always a linear space.

\begin{example} 
Consider the binary form $F=x_0^2x_1^2$. Since $F^\perp=(y_0^3,y_1^3)$, by Sylvester's algorithm, we have that the rank of $F$ is three. Moreover, by the apolarity lemma, we have that $\VSP(F,3)$ is the projectivization of the vector space $W = \langle y_0^3,y_1^3\rangle$ because the generic form in $W$ has three distinct roots.
\end{example}

In general, the study of VSPs
 is quite difficult, but rewarding: VSPs play an important role in classification work by Mukai see \cite{Mukai1,Mukai2,Mukai3}. For a review of the case of general plane curves of degree up to ten, that is for general ternary forms of degree up to ten, a complete description is given in \cite{RS00}, including results from Mukai and original results. We summarize them in the following. 

\begin{thm}[{\cite[Theorem 1.7]{RS00}}]
Let $F\in S^d\bbC^3$ be a general ternary cubic with $d = 2t - 2, 2 \leq t \leq 5$, then:
\[\VSP\left(F, {t+1\choose 2}\right)\simeq G(t, V, \eta) = \{E \in G(t,V) ~|~ \wedge^2E \subset \eta\}
\]
where $V$ is a $2t + 1$-dimensional vector space and $\eta$ is a net of alternating forms $\eta: \Lambda^2 V \rightarrow\mathbb{C}^3$ on $V$. Moreover:
\begin{itemize}
\item if $F$ is a smooth plane conic section, then $\VSP(F,3)$ is a Fano three-fold of index two and degree five in $\mathbb{P}^6$.
\item if $F$ is a general plane quartic curve, then $\VSP(F, 6)$ is a smooth Fano three-fold of index one and genus $12$ with anti-canonical embedding of degree $22$;
\item if $F$ is a general plane sextic curve, then $\VSP(F, 10)$ is isomorphic to the polarized $K3$-surface of genus $20$;
\item if $F$ is a general plane octic curve, then $\VSP(F,15)$ is finite of degree $16$, i.e., consists of $16$ points.
\end{itemize}
\end{thm}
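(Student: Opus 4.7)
\smallskip
The plan is to use apolarity (Lemma \ref{lemma:Apolarity}) as the central device, combined with the Buchsbaum--Eisenbud structure theorem for codimension $3$ Gorenstein ideals. First I will compute the Hilbert function of the Artinian Gorenstein algebra $A_F = R/F^\perp$ for a general $F\in S^{2t-2}\bbC^3$: by the generic behaviour of the catalecticants $Cat_{i,2t-2-i}(F)$, this function is the symmetric sequence $\bigl(1,3,6,\ldots,\binom{t+1}{2},\binom{t+1}{2},\ldots,3,1\bigr)$. In particular $F^\perp_{t-1}=0$ and $\dim_{\bbC} F^\perp_t = \binom{t+2}{2}-\binom{t+1}{2} = t+1$, while a short computation using Gorenstein duality yields $\dim_{\bbC} F^\perp_{t+1} = \binom{t+3}{2}-\binom{t-1}{2}$, etc. On the other hand, for a set $\mathbb{X}\subset\bbP^2$ of $r=\binom{t+1}{2}$ points in general linear position one has $\dim(I_\mathbb{X})_{t-1}=0$ and $\dim(I_\mathbb{X})_{t}=t+1$, so by the apolarity lemma, $\mathbb{X}\in\VSP(F,r)$ if and only if the $(t+1)$-dimensional space $(I_\mathbb{X})_t$ is contained in an appropriate piece of $F^\perp$.

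Next, I will invoke Buchsbaum--Eisenbud to present $F^\perp$ via a skew-symmetric matrix $M$ of size $(2t+1)\times(2t+1)$ with entries in $R_1\cong\bbC^3$; the $2t\times 2t$ Pfaffians of $M$ are the minimal generators of $F^\perp$ (there are $2t+1$ of them, of degree $t$). Setting $V := F^\perp_{t}\cong\bbC^{2t+1}$, the matrix $M$ is naturally an element of $R_1\otimes \Lambda^2 V^*$, equivalently a linear map $\eta:\Lambda^2 V\to R_1\cong\bbC^3$, i.e.\ a net of alternating forms on $V$. The crucial claim is that the assignment $\mathbb{X}\mapsto E_\mathbb{X}$, where $E_\mathbb{X}\subset V$ is the $t$-dimensional subspace naturally cut out by the syzygies of $(I_\mathbb{X})_t$ inside the Pfaffian complex, gives a bijection
\[
\VSP(F,r)\ \simeq\ G(t,V,\eta)\ =\ \{E\in G(t,V)\mid \wedge^2 E\subset \ker\eta\}.
\]
To establish this bijection I will use that, once $E$ is chosen, one recovers $(I_\mathbb{X})_t$ as a kernel of the contraction of $M$ against $E$; isotropy of $E$ for $\eta$ translates exactly into the fact that the resulting $(t+1)$ forms form an apolar system which, for generic $F$, cuts out a reduced scheme of degree $r$. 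A dimension count confirms consistency: $\dim G(t,2t+1)=t(t+1)$, the isotropy condition imposes $3\binom{t}{2}$ conditions, and $t(t+1)-3\binom{t}{2}=t(5-t)/2$ equals $3,3,2,0$ for $t=2,3,4,5$ respectively.

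The main obstacle is then to identify $G(t,V,\eta)$ with the claimed classical varieties for each $t$. For $t=2$ the space $G(2,5,\eta)$ is a linear section of $G(2,5)\subset\bbP^9$ cut by three hyperplanes, which is the classical quintic del Pezzo threefold of index two in $\bbP^6$; I will verify this by computing its Hilbert polynomial via the Koszul complex of $\eta$. For $t=3$, the variety $G(3,7,\eta)$ is the Mukai model of the prime Fano threefold of genus $12$; the identification follows from Mukai's classification together with a check that the anticanonical class, genus, and general hyperplane section agree. For $t=4$, a Chern-class computation on $G(4,9)$ shows that $G(4,9,\eta)$ is a smooth surface whose canonical bundle is trivial and whose hyperplane section has arithmetic genus $20$, so by Mukai's theorem on $K3$ surfaces of genus $20$ it is the required polarized $K3$. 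Finally for $t=5$, the expected dimension is zero, and the degree of $G(5,11,\eta)$ in $G(5,11)$ must be computed as the top Chern number of $(\Lambda^2 \calS^*)^{\oplus 3}$, where $\calS$ is the tautological subbundle on $G(5,11)$; a Schubert-calculus computation (or Schur-polynomial expansion) yields $16$. Throughout, transversality of the intersections for general $F$ will be ensured by a separate dimension-of-the-fibre argument over the open locus of $F$'s whose catalecticants have maximal rank, reducing the smoothness/reducedness claims to checking a single explicit example in each case.
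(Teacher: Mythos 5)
The survey does not prove this theorem; it is quoted verbatim from Ranestad--Schreyer \cite{RS00}, and your proposal is essentially a reconstruction of their argument: apolarity plus the Buchsbaum--Eisenbud structure theorem to present $F^\perp$ by a $(2t+1)\times(2t+1)$ skew matrix of linear forms, the resulting net $\eta$ of alternating forms on $V=F^\perp_t$, the correspondence between apolar subschemes and $\eta$-isotropic $t$-planes via comparison of resolutions, and the case-by-case identification of $G(t,V,\eta)$ with the classical models. So the strategy is the right one and matches the source.

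One arithmetic slip should be fixed, because it is load-bearing. For $d=2t-2$ the Hilbert function of $R/F^\perp$ peaks \emph{once}, at degree $t-1$, with value $\binom{t+1}{2}$; at degree $t$ it has already dropped to $\binom{t}{2}$. Hence
\[
\dim_{\bbC}F^\perp_t=\binom{t+2}{2}-\binom{t}{2}=2t+1,
\]
not $\binom{t+2}{2}-\binom{t+1}{2}=t+1$ as you write. Your later steps silently use the correct value ($2t+1$ Pfaffian generators, $V\cong\bbC^{2t+1}$), so the text as written is internally inconsistent; with $t+1$ generators the whole Pfaffian presentation and the Grassmannian $G(t,2t+1)$ would not exist. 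The quantity $t+1$ is instead $\dim(I_{\mathbb X})_t$ for an apolar set of $\binom{t+1}{2}$ general points, and the passage from this $(t+1)$-dimensional subspace of $V$ to the $t$-dimensional isotropic subspace $E_{\mathbb X}$ (via the syzygies, as you indicate) is exactly the point that needs the resolution comparison of \cite{RS00}; as stated it is still an assertion rather than a proof, but it is the correct assertion.
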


Very often, for a given specific form, we do not have such a complete description, but at least, we can get some relevant information, for example about the dimension of the VSP: this is the case for~monomials.

\begin{thm}[{\cite[Theorem 2]{MR3017012}}]
Let $F\in \bbC[x_0, \ldots , x_n]$ be a monomial $F = \bfx^\alpha$ with exponents $0 < \alpha_0 \leq \ldots \leq \alpha_n$. Let $A = \bbC[y_0,\ldots, y_n]/(y_1^{\alpha_1+1},\ldots,y_n^{\alpha_n+1})$. Then, $\VSP(F,\rk_{\rm sym}(F))$ is irreducible and: $$\dim \VSP(F,{\rk}_{\rm sym}(F)) = \sum_{i=1}^n\HF(A;d_i - d_0).$$
\end{thm}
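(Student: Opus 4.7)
The plan is to exploit the Apolarity Lemma (Lemma~\ref{lemma:Apolarity}) together with the fact that for a monomial $F = \bfx^\alpha$ (with $0 < \alpha_0 \leq \ldots \leq \alpha_n$), the apolar ideal $F^{\perp} = (y_0^{\alpha_0+1}, y_1^{\alpha_1+1}, \ldots, y_n^{\alpha_n+1})$ is a monomial complete intersection of Gorenstein type, and the symmetric rank is $r = \rk_{\rm sym}(F) = \prod_{i=1}^n (\alpha_i+1)$ by Theorem~\ref{thm:monomials}. Apolarity identifies $\VSP(F,r)$ with the closure in $\mathrm{Hilb}_r(\mathbb{P}^n)$ of the locus of reduced schemes $\mathbb{X}$ whose homogeneous ideal is contained in $F^{\perp}$. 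The central observation is that such a subscheme of minimal length $r$ is forced to be cut out by a regular sequence $(g_1,\dots,g_n) \subset F^\perp$ with $\deg g_i = \alpha_i+1$: indeed, any such sub-complete-intersection has degree exactly $\prod_{i=1}^{n}(\alpha_i+1) = r$, and a length-$r$ apolar scheme cannot be more general than a complete intersection inside the Gorenstein Artinian algebra $S/F^{\perp}$. This last step, which uses a Gorenstein liaison / Macaulay duality argument to rule out other minimal apolar configurations, is what I expect to be the main obstacle.

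Once this classification is in hand, I would normalize the generators. Since $y_0^{\alpha_0+1}$ is (up to scalar) the unique minimal generator of $F^{\perp}$ of lowest degree, after elementary row operations on the tuple $(g_1,\dots,g_n)$ and rescaling, each $g_i$ can be uniquely written in the form
\[
g_i \;=\; y_i^{\alpha_i+1} \;+\; p_i\,y_0^{\alpha_0+1}, \qquad p_i \in S_{\alpha_i-\alpha_0}.
\]
This defines a morphism from the affine space $\mathcal{P} := \bigoplus_{i=1}^{n} S_{\alpha_i-\alpha_0}$ to $\mathrm{Hilb}_r(\mathbb{P}^n)$, whose image lies in $\VSP(F,r)$, and whose image contains an open dense subset of reduced schemes (for generic $(p_i)$ the complete intersection $(g_1,\dots,g_n)$ is smooth, hence reduced, of cardinality $r$). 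Irreducibility of $\VSP(F,r)$ is then immediate from irreducibility of $\mathcal{P}$.

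To finish the dimension count, I would analyze the fibres of the parametrization $\mathcal{P}\to\VSP(F,r)$. A tangent vector $(\dot p_1,\dots,\dot p_n)$ lies in the fibre exactly when the infinitesimal variation $\dot g_i = \dot p_i\, y_0^{\alpha_0+1}$ already lies in the ideal $(g_1,\dots,g_n)$ in degree $\alpha_i+1$. Using the Koszul syzygies of the complete intersection $(y_0^{\alpha_0+1},\dots,y_n^{\alpha_n+1})$ — which are the only syzygies available in these degrees because $F^\perp$ is a complete intersection — one identifies this fibre with the image of the multiplication map encoded by the relations $y_j^{\alpha_j+1}\cdot y_0^{\alpha_0+1} \equiv 0 \pmod{\text{syzygies}}$, and shows its dimension is
\[
\sum_{i=1}^{n} \dim_{\bbC}\, \bigl(y_1^{\alpha_1+1},\dots,y_n^{\alpha_n+1}\bigr)_{\alpha_i-\alpha_0}.
\]
Subtracting from $\dim \mathcal{P} = \sum_{i=1}^n \dim S_{\alpha_i-\alpha_0}$ and using $\HF(A;k) = \dim S_k - \dim(y_1^{\alpha_1+1},\dots,y_n^{\alpha_n+1})_k$ yields
\[
\dim \VSP(F,r) \;=\; \sum_{i=1}^{n} \HF(A;\alpha_i-\alpha_0),
\]
which is the stated formula.

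The crux of the argument is the Gorenstein-apolar classification in the first paragraph: it is plausible (and known in small examples) that every minimal apolar subscheme for a monomial is a complete intersection of the claimed shape, but the general proof must rule out, e.g., apolar schemes with more than the minimal number of generators whose length happens to equal $r$. I would tackle this via the Cayley–Bacharach / Gorenstein liaison for $F^\perp$, or alternatively by a Hilbert-function argument on the quotient $S/I_{\mathbb X}$, comparing it to the symmetric Hilbert function of $S/F^\perp$ to deduce that $I_{\mathbb X}$ must be generated in degrees $\alpha_1+1,\dots,\alpha_n+1$ only.
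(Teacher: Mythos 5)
The survey states this result without proof (it is quoted from Buczy\'nska--Buczy\'nski--Teitler), so I am comparing your plan against the argument in that reference. Your architecture is the right one and essentially the one used there: classify the minimal apolar schemes as complete intersections $(g_1,\dots,g_n)$ with $g_i\in F^\perp_{\alpha_i+1}$, normalize $g_i=y_i^{\alpha_i+1}+p_iy_0^{\alpha_0+1}$, and compute the dimension of the image of the resulting parametrization; your fibre analysis correctly produces $\sum_i\bigl(\dim S_{\alpha_i-\alpha_0}-\dim(y_1^{\alpha_1+1},\dots,y_n^{\alpha_n+1})_{\alpha_i-\alpha_0}\bigr)=\sum_i\HF(A;\alpha_i-\alpha_0)$, which is why the quotient $A$ omits $y_0^{\alpha_0+1}$ from its defining ideal.

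The genuine gap is exactly the step you flag as ``the main obstacle'' and then do not carry out: that every reduced apolar scheme of length $r=\prod_{i=1}^n(\alpha_i+1)$ is a complete intersection of forms of degrees precisely $\alpha_1+1,\dots,\alpha_n+1$. Your justification (``a length-$r$ apolar scheme cannot be more general than a complete intersection'') is not an argument, and invoking Gorenstein liaison or Cayley--Bacharach in general terms does not rule out, e.g., apolar length-$r$ sets whose ideal needs more than $n$ generators, or complete intersections with a different degree vector. Both irreducibility and the dimension formula rest entirely on this classification, since $\VSP$ is by definition the closure of \emph{all} reduced apolar sets, not just those in your family $\mathcal{P}$. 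In the cited paper this is the technical heart: one first shows a minimal apolar scheme $\mathbb{X}$ meets $\{y_0=0\}$ trivially, so $y_0$ is a nonzerodivisor on $S/I_{\mathbb{X}}$, and then a Hilbert-function comparison between $S/I_{\mathbb{X}}$, $S/(I_{\mathbb{X}}+(y_0))$ and $S/F^\perp$ forces $I_{\mathbb{X}}$ to contain, for each $i\geq1$, an element of $F^\perp_{\alpha_i+1}$ with unit coefficient on $y_i^{\alpha_i+1}$, and these to generate. None of this is in your proposal. Two smaller points: the claim that $y_0^{\alpha_0+1}$ is the \emph{unique} lowest-degree minimal generator of $F^\perp$ is false whenever $\alpha_0=\alpha_1$ (e.g.\ $x_0x_1x_2$), so your normalization needs an extra linear change of generators within each block of equal degrees; and the reduction to the normal form $y_i^{\alpha_i+1}+p_iy_0^{\alpha_0+1}$ by ``row operations'' must be shown to terminate, since eliminating a term $y_j^{\alpha_j+1}$ via $g_j$ reintroduces other terms.
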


A complete knowledge of $\VSP(F,r)$ gives us a complete control on {all} sums of powers decompositions of $F$ involving $r$ summands. Such a complete knowledge comes at a price: a complete description of the variety of sums of powers might be very difficult to obtain. However, even less complete information might be useful to have and, possibly, easier to obtain. One viable option is given by {Waring loci} as defined in \cite{MR3658727}.

\begin{dfn}
The {Waring locus} of a degree $d$ form $F \in S^dV$ is:
\[
\mathcal{W}_F=\lbrace [L]\in\mathbb{P}V ~:~ F=L^d+L_2^d+\ldots+L_r^d,~ r = \rk_{\rm sym}(F)\rbrace,
\]
i.e., the space of linear form which appears in some {minimal} sums of powers decomposition of $F$. The {forbidden locus} of $F$ is defined as the complement of the Waring locus of $F$, and we denote it by $\mathcal{F}_F$.
\end{dfn}

\begin{rmk}
	In this definition, the notion of {essential variables} has a very important role; see Remark \ref{variables}. In particular, it is possible to prove that if $F \in \bbC[x_0,\ldots,x_n]$ has less than $n+1$ essential variables, say $x_0,\ldots,x_m$, then for any minimal decomposition $F = \sum_{i=1}^r L_i$, the $L_i$'s also involve only the variables $x_0,\ldots,x_m$. For this reason, if in general, we have $F \in S^dV$, which has less than $\dim_\bbC V$ essential variables, say that $W \subset V$ is the linear span of a set of essential variables, then $\mathcal{W}_F \subset \bbP W$. 
\end{rmk}

In \cite{MR3658727}, the forbidden locus, and thus the Waring locus, of several classes of polynomials was computed. For example, in the case of monomials, we have the following description.

\begin{thm}[{\cite[Theorem 3.3]{MR3658727}}] 
If $F = \bfx^\alpha$, such that the exponents are increasingly ordered and $m = \min_i\{\alpha_i = \alpha_0\}$, then:
\[
	\mathcal{F}_F = V(y_0\cdots y_m).
\]
\end{thm}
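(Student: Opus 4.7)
The plan is to establish the two containments $\mathcal{W}_F\supseteq\mathbb{P}V\setminus V(y_0\cdots y_m)$ and $\mathcal{F}_F\supseteq V(y_0\cdots y_m)$ separately. Since $F$ is invariant under the $\mathfrak{S}_{m+1}$-action permuting $\{x_0,\ldots,x_m\}$ (those variables carrying the minimum exponent $\alpha_0$), so are both $\mathcal{W}_F$ and $\mathcal{F}_F$; this symmetry reduces the forbidden containment to proving $V(y_0)\subseteq\mathcal{F}_F$. Throughout I appeal to the apolarity lemma (Lemma~\ref{lemma:Apolarity}) and to the explicit description $F^\perp=(y_0^{\alpha_0+1},\ldots,y_n^{\alpha_n+1})$ of the apolar ideal as a monomial complete intersection, together with the rank formula $r:=\rk_{\mathrm{sym}}(F)=\prod_{i=1}^n(\alpha_i+1)$ from Theorem~\ref{thm:monomials}.

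For the Waring-locus direction, given $[L]=[1:a_1:\ldots:a_n]$ with $a_1,\ldots,a_m$ nonzero, I produce a reduced apolar scheme of length $r$ through $[L]$ as an explicit complete intersection inside $F^\perp$. For each $i=1,\ldots,n$ define a degree-$(\alpha_i+1)$ generator $g_i\in F^\perp$ as follows: if $a_i\neq 0$, set $g_i=y_i^{\alpha_i+1}-a_i^{\alpha_i+1}y_0^{\alpha_i+1}$; if $a_i=0$ (which can occur only for $i>m$), set $g_i=y_i^{\alpha_i+1}-c_iy_iy_0^{\alpha_i}$ for some $c_i\neq 0$. In both cases membership in $F^\perp$ follows from $\alpha_i\geq\alpha_0$ (always) and $\alpha_i\geq\alpha_0+1$ for $i>m$, and $g_i$ factors as a product of $\alpha_i+1$ distinct linear forms in $y_0,y_i$, one of which vanishes at $[L]$. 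B\'ezout then gives $\deg V(g_1,\ldots,g_n)=\prod_{i=1}^n(\alpha_i+1)=r$ with all intersection points reduced (generic position of the hyperplane factors), with $[L]$ among them by construction.

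For the forbidden direction, suppose by contradiction that $[L]=[0:a_1:\ldots:a_n]$ lies in a reduced apolar scheme $\mathbb{X}$ of cardinality $r$. The apolarity identity $F^\perp:y_0=(\partial_0F)^\perp$, combined with the colon formula $I_\mathbb{X}:y_0=I_{\mathbb{X}\setminus V(y_0)}$ (valid since $\mathbb{X}$ is reduced), yields
\[
I_{\mathbb{X}\setminus V(y_0)}\;\subseteq\;(\partial_0F)^\perp,
\]
so $\mathbb{X}\setminus V(y_0)$ is apolar to $\partial_0F$ and $|\mathbb{X}\setminus V(y_0)|\geq\rk_{\mathrm{sym}}(\partial_0F)$. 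When $\alpha_0\geq 2$, Theorem~\ref{thm:monomials} applied to $\partial_0F=\alpha_0\,x_0^{\alpha_0-1}x_1^{\alpha_1}\cdots x_n^{\alpha_n}$ (with minimum exponent $\alpha_0-1\geq 1$) gives $\rk_{\mathrm{sym}}(\partial_0F)=\prod_{i\geq 1}(\alpha_i+1)=r$. This forces $|\mathbb{X}\setminus V(y_0)|=r$, hence $\mathbb{X}\cap V(y_0)=\emptyset$, contradicting $[L]\in\mathbb{X}\cap V(y_0)$.

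The principal obstacle is the remaining case $\alpha_0=1$: here $\partial_0F$ involves only $n$ variables and $\rk_{\mathrm{sym}}(\partial_0F)=r/(\alpha_1+1)<r$, so the colon-ideal bound alone is too weak. I plan to close this case by induction on $n$. The base $n=1$ (binary $F=x_0x_1^{\alpha_1}$ of rank $\alpha_1+1$) is handled via Sylvester's algorithm (Algorithm~\ref{algo: sylvester}): any form of degree $\alpha_1+1$ in $F^\perp=(y_0^2,y_1^{\alpha_1+1})$ vanishing at $[0:1]$ has zero coefficient on $y_1^{\alpha_1+1}$, hence lies in $(y_0^2)$ and carries $y_0$ as a multiple root, preventing a reduced length-$(\alpha_1+1)$ scheme through $[0:1]$. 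For the inductive step $n\geq 2$, exploit the factorization $F=x_0G$ with $G=x_1^{\alpha_1}\cdots x_n^{\alpha_n}$: writing $L_j=b_{j,0}x_0+\tilde L_j$ in the hypothetical decomposition and matching powers of $x_0$ produces the polynomial identity
\[
\sum_{j\geq 2}(b_{j,0}t+\tilde L_j)^d\;=\;tG-\lambda L^d\quad\text{in }\mathbb{C}[t,x_1,\ldots,x_n],
\]
so that $tG-\lambda L^d$ has symmetric rank at most $r-1$. A lower-bound computation for $\rk_{\mathrm{sym}}(tG-\lambda L^d)$, leveraging the inductive hypothesis on the monomial $G$ in $n$ variables, supplies the contradiction. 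Avoiding circularity in this last step — since $tG$ has the same exponent profile as $F$ up to relabeling — will be the most delicate part of the argument.
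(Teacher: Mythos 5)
Your overall architecture matches the cited proof: split the statement into the two containments, use the $\mathfrak{S}_{m+1}$-symmetry to reduce the forbidden half to $V(y_0)\subseteq\mathcal{F}_F$, and get the Waring half from explicit split complete intersections. That half is correct: each $g_i$ lies in $F^\perp=(y_0^{\alpha_0+1},\ldots,y_n^{\alpha_n+1})$ (the case $a_i=0$ using $\alpha_i\geq\alpha_0+1$ for $i>m$), the $g_i$'s cut out $r=\prod_{i\geq1}(\alpha_i+1)$ reduced points containing $[L]$, and minimality of $r$ forces every coefficient in the resulting decomposition to be nonzero, so $[L]\in\mathcal{W}_F$. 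Your colon-ideal argument for the forbidden half is also a clean, correct, and self-contained proof when $\alpha_0\geq 2$: from $I_{\mathbb{X}}:y_0=I_{\mathbb{X}\setminus V(y_0)}\subseteq F^\perp:y_0=(\partial_0F)^\perp$ and $\rk_{\rm sym}(\partial_0F)=r$ you correctly force $\mathbb{X}\cap V(y_0)=\emptyset$.

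The genuine gap is the case $\alpha_0=1$, $n\geq 2$, and your sketch does not close it. The identity $\sum_{j\geq2}\lambda_j(b_{j,0}t+\tilde L_j)^d=tG-\lambda L^d$ is nothing but $F-\lambda L^d$ with $x_0$ renamed to $t$: showing that this form has rank at least $r$ for every $\lambda$ is precisely the statement $[L]\in\mathcal{F}_F$ you are trying to prove, and neither the number of variables nor the degree nor the rank has decreased, so there is nothing to induct on --- the circularity you flag is not a delicate point to be handled but the absence of an argument. The proof in \cite{MR3658727} closes this case (and in fact treats all $\alpha_0\geq1$ uniformly) by invoking the structure theorem of Buczy\'nska--Buczy\'nski--Teitler \cite{MR3017012}: every zero-dimensional scheme $\mathbb{X}$ of length $r$ apolar to $\bfx^\alpha$ has ideal a complete intersection $(h_1,\ldots,h_n)$ with $h_i\equiv y_i^{\alpha_i+1}\pmod{(y_0)}$, whence $\mathbb{X}\cap V(y_0)=V(y_0,y_1^{\alpha_1+1},\ldots,y_n^{\alpha_n+1})=\emptyset$. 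Some input of this strength (or an equivalent analysis of $I_{\mathbb{X}}+(y_0)$) is unavoidable: already for $F=x_0x_1x_2$ your colon bound only gives $|\mathbb{X}\setminus V(y_0)|\geq\rk_{\rm sym}(x_1x_2)=2<4=r$, which permits two points of $\mathbb{X}$ on $V(y_0)$. You should either quote the Buczy\'nska--Buczy\'nski--Teitler classification or supply a genuinely new argument for squarefree-in-$x_0$ monomials; as written, the proposal proves the theorem only for $\alpha_0\geq2$ (plus the binary base case).
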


The study of Waring and forbidden loci can have a two-fold application. One is to construct {step-by-step} a minimal decomposition of a given form $F$: if $[L]$ belongs to $\mathcal{W}_F$, then there exists some coefficient $\lambda \in \bbC$ such that $F' = F+\lambda L^d$ has rank smaller than $F$; then, we can iterate the process by considering $\mathcal{W}_{F'}$. In \cite{MO18}, this idea is used to present an algorithm to find minimal decompositions of forms in any number of variables and of any degree of rank $r \leq 5$ (the analysis runs over all possible configuration of $r$ points in the space whose number of possibilities grows quickly with the rank); a {Macaulay2} package implementing this algorithm can be found in the ancillary files of the arXiv version of \cite{MO18}. A second possible application relates to the search for forms of high rank: if $[L]$ belongs to $\mathcal{W}_F$, then the rank of $F + \lambda L^d$ cannot increase as $\lambda$ varies, but conversely, if the rank of $F+\lambda L^d$ increases, then $[L]$ belongs to the forbidden locus of $F$. Unfortunately, it is not always possible to use elements in the forbidden locus to increase the rank of a given form; however, this idea can give a place to look for forms of high rank. For example, since $\mathcal{F}_{x_0x_1x_2}=V(y_0y_1y_2)$, that is the forbidden locus of the monomial $F=x_0x_1x_2$ is the union of the three coordinate lines of $\bbP^2$, the only {possible} way to make the rank of $F$ increase is to consider $F+\lambda L^3$ where $L$ is a linear form not containing at least one of the variables. However, as some computations can show, the rank of $F+\lambda L^3$ does {not} increase for any value of $\lambda$ and for any choice of $L$ in the forbidden locus.

Another family of polynomial for which we have a complete description of Waring and forbidden loci are binary forms.

\begin{thm}[{\cite[Theorem 3.5]{MR3658727}}]
Let $F$ be a degree $d$ binary form, and let $G \in F^\perp$ be an element of minimal degree. Then,
\begin{itemize}
\item if $\mathrm{rk}(F) <
{d+1\choose 2}$, then $\mathcal{W}_F=V(G)$;
\item if $\mathrm{rk}(F) >
{d+1 \choose 2}$, then $\mathcal{F}_F=V(G)$;
\item if $\mathrm{rk}(F) =
{d+1 \choose 2}$ and $d$ is even, then $\mathcal{F}_F$ is finite and not empty; \\
if $\mathrm{rk}(F) =
{d+1 \choose 2}$ and $d$ is odd, then $\mathcal{W}_F=V(G)$.
\end{itemize}
\end{thm}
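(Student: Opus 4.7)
I would use the Apolarity Lemma~\ref{lemma:Apolarity} together with the classical fact that the apolar ideal of a binary form $F$ of degree $d$ is a complete intersection $F^\perp = (G_1, G_2) \subset R = \Bbbk[y_0,y_1]$ with $\deg G_1 \leq \deg G_2$ and $\deg G_1 + \deg G_2 = d+2$, where $G = G_1$. Sylvester's algorithm identifies $r := \mathrm{rk}(F) = \deg G_1$ if $G_1$ is square-free, and $r = \deg G_2$ otherwise. By apolarity, $[L] \in \mathcal{W}_F$ iff there exists a square-free $Q \in F^\perp_r$ vanishing at the point dual to $L$, so the proof reduces to analyzing $F^\perp_r$ case by case.

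In Case 1 and Case 3 with $d$ odd, $r = \deg G_1 < \deg G_2$ forces $G$ to be square-free and $F^\perp_r = R_{r - \deg G} \cdot G = \langle G \rangle$ to be 1-dimensional, yielding $\mathcal{W}_F = V(G)$ immediately. For Case 3 with $d$ even, $\deg G_1 = \deg G_2 = r$ gives $F^\perp_r = \langle G_1, G_2\rangle$ of dimension 2; coprimality provides, for each $[a:b] \in \bbP^1$, a unique (up to scalar) $Q_{(a,b)} \in F^\perp_r$ vanishing at $[a:b]$, inducing a morphism $\phi: \bbP^1 \to \bbP F^\perp_r \simeq \bbP^1$. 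Then $\mathcal{F}_F = \phi^{-1}(\bbP F^\perp_r \cap \Delta)$, where $\Delta \subset \bbP S_r$ is the discriminant hypersurface of degree $2r-2$; since $F^\perp_r$ contains square-free elements (by the rank hypothesis) but the line $\bbP F^\perp_r$ necessarily meets the hypersurface $\Delta$, the set $\mathcal{F}_F$ is finite and non-empty.

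Case 2 ($r > \lceil (d+1)/2\rceil$) is the main challenge. Here $r = \deg G_2$, $\deg G < r$, $G$ is not square-free, and $F^\perp_r = G \cdot S_{r - \deg G} \oplus \langle G_2\rangle$ has dimension $r - \deg G + 2 \geq 3$; every element writes as $Q = hG + cG_2$. The inclusion $V(G) \subset \mathcal{F}_F$ is immediate: if $G(a,b) = 0$ then coprimality gives $G_2(a,b) \neq 0$, forcing $c = 0$ in any $Q \in V_{(a,b)}$, so $Q = hG$ inherits non-square-freeness from $G$. For the reverse inclusion $\mathcal{F}_F \subset V(G)$, fix $(a,b) \notin V(G)$ and pick any square-free $Q_0 = h_0 G + G_2 \in F^\perp_r$ (which exists since $\mathrm{rk}(F) = r$); because $Q_0$ has $r$ distinct roots and $|V(G)| \leq \deg G < r$, at least one root of $Q_0$ lies outside $V(G)$. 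I would then argue, via a pencil argument, that for any $(a,b) \notin V(G)$ an analogous square-free element exists in the hyperplane $V_{(a,b)} = \{Q \in F^\perp_r : Q(a,b) = 0\}$: taking any $R \in V_{(a,b)}$, the line from $Q_0$ to $R$ is not contained in $\Delta$ (since $Q_0 \notin \Delta$), meets $V_{(a,b)}$ exactly at $R$, and meets $\Delta$ in at most $2r-2$ points, so for generic $R$ the element $R$ itself is square-free.

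The main obstacle is confirming that a generic $R \in V_{(a,b)}$ is square-free, equivalently that $V_{(a,b)}$ is not entirely contained in $\Delta$. To this end I would analyze the homogeneous projective discriminant $\widetilde{\mathcal{D}}(h,c) = \mathrm{disc}(hG + cG_2)$, a polynomial of degree $2r-2$ in the coefficients of $(h,c)$ that is divisible by $c$ because $hG$ always has a repeated root inherited from $G$; writing $\widetilde{\mathcal{D}} = c^e \cdot \mathcal{D}'$ with $\mathcal{D}'(h,0) \not\equiv 0$, the condition $V_{(a,b)} \subset \Delta$ becomes: the linear form $\ell_{(a,b)}(h,c) = G(a,b)\, h(a,b) + G_2(a,b)\, c$ divides $\widetilde{\mathcal{D}}$. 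For $(a,b) \notin V(G)$ this linear form is coprime to $c$, reducing the question to whether $\ell_{(a,b)}$ divides $\mathcal{D}'$. The key structural step is then to verify, by a local analysis near each distinct root of $G$ of how multiple roots of $hG + cG_2$ degenerate as $c \to 0$, that the linear factors of $\mathcal{D}'$ correspond precisely to the evaluation forms $\ell_{(a,b)}$ with $(a,b) \in V(G)$; this identifies the hyperplane components of $\Delta \cap \bbP F^\perp_r$ and completes the proof that $\mathcal{F}_F = V(G)$.
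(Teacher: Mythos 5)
The survey states this theorem without proof (it is quoted from \cite{MR3658727}), so there is no in-paper argument to compare against; I therefore assess your proposal on its own terms. (As you implicitly do, the binomial coefficient in the statement must be read as $\lceil\frac{d+1}{2}\rceil$, the generic rank of binary $d$-forms.) Your setup via the apolarity lemma and the complete-intersection structure $F^\perp=(G_1,G_2)$ is the right one, and the first and third bullets are handled correctly and completely: there $F^\perp_r$ is one- or two-dimensional, and your observation that the line $\bbP F^\perp_r$ is not contained in the discriminant hypersurface $\Delta$ (because $\rk F=r$) yet must meet it settles the even-degree boundary case. The inclusion $V(G)\subseteq\mathcal{F}_F$ in the second bullet is also correct.

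The gap is the reverse inclusion $\mathcal{F}_F\subseteq V(G)$ in the second bullet, i.e., showing that for $(a,b)\notin V(G)$ the hyperplane $V_{(a,b)}\subseteq F^\perp_r$ is not contained in $\Delta$. Your pencil argument does not give this: the line through $Q_0$ and $R$ meets $V_{(a,b)}$ only at $R$, and nothing prevents $R$ from being one of the $\leq 2r-2$ points of that line on $\Delta$ \emph{for every} $R\in V_{(a,b)}$, so ``for generic $R$ the element $R$ itself is square-free'' does not follow. You rightly identify that $V_{(a,b)}\not\subseteq\Delta$ is the crux, but the proposed resolution is neither carried out nor correctly formulated: for $(a,b)\in V(G)$ one has $\ell_{(a,b)}(h,c)=G_2(a,b)\,c$, i.e., all these evaluation forms are proportional to $c$, so they cannot be the linear factors of $\mathcal{D}'$, which you defined precisely by removing every factor of $c$; moreover a local analysis ``as $c\to 0$'' only detects the component $\{c=0\}$ of $\Delta\cap\bbP F^\perp_r$, not hyperplanes $V_{(a,b)}$ transverse to it. What must actually be proved is that $\mathcal{D}'$ has \emph{no} linear factor $\ell_{(a,b)}$ with $(a,b)\notin V(G)$. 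One way to finish along your lines: if $\ell_{(a,b)}\mid\mathcal{D}'$ with $G(a,b)\neq 0$, restrict to $c=0$ to get that the functional $h\mapsto h(a,b)$ divides $\mathcal{D}'(h,0)$; computing $\mathcal{D}'(h,0)$ as the reduced limit of $\mathrm{disc}(hG+cG_2)$ at $c=0$ (via $\mathrm{disc}(AB)=\mathrm{disc}(A)\mathrm{disc}(B)\mathrm{Res}(A,B)^2$, or a Puiseux expansion of the root clusters at the repeated roots of $G$) shows it equals, up to a nonzero constant, $\mathrm{disc}(h)$ times powers of the evaluations $h(R_j)$ at the roots $R_j$ of $G$, whose only hyperplane components are $\{h(R_j)=0\}$ with $R_j\in V(G)$ --- a contradiction. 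Until this computation (or an equivalent dimension count on the incidence variety of pairs $(Q,P')$ with $P'$ a double root of $Q\in V_{(a,b)}$) is supplied, the second bullet is not proved.
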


\begin{example}
The result about Waring and forbidden loci of binary forms can be nicely interpreted in terms of rational normal curves. If $F=x_0x_1^2$, then:
\[\mathcal{F}_F=\lbrace [y_1^3] \rbrace,\]
and this means that any plane containing $[F]$ and $[y_1^3]$ is {not} intersecting the twisted cubic curve in three distinct points; indeed, the line spanned by $[F]$ and $[y_1^3]$ is tangent to the twisted cubic curve.
\end{example}
Other families of homogeneous polynomials for which we have a description of Waring and forbidden loci are quadrics \cite{MR3658727} (Corollary 3.2) (in which case, the forbidden locus is given by a quadric) and plane cubics \cite{MR3658727} (Section~3.4). We conclude with two remarks coming from the treatment of the latter case:
\begin{itemize}
\item in all the previous cases, the Waring locus of $F$ is always either closed or open. However, this is not true in general. In fact, for the cusp $F=x_0^3+x_1^2x_2$, we have that the Waring locus is: 
 \[
 	\mathcal{W}_F=\{[y_0^3]\}\cup\{[(ay_1+by_2)^3]~:~a,b\in\mathbb{C}\mbox{ and } b\neq 0\},\]
 that is the Waring locus is given by two disjoint components: a point and a line minus a point; therefore, $\mathcal{W}_F$ is neither open nor closed in $\mathbb{P}^2$;
\item since the space of minimal decompositions of forms of high rank is high dimensional, it is expected that the Waring locus is very large, and conversely, the forbidden locus is reasonably small. For example, the forbidden locus of the maximal rank cubic $F=x_0(x_1^2+x_0x_2)$ is just a point, i.e., 
  \[
  \mathcal{F}_F=\{[y_0^3]\}.
  \]
There is an open conjecture stating that, for any form $F$, the forbidden locus $\mathcal{F}_F$ is {not} empty.
\end{itemize}


\subsection{Equations for the Secant Varieties}
A very crucial problem is to find equations for the secant varieties we have studied in the previous sections, mainly for Veronese, Segre and Grassmann varieties. Notice that having such equations (even equations defining only set-theoretically the secant varieties in question) would be crucial in having methods to find border ranks of tensors.

\subsubsection{Segre Varieties}
Let us consider first Segre varieties; see also \cite{CGG7}. In the case of two factors, i.e., $X_{\mathbf{n}}$ with $\mathbf{n} =(n_1,n_2)$, the Segre variety, which is the image of the embedding:
$$\mathbb{P}^{n_1} \times \mathbb{P}^{n_2} \rightarrow X_{\mathbf{n}} \subset \mathbb{P}^{N} ,\quad N = (n_1 + 1)(n_2 + 1)-1,$$
corresponds to the variety of rank one matrices, and $\sigma_s(X_{\mathbf{n}})$ corresponds to the variety of rank $s$ matrices, which is defined by the ideal generated by the $(s + 1)\times (s + 1)$ minors of the {generic} $(n_1 + 1)\times (n_2 + 1)$ matrix, whose entries are the homogenous coordinates of $\mathbb{P}^N$. In this case, the ideal is rather well understood; see, e.g., \cite{L} and also the extensive bibliography given in the book of Weyman \cite{W}.

We will only refer to a small part of this vast subject, and we recall that the ideal $I_{\sigma_s(X_{\mathbf{n}})}$ is a perfect ideal of height $(n_1+1-(s + 1)-1)\times (n_2+1-(s + 1)-1) = (n_1-s-1) \times (n_2-s-1)$ in the polynomial ring with $N + 1$ variables, with a very well-known resolution: the {Eagon--Northcott complex}. It follows from this description that all the secant varieties of the Segre embeddings of a product of two projective spaces are {arithmetically Cohen--Macaulay} varieties. Moreover, from the resolution, one can also deduce the degree, as well as other significant geometric invariants, of these varieties. A determinantal formula for the degree was first given by Giambelli. There is, however, a reformulation of this result, which we will use (see, e.g., \cite{Harris} (p. 244) or \cite[Theorem 6.5]{BrCo}), where this lovely reformulation of Giambelli's Formula is attributed to Herzog and Trung:
$$
 \deg(\sigma_s(X_{(n_1,n_2)})) = \prod_{i=0}^{n_1-s}\frac{{{n_2+1+i\choose s}}}{{{s+i\choose s}} }
.$$
Let us now pass to the case of the Segre varieties with more than two factors. Therefore, let $X_{\mathbf{n}}\subset \mathbb{P}^N$ with $\mathbf{n} = (n_1,\ldots ,n_t)$, $N = \Pi_{i=1}^t(n_i + 1)-1$ and $t \geq 3$, where we usually assume that $n_1 \geq \ldots \geq n_t$.

If we let $T$ be the {generic} $(n_1 +1)\times\ldots\times(n_t +1)$ tensor whose entries are the homogeneous coordinates in $\mathbb{P}^N$, then it is well known that the ideal of $X_{\mathbf{n}}$ has still a determinantal representation, namely it is generated by all the $2 \times 2$ ``minors'' of $T$, that is the $2\times 2$ minors of the flattenings of $T$. It is natural to ask if the flattenings can be used also to find equations of higher secant varieties of $X_{\mathbf{n}}$. If we split ${1,\ldots, t}$ into two subsets, for simplicity say ${1,\ldots, \ell}$ and ${\ell+1,\ldots, t}$, then we can form the~composition:
$$\nu_{(n_1,\ldots,n_\ell)} \times \nu_{(n_{\ell+1},\ldots,n_t)} : (\mathbb{P}^{n_1}\times \ldots \times \mathbb{P}^{n_\ell} ) \times (\mathbb{P}^{n_\ell +1} \times \ldots \times \mathbb{P}^{n_t}) \rightarrow \mathbb{P}^a \times \mathbb{P}^b,$$
where $a = \Pi_{i=1}^\ell(n_i + 1)-1$, $b=\Pi_{i=\ell+1}^t(n_i + 1)-1$, followed by:
$$ \nu_{1,1}: \mathbb{P}^a \times \mathbb{P}^b \rightarrow \mathbb{P}^N, \quad \quad N\quad {\rm as\ above}. $$
Clearly $X_{\mathbf{n}} \subset \nu_{1,1}(\mathbb{P}^a \times \mathbb{P}^b)$, and hence,
$\sigma_s(X_{\mathbf{n}}) \subset \sigma_s(\nu_{1,1}(\mathbb{P}^a \times \mathbb{P}^b))$.

Thus, the $(s + 1)\times (s + 1)$ minors of the matrix associated with the embedding $\nu_{1,1}$ will all vanish on $\sigma_s(X_{\mathbf{n}})$. That matrix, written in terms of the coordinates of the various $\mathbb{P}^{n_i}$, is what we have called a {flattening} of the tensor $T$.

As we have seen in \eqref{eq:flattening}, we can perform a flattening of $T$ for every partition of ${1,\ldots,t}$ into two subsets. The $(s + 1) \times (s + 1)$ minors of all of these flattenings will give us equations that vanish on $\sigma_s(X_{\mathbf{n}})$. In \cite{GSS}, it was conjectured that,
at least for $s = 2$, these equations are precisely the generators for the ideal $I_{\sigma_2(X_{\mathbf{n}})}$ of $\sigma_2(X_{\mathbf{n}})$. The conjecture was
proven in \cite{LM} for the special case of $t = 3$ (and set theoretically for all $t$'s). Then, Allman and Rhodes \cite{ar} proved the conjecture for up to five factors, while Landsberg and Weyman \cite{LaWe07} found the generators for the defining ideals of secant varieties for the Segre varieties in the following cases: all secant
varieties for $\mathbb{P}^1 \times \mathbb{P}^m \times \mathbb{P}^n$ for all $m, n$; the secant line varieties of the Segre varieties with four factors; the secant
plane varieties for any Segre variety with three factors. The proofs use representation theoretic methods.

Note that for $s > 2$, one cannot expect, in general, that the ideals $I_{\sigma_s(X_{\mathbf{n}})}$ are generated by the $(s+1)\times(s+1)$ minors of flattenings of $T$. Indeed, in many cases, there are no such minors, e.g., it is easy to check that if we consider $\mathbf{n}=(1,1,1,1,1)$ and $X_{\mathbf{n}}\subset \mathbb{P}^{31}$, we get that the flattenings can give only ten $4\times 8$ matrices and five $2\times 16$ matrices. Therefore, we get quadrics, which generate $I_{X_{\mathbf{n}}}$, and quartic forms, which are zero on $\sigma_3(X_{\mathbf{n}})$, but no equations for $\sigma_4(X_{\mathbf{n}})$ or $\sigma_5(X_{\mathbf{n}})$, which, by a simple dimension count, do not fill all of $\mathbb{P}^{31}$.

There is a particular case when we know that the minors of a single flattening are enough to generate the ideal $I(\sigma_s(X_{\mathbf{n}}))$, namely the {unbalanced} case we already met in Theorem \ref{unbalanced}, for which we have the following result.

\begin{thm}[\cite{CGG7}]
Let $X = X_{\mathbf{n}} \subset \PP ^M$ with $M=\Pi _{i=1} ^t ( n_i +1)-1$; let $N=\Pi _{i=1} ^{t-1} ( n_i +1)-1$; and let $Y_{(N,n_t)}$ be the Segre embedding of $\mathbb{P}^N\times \mathbb{P}^{n_t}$ into $\mathbb{P}^M$. Assume $n_t > N - \sum _{i=1}^{t-1}n_i +1$.
Then, for:
$$N - \sum _{i=1} ^{t-1} n_i +1 \leq s \leq \min \{ n_t,N\},$$
we have that $\sigma_s(X) = \sigma_s(Y) \neq \mathbb{P}^M$, and its ideal is generated by the $(s+1)\times (s + 1)$ minors of an $(n_t + 1) \times (N + 1)$ matrix of indeterminates, i.e., the flattening of the generic tensor with respect to the splitting $\{1,\ldots,t-1\} \cup \{t\}$.
\end{thm}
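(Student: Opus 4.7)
My plan is to establish the set-theoretic equality $\sigma_s(X) = \sigma_s(Y)$ by a dimension comparison, after which the statement about the ideal will be a direct consequence of the classical two-factor determinantal theory. First I would set up the inclusion: the Segre map $\nu_\bfn$ factors as $\nu_{1,1} \circ (\nu_{(n_1,\ldots,n_{t-1})} \times \mathrm{id}_{\PP^{n_t}})$, so $X = X_\bfn$ sits inside $Y = Y_{(N,n_t)}$ as subvarieties of $\PP^M$, and taking secants preserves inclusion, giving $\sigma_s(X) \subset \sigma_s(Y)$.

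Next I would invoke the classical picture for $Y$. Since $Y$ is a two-factor Segre, the homogeneous coordinates on $\PP^M$ can be arranged into a generic $(N+1) \times (n_t+1)$ matrix, and $\sigma_s(Y)$ is the classical irreducible determinantal variety of matrices of rank at most $s$, of dimension $s(N + n_t + 2 - s) - 1$, with ideal generated by the $(s+1)\times(s+1)$ minors (this is the theory recalled in the paragraph preceding the statement). For $s \leq \min\{N, n_t\}$, the codimension $(N+1-s)(n_t+1-s)$ is positive, so $\sigma_s(Y) \neq \PP^M$.

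The key step is then to compute $\dim \sigma_s(X)$ and match it with $\dim \sigma_s(Y)$. Applying Theorem \ref{unbalanced} in the range $N - \sum_{i=1}^{t-1}n_i + 1 \leq s \leq \min\{N, n_t\}$, the actual dimension equals the expected dimension $s(n_1 + \cdots + n_t + 1) - 1$ reduced by the defect $\delta_s = s^2 - s(N - \sum_{i=1}^{t-1} n_i + 1)$. A direct simplification gives
$$
\dim \sigma_s(X) \;=\; s(N + n_t + 2 - s) - 1 \;=\; \dim \sigma_s(Y),
$$
and the boundary case $s = N - \sum_{i=1}^{t-1} n_i + 1$, where $\delta_s = 0$, still satisfies the identity by the same algebra. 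Since $\sigma_s(X)$ is irreducible (as the closure of the image of an irreducible incidence variety over $X^s$) and is contained in the irreducible variety $\sigma_s(Y)$ of the same dimension, the two must coincide. Therefore $I(\sigma_s(X)) = I(\sigma_s(Y))$ is generated by the $(s+1)\times(s+1)$ minors of the flattening matrix, as claimed.

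The main obstacle in this plan is not the synthesis above but the input it relies on: the entire argument hinges on Theorem \ref{unbalanced} supplying the exact defect $\delta_s$. That computation, obtained via Terracini's lemma and the multiprojective-affine-projective reduction to Hilbert functions of fat points described in Section \ref{MAP}, is the substantive content; once it is granted, the present theorem follows cleanly from the inclusion $X \subset Y$, the irreducibility of both secant varieties, and the classical determinantal description of $\sigma_s$ of a two-factor Segre.
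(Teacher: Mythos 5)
Your strategy — the inclusion $\sigma_s(X)\subseteq\sigma_s(Y)$ coming from the factorization of the Segre map, the classical determinantal description of $\sigma_s(Y)$, a dimension match via Theorem \ref{unbalanced}, and then irreducibility to force equality of the two varieties and hence of their ideals — is the right one, and it is essentially how the cited reference \cite{CGG7} argues (the survey itself gives no proof). Your arithmetic checks out: with $\Sigma=\sum_{i=1}^{t-1}n_i$, subtracting $\delta_s=s^2-s(N-\Sigma+1)$ from the parameter count $s(\Sigma+n_t+1)-1$ does give $s(N+n_t+2-s)-1=\dim\sigma_s(Y)$. One caveat you should make explicit: for this to work, the defect in Theorem \ref{unbalanced} must be read relative to the parameter count $s(\dim X+1)-1$ and \emph{not} relative to $\min\{M,\,s(\dim X+1)-1\}$ as in the paper's formal definition of defectivity; in the range of the theorem the parameter count can exceed $M$ (e.g., $\PP^5\times\PP^5$, $s=5$), and only the first reading makes the formula for $\delta_s$ consistent with the known answer. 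You use the correct reading, but silently.

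The one genuine gap is the boundary value $s_0=N-\sum_{i=1}^{t-1}n_i+1$. Theorem \ref{unbalanced} asserts the defect formula only for $s$ \emph{strictly} greater than $s_0$, so writing ``$\delta_{s_0}=0$ by the same algebra'' is an extrapolation of the formula outside its stated range, not a proof that $\sigma_{s_0}(X)$ attains the expected dimension. The containments only give you the upper bound $\dim\sigma_{s_0}(X)\le\dim\sigma_{s_0}(Y)$ (which at $s_0$ coincides with the parameter count); the matching lower bound does not follow from Theorem \ref{thm:first} in general, nor from superadditivity arguments such as $\dim\sigma_{s_0+1}(X)\le\dim\sigma_{s_0}(X)+\dim X+1$, which falls short by $s_0+1$. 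You need to import the non-defectivity at $s=s_0$ as a separate input (it is proven in \cite{CGG2} for the full closed range and is asserted in the remark following the theorem in this survey), or else prove directly that a generic point of $\sigma_{s_0}(Y)$ lies in $\sigma_{s_0}(X)$. With that one citation added, the proof is complete.
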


We can notice that in the case above, $X$ is defective for $N - \sum _{i=1} ^{t-1} n_i +1 < s$; see Theorem \ref{unbalanced}, while when equality holds, $\sigma_s(X)$ has the expected dimension. Moreover, in the cases covered by the theorem, $\sigma_s(X)$ is arithmetically Cohen--Macaulay, and a minimal free resolution of its defining ideal is given by the Eagon--Northcott complex.

\subsubsection{Veronese Varieties}
Now, let us consider the case of Veronese varieties. One case for which we have a rather complete information about the ideals of their higher secant varieties is the family of rational normal curves, i.e., the Veronese embeddings of $\PP^1$. In this case, the ideals in question are classically known; in particular, the ideal of $\sigma_s(X_{1,d})$ is generated by the $(s+1)\times (s+1)$ minors of catalecticant matrices associated with generic binary form of degree $d$, whose coefficients corresponds to the coordinates of the ambient coordinates. Moreover, we also know the entire minimal free resolution of these ideals, again given by the Eagon--Northcott complex.

Since the space of quadrics can be associated with the space of symmetric matrices, a similar analysis as the one done for the Segre varieties with two factors can be done in the case $d = 2$. In particular, the defining ideals for the higher secant varieties of the quadratic Veronese embeddings of $\PP ^n$, i.e., of $\sigma_s(X_{n,2})$, are defined by the $(s+1)\times (s+1)$ minors of the generic symmetric matrix of size $(n+1)\times(n+1)$.

For any $n,d$, the ideal of $\sigma_2(X_{n,d})$ is considered in \cite{K}, where it is proven that it is
generated by the $3\times 3$ minors of the first two catalecticant matrices of the generic polynomial of degree $d$ in $n+1$ variables; for the ideal of the $3\times 3$ minors, see also \cite{Rai}.

In general for all $\sigma_{s}(X_{n,d})$'s, these kinds of equations, given by $(s+1)\times (s+1)$ minors of catalecticant matrices (\cite{CatJ}), are known, but in most of the cases, they are not enough to generate the whole ideal.

Notice that those catalecticant matrices can also be viewed this way (see \cite{Pa}): consider a generic symmetric tensor (whose entries are indeterminates) $T$; perform a flattening of $T$ as we just did for generic tensors and Segre varieties; erase from the matrix that is thus obtained all the repeated rows or columns. What you get is a generic catalecticant matrix, and all of them are obtained in this way, i.e., those equations are the same as you get for generic tensors, symmetrized.

Only in a few cases, our knowledge about the equations of secant varieties of Veronese varieties is complete; see for example \cite{K, IaKa, CGG7, LO}. All recent approaches employ representation theory and the definition of {Young flattenings}. We borrow the following list of known results from \cite{LO}.

\begin{center}
{\begin{tabular}{c | l | c | c}
	\hline
	$\sigma_s(X_{n,2})$ & size $s+1$ minors generic symmetric matrix & ideal & classical \\
	\hline
	$\sigma_s(X_{1,d})$ & size $s+1$ minors of any generic catalecticant & ideal & classical \\
	\hline
	$\sigma_2(X_{n,d})$ & size $3$ minors of & ideal & \cite{K} \\
	& \quad generic $(1,d-1)$ and $(2,d-2)$-catalecticants & & \\
	\hline
	$\sigma_3(X_{n,3})$ & Aronhold equation + size $4$ minors of & ideal & Aronhold ($n=2$) \cite{IaKa} \\
	& \quad generic $(1,2)$-catalecticant & & \cite{LO} \\
	\hline
	$\sigma_3(X_{n,d})$ & size $4$ minors of & scheme & \cite{schreyer} ($n = 2, d = 4$) \\
	\quad ($d \geq 4$) & generic $(1,3)$ and $(2,2)$-catalecticant & & \cite{LO} \\
	\hline
	$\sigma_4(X_{2,d})$ & size $5$ minors of & scheme & \cite{schreyer} ($d = 4$) \\
	& \quad generic $(\left\lfloor \frac{d}{2} \right\rfloor,\left\lceil \frac{d}{2} \right\rceil)$-catalecticant & & \cite{LO} \\
	\hline
	$\sigma_5(X_{2,d})$ & size $6$ minors of & scheme & Clebsch ($d=4$) \cite{IaKa}\\
	($d \geq 6$, $d = 4$) & \quad generic $(\left\lfloor \frac{d}{2} \right\rfloor,\left\lceil \frac{d}{2} \right\rceil)$-catalecticant & & \cite{LO} \\
	\hline
	$\sigma_s(X_{2,5})$ & size $2s+2$ sub-Pfaffians of & irred.
	comp.
	& \cite{LO} \\
	$\quad s \leq 5$ & \quad generic Young $((31),(31))$-flattening & \\
	\hline
	$\sigma_6(X_{2,5})$ & size $14$ sub-Pfaffians of & scheme & \cite{LO} \\
	$\quad s \leq 5$ & \quad generic Young $((31),(31))$-flattening & \\	
	\hline
	$\sigma_6(X_{2,d})$ & size $7$ minors of & scheme & \cite{LO} \\
	& \quad generic $(\left\lfloor \frac{d}{2} \right\rfloor,\left\lceil \frac{d}{2} \right\rceil)$-catalecticant & \\
	\hline
	$\sigma_7(X_{2,6})$ & symmetric flattenings + Young flattenings & irred. comp. & \cite{LO} \\
	\hline
	$\sigma_8(X_{2,6})$ & symmetric flattenings + Young flattenings & irred. comp. & \cite{LO} \\
	\hline
	$\sigma_9(X_{2,6})$ & determinant of generic $(3,3)$-catalecticant & ideal & classical \\
	\hline
	$\sigma_s(X_{2,7})$ & size $(2s+2)$ sub-Pfaffians of & irred. comp. & \cite{LO} \\
	 ($s \leq 10$) & \quad generic $((4,1),(4,1))$-Young flattening & & \\
	\hline
	$\sigma_s(X_{2,2m})$ & rank of $(a,d-a)$-catalecticant $\leq \min\left\{s,{a+2 \choose 2}\right\}$ & scheme & \cite{IaKa,LO} \\
	 ($s \leq {m+1 \choose 2}$) & \quad for $1 \leq a \leq m$, open and closed & & \\
		\end{tabular}
}

{\begin{tabular}{c | l | c | c}
	$\sigma_s(X_{2,2m+1})$ & rank of $(a,d-a)$-catalecticant $\leq \min\left\{s,{a+2 \choose 2}\right\}$ & scheme & \cite{IaKa,LO} \\
	 ($s \leq {m+1 \choose 2}+1$) & \quad for $1 \leq a \leq m$, open and closed & & \\
	\hline
	$\sigma_s(X_{n,2m})$ & size $s+1$ minors of & irred. component & \cite{IaKa,LO} \\
	 ($s \leq {m+n-1 \choose n}$) & \quad generic $(m,m)$-catalecticant & & \\	
	\hline	
	$\sigma_s(X_{2,2m+1})$ & size ${n \choose \lfloor n/2 \rfloor}j+1$ minors of a Young flattening & irred. comp. & \cite{IaKa,LO} \\
	\quad ($s \leq {m+n \choose n}$) & & & \\
\end{tabular}
}
\end{center}
\medskip

 Note that the knowledge of equations that define the $\sigma_{s}(X_{n,d})$'s, also just
set-theoretically, would give the possibility to compute the symmetric border rank for any tensor in $S^{d}V$.

For the sake of completeness, we mention that equations for secant varieties in other cases can be found in \cite{LO} (Grassmannian and other homogeneous varieties), in \cite{CGG7} (Segre--Veronese varieties and Del Pezzo surfaces) and in \cite{BGL} (Veronese re-embeddings of varieties). 

\subsection{The Real World}
For many applications, it is very interesting to study tensor decompositions over the real numbers.

The first thing to observe here is that since $\bbR$ is not algebraically closed, the geometric picture is much more different. In particular, a first difference is in the definition of secant varieties, where, instead of considering the closure in the Zariski topology, we need to consider the Euclidean topology. In this way, we have that open sets are no longer dense, and there is not a definition of ``generic rank'': if $X$ is variety in $\bbP^N_{\bbR}$, the set $U_r(X) = \{P \in \bbP_\bbR^N ~|~ \rk_X(P) = r\}$ might be non-empty interior for several values of $r$. Such values are called the {typical ranks} of $X$. 

It is known that the minimal typical (real) rank of $X$ coincides with the generic (complex) rank of the complexification $X \otimes \bbC$; see \cite[Theorem 2]{bt}.

The kind of techniques that are used to treat this problem are sometimes very different from what we have seen in the case of algebraically-closed fields. 

\smallskip
Now, we want to overview the few known cases on real symmetric-ranks. 

The typical ranks of binary forms are completely known. Comon and Ottaviani conjectured in~\cite{CO12:TypicalBinary} that typical ranks take all values between $\left\lfloor \frac{d+2}{2} \right\rfloor$ and $d$. The conjecture was proven by Blekherman in \cite{Ble15:Typical}.

In \cite{BBO18}, the authors showed that any value between the minimal and the maximal typical rank is also a typical rank. Regarding real symmetric-ranks, they proved that: the typical real rank of ternary cubics is four; the typical ranks of quaternary cubics are only five and six; and they gave bounds on typical ranks of ternary quartics and quintics.

Another family of symmetric tensors for which we have some results on real ranks are monomials. First of all, note that the apolarity lemma (Lemma \ref{lemma:Apolarity}) can still be employed, by making all algebraic computations over the complex number, but then looking for ideals of reduced points apolar to the given homogeneous polynomial and having only real coefficients. This was the method used in \cite{BCG:RealRankBinaryMonomials} to compute the real rank of binary monomials. 

Indeed, if $M = x_0^{\alpha_1}x_1^{\alpha_1}$, then, as we have already seen, $M^\perp = (y_0^{\alpha_0+1},y_1^{\alpha_1+1})$. Now, Waring decompositions of $M$ are in one-to-one relation with reduced sets of points (which are principal ideals since we are in $\bbP^1$), whose ideal is contained in $M^\perp$. Now, if we only look for sets of points that are also completely real, then we want to understand for which degree $d$ it is possible to find suitable polynomials $H_0$ and $H_1$ such that $G = y_0^{\alpha_0+1}H_0 + y_1^{\alpha_1+1}H_1$ is of degree $d$ and have only distinct real~roots.

The authors observe the following two elementary facts, which hold for any univariate $g(y) = c_dy^d+c_{d-1}y^{d-1}+\ldots+c_1y+c_0$, as a consequence of the classic Descartes' rule of signs:
\begin{itemize}
	\item if $c_i = c_{i-1} = 0$, for some $i = 1,\ldots,d$, then $f$ does not have $d$ real distinct roots; see \cite{BCG:RealRankBinaryMonomials} (Lemma~4.1);
	\item for any $j = 1,\ldots,d-1$, there exists $c_i$'s such that $f$ has $d$ real distinct roots and $c_j = 0$; see \cite{BCG:RealRankBinaryMonomials} (Lemma 4.2).
\end{itemize}
As a consequence of this, we obtain that:
\begin{itemize}
	\item if $d < \alpha_0+\alpha_1$, then $G$ (or rather, its dehomogenization) has two consecutive null coefficients; hence, it cannot have $d$ real distinct roots;
	\item if $d = \alpha_0+\alpha_1$, then only the coefficient corresponding to $y_0^{\alpha_0}y_1^{\alpha_1}$ of $G$ (or rather its dehomogenization) is equal to zero; hence, it is possible to find $H_0$ and $H_1$ such that $G$ has $d$ real distinct roots.
\end{itemize}
Therefore, we get the following result.
\begin{thm}[{\cite[Proposition 3.1]{BCG:RealRankBinaryMonomials}}]
If $\alpha_0,\alpha_1$ are not negative integers, then 
$\rk_{\rm sym}^{\mathbb{R}}(x_0^{\alpha_0}x_1^{\alpha_1}) =\alpha_0+\alpha_1.$
\end{thm}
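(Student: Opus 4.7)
The plan is to combine the apolarity lemma (Lemma \ref{lemma:Apolarity}) with the two Descartes-style lemmas (Lemmas 4.1 and 4.2 of \cite{BCG:RealRankBinaryMonomials}) that are quoted immediately before the statement. Since $M = x_0^{\alpha_0} x_1^{\alpha_1}$ has apolar ideal $M^\perp = (y_0^{\alpha_0+1}, y_1^{\alpha_1+1})$, and since every reduced zero-dimensional subscheme of $\mathbb{P}^1$ is cut out by a single binary form, a real Waring decomposition of $M$ of length $d$ corresponds, via Lemma \ref{lemma:Apolarity}, to a binary form $G \in (M^\perp)_d$ that splits as a product of $d$ pairwise distinct real linear factors. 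So the whole task is to determine the minimal $d$ for which such a $G$ exists.

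For the lower bound I would show that no admissible $G$ exists when $d < \alpha_0 + \alpha_1$. Writing $G = y_0^{\alpha_0+1} H_0 + y_1^{\alpha_1+1} H_1$ in $(M^\perp)_d$, the monomial $y_0^i y_1^{d-i}$ can appear with nonzero coefficient only if $i \geq \alpha_0 + 1$ or $i \leq d - \alpha_1 - 1$. Hence, for every index $i$ in the range $d - \alpha_1 \leq i \leq \alpha_0$, the corresponding coefficient of $G$ is forced to vanish. Under the assumption $d \leq \alpha_0 + \alpha_1 - 1$ this range contains at least two consecutive integers, so $G$ has two consecutive zero coefficients, and Lemma 4.1 of \cite{BCG:RealRankBinaryMonomials} rules out the possibility that $G$ has $d$ distinct real roots. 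Therefore $\rk_{\rm sym}^{\mathbb{R}}(M) \geq \alpha_0 + \alpha_1$.

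For the upper bound I would take $d = \alpha_0 + \alpha_1$. In this borderline degree the previous constraint collapses to the single index $i = \alpha_0$, i.e.\ the only coefficient of $G$ forced to be zero is that of $y_0^{\alpha_0} y_1^{\alpha_1}$. Since $\alpha_0, \alpha_1\geq 1$ we have $1 \leq \alpha_0 \leq d-1$, so Lemma 4.2 of \cite{BCG:RealRankBinaryMonomials} supplies a choice of the remaining coefficients making the resulting $G$ a product of $d$ distinct real linear forms; equivalently, one can prescribe $H_0, H_1$ directly. Applying the apolarity lemma to the reduced real scheme of $d$ points cut out by this $G$ yields a real Waring decomposition of $M$ of length $\alpha_0+\alpha_1$, and the two inequalities combine to the claimed equality.

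The proof is essentially routine once the two quoted Descartes facts are granted; the only step requiring any care is verifying that the ``forbidden'' indices in the two cases behave as described, in particular that when $d < \alpha_0 + \alpha_1$ the vanishing range $[d-\alpha_1,\alpha_0]$ contains at least two consecutive integers (hence genuinely triggers Lemma 4.1), and that when $d = \alpha_0+\alpha_1$ the single forced-zero index $\alpha_0$ lies strictly between $1$ and $d-1$ (so Lemma 4.2 applies). Both are immediate bookkeeping under $\alpha_0,\alpha_1 \geq 1$, so I do not foresee any substantive obstacle.
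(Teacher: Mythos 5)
Your proposal is correct and follows essentially the same route as the paper: both use the real apolarity lemma to reduce to finding $G=y_0^{\alpha_0+1}H_0+y_1^{\alpha_1+1}H_1$ of degree $d$ with $d$ distinct real roots, then invoke Lemma 4.1 of the cited reference (two consecutive vanishing coefficients when $d<\alpha_0+\alpha_1$) for the lower bound and Lemma 4.2 (a single prescribed vanishing coefficient is harmless when $d=\alpha_0+\alpha_1$) for the upper bound. Your bookkeeping of the forced-zero indices is the same observation the paper makes, just written out more explicitly.
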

Note that, comparing the latter result with Theorem \ref{thm:monomials}, we can see that for binary monomials, the real and the complex rank coincide if and only if the least exponent is one. However, this is true in full generality, as shown in \cite{CKOV:RealRankMonomials}.
\begin{thm}[{\cite[Theorem 3.5]{CKOV:RealRankMonomials}}]
Let $M = x_0^{\alpha_0}\cdots x_n^{\alpha_n}$ be a degree d
 monomial with $\alpha_0 = \min_i\{\alpha_i\}$. Then,
\[\rk_{\rm sym}^\mathbb{R}(M) = \rk_{\rm sym}^\mathbb{C}(M) \quad \text{ if and only if } \quad \alpha_0 = 1.\]
\end{thm}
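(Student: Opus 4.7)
The plan is to prove the two directions of the equivalence separately. Throughout I set $r:=\rk_{\rm sym}^{\mathbb{C}}(M)=\prod_{i=1}^n(\alpha_i+1)$ (by Theorem~\ref{thm:monomials}, since $\alpha_0$ is the minimum exponent), and I use freely that $\rk_{\rm sym}^{\mathbb{R}}(M)\geq \rk_{\rm sym}^{\mathbb{C}}(M)$, so the content is to match this bound exactly when $\alpha_0=1$ and to break it strictly when $\alpha_0\geq 2$. The apolarity lemma (Lemma~\ref{lemma:Apolarity}) and the explicit apolar ideal $M^\perp=(y_0^{\alpha_0+1},y_1^{\alpha_1+1},\ldots,y_n^{\alpha_n+1})$ are the main engines; an easy preliminary computation shows that a monomial $y_0^k y_i^{\alpha_i+1-k}$ lies in $M^\perp$ if and only if $k=0$ or $k\geq\alpha_0+1$.

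\emph{Direction ``$\alpha_0=1\Rightarrow$ equality''.} I will construct an explicit real radical apolar ideal of colength $r$. For each $i=1,\ldots,n$ choose $\alpha_i+1$ distinct real numbers $\beta_{i,1},\ldots,\beta_{i,\alpha_i+1}$ whose sum is zero (e.g.\ $\beta_{i,j}=j-(\alpha_i+2)/2$) and set
\[
p_i(y_0,y_i):=\prod_{j=1}^{\alpha_i+1}(y_i-\beta_{i,j}y_0)=\sum_{k=0}^{\alpha_i+1}(-1)^k e_k(\beta_{i,\cdot})\,y_0^k y_i^{\alpha_i+1-k}.
\]
The only monomial appearing in $p_i$ with $y_0$-exponent equal to $1$ is $y_0 y_i^{\alpha_i}$, and its coefficient is $-e_1(\beta_{i,\cdot})=0$ by construction; every other monomial has $y_0$-exponent $0$ or $\geq 2$, hence lies in $M^\perp$ (using $\alpha_0=1$). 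Therefore $p_i\in M^\perp$ for every $i$. The ideal $I=(p_1,\ldots,p_n)$ is a complete intersection whose zero locus on the chart $y_0=1$ consists of all $\prod_i(\alpha_i+1)=r$ real points $(\beta_{1,j_1},\ldots,\beta_{n,j_n})$; on $y_0=0$ the generators reduce to $y_i^{\alpha_i+1}$ and contribute no projective point. The affine quotient $\bigotimes_i \mathbb{R}[y_i]/(p_i(1,y_i))$ is a product of fields, so $I$ is radical and equals $I_{\mathbb{X}}$. Apolarity then yields $M$ as a real combination of $d$-th powers of the $r$ real linear forms associated with $\mathbb{X}$.

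\emph{Direction ``equality $\Rightarrow\alpha_0=1$''.} Assume $\alpha_0\geq 2$. I will show that no real reduced apolar scheme $\mathbb{X}\subset\mathbb{P}^n$ of length $r$ contained in $M^\perp$ can exist, which forces $\rk_{\rm sym}^{\mathbb{R}}(M)>r$. The elementary obstruction is as follows: if $q(y_0,y_i)\in M^\perp$ is any form of degree $\alpha_i+1$ depending only on $y_0,y_i$, then the coefficients of $y_0^s y_i^{\alpha_i+1-s}$ must vanish for $1\leq s\leq \alpha_0$, i.e.\ the first $\alpha_0$ elementary symmetric functions of the $\alpha_i+1$ roots of $q(1,y_i)$ vanish. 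By Newton's identities this forces $\sum_j\beta_j=0$ \emph{and} $\sum_j\beta_j^2=0$, and over $\mathbb{R}$ the latter collapses all roots to $0$ and destroys reducedness. I would then lift this pointwise obstruction to a global one by arguing that every minimal apolar scheme to $M$ must contain a ``coordinate-restricted'' relation of the above type. Concretely, using the structural description of $\VSP(M,r)$ available in~\cite{MR3017012} for monomials, every complex minimal apolar scheme is a small deformation of the grid complete intersection of Direction~1; a semialgebraic argument then shows the real locus in the deformation stays empty exactly when $\alpha_0\geq 2$, reducing (via the sanity check for the binary case $M=x_0^{\alpha_0}x_1^{\alpha_1}$, which follows directly from a real cube-root argument in $\mathbb{P}^1$) to the elementary obstruction above.

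\emph{Main obstacle.} Direction~1 is a clean explicit construction. The real difficulty sits in Direction~2: passing from the easy elementary-symmetric obstruction ruling out the \emph{naive} complete-intersection apolar scheme to a proof that \emph{no} real minimal apolar scheme exists. This requires genuine control over the global geometry of $\VSP(M,r)$ for the monomial $M$, not merely an analysis of one specific family of apolar ideals, and this classification-type input is the step where the proof becomes technical.
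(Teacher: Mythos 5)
Your first direction is correct and essentially complete: the complete intersection $(p_1,\ldots,p_n)$ with distinct real roots chosen so that $e_1(\beta_{i,\cdot})=0$ is a reduced set of $\prod_{i=1}^n(\alpha_i+1)$ real points whose ideal sits inside $M^\perp$, and real apolarity then gives the matching upper bound. This is the standard construction for the ``if'' part.

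The converse direction, however, contains a genuine gap. Your ``elementary obstruction'' only applies to elements of $M^\perp$ of degree $\alpha_i+1$ that are \emph{binary} forms in $y_0,y_i$. But the Buczy\'nska--Buczy\'nski--Teitler structure theorem you invoke says that a minimal apolar ideal is a complete intersection $(q_1,\ldots,q_n)$ with $q_i=y_i^{\alpha_i+1}-y_0^{\alpha_0+1}g_i$, where $g_i$ is an essentially arbitrary form of degree $\alpha_i-\alpha_0$ in \emph{all} the variables; as soon as some $\alpha_i>\alpha_0$, these generators are not binary forms, the ideal need not contain any binary form of degree $\alpha_i+1$, and your Newton-identity computation does not apply to it. The bridge you propose --- ``every complex minimal apolar scheme is a small deformation of the grid complete intersection'' followed by ``a semialgebraic argument shows the real locus stays empty'' --- is a restatement of the claim, not an argument: one must exclude real reduced members from the entire (irreducible, positive-dimensional) $\VSP(M,r)$, not from a neighbourhood of one member, and for $\alpha_0\ge 2$ there is no real grid to deform from in the first place. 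The missing step, which is how the cited proof actually closes the argument, is a \emph{global} form of your power-sum observation: no point of $V(q_1,\ldots,q_n)$ lies on $y_0=0$, so one may dehomogenize at $y_0=1$ and use the monomial basis $\{y_1^{b_1}\cdots y_n^{b_n}:0\le b_j\le\alpha_j\}$ of the $r$-dimensional quotient algebra determined by the Gr\"obner basis $(q_i)$; since each reduction $y_i^{\alpha_i+1}\mapsto g_i(1,y)$ drops the degree by at least $\alpha_0+1\ge 3$, the traces of multiplication by $y_i$ and by $y_i^2$ both vanish. For a reduced set of real points these traces equal $\sum_k p_{k,i}$ and $\sum_k p_{k,i}^2$, so all coordinates of all points vanish --- contradicting $|\mathbb{X}|=r\ge 2$. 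Without this trace computation (or an equivalent replacement for it), your second direction remains unproved.
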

Note that the real rank of monomials is {not} known in general, and as far as we know, the first unknown case is the monomial $x_0^2x_1^2x_2^2$, whose real rank is bounded by $11 \leq \rk_{\rm sym}^{\mathbb{R}}(x_0^2x_1^2x_2^2) \leq 13$; here, the upper bound is given by \cite[Proposition 3.6 and Example 3.6]{CKOV:RealRankMonomials}, and the lower bound is given by~\cite[Example 6.7]{MMSV:RealRankTernary}.

\smallskip
We conclude with a result on real ranks of reducible real cubics, which gives a (partial) real counterpart to Theorem \ref{CGV:ReducedCubicsCC}.
\begin{thm}[{\cite[Theorem 5.6]{CGV16:RealComplexRankReducibleCubics}}]
If $F \in \bbR[x_0,\ldots,x_n]$ is a reducible cubic form essentially involving $n+1$ variables, then one and only one of the following holds:
\begin{itemize}
\item $F$ is equivalent to $x_0(\sum_{i=1}^n\epsilon_i x^2_i)$, where $\epsilon_i \in \{-1, +1\}$, for $1\leq i \leq n$, and:
$$2n \leq \rk_{\rm sym}^\bbR(F) \leq 2n + 1.$$
Moreover, if $\sum_i \epsilon_i = 0$, then $\rk_{\rm sym}^\bbR(F) = 2n$.
\item $F$ is equivalent to $x_0(\sum_{i=0}^n \epsilon_ix_i^2)$, where $\epsilon_i \in \{-1, +1\}$, for $1\leq i \leq n$, and:
$$2n \leq \rk_{\rm sym}^\bbR(F) \leq 2n + 1.$$
Moreover, if $\epsilon_0 = \ldots = \epsilon_n$, then $\rk_{\rm sym}^\bbR(F) = 2n$. If $\epsilon_0 \neq \epsilon_1$ and $\epsilon_1=\ldots=\epsilon_n$,
then $\rk_{\rm sym}^\bbR(F) = 2n + 1$.
\item $F$ is equivalent to $(\alpha x_0 + x_p)(\sum_{i=0}^n \epsilon_i x_i^2)$, for $\alpha \neq 0$, where $\epsilon_0=\ldots=\epsilon_{p-1}=1$ and $\epsilon_p=\ldots=\epsilon_n = -1$ for $1 \leq p \leq n$, and: $$2n \leq \rk_{\rm sym}^\bbR(F) \leq 2n + 3.$$
Moreover, if $\alpha = -1$ or $\alpha = 1$, then $2n + 1 \leq \rk_{\rm sym}^\bbR(F) \leq 2n + 3$.
\end{itemize}
\end{thm}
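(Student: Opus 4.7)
The plan is to combine a real normal form classification with explicit Waring decompositions for the upper bounds and use the complex case together with the derivative bound of Theorem~\ref{thm:boundCGV} for the lower bounds.

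First, I would establish the normal forms. Write $F = L \cdot Q$ with $L \in \bbR[x_0,\ldots,x_n]_1$ and $Q \in \bbR[x_0,\ldots,x_n]_2$. By Sylvester's law of inertia, after a real linear change of variables, $Q$ becomes $\sum_{i=0}^{n} \epsilon_i x_i^2$ with $\epsilon_i \in \{-1,0,+1\}$. Because $F$ essentially involves $n+1$ variables, none of the $\epsilon_i$'s is zero in the end (otherwise $Q$ has fewer essential variables and a further adjustment of $L$ reduces variables too, by the easy multivariate version of the argument in Remark~\ref{variables}). Three cases then arise according to the relative position of $L$ with respect to the diagonalization of $Q$: (i) after a change of variables fixing the diagonalization of $Q$, $L$ is one of the diagonal directions and does not appear in $Q$; (ii) $L$ is a direction for which $L^2$ appears in $Q$; and (iii) $L$ is neither, i.e.\ it is a mixed direction of the form $\alpha x_0 + x_p$ with $\alpha \neq 0$ and $\epsilon_0 \ne \epsilon_p$. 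A short bookkeeping check shows these cases are mutually exclusive and exhaust all possibilities, producing the three normal forms stated.

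For the upper bounds I would exploit the elementary identity
\[
L \cdot M^2 \;=\; \tfrac{1}{6}(L+M)^3 + \tfrac{1}{6}(L-M)^3 - \tfrac{1}{3}L^3,
\]
which shows $\rk^{\bbR}_{\mathrm{sym}}(LM^2) \leq 3$. In case (1) with $L = x_0$, summing over $i = 1,\ldots,n$ with the weights $\epsilon_i$ gives $F$ as a combination of the $2n$ cubes $(x_0 \pm x_i)^3$ plus the single cube $L^3$ with coefficient $-\tfrac{1}{3}\sum \epsilon_i$; this proves $\rk^{\bbR}_{\mathrm{sym}}(F) \leq 2n+1$, with the stronger bound $2n$ whenever $\sum_i \epsilon_i = 0$. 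In case (2) with the definite signature $\epsilon_0 = \cdots = \epsilon_n$, a direct calculation shows that the $2n$ cubes $(x_0 \pm \sqrt{n/3}\,x_i)^3$, each with weight $\epsilon_0/(2n)$, produce exactly $F$; for the indefinite signatures one keeps the previous strategy adding back one cube $L^3$, obtaining $2n+1$. Case (3) is handled by writing $(\alpha x_0 + x_p)(\epsilon_0 x_0^2 + \epsilon_p x_p^2) = L \cdot (\epsilon_0 x_0^2 + \epsilon_p x_p^2)$ and noting that this binary-variable cubic admits a real decomposition of length at most $3$ by Sylvester's binary algorithm (Algorithm~\ref{algo: sylvester}); adding the $2(n-1)$ cubes that decompose $L \cdot \sum_{i \ne 0,p} \epsilon_i x_i^2$ via the identity above yields the stated bound $2n+3$.

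For the lower bounds I would first use the inequality $\rk^{\bbR}_{\mathrm{sym}}(F) \geq \rk^{\bbC}_{\mathrm{sym}}(F)$ together with Theorem~\ref{CGV:ReducedCubicsCC}. In cases (1) and (2), $F$ is complex-equivalent to case (1) or (2) of the complex classification, hence $\rk^{\bbC}_{\mathrm{sym}}(F) = 2n$ and the lower bound $2n$ follows. In case (3), a direct substitution $u = \alpha x_0 + x_p$, $v = \alpha x_0 - x_p$ shows that $Q$ rewrites with a nonzero $L^2$ coefficient iff $\alpha \ne \pm 1$, which places $F$ in complex case (1) (rank $2n$) when $\alpha \ne \pm 1$ and in complex case (3) (rank $2n+1$) when $\alpha = \pm 1$; this gives the corresponding real lower bounds. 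The remaining sharper lower bound $\rk^{\bbR}_{\mathrm{sym}}(F) \geq 2n+1$ in case (2) when $\epsilon_0 \neq \epsilon_1 = \cdots = \epsilon_n$ does not follow from complex rank and must come from a genuine real obstruction: I would apply Theorem~\ref{thm:boundCGV} to the partial derivatives $F_0, F_1, \ldots, F_n$ of $F$, noting that $F_1,\ldots,F_n$ are linearly independent and that every real linear combination $F_0 + \sum \lambda_k F_k$ is a quadric of fixed indefinite signature, hence has real rank at least $n+1$ (by the classical fact that the real rank of a quadric equals the sum of the numbers of positive and negative eigenvalues); taking $m = n+1$ and $p = n$ yields $\rk^{\bbR}_{\mathrm{sym}}(F) \geq 2n+1$.

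The main obstacle is precisely this last step: proving sharp real-versus-complex gaps for the signature-sensitive cases. The derivative bound must be deployed carefully because the real rank of the relevant quadric linear combinations depends delicately on the signature $\sum \epsilon_i$, and a too-naive choice of the reference derivative $F_0$ produces a degenerate quadric and loses the extra unit. A secondary technical difficulty is justifying that the upper bound in case (3) is genuinely $2n+3$ rather than the naive $2n+4$; this requires absorbing the binary-form piece $L \cdot (\epsilon_0 x_0^2 + \epsilon_p x_p^2)$ into three cubes via a real variant of Sylvester's algorithm, which is possible because the discriminant of this binary cubic is nonzero whenever $\alpha \ne 0$ and $\epsilon_0 \ne \epsilon_p$.
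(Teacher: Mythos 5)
The survey states this result only as a citation of \cite{CGV16:RealComplexRankReducibleCubics} and contains no proof to compare against; your strategy is nonetheless the one used in that source: lower bounds from $\rk_{\rm sym}^{\bbR}\geq\rk_{\rm sym}^{\bbC}$ together with Theorem \ref{CGV:ReducedCubicsCC}, the sharper bound $2n+1$ in case (2) from Theorem \ref{thm:boundCGV} applied to the pencil of quadrics $F_0+\sum_k\lambda_kF_k$ (whose Gram determinant is $\pm(3+|\lambda|^2)$ in the mixed-signature subcase, hence never vanishes for real $\lambda$ --- this is indeed the genuine real obstruction, and your observation that the definite subcase degenerates at $|\lambda|^2=3$ is exactly why the method correctly distinguishes the two), and upper bounds from the identity $LM^2=\tfrac16(L+M)^3+\tfrac16(L-M)^3-\tfrac13L^3$ together with the $2n$-term decomposition using $x_0\pm\sqrt{n/3}\,x_i$ in the definite subcase, which checks out. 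All of the rank estimates in your sketch are sound.

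The genuine gap is the classification step. The ``short bookkeeping check'' of mutual exclusivity does not exist as you describe it, because whether $L$ ``is a diagonal direction'' of $Q$ depends on the chosen diagonalization. The relevant invariant is the dual quadric evaluated at $L$: in case (3) one has $Q^{\vee}(\alpha x_0+x_p)=\alpha^2\epsilon_0+\epsilon_p=\alpha^2-1$, so for $|\alpha|\neq1$ the covector $L$ is anisotropic for $Q^{\vee}$ and Witt's theorem moves it onto a coordinate axis while preserving $Q$; concretely, $(2x_0+x_1)(x_0^2-x_1^2-\cdots-x_n^2)$ is equivalent to $x_0(x_0^2-x_1^2-\cdots-x_n^2)$, which is a case-(2) normal form. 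Your trichotomy ``$L$ absent from $Q$ / $L^2$ appears in $Q$ / neither'' is therefore not well defined, and establishing the exhaustiveness and (claimed) disjointness requires the actual orbit analysis --- rank of $Q$, its signature up to flip, and the sign of $Q^{\vee}(L)$ modulo the rescaling $(L,Q)\mapsto(aL,bQ)$ --- rather than bookkeeping. Two smaller slips: in case (3) the leftover multiples of $L^3$ coming from the identity should be absorbed into the binary cubic in $x_0,x_p$, so your own construction yields $2(n-1)+3=2n+1$ summands (which of course still proves the stated $\leq 2n+3$, but your count does not match your construction); and that binary cubic has a repeated factor exactly when $\alpha=\pm1$, so its real rank being at most $3$ should be justified by the fact that the maximal real rank of a binary cubic is $3$, not by a nonvanishing discriminant.
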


\bibliographystyle{alpha}

\begin{thebibliography}{99}

\bibitem{Harris}
Harris, J.
\newblock {\em Algebraic Geometry, A First Course};
\newblock Graduate Texts in Math; Springer-Verlag: New York, NY, USA,~1992.

\bibitem{Pa}
Palatini, F.
\newblock Sulle variet\`a algebriche per le quali sono di dimensione minore
dell' ordinario, senza riempire lo spazio ambiente, una o alcuna delle
variet\`a formate da spazi seganti.
\newblock {\em Atti Accad. Torino Cl. Sci. Mat. Fis. Nat.} \textbf{1909}, {44}, 362--375,

\bibitem{Te}
Terracini, A.
\newblock Sulle $v_k$ per cui la variet\`a degli $s_h$ $(h+1)$-seganti ha
dimensione minore dell'ordinario.
\newblock {\em Rend. Circ. Mat. Palermo} \textbf{1911}, {31}, 392--396.

\bibitem{Te3}
Terracini, A.
\newblock Sulla rappresentazione delle forme quaternarie mediante somme di
potenze di forme lineari.
\newblock {\em Atti della R. Acc. delle Scienze di Torino} \textbf{1916}, {51}, 107--117.

\bibitem{Sc1}
Scorza, G.
\newblock Sopra la teoria delle figure polari delle curve piane del $4^o$
ordine.
\newblock {\em Ann. di Mat.} \textbf{1899}, {2}, 155--202.

\bibitem{Sc2}
Scorza, G.
\newblock Sulla determinazione delle variet\`a a tre dimensioni di $s_r$
($r\geq 7$) i cui $s_3$ tangenti si tagliano a due a due.
\newblock {\em Rend. Circ. Mat. Palermo} \textbf{1908}, {25}, 193--204.

\bibitem{Z}
Zak, F.L.
\newblock {\em Tangents and secants of algebraic varieties}, volume 127 of {\em
Translations of Mathematical Monographs}.
\newblock American Mathematical Society, Providence, RI, 1993.
\newblock Translated from the Russian manuscript by the author.

\bibitem{AH95}
Alexander, J.; Hirschowitz, A.
\newblock Polynomial interpolation in several variables
\newblock {\em J. Algebraic Geom.} \textbf{1995}, {4}, 201--222.

\bibitem{War}
Waring, E.
\newblock {Meditationes Algebricae}; 
\newblock Translated from the Latin by D. Weeks; American Mathematical
Sociey: Providence, RI, USA, 1991.

\bibitem{deLC}
de Lathauwer, L.; Castaing, J.
\newblock Tensor-based techniques for the blind separation of DS--CDMA signals.
\newblock {\em Signal Process.} \textbf{2007}, {87}, 322--336.

\bibitem{SBG}
Smilde, A.; Bro, R.; Geladi, P.
\newblock {\em Multi-Way Analysis: Applications in the Chemical Sciences};
\newblock John Wiley \&amp; Sons: {2005}. 

\bibitem{Str}
Strassen, V.
\newblock Rank and optimal computation of generic tensors.
\newblock {\em Linear Algebra Its Appl.} \textbf{1983}, {52}, 645--685.

\bibitem{Vali}
Valiant, L.G.
\newblock Quantum computers that can be simulated classically in polynomial
time.
\newblock In Proceedings of the Thirty-Third Annual ACM Symposium On
Theory of Computing, {2001}; ACM: New York, NY, USA, 2001; pp. 114--123. 

\bibitem{bernardi2012algebraic}
Bernardi, A.; Carusotto, I.
\newblock Algebraic geometry tools for the study of entanglement: an
application to spin squeezed states.
\newblock {\em J. Phys. A} \textbf{2012}, {45}, 105304.

\bibitem{entanglement}
Eisert, J.; Gross, D.
\newblock {Lectures on Quantum Information};
\newblock Wiley-VCH: Weinheim, Germany, 2007.


\bibitem{ar}
Allman, E.S.; Rhodes, J.A.
\newblock {Phylogenetic ideals and varieties }for the general {M}arkov model.
\newblock {\em Adv. Appl. Math.} \textbf{2008}, {40}, 127--148. 


\bibitem{Comon}
Comon, P.
\newblock Independent component analysis, a new concept?
\newblock {\em Signal Process.} \textbf{1994}, {36}, 287--314.

\bibitem{SGB}
Sidiropoulos, N.D.; Giannakis, G.B.; Bro, R.
\newblock Blind parafac receivers for ds-cdma systems.
\newblock {\em IEEE Trans. Signal Process.} \textbf{2000}, {48}, 810--823.

\bibitem{ScSe}
Schultz, T.; Seidel, H.
\newblock Estimating crossing fibers: A tensor decomposition approach.
\newblock {\em IEEE Trans. Vis. Comput. Graph.} \textbf{2008},
14, 1635--1642.

\bibitem{CoJu}
Comon, P.; Jutten, C.
\newblock {\em Handbook of Blind Source Separation: Independent Component
Analysis and Applications};
\newblock {Academic Press}: 2010. 


\bibitem{CGG1}
Catalisano, M.V.; Geramita, A.V.; Gimigliano, A.
\newblock On the secant varieties to the tangential varieties of a
{V}eronesean.
\newblock {\em Proc. Am. Math. Soc.} \textbf{2002}, {130}, 975--985.



\bibitem{CGG2}
Catalisano, M.V.; Geramita, A.V.; Gimigliano, A.
\newblock Ranks of tensors, secant varieties of {S}egre varieties and fat
points.
\newblock {\em Linear Algebra Appl.} \textbf{2002}, {355}, 263--285.


\bibitem{cgo}
Carlini, E.; Grieve, N.; Oeding, L.
\newblock Four lectures on secant varieties.
\newblock In {\em Connections between Algebra, Combinatorics, and Geometry};
 {Springer}: 2014; pp. 101--146. 

\bibitem{Ge}
Geramita, A.V.
\newblock Inverse systems of fat points: {W}aring's problem, secant varieties
of {V}eronese varieties and parameter spaces for {G}orenstein ideals.
\newblock In {\em The Curves Seminar at Queen's, Vol.\ X (Kingston, ON, 1995)}; 
Queen's Univ.: Kingston, ON, Canada, 1996; pp. 2--114.

\bibitem{IaKa}
Iarrobino, A.; Kanev, V.
\newblock Power sums, {G}orenstein algebras, and determinantal loci. 
In v{\em Lecture Notes in Mathematics};
\newblock Springer-Verlag: Berlin, Germany, 1999; Volume 1721.

\bibitem{Land}
Landsberg, J.M.
\newblock {\em Tensors: Geometry and Applications}; 
\newblock American Mathematical Society: Providence, RI, USA, 2012; Volume 128.

\bibitem{CS}
Comas, G.; Seiguer, M.
\newblock On the rank of a binary form.
\newblock {\em Found. Comput. Math.} \textbf{2011}, {11}, 65--78.

\bibitem{bgi}
Bernardi, A.; Gimigliano, A.; Id{\`a}, M.
\newblock Computing symmetric-rank for symmetric tensors.
\newblock {\em J. Symbolic Comput.} \textbf{2011}, {46}, 34--53.

\bibitem{AP05}
Arrondo, E.; Paoletti, R. 
\newblock Characterization of Veronese varieties via projection in
Grassmannians. 
\newblock In \emph{Projective Varieties With Unexpected Properties};  Walter
de Gruyter: Berlin, Germany, 2005; pp. 1--12.

\bibitem{BCMT}
Brachat, J.; Comon, P.; Mourrain, B.; Tsigaridas, E.
\newblock Symmetric tensor decomposition.
\newblock {\em Linear Alg. Its Appl.} \textbf{2010}, {433}, 1851--1872.

\bibitem{bertau}
Bernardi, A.; Taufer, D.
\newblock Waring, tangential and cactus decompositions.
\newblock {\em arXiv} {\bf 2018}, arXiv:1812.02612.

\bibitem{taufer}
Taufer, D.
\newblock Symmetric Tensor Decomposition.
\newblock Master's Thesis, Galileian School of Higher Education, University of
Padova, {2017}.

\bibitem{Dav39:Waring}
Davenport, H.
\newblock On Waring's problem for fourth powers.
\newblock {\em Ann. Math.} \textbf{1939}, {40}, 731--747.


\bibitem{Wa}
Wakeford, E.K.
\newblock On canonical forms.
\newblock {\em Proc. Lond. Math. Soc.} \textbf{1920}, {2}, 403--410.

\bibitem{Pu}
Pucci, M.
\newblock The Veronese variety and catalecticant matrices.
\newblock {\em J. Algebra} \textbf{1998}, {202}, 72--95.

\bibitem{parolin2004varieta}
Parolin, A.
\newblock {\em Variet\`a secanti alle variet\`a di Segre e di Veronese e loro applicazioni}.
\newblock Ph.D. Thesis, {Universit\`a di Bologna}, 2004.

\bibitem{bo}
Brambilla, M.C.; Ottaviani, G.
\newblock On the {A}lexander-{H}irschowitz theorem.
\newblock {\em J. Pure Appl. Algebra} \textbf{2008}, {212}, 1229--1251.


\bibitem{Cl}
Clebsch, A.
\newblock Ueber curven vierter ordnung.
\newblock {\em J. Reine Angew. Math.} 
\textbf{1861}, {59}, 125--145.

\bibitem{Ri}
Richmond, H.W.
\newblock On canonical forms.
\newblock {\em Quart. J. Pure Appl. Math.} \textbf{1904}, {33}, 331--340.


\bibitem{Pa1}
Palatini, F.
\newblock Sulla rappresentazione delle forme ed in particolare della cubica
quinaria con la somma di potenze di forme lineari.
\newblock {\em Atti Acad. Torino} \textbf{1903}, {38}, 43--50.

\bibitem{Pa2}
Palatini, F.
\newblock Sulla rappresentazione delle forme ternarie mediante la somma di
potenze di forme lineari.
\newblock {\em Rend. Accad. Lincei} \textbf{1903}, {V}, 378--384.

\bibitem{Cam}
Campbell, J.E.
\newblock Note on the maximum number of arbitrary points which can be double
points on a curve, or surface, of any degree.
\newblock {\em Messenger Math.} \textbf{1891}, {21}, 158--164.


\bibitem{Te2}
Terracini, A.
\newblock Sulla rappresentazione delle coppie di forme ternarie mediante somme
di potenze di forme lineari.
\newblock {\em Annali Matematica Pura Applicata}
\textbf{1915}, {24}, 1--10.

\bibitem{rzb}
Ro{\'e}, J.; Zappal{\`a}, G.; Baggio, S.
\newblock Linear systems of surfaces with double points: Terracini revisited.
\newblock {\em Le Matematiche} \textbf{2003}, {56}, 269--280.


\bibitem{br}
Bronowski, J.
\newblock The sum of powers as canonical expression.
\newblock In {\em Math. Proc. Cambridge Philos. Soc.}; 
{Cambridge Univ Press}: 1933; Volume 29, pp. 69--82. 

\bibitem{Hir1}
Hirschowitz, A.
\newblock La methode d'Horace pour l'interpolation {\`a} plusieurs variables.
\newblock {\em Manuscripta Math.} \textbf{1985}, {50}, 337--388.

\bibitem{Ale}
Alexander, J.
\newblock Singularit{\'e}s imposables en position g{\'e}n{\'e}rale {\`a} une
hypersurface projective.
\newblock {\em Compos. Math.} \textbf{1988}, {68}, 305--354.

\bibitem{AH0}
Alexander, J.; Hirschowitz, A.
\newblock Un lemme d'{H}orace diff{\'e}rentiel: Application aux
singularit{\'e}s hyperquartiques de $\mathbb{P}^5$.
\newblock {\em J. Algebraic Geom.} \textbf{1992}, {1}, 411--426.

\bibitem{AH3}
Alexander, J.; Hirschowitz, A.
\newblock Generic hypersurface singularities.
\newblock {\em Proc. Math. Sci.} \textbf{1997}, {107}, 139--154.

\bibitem{Chandler}
Chandler, K.A.
\newblock A brief proof of a maximal rank theorem for generic double points in
projective space.
\newblock {\em Trans. Am. Math. Soc.} \textbf{2001}, {{353}}(5), 1907--1920.

\bibitem{AAd}
{\AA}dlandsvik, B.
\newblock Varieties with an extremal number of degenerate higher secant
varieties.
\newblock {\em J. Reine Angew. Math.} \textbf{1988}, {392}, 16--26.

\bibitem{EH}
Eisenbud, D.; Harris, J.
\newblock {\em The Geometry of Schemes}; 
\newblock {Springer Science \& Business Media}: 2006; Volume 197. 



\bibitem{hartshorne}
Hartshorne, R.
\newblock {\em Algebraic Geometry}; 
\newblock {Springer Science \& Business Media}: 2013; Volume 52. 


\bibitem{mumford}
Mumford, D.
\newblock {\em The red Book of Varieties and Schemes: Includes the Michigan
Lectures (1974) on Curves and Their Jacobians}; 
\newblock {Springer Science \& Business Media}: 1999; Volume 1358. 



\bibitem{Ge1}
Geramita, A.V.
\newblock Catalecticant varieties.
\newblock In {\em Commutative Algebra and Algebraic Geometry (Ferrara)}; Dekker: 
New York, NY, USA, 1999; pp. 143--156.

\bibitem{ArPo10:PowerIdeals}
Ardila, F.; Postnikov, A.
\newblock Combinatorics and geometry of power ideals.
\newblock {\em Trans. Am. Math. Soc.} \textbf{2010}, {362}, 4357--4384.

\bibitem{Fro}
Fr{\"o}berg, R.
\newblock An inequality for {H}ilbert series of graded algebras.
\newblock {\em Math. Scand.} \textbf{1985}, {56}, 117--144.


\bibitem{Ia97III:InvSys}
Iarrobino, A.
\newblock Inverse system of a symbolic power III: thin algebras and fat points.
\newblock {\em Compos. Math.} \textbf{1997}, {108}, 319--356.




\bibitem{Ch05:GeomInterpretationFrobIarrobino}
Chandler, K.A.
\newblock The geometric interpretation of Fr{\"o}berg--Iarrobino conjectures on
infinitesimal neighborhoods of points in projective space.
\newblock {\em J. Algebra} \textbf{2005}, {286}, 421--455.

\bibitem{Se}
Segre, B.
\newblock Alcune questioni su insiemi finiti di punti in geometria algebrica.
\newblock In {\em Atti del Convegno Internazionale di Geometria Algebrica, Torino}; {1961}; pp.
15--33. 


\bibitem{Har}
Harbourne, B.
\newblock The geometry of rational surfaces and {H}ilbert functions of points
in the plane.
\newblock In {\em Proceedings of the 1984 {V}ancouver Conference in Algebraic
Geometry}; American Mathematical Society: Providence, RI, USA, 1986; pp. 95--111.

\bibitem{Gi87}
Gimigliano, A.
\newblock {\em On Linear Systems of Plane Curves}.
\newblock Ph.D. Thesis, {Queen's University}, Kingston, ON, Canada1987.

\bibitem{Hir}
Hirschowitz, A.
\newblock Une conjecture pour la cohomologie des diviseurs sur les surfaces
rationnelles g{\'e}n{\'e}riques.
\newblock {\em J. Reine Angew. Math} \textbf{1989}, {397}, 208--213.

\bibitem{CHMR}
Ciliberto, C.; Harbourne, B.; Miranda, R.; Ro{\'e}, J.
\newblock Variations on Nagata's conjecture.
\newblock In {\em A Celebration of Algebraic Geometry}; {2013}; Volume 18, pp. 185--203.

\bibitem{Ci01}
Ciliberto, C.
\newblock Geometric aspects of polynomial interpolation in more variables and
of {W}aring's problem.
\newblock In {\em European Congress of Mathematics, Vol. I (Barcelona, 2000)};
 Birkh\"auser: Basel, Switzerland, 2001; pp. 289--316..



\bibitem{CM}
Ciliberto, C.; Miranda, R.
\newblock Linear systems of plane curves with base points of equal
multiplicity.
\newblock {\em Trans. Am. Math. Soc.} 
\textbf{2000}, {352}, 4037--4050.

\bibitem{CM2}
Ciliberto, C.; Miranda, R.
\newblock The Segre and Harbourne--Hirschowitz conjectures.
\newblock In {\em Applications of Algebraic Geometry to Coding Theory, Physics
and Computation}; {Springer}: 2001; pp. 37--51.

\bibitem{Gi}
Gimigliano, A.
\newblock Our thin knowledge of fat points.
\newblock In {\em Queen's Papers in Pure and Applied Mathematics}; {1989}. 

\bibitem{bov}
Beauville, A.
\newblock {\em Surfaces Alg\'ebriques Complexes}; 
\newblock Soci\'et\'e Math\'ematique de France: {1978}; Volume 54. 

\bibitem{Na}
Nagata, M.
\newblock On the 14-th problem of hilbert.
\newblock {\em Am. J. Math.} \textbf{1959}, {81}, 766--772.

\bibitem{BCGI2}
Bernardi, A.; Catalisano, M.V.; Gimigliano, A.; Id{\`a}, M.
\newblock Secant varieties to osculating varieties of {V}eronese embeddings of
$\mathbb{P}^n$.
\newblock {\em J. Algebra} \textbf{2009}, {321}, 982--1004.

\bibitem{AH00}
Alexander, J.; Hirschowitz, A.
\newblock An asymptotic vanishing theorem for generic unions of multiple
points.
\newblock {\em Invent. Math.} \textbf{2000}, {140}, 303--325.

\bibitem{Mu}
Murnaghan, F.D.
\newblock The theory of group representations.
\newblock {\em Johns Hopkins Univ.}, 1938.

\bibitem{AB}
Arrondo, E.; Bernardi, A.
\newblock On the variety parameterizing completely decomposable polynomials.
\newblock {\em J. Pure Appl. Algebra} \textbf{2011}, {215}, 201--220.





\bibitem{LS}
de Silva, V.; Lim, L.H.
\newblock Tensor rank and the ill-posedness of the best low-rank approximation
problem.
\newblock {\em SIAM J. Matrix Anal. Appl.}
\textbf{2008}, {30}, 1084--1127.

\bibitem{LT}
Landsberg, J.M.; Teitler, Z.
\newblock On the ranks and border ranks of symmetric tensors.
\newblock {\em Found. Comput. Math.} \textbf{2010}, {10}, 339--366.


\bibitem{Ca}
Carlini, E.
\newblock Reducing the number of variables of a polynomial.
\newblock In {\em Algebraic Geometry and Geometric Modeling}; 
{Springer}: 2006; pp. 237--247. 


\bibitem{K}
Kanev, V.
\newblock Chordal varieties of veronese varieties and catalecticant matrices.
\newblock {\em J. Math. Sci.} \textbf{1999}, {94}, 1114--1125.


\bibitem{bb}
Ballico, E.; Bernardi, A.
\newblock Decomposition of homogeneous polynomials with low rank.
\newblock {\em Math. Z.}, 271(3-4):1141--1149, 2012.


\bibitem{BB13a}
Ballico, E.; Bernardi, A. 
\newblock Stratification of the fourth secant variety of Veronese
varieties via the symmetric rank. 
\newblock {\em Adv. Pure Appl. Math.} {\bf2013}, {\em4}, 215–250.

\bibitem{bbjw}
Buczy{\'n}ska, W.; Buczy{\'n}ski, J.
\newblock On differences between the border rank and the smoothable rank of a
polynomial.
\newblock {\em Glasg. Math. J.} \textbf{2015}, {57}, 401--413.


\bibitem{bbm}
Bernardi, A.; Brachat, J.; Mourrain, B.
\newblock A comparison of different notions of ranks of symmetric tensors.
\newblock {\em Linear Algebra Appl.} \textbf{2014}, {460}, 205--230.

\bibitem{BGL}
Buczyski, J.; Ginensky, A.; Landsberg, J.M.
\newblock Determinantal equations for secant varieties and the
Eisenbud--Koh--Stillman conjecture.
\newblock {\em J. Lond. Math. Soc.} \textbf{2013}, {{88}}. 

\bibitem{OO}
Oeding, L.; Ottaviani, G.
\newblock Eigenvectors of tensors and algorithms for Waring decomposition.
\newblock {\em J. Symb. Comput.} \textbf{2013}, {54}, 9--35.

\bibitem{LO}
Landsberg, J.M.; Ottaviani, G.
\newblock Equations for secant varieties of veronese and other varieties.
\newblock {\em Annali di Matematica Pura ed Applicata} \textbf{2013}, {{192}}, 569--606.

\bibitem{mp-jcomplexity-2000}
Mourrain, B.; Pan, V.Y.
\newblock Multivariate polynomials, duality, and structured matrices.
\newblock {\em J. Complex.} \textbf{2000}, {16}, 110--180.

\bibitem{gvl-book-1996}
Golub, G.H.; van Loan, C.F.
\newblock {\em Matrix Computations}; 
\newblock {JHU Press}: 2012; Volume 3. 

\bibitem{BCS}
B\"urgisser, P.; Clausen, M.; Shokrollahi, M.A.
\newblock {\em Algebraic Complexity Theory, Volume 315 of Grundlehren der
Mathematischen Wissenschaften [Fundamental Principles of Mathematical
Sciences]}; ,With the collaboration of Thomas Lickteig; 
\newblock Springer-Verlag: Berlin, Germany, 1997. 

\bibitem{GHKM}
Geiger, D.; Heckerman, D.; King, H.; Meek, C.
\newblock Stratified exponential families: graphical models and model
selection.
\newblock {\em Ann. Stat.} \textbf{2001}, {29}, 505--529.

\bibitem{GSS}
Garcia, L.D.; Stillman, M.; Sturmfels, B.
\newblock Algebraic geometry of {B}ayesian networks.
\newblock {\em J. Symb. Comput.} \textbf{2005}, {39}, 331--355.

\bibitem{Land08}
Landsberg, J.M.
\newblock Geometry and the complexity of matrix multiplication.
\newblock {\em Bull. Am. Math. Soc. } \textbf{2008}, {45}, 247--284.

\bibitem{AOP}
Abo, H.; Ottaviani, G.; Peterson, C.
\newblock Induction for secant varieties of {S}egre varieties.
\newblock {\em Trans. Am. Math. Soc.} \textbf{2009}, {36}, 767--792.

\bibitem{ChCi}
Chiantini, L.; Ciliberto, C.
\newblock Weakly defective varieties.
\newblock {\em Trans. Am. Math. Soc.} \textbf{2002}, {354}, 151--178.

\bibitem{CGG2err}
Catalisano, M.V.; Geramita, A.V.; Gimigliano, A.
\newblock Publisher's erratum to: ``{R}anks of tensors, secant varieties of
{S}egre varieties and fat points'' [{L}inear {A}lgebra {A}ppl.\ {355}
(2002), 263--285; {MR}1930149 (2003g:14070)].
\newblock {\em Linear Algebra Appl.} \textbf{2003}, {367}, 347--348.

\bibitem{LaMa08}
Landsberg, J.M.; Manivel, L.
\newblock Generalizations of {S}trassen's equations for secant varieties of
{S}egre varieties.
\newblock {\em Commun. Algebra} \textbf{2008}, {36}, 405--422.

\bibitem{LaWe07}
Landsberg, J.M.; Weyman, J.
\newblock On the ideals and singularities of secant varieties of {S}egre
varieties.
\newblock {\em Bull. Lond. Math. Soc.} \textbf{2007}, {39}, 685--697.

\bibitem{Draisma}
Draisma, J.
\newblock A tropical approach to secant dimensions.
\newblock {\em J. Pure Appl. Algebra} \textbf{2008}, {212}, 349--363.

\bibitem{Fr}
Friedland, S.
\newblock On the Generic Rank of 3-Tensors.
\newblock Available online: \url{http://front.math.ucdavis.edu/0805.1959} ({accessed on} 2008).

\bibitem{CGG5}
Catalisano, M.V.; Geramita, A.V.; Gimigliano, A.
\newblock Higher secant varieties of {S}egre-{V}eronese varieties.
\newblock In {\em Projective Varieties With Unexpected Properties}; Walter de Gruyter GmbH \& Co. KG: Berlin, Germany, 2005; pp.~81--107.

\bibitem{Galazka}
Ga{\l}azka, M.
\newblock Multigraded Apolarity.
\newblock {\emph arXiv} {\bf 2016}, {\emph arXiv:1601.06211}.

\bibitem{BBG}
Ballico, E.; Bernardi, A.; Gesmundo, F.
\newblock A Note on the Cactus Rank for Segre--Veronese Varieties.
\newblock {\emph arXiv} {\bf 2017}, \emph{arXiv:1707.06389}.

\bibitem{GRV18}
 Gallet, M.;  Ranestad, K.;  Villamizar, N. 
 \newblock Varieties of apolar subschemes of toric
surfaces. 
\newblock \emph{Ark. Mat.} \textbf{2018}, \emph{56}, 73--99.

\bibitem{CGG4}
Catalisano, M.V.; Geramita, A.V.; Gimigliano, A.
\newblock Higher secant varieties of the {S}egre varieties {$\mathbb{P}^1\times\ldots\times\mathbb{P}^1$}.
\newblock {\em J. Pure Appl. Algebra} \textbf{2015}, {201}, 367--380.

\bibitem{CGG8}
Catalisano, M.V.; Geramita, A.V.; Gimigliano, A.
\newblock Secant varieties of $\mathbb{P}^1\times\ldots\times\mathbb{P}^1$ ($n$-times) are not defective for $n > 5$.
\newblock {\em J. Algebraic Geom.} \textbf{2011}, {20}, 295--327.

\bibitem{CEG}
Catalisano, M.V.; Ellia, P.; Gimigliano, A.
\newblock Fat points on rational normal curves.
\newblock {\em J. Algebra} \textbf{1999}, {216}, 600--619.

\bibitem{Li}
Lickteig, T.
\newblock Typical tensorial rank.
\newblock {\em Linear Algebra App} \textbf{1985}, {69}, 95--120.

\bibitem{Ges}
Gesmundo, F.
\newblock An asymptotic bound for secant varieties of Segre varieties.
\newblock {\em Ann. Univ. Ferrara} \textbf{2013}, {59}, 285--302.

\bibitem{CGG7}
Catalisano, M.V.; Geramita, A.V.; Gimigliano, A.
\newblock On the ideals of secant varieties to certain rational varieties.
\newblock {\em J. Algebra} \textbf{2008}, {319}, 1913--1931.

\bibitem{ER}
Ehrenborg, R.; Rota, G.-G.
\newblock Apolarity and canonical forms for homogeneous polynomials.
\newblock {\em Eur. J. Comb.} \textbf{1993}, {14}, 157--181.

\bibitem{CGG3}
Catalisano, M.V.; Geramita, A.V.; Gimigliano, A.
\newblock Secant varieties of {G}rassmann varieties.
\newblock {\em Proc. Am. Math. Soc.} \textbf{2005}, {133}, 633--642.

\bibitem{AbOtPetGRASS}
Abo, H.; Ottaviani, G.; Peterson, C.
\newblock Non-defectivity of {G}rassmannians of planes.
\newblock {\em J. Algebraic Geom.} \textbf{2012}, {21}, 1--20.

\bibitem{BaDraGraafGRASS}
Baur, K.; Draisma, J.; de Graaf, W.A.
\newblock Secant dimensions of minimal orbits: Computations and conjectures.
\newblock {\em Exp. Math.} \textbf{2007}, {16}, 239--250.

\bibitem{McG06}
McGillivray, B.
\newblock A probabilistic algorithm for the secant defect of Grassmann varieties.
\newblock {\em Linear Algebra Appl.} \textbf{2006}, {418}, 708--718.

\bibitem{Bor}
Boralevi, A.
\newblock A note on secants of {G}rassmannians.
\newblock {\em Rend. Istit. Mat. Univ. Trieste} \textbf{2013}, {45}, 67--72.

\bibitem{RR}
Rischter, R.
\newblock {\em Projective and Birational Geometry of Grassmannians and Other
Special Varieties}.
\newblock Ph.D. Thesis, {2017}. 

\bibitem{ABMM}
Arrondo, E.; Bernardi, A.; Marques, P.; Mourrain, B.
\newblock Skew-symmetric decomposition.
\newblock {\em arXiv} {\bf 2018}, arXiv:1811.12725. 

\bibitem{West}
Westwick, R.
\newblock Irreducible lengths of trivectors of rank seven and eight.
\newblock {\em Pac. J. Math.} \textbf{1979}, {80}, 575--579.

\bibitem{Segre}
Segre, C.
\newblock Sui complessi lineari di piani nello spazio a cinque dimensioni.
\newblock {\em Annali Mat. Pura Ed Appl.} \textbf{1917}, {27}, 75--23.

\bibitem{Schou}
Schouten, J.A.
\newblock Klassifizierung der alternierenden Gr\"{o}ssen dritten Grades in 7
dimensionen.
\newblock {\em Rend. Circ. Mat. Palermo} \textbf{1931}, {55}, 137--156.

\bibitem{Las}
Lascoux, A.
\newblock Degree of the dual of a Grassmann variety.
\newblock {\em Commun. Algebra} \textbf{1981}, {9}, 1215--1225.

\bibitem{AboBra}
Abo, H.; Brambilla, M.C.
\newblock On the dimensions of secant varieties of {S}egre-Veronese varieties.
\newblock {\em Ann. Mat. Pura Appl.} \textbf{2013}, {192}, 61--92.

\bibitem{Boc05}
Bocci, C.
\newblock Special effect varieties in higher dimension.
\newblock {\em Collect. Math.} \textbf{2005}, {56}, 299--326.

\bibitem{Lon890}
London, F.
\newblock Ueber die polarfiguren der ebenen curven dritter ordnung.
\newblock {\em Math. Ann.} \textbf{1890}, {36}, 535--584.

\bibitem{Abo}
Abo, H.
\newblock On non-defectivity of certain {S}egre-{V}eronese varieties.
\newblock {\em J. Symb. Comput.} \textbf{2010}, {45}, 1254--1269.

\bibitem{AboBra09}
Abo, H.; Brambilla, M.C.
\newblock Secant varieties of {S}egre-{V}eronese varieties
$\mathbb{P}^m\times\mathbb{P}^n$ embedded by $\mathcal{O}(1,2)$.
\newblock {\em Exp. Math.} \textbf{2009}, {18}, 369--384.

\bibitem{Bal06}
Ballico, E.
\newblock On the non-defectivity and non weak-defectivity of {S}egre-{V}eronese
embeddings of products of projective spaces.
\newblock {\em Port. Math.} \textbf{2006}, {63}, 101--111.

\bibitem{BaBeC}
Ballico, E.; Bernardi, A.; Catalisano, M.V.
\newblock Higher secant varieties of $\mathbb{P}^n \times \mathbb{P}^1$
embedded in bi-degree $(a, b)$.
\newblock {\em Commun. Algebra} \textbf{2012}, {40}, 3822--3840.

\bibitem{CaCh}
Carlini, E.; Chipalkatti, J.
\newblock On {W}aring's problem for several algebraic forms.
\newblock {\em Comment. Math. Helv.} \textbf{2003}, {78}, 494--517.

\bibitem{Abrescia}
Abrescia, S.
\newblock About defectivity of certain {S}egre-{V}eronese varieties.
\newblock {\em Can. J. Math.} \textbf{2008}, {60}, 961--974.

\bibitem{CGG6}
Catalisano, M.V.; Geramita, A.V.; Gimigliano, A.
\newblock Segre-{V}eronese embeddings of {$ {\Bbb P}^1\times{\Bbb P}^1\times
{\Bbb P}^1$} and their secant varieties.
\newblock {\em Collect. Math.} \textbf{2007}, {58}, 1--24.

\bibitem{DF}
Dionisi, C.; Fontanari, C.
\newblock Grassman defectivity {\`a} la Terracini.
\newblock {\em Le Matematiche} \textbf{2001}, {56}, 245--255.

\bibitem{BeCC}
Bernardi, A.; Carlini, E.; Catalisano, M.V.
\newblock Higher secant varieties of $\mathbb{P}^n\times\mathbb{P}^m$ embedded
in bi-degree $(1,d)$.
\newblock {\em J. Pure Appl. Algebra} \textbf{2011}, {215}, 2853--2858.

\bibitem{AboBra2}
Abo, H.; Brambilla, M.C.
\newblock New exampless of defective secant varieties of {S}egre-{V}eronese
varieties.
\newblock {\em Collect. Math.} \textbf{2012}, {63}, 287--297.

\bibitem{Ott08}
Ottaviani, G.
\newblock Symplectic bundles on the plane, secant varieties and L\"{u}roth
quartics revisited.
\newblock {\em Quaderni di Quaderni di Matematica} \textbf{2008}, {{\emph{21}}}. 

\bibitem{LP13}
Laface, A.; Postinghel, E.
\newblock Secant varieties of Segre-Veronese embeddings of $(\mathbb{P}^1)^r$.
\newblock {\em Math. Ann.} \textbf{2013}, {356}, 1455--1470.

\bibitem{CaCat09}
Carlini, E.; Catalisano, M.V.
\newblock On rational normal curves in projective space.
\newblock {\em J. Lond. Math. Soc.} \textbf{2009}, {80}, 1--17.

\bibitem{AMR18}
Araujo, C.; Massarenti, A.; Rischter, R.
\newblock On non-secant defectivity of {S}egre-{V}eronese varieties.
\newblock {\em Trans. Amer. Math. Soc.} \textbf{{2018}}. 
 
 \bibitem{TEI14}
Teitler, Z. Geometric lower bounds for generalized ranks. {\emph{arXiv}} \textbf{2014}, 
arXiv:1406.5145.

\bibitem{GOV18}
Gesmundo, F.; Oneto, A.; Ventura, E. Partially symmetric variants of Comon’s
problem via simultaneous rank. 
\newblock \emph{arXiv} \textbf{2018}, arXiv:1810.07679.

\bibitem{Bal05:SecantTangentialVeronese}
Ballico, E.
\newblock On the secant varieties to the tangent developable of a {V}eronese
variety.
\newblock {\em J. Algebra} \textbf{2005}, {288}, 279--286.

\bibitem{AboVan:SecantTangentialVeronese}
Abo, H.; Vannieuwenhoven, N.
\newblock Most secant varieties of tangential varieties to {V}eronese varieties
are nondefective.
\newblock {\em Trans. Amer. Math. Soc.} \textbf{2018}, {370}, 393--420.

\bibitem{BF}
Ballico, E.; Fontanari, C.
\newblock On the secant varieties to the osculating variety of a {V}eronese
surface.
\newblock {\em Central Europ. J. Math.} \textbf{2003}, {1}, 315--326.

\bibitem{BCGI1}
Bernardi, A.; Catalisano, M.V.; Gimigliano, A.; Id{\`a}, M.
\newblock Osculating varieties of {V}eronese varieties and their higher secant
varieties.
\newblock {\em Can. J. Math.} \textbf{2007}, {59}, 488--502.

\bibitem{BeC}
Bernardi, A.; Catalisano, M.V.
\newblock Some defective secant varieties to osculating varieties of {V}eronese
surfaces.
\newblock {\em Collect. Math.} \textbf{2006}, {57}, 43--68.

\bibitem{Shin12:CompletelyRed}
Shin, Y.S.
\newblock Secants to the variety of completely reducible forms and the Hilbert
function of the union of star-configurations.
\newblock {\em J. Algebra Its Appl.} \textbf{2012}, {11}, 1250109.

\bibitem{Abo14:CompletelyDec}
Abo, H.
\newblock Varieties of completely decomposable forms and their secants.
\newblock {\em J. Algebra} \textbf{2014}, {403}, 135--153.

\bibitem{7:ReducibleForms}
Catalisano, M.V.; Geramita, A.V.; Gimigliano, A.; Harbourne, B.; Migliore, J.; Nagel, U.; Shin, Y.S.
\newblock Secant varieties of the varieties of reducible hypersurfaces in
$\mathbb{P}^n$.
\newblock {\em arXiv} \textbf{2015}, arXiv:1502.00167.

\bibitem{Tor17:GenericChow}
Torrance, D.A.
\newblock Generic forms of low chow rank.
\newblock {\em J. Algebra Its Appl.} \textbf{2017}, {16}, 1750047.

\bibitem{CaChGeOn:WaringLike}
Catalisano, M.V.; Chiantini, L.; Geramita, A.V.; Oneto, A.
\newblock Waring-like decompositions of polynomials, 1.
\newblock {\em Linear Algebra Its Appl.} \textbf{2017}, {533}, 311--325.

\bibitem{GeSc}
Geramita, A.V.; Schenck, H.K.
\newblock Fat points, inverse systems, and piecewise polynomial functions.
\newblock {\em J. Algebra} \textbf{1998}, {204}, 116--128.

\bibitem{Mamma}
Mammana, C.
\newblock Sulla variet\`a delle curve algebriche piane spezzate in un dato
modo.
\newblock {\em Ann. Scuola Norm. Super. Pisa (3)} \textbf{1954}, {8}, 53--75.

\bibitem{AnHo:StillmanConj}
Ananyan, T.; Hochster, M.
\newblock Small subalgebras of polynomial rings and {S}tillman's conjecture.
\newblock {\em arXiv} \textbf{2016}, arXiv:1610.09268.

\bibitem{BiDrEg:BoundedStrength}
Bik, A.; Draisma, J.; Eggermont, R.H.
\newblock Polynomials and tensors of bounded strength.
\newblock {\em arXiv} \textbf{2018}, arXiv:1805.01816.

\bibitem{FrOtSh:GeneralizedWaring}
Fr{\"o}berg, R.; Ottaviani, G.; Shapiro, B.
\newblock On the Waring problem for polynomial rings.
\newblock {\em Proc. Natl. Acad. Sci. USA} \textbf{2012}, {109}, 5600--5602.

\bibitem{LORS18}
Lundqvist, S.;  Oneto, A.;  Reznick, B.;   Shapiro,  B. 
\newblock On generic and maximal k-ranks
of binary forms. \emph{J. Pure Appl. Algebra} 2018, doi:10.1016/j.jpaa.2018.08.015.

\bibitem{Nick:HS}
Nicklasson, L.
\newblock On the Hilbert series of ideals generated by generic forms.
\newblock {\em Commun. Algebra} \textbf{2017}, {45}, 3390--3395.

\bibitem{bt}
Blekherman, G.; Teitler, Z.
\newblock On maximum, typical and generic ranks.
\newblock {\em Math. Ann.} \textbf{2015}, {362}, 1021--1031.

\bibitem{Jel14:AnUpperBound}
Jelisiejew, J.
\newblock An upper bound for the Waring rank of a form.
\newblock {\em Archiv der Mathematik} \textbf{2014}, {102}, 329--336.

\bibitem{Re13:OnLengthBinaryForms}
Reznick, B.
\newblock On the length of binary forms.
\newblock In {\em Quadratic and Higher Degree Forms}; { Springer}: 
2013; pp. 207--232. 

\bibitem{Seg42:NonSingularCubic}
Segre, B.
\newblock {\em The Non-Singular Cubic Surfaces}; 
\newblock Oxford University Press: Oxford, UK, 1942.

\bibitem{DeP15:MaximumRankTernaryQuartics}
De Paris, A.
\newblock A proof that the maximum rank for ternary quartics is seven.
\newblock {\em Le Matematiche} \textbf{2015}, {70}, 3--18.

\bibitem{Kl99:RepresentingPolynomials}
Kleppe, J.
\newblock {\em Representing a Homogenous Polynomial as a Sum of Powers Of
Linear Forms}.
\newblock Ph.D. Thesis, {University of Oslo}, Oslo, Norway, 1999.

\bibitem{DeP15:TernaryQuintic}
De Paris, A.
\newblock Every ternary quintic is a sum of ten fifth powers.
\newblock {\em Internat. J. Algebra Comput.} \textbf{2015}, {25}, 607--631.

\bibitem{BT16:SomeExamplesHighRank}
Buczy\'nski, J.; Teitler, Z.
\newblock Some examples of forms of high rank.
\newblock {\em Collect. Math.} \textbf{2016}, {67}, 431--441.

\bibitem{deP16:AsympthoticTerm}
De Paris, A.
\newblock The asymptotic leading term for maximum rank of ternary forms of a
given degree.
\newblock {\em Linear Algebra Appl.} \textbf{2016}, {500}, 15--29.

\bibitem{CCG12:SolutionWaringMonomials}
Carlini, E.; Catalisano, M.V.; Geramita, A.V.
\newblock The solution to the Waring problem for monomials and the sum of
coprime monomials.
\newblock {\em J. Algebra} \textbf{2012}, {370}, 5--14.

\bibitem{BHMT18:LocusHighRank}
Buczy{\'n}ski, J.; Han, K.; Mella, M.; Teitler, Z.
\newblock On the locus of points of high rank.
\newblock {\em Eur. J. Math.} \textbf{2018}, {4}, 113--136.

\bibitem{bb4}
Ballico, E.; Bernardi, A.
\newblock Curvilinear schemes and maximum rank of forms.
\newblock {\em Matematiche (Catania)} \textbf{2017}, {72}, 137--144.

\bibitem{bubu}
Buczy\'nska, W.; Buczy\'nski, J.
\newblock Secant varieties to high degree {V}eronese reembeddings,
catalecticant matrices and smoothable {G}orenstein schemes.
\newblock {\em J. Algebraic Geom.} \textbf{2014}, {23}, 63--90.

\bibitem{RS11:RankSymmetricForm}
Ranestad, K.; Schreyer, F.-O.
\newblock On the rank of a symmetric form.
\newblock {\em J. Algebra} \textbf{2011}, {346}, 340--342.

\bibitem{CCCGW18}
Carlini, E.; Catalisano, M.V.; Chiantini, L.; Geramita, A.V.; Woo, Y.
\newblock Symmetric tensors: Rank, {S}trassen's conjecture and
$e$-computability.
\newblock {\em Ann. Scuola Norm. Sup. Pisa} \textbf{2018}, {18}, 1--28.

\bibitem{CGV16:RealComplexRankReducibleCubics}
Carlini, E.; Guo, C.; Ventura, E.
\newblock Real and complex Waring rank of reducible cubic forms.
\newblock {\em J. Pure Appl. Algebra} \textbf{2016}, {220}, 3692--3701.

\bibitem{Tei15:SufficientStrassen}
Teitler, Z.
\newblock Sufficient conditions for Strassen's additivity conjecture.
\newblock {\em Ill. J. Math.} \textbf{2015}, {59}, 1071--1085.

\bibitem{Lan17:GeometryComplexity}
Landsberg, J.M.
\newblock {\em Geometry and Complexity Theory}; 
\newblock {Cambridge University Press}: 2017; Volume 169. 

\bibitem{AFT:LowerUpperBounds}
Alexeev, B.; Forbes, M.A.; Tsimerman, J.
\newblock Tensor rank: some lower and upper bounds.
\newblock In Proceedings of the 2011 IEEE 26th Annual Conference on Computational Complexity (CCC), San Jose, CA, USA 8--11 June 2011; pp. 283--291.

\bibitem{BB13}
Ballico, E.; Bernardi, A.
\newblock Tensor ranks on tangent developable of Segre varieties.
\newblock {\em Linear Multilinear Algebra} \textbf{2013}, {61}, 881--894.

\bibitem{Sei18:RankSymmRankCubic}
Seigal, A.
\newblock Ranks and symmetric-ranks of cubic surfaces.
\newblock {\em arXiv} \textbf{2018}, arXiv:1801.05377.

\bibitem{Fri16}
Friedland, S.
\newblock Remarks on the symmetric-rank of symmetric tensors.
\newblock {\em SIAM J. Matrix Anal. Appl.}
\textbf{2016}, {37}, 320--337.

\bibitem{CGLM08}
Comon, P.; Golub, G.; Lim, L.H.; Mourrain, B.
\newblock Symmetric tensors and symmetric tensor rank.
\newblock {\em SIAM J. Matrix Anal. Appl.}
\textbf{2008}, {30}, 1254--1279.

\bibitem{ZHQ16}
Zhang, X.; Huang, Z.H.; Qi, L.
\newblock Comon's conjecture, rank decomposition, and symmetric-rank
decomposition of symmetric tensors.
\newblock {\em SIAM J. Matrix Anal. Appl.}
\textbf{2016}, {37}, 1719--1728.

\bibitem{Shi18:Comon}
Shitov, Y.
\newblock A counterexample to Comon's conjecture.
\newblock {\em SIAM J. Appl. Algebra Geometry} \textbf{2018}, {2}, 428--443.

\bibitem{MR3658727}
Carlini, E.; Catalisano, M.V.; Oneto, A.
\newblock Waring loci and the {S}trassen conjecture.
\newblock {\em Adv. Math.} \textbf{2017}, {314}, 630--662.

\bibitem{Str73:StrassenConj}
Strassen, V.
\newblock Vermeidung von divisionen.
\newblock {\em J. Reine Angew. Math.} 
\textbf{1973}, {264}, 184--202.

\bibitem{LM17:AbelianTensors}
Landsberg, J.M.; Micha{\l}ek, M.
\newblock Abelian tensors.
\newblock {\em J. Math. Pures Appl.} \textbf{2017}, {108}, 333--371.

\bibitem{JT86:ValidityStrassen}
Ja'Ja', J.; Takche, J.
\newblock On the validity of the direct sum conjecture.
\newblock {\em SIAM J. Comput.} \textbf{1986}, {15}, 1004--1020.

\bibitem{FW84:OnStrassen}
Feig, E.; Winograd, S.
\newblock On the direct sum conjecture.
\newblock {\em Linear Algebra Appl.} \textbf{1984}, {63}, 193--219.

\bibitem{Shi17:CounterexampleStrassen}
Shitov, Y.
\newblock A counterexample to Strassen's direct sum conjecture.
\newblock {\em arXiv} \textbf{2017}, arXiv:1712.08660.

\bibitem{CCC:ProgressStrassen}
Carlini, E.; Catalisano, M.V.; Chiantini, L.
\newblock Progress on the symmetric Strassen conjecture.
\newblock {\em J. Pure Appl. Algebra} \textbf{2015}, {219}, 3149--3157.

\bibitem{Kr}
Kruskal, J.B.
\newblock Three-way arrays: rank and uniqueness of trilinear decompositions,
with application to arithmetic complexity and statistics.
\newblock {\em Linear Algebra Appl.} \textbf{1977}, {18}, 95--138.

\bibitem{Syl}
Sylvester, J.J.
\newblock {\em The Collected Mathematical Papers, Vol. I};
\newblock {Cambridge University Press}: 1904. 

\bibitem{Hil88}
Hilbert, D. { Letter adresse\'e \`a M. Hermite. Gesam. Abh., II:148--153, 1888}. 

\bibitem{AGHKT}
Anandkumar, A.; Ge, R.; Hsu, D.; Kalade, S.M.; Telgarsky, M.
\newblock Tensor decompositions for learning latent variable models.
\newblock {\em J. Mach. Learn. Res.} \textbf{2014}, {15}, 2773--2832.

\bibitem{AD}
Appellof, C.J.; Davidson, E.R.
\newblock Strategies for analyzing data from video fluorometric monitoring of
liquid chromatographic effluents.
\newblock {\em Anal. Chem.} \textbf{1981}, {53}, 2053--2056.

\bibitem{AMR09}
 Allman, E.S.;   Matias, C.;   Rhodes, J.A. 
 \newblock Identifiability of parameters in latent structure
models with many observed variables. 
\newblock \emph{Ann. Stat.} \textbf{2009}, \emph{37}, 3099--3132.

\bibitem{ABC}
Angelini, E.; Bocci, C.; Chiantini, L.
\newblock Real identifiability vs complex identifiability.
\newblock {\em Linear Multilinear Algebra} \textbf{2018}, {66}, 1257--1267.

\bibitem{Ae}
Angelini, E.
\newblock On complex and real identifiability of tensors.
\newblock {\em Riv. Mat. Univ. Parma} \textbf{2017}, {8}, 367--377.

\bibitem{CO}
Chiantini, L.; Ottaviani, G.
\newblock On generic identifiability of 3-tensors of small rank.
\newblock {\em SIAM J. Matrix Anal. Appl} \textbf{2012}, {33}, 1018--1037.

\bibitem{BC}
Bocci, C.; Chiantini, L.
\newblock On the identifiability of binary {S}egre products.
\newblock {\em J. Algebraic Geom.} \textbf{2013}, {22}, 1--11.

\bibitem{BCV}
Bhaskara, A.; Charikar, M.; Vijayaraghavan, A.
\newblock Uniqueness of tensor decompositions with applications to polynomial
identifiability.
\newblock In Proceedings of the Conference on Learning Theory, {2014}; pp. 742--778.

\bibitem{BCO}
Bocci, C.; Chiantini, L.; Ottaviani, G.
\newblock Refined methods for the identifiability of tensors.
\newblock {\em Ann. Mat. Pura Appl.} \textbf{2014}, {193}, 1691--1702.

\bibitem{COV14}
Chiantini, L.; Ottaviani, G.; Vannieuwenhoven, N.
\newblock An algorithm for generic and low-rank specific identifiability of complex tensors. 
\newblock \emph{SIAM J. Matrix Anal. Appl.} \textbf{2014}, \emph{35}, 1265--1287.

\bibitem{COV17}
 Chiantini, L.;  Ottaviani, G.;  Vannieuwenhoven, N. On generic identifiability of
symmetric tensors of subgeneric rank. T\emph{rans. Amer. Math. Soc.} \textbf{2017}, \emph{369}, 4021--4042, 2017.

\bibitem{GM16:Identifiability}
Galuppi, F.; Mella, M.
\newblock Identifiability of homogeneous polynomials and Cremona
transformations.
\newblock {\em J. Reine Angew. Math.} \textbf{{2018}}, doi:10.1515/crelle-2017-0043. 

\bibitem{AGMO}
Angelini, E.; Galluppi, F.; Mella, M.; Ottaviani, G.
\newblock On the number of Waring decompositions for a generic polynomial vector.
\newblock {\em J. Pure Appl. Algebra} \textbf{2018}, {222}, 950--965.

\bibitem{CR06:MinimalSecantDegree}
Ciliberto, C.; Russo, F.
\newblock Varieties with minimal secant degree and linear systems of maximal
dimension on surfaces.
\newblock {\em Adv. Math.} \textbf{2006}, {200}, 1--50.

\bibitem{Wei67:ZurBilinearenFormen}
Weierstrass, K.
\newblock {Zur Theorie Der Bilinearen and Quadratischen Formen};
\newblock Monatsh. Akad. Wisa.: Berlin, Germany, 1867; pp. 310--338.

\bibitem{Ro89:NoteCubicConic}
Roberts, R.A.
\newblock Note on the plane cubic and a conic.
\newblock In {\em Proc. Lond. Math. Soc.} \textbf{1889}, {21}, 62--69.

\bibitem{BV}
Bernardi, A.; Vanzo, D.
\newblock A new class of non-identifiable skew-symmetric tensors.
\newblock {\em Annali Matematica Pura Applicata} \textbf{2018}, {197}, 1499--1510.

\bibitem{RS00}
Ranestad, K.; Schreyer, F.-O.
\newblock Varieties of sums of powers.
\newblock {\em J. Reine Angew. Math.} \textbf{2000}, {525}, 147--181.

\bibitem{Mukai2}
Mukai, S.
\newblock Polarized K3 surfaces of genus 18 and 20.
\newblock {\em Complex projective geometry (Trieste, 1989/Bergen, 1989)} \textbf{{1992}}, {179}, 264--276. 

\bibitem{Mukai3}
Mukai, S.
\newblock Curves and symmetric spaces, 1.
\newblock {\em Am. J. Math} \textbf{1995}, {117}, 1627--1644.

\bibitem{Mukai1}
Mukai, S.
\newblock Fano 3-folds, complex projective geometry.
\newblock {Lond. Math. Soc. Lecture Note} \textbf{1992}, {179}, 255--263.

\bibitem{MR3017012}
Buczy\'nska, W.; Buczy\'nski, J.; Teitler, Z.
\newblock Waring decompositions of monomials.
\newblock {\em J. Algebra} \textbf{2013}, {378}, 45--57.

\bibitem{MO18}
Mourrain, B.; Oneto, A.
\newblock On minimal decompositions of low rank symmetric tensors.
\newblock {\em arXiv} \textbf{2018}, arXiv:1805.11940.

\bibitem{L}
Lascoux, A.
\newblock Syzygies des varietes determinatales.
\newblock {\em Adv. Math.} \textbf{1978}, {30}, 202--237.

\bibitem{W}
Weyman, J.
\newblock {Cohomology of Vector Bundles and Syzygies};
\newblock {Cambridge University Press}: 2003. 

\bibitem{BrCo}
Bruns, W.; Conca, A.
\newblock Groebner bases and determinantal ideals.
\newblock {\em Singularities and Computer Algebra}; Kluwer Academic Publishers:
{The Netherlands}, 2003; pp. 9--66. 

\bibitem{LM}
Landsberg, J.M.; Manivel, L.
\newblock On the ideals of secant varieties of {S}egre varieties.
\newblock {\em Found. Comput. Math.} \textbf{2004}, {4}, 397--422.

\bibitem{Rai}
Raicu, C. $3\times 3$ Minors of Catalecticants.
\newblock {\em Math. Res. Lett.} \textbf{2013}, {20}, 745--756.

\bibitem{CatJ}
Catalano-Johnson, M.L.
\newblock The possible dimensions of the higher secant varieties.
\newblock {\em Am. J. Math.} \textbf{1996}, {118}, 355--361.

\bibitem{schreyer}
Schreyer, F.-O.
\newblock Geometry and algebra of prime Fano $3$-folds of genus $12$.
\newblock {\em Compos. Math.} \textbf{2001}, {127}, 297--319.

\bibitem{CO12:TypicalBinary}
Comon, P.; Ottaviani, G.
\newblock On the typical rank of real binary forms.
\newblock {\em Linear Multilinear Algebra} \textbf{2012}, {60}, 657--667.

\bibitem{Ble15:Typical}
Blekherman, G.
\newblock Typical real ranks of binary forms.
\newblock {\em Found. Comput. Math.} \textbf{2015}, {15}, 793--798.

\bibitem{BBO18}
 Bernardi, A.;  Blekherman, G.;  Ottaviani, G.  
 \newblock On real typical ranks. 
 \newblock \emph{Bollettino
dell’Unione Matematica Italiana} \textbf{2018}, \emph{11}, 293--307.

\bibitem{BCG:RealRankBinaryMonomials}
Boij, M.; Carlini, E.; Geramita, A.
\newblock Monomials as sums of powers: the real binary case.
\newblock {\em Proc. Am. Math. Soc.}
\textbf{2011}, {139}, 3039--3043.

\bibitem{CKOV:RealRankMonomials}
Carlini, E.; Kummer, M.; Oneto, A.; Ventura, E.
\newblock On the real rank of monomials.
\newblock {\em Math. Z.} \textbf{2017}, {286}, 571--577.

\bibitem{MMSV:RealRankTernary}
Micha{\l}ek, M.; Moon, H.; Sturmfels, B.; Ventura, E.
\newblock Real rank geometry of ternary forms.
\newblock {\em Annali di Matematica Pura ed Applicata} 
\textbf{2017}, {196}, 1025--1054.
\end{thebibliography}

\end{document}